\newsavebox{\measure@tikzpicture}
  \def\tikz@width{#1}%
  \def\tikzscale{1}\begin{lrbox}{\measure@tikzpicture}%
  \edef\tikzscale{\pgfmathresult}%
\newcommand{\myref}[2]{\hyperref[#2]{#1 \ref*{#2}}}
\newcommand{\myrefT}[1]{\hyperref[#1]{Theorem \ref*{#1}}}
\newcommand{\myrefP}[1]{\hyperref[#1]{Proposition \ref*{#1}}}
\newcommand{\myrefL}[1]{\hyperref[#1]{Lemma \ref*{#1}}}
\newcommand{\myrefD}[1]{\hyperref[#1]{Definition \ref*{#1}}}
\newcommand{\myrefS}[1]{\hyperref[#1]{Section \ref*{#1}}}
\newcommand{\myrefC}[1]{\hyperref[#1]{Corollary \ref*{#1}}}
\newcommand{\myrefR}[1]{\hyperref[#1]{Remark \ref*{#1}}}
\newcommand{\myrefn}[3]{\hyperref[#2]{#1 \ref*{#2} (#3)}}
\definecolor{goldenpoppy}{rgb}{0.99, 0.76, 0.0}
\definecolor{darkblue}{rgb}{0.0, 0.0, 0.55}
\tikzstyle{close} = [inner sep = 0cm]
\tikzstyle{legendLine} = [rectangle, inner sep=0pt, minimum height=0pt, minimum width=24pt]
\tikzstyle{dynkinLabelStyle} = [scale = .9, every node/.style={transform shape}]
\newcommand{\dynkinLabel}[2] {
\begin{tikzpicture}[close, dynkinLabelStyle]
   \node (DyNode){#1};
   \node [right = 0cm of DyNode] {\Large $\euscr #2$};
\end{tikzpicture}
}
\newcommand{\dynkinLabelWeyl}[2] {
\begin{tikzpicture}[close, dynkinLabelStyle]
   \node (DyNode){#1};
   \node [right = 0cm of DyNode] {\Large $#2$};
\end{tikzpicture}
}
\newcommand{\dynkinLabelWeylHuge}[2] {
\begin{tikzpicture}[close, dynkinLabelStyle]
   \node (DyNode){#1};
   \node [right = 0cm of DyNode] {\huge $#2$};
\end{tikzpicture}
}
\newcommand{\verticalToNode}[1]{1.4cm of #1}
\newcommand{\upLine}[4]{\draw[-, thick, #1, #2] (#3.north) -- (#4.south);}
\newcommand{\DynkinDFour} {
  \draw[blue, very thick] (.707, .707) -- (0, 0);
  \draw[blue, very thick] (.707, -.707) -- (0, 0);
  \draw[blue, very thick] (-1 ,0) -- (0, 0);
  \filldraw[blue] (0, 0) circle (4pt);
  \filldraw[blue] (.707, .707) circle (4pt);
  \filldraw[blue] (.707, -.707) circle (4pt);
  \filldraw[blue] (-1, 0) circle (4pt);
}
\newcommand{\redDot}[1] {
  \ifcase#1\relax
  \or
    \filldraw[red] (.707, .707) circle (4.9pt);
  \or
    \filldraw[red] (.707, -.707) circle (4.9pt);
  \or
    \filldraw[red] (0, 0) circle (4.9pt);
  \or
    \filldraw[red] (-1, 0) circle (4.9pt);
  \fi
}
\newcommand{\otherRedDots}[1] {
  \ifcase#1\relax
  \or
    \filldraw[red] (.707, -.707) circle (4.9pt);
    \filldraw[red] (-1, 0) circle (4.9pt);
  \or
    \filldraw[red] (.707, .707) circle (4.9pt);
    \filldraw[red] (-1, 0) circle (4.9pt);
  \or
  \or
    \filldraw[red] (.707, .707) circle (4.9pt);
    \filldraw[red] (.707, -.707) circle (4.9pt);
  \fi
}
\newcommand{\TypeAPicture}[1]{
  \begin{tikzpicture}
    \DynkinDFour
    \redDot{3}
    \redDot{#1}
  \end{tikzpicture}
}
\newcommand{\TypeBPicture}[1]{
  \begin{tikzpicture}
    \DynkinDFour
    \otherRedDots{#1}
  \end{tikzpicture}
}
\newcommand{\TypeCPicture}{
  \begin{tikzpicture}
    \DynkinDFour
    \redDot{1}
    \redDot{2}
    \redDot{4}
  \end{tikzpicture}
}
\newcommand{\TypeDPicture}{
  \begin{tikzpicture}
    \DynkinDFour
    \redDot{3}
  \end{tikzpicture}
}
\newcommand{\TypeIPicture}[1]{
  \begin{tikzpicture}
    \DynkinDFour
    \redDot{#1}
  \end{tikzpicture}
}
\newcommand{\TypeFPicture}[1]{
  \begin{tikzpicture}
    \DynkinDFour
    \redDot{3}
    \otherRedDots{#1}
  \end{tikzpicture}
}
\newcommand{\doubleDist}{1.5pt}
\definecolor{indigo}{rgb}{0.29, 0.0, 0.51}
\newcommand{\DynkinATwo} {
  \draw[blue, very thick] (1, 0) -- (2, 0);
  \filldraw[blue] (1, 0) circle (4.5pt);
  \filldraw[blue] (2, 0) circle (4.5pt);
}
\newcommand{\redADot}[1] {
  \filldraw[red] (#1, 0) circle (5pt);
}
\newcommand{\IdentityPictureATwo}{
  \begin{tikzpicture}
    \DynkinATwo
  \end{tikzpicture}
}
\newcommand{\SPictureATwo}{
  \begin{tikzpicture}
    \DynkinATwo
    \redADot{1}
  \end{tikzpicture}
}
\newcommand{\TPictureATwo}{
  \begin{tikzpicture}
    \DynkinATwo
    \redADot{2}
  \end{tikzpicture}
}
\newcommand{\LongPictureATwo}{
  \begin{tikzpicture}
    \DynkinATwo
    \redADot{1}
    \redADot{2}
  \end{tikzpicture}
}
\newcommand{\xSA}{3}
\newcommand{\ySA}{2.5}
\newcommand{\upLineLabelA}[6]{\draw[-, thick, shorten <=0.2cm, shorten >=0.2cm, #1, #2] (#3 * \xSA, #4 * \ySA) -- node[#6]{\Large $#5$} (#3 * \xSA, #4 * \ySA + \ySA);}
\newcommand{\leftLineLabelA}[6]{\draw[-, thick, shorten <=0.4cm, shorten >=.4cm, #1, #2] (#3 * \xSA, #4 * \ySA) -- node[#6]{\Large $#5$} (#3 * \xSA - \xSA, #4 * \ySA + \ySA);}
\newcommand{\rightLineLabelA}[6]{\draw[-, thick, shorten <=.4cm, shorten >=0.4cm, #1, #2] (#3 * \xSA, #4 * \ySA) -- node[#6]{\Large $#5$}(#3 * \xSA + \xSA, #4 * \ySA + \ySA);}
\newcommand{\DynkinBTwo} {
  \draw[blue, very thick, double, double distance = \doubleDist] (1, 0) -- (2, 0);
  \filldraw[blue] (1, 0) circle (4.5pt);
  \filldraw[blue] (2, 0) circle (4.5pt);
}
\newcommand{\redBDot}[1] {
  \filldraw[red] (#1, 0) circle (5pt);
}
\newcommand{\IdentityPictureBTwo}{
  \begin{tikzpicture}
    \DynkinBTwo
  \end{tikzpicture}
}
\newcommand{\SPictureBTwo}{
  \begin{tikzpicture}
    \DynkinBTwo
    \redBDot{1}
  \end{tikzpicture}
}
\newcommand{\TPictureBTwo}{
  \begin{tikzpicture}
    \DynkinBTwo
    \redBDot{2}
  \end{tikzpicture}
}
\newcommand{\LongPictureBTwo}{
  \begin{tikzpicture}
    \DynkinBTwo
    \redBDot{1}
    \redBDot{2}
  \end{tikzpicture}
}
\newcommand{\xSB}{3}
\newcommand{\ySB}{2.5}
\newcommand{\dynkinLabelABTwo}[2] {
\begin{tikzpicture}[close]
   \node (DyNode){#1};
   \node [right = .1cm of DyNode] {\Large $#2$};
\end{tikzpicture}
}
\newcommand{\dynkinLabelLeftABTwo}[2] {
\begin{tikzpicture}[close]
   \node (DyNode){#1};
   \node [left = .1cm of DyNode] {\Large $#2$};
\end{tikzpicture}
}
\newcommand{\DynkinESix} {
  \draw[blue, very thick] (1, 0) -- (5, 0);
  \draw[blue, very thick] (3, 0) -- (3, 1);
  \filldraw[blue] (1, 0) circle (4pt);
  \filldraw[blue] (2, 0) circle (4pt);
  \filldraw[blue] (3, 0) circle (4pt);
  \filldraw[blue] (4, 0) circle (4pt);
  \filldraw[blue] (5, 0) circle (4pt);
  \filldraw[blue] (3, 1) circle (4pt);
}
\newcommand{\DynkinESixLabelBourbaki} {
  \draw[blue, very thick] (1, 0) -- (5, 0);
  \draw[blue, very thick] (3, 0) -- (3, 1);
  \filldraw[blue] (1, 0) circle (4pt) node [below=.2cm, black]{\small\pgfmathprintnumber{1}};
  \filldraw[blue] (2, 0) circle (4pt) node [below=.2cm, black]{\small\pgfmathprintnumber{3}};
  \filldraw[blue] (3, 0) circle (4pt) node [below=.2cm, black]{\small\pgfmathprintnumber{4}};
  \filldraw[blue] (4, 0) circle (4pt) node [below=.2cm, black]{\small\pgfmathprintnumber{5}};
  \filldraw[blue] (5, 0) circle (4pt) node [below=.2cm, black]{\small\pgfmathprintnumber{6}};
  \filldraw[blue] (3, 1) circle (4pt) node [above=.2cm, black]{\small\pgfmathprintnumber{2}};
}
\newcommand{\redEDot}[1] {
  \filldraw[red] (#1, 0) circle (7pt);
}
\newcommand{\topDotESix} {
  \filldraw[red] (3, 1) circle (7pt);
}
\newcommand{\TypeCPictureE}{
  \begin{tikzpicture}
    \DynkinESix
    \redEDot{1}
    \redEDot{2}
    \redEDot{4}
    \redEDot{5}
    \topDotESix
  \end{tikzpicture}
}
\newcommand{\TypeAOnePictureE}{
  \begin{tikzpicture}
    \DynkinESix
    \redEDot{1}
    \redEDot{2}
    \redEDot{3}
    \redEDot{5}
  \end{tikzpicture}
}
\newcommand{\TypeAFivePictureE}{
  \begin{tikzpicture}
    \DynkinESix
    \redEDot{1}
    \redEDot{3}
    \redEDot{4}
    \redEDot{5}
  \end{tikzpicture}
}
\newcommand{\TypeBOnePictureE}{
  \begin{tikzpicture}
    \DynkinESix
    \redEDot{1}
    \redEDot{2}
    \redEDot{4}
    \topDotESix
  \end{tikzpicture}
}
\newcommand{\TypeBFivePictureE}{
  \begin{tikzpicture}
    \DynkinESix
    \redEDot{2}
    \redEDot{4}
    \redEDot{5}
    \topDotESix
  \end{tikzpicture}
}
\newcommand{\TypeDOnePictureE}{
  \begin{tikzpicture}
    \DynkinESix
    \redEDot{2}
    \redEDot{3}
    \redEDot{5}
  \end{tikzpicture}
}
\newcommand{\TypeDFivePictureE}{
  \begin{tikzpicture}
    \DynkinESix
    \redEDot{1}
    \redEDot{3}
    \redEDot{4}
  \end{tikzpicture}
}
\newcommand{\ESixPicture}[4]{
  \begin{tikzpicture}
    \DynkinESix
    \redEDot{#1}
    \redEDot{#2}
    \redEDot{#3}
    \redEDot{#4}
  \end{tikzpicture}
}
\newcommand{\ESixTopPicture}[3]{
  \begin{tikzpicture}
    \DynkinESix
    \redEDot{#1}
    \redEDot{#2}
    \redEDot{#3}
    \topDotESix
  \end{tikzpicture}
}
\newcommand{\xSE}{4.5}
\newcommand{\ySE}{2.7}
\newcommand{\upLineE}[4]{\draw[-, thick, shorten <=0.4cm, shorten >=0.3cm, #1, #2] (#3 * \xSE, #4 * \ySE) -- (#3 * \xSE, #4 * \ySE + \ySE);}
\newcommand{\leftLineE}[4]{\draw[-, thick, shorten <=0.3cm, shorten >=.6cm, #1, #2] (#3 * \xSE, #4 * \ySE) -- (#3 * \xSE - \xSE, #4 * \ySE + \ySE);}
\newcommand{\rightLineE}[4]{\draw[-, thick, shorten <=.3cm, shorten >=0.6cm, #1, #2] (#3 * \xSE, #4 * \ySE) -- (#3 * \xSE + \xSE, #4 * \ySE + \ySE);}
\newcommand{\upLineLabel}[4]{\draw[-, thick, #1] (#2.north) -- node[right]{$#4$} (#3.south);}
\newcommand{\xSLT}{3.7}
\newcommand{\ySLT}{3}
\newcommand{\upLineT}[4]{\draw[-, thick, shorten <=0.4cm, shorten >=0.4cm, #1, #2] (#3 * \xSLT, #4 * \ySLT) -- (#3 * \xSLT, #4 * \ySLT + \ySLT);}
\newcommand{\leftLineT}[4]{\draw[-, thick, shorten <=0.5cm, shorten >=1cm, #1, #2] (#3 * \xSLT, #4 * \ySLT) -- (#3 * \xSLT - \xSLT, #4 * \ySLT + \ySLT);}
\newcommand{\rightLineT}[4]{\draw[-, thick, shorten <=1cm, shorten >=0.5cm, #1, #2] (#3 * \xSLT, #4 * \ySLT) -- (#3 * \xSLT + \xSLT, #4 * \ySLT + \ySLT);}
\newcommand{\xSLF}{3.5}
\newcommand{\ySLF}{3}
\newcommand{\upLineF}[4]{\draw[-, thick, shorten <=0.4cm, shorten >=0.4cm, #1, #2] (#3 * \xSLF, #4 * \ySLF) -- (#3 * \xSLF, #4 * \ySLF + \ySLF);}
\newcommand{\leftLineF}[4]{\draw[-, thick, shorten <=0.5cm, shorten >=1cm, #1, #2] (#3 * \xSLF, #4 * \ySLF) -- (#3 * \xSLF - \xSLF, #4 * \ySLF + \ySLF);}
\newcommand{\rightLineF}[4]{\draw[-, thick, shorten <=1cm, shorten >=0.5cm, #1, #2] (#3 * \xSLF, #4 * \ySLF) -- (#3 * \xSLF + \xSLF, #4 * \ySLF + \ySLF);}
\tikzstyle{dynkinLabelWeylStyle} = [scale = .8, every node/.style={transform shape}]
\newcommand{\lineArrowCellT}[4]{\draw[->, thick, Plum, shorten <=.7cm, shorten >=0.9cm] (#1 * \xSLT, #2 * \ySLT) -- (#3 * \xSLT, #4 * \ySLT);}
\newcommand{\lineArrowCellTMiddle}[4]{\draw[->, thick, Plum, shorten <=.4cm, shorten >=0.4cm] (#1 * \xSLT, #2 * \ySLT) -- (#3 * \xSLT, #4 * \ySLT);}
\newcommand{\bendCellBig}[3]{\draw[bend #3 = 60, ->, thick, Plum] (#1) to (#2);}
\newcommand{\bendCellSmall}[3]{\draw[bend #3 = 30, ->, thick, Plum] (#1) to (#2);}
\newcommand{\bendCell}[3]{\draw[bend #3 = 15, ->, thick, Plum] (#1) to (#2);}
\newcommand{\bendCellSmallest}[3]{\draw[bend #3 = 5, ->, thick, Plum] (#1) to (#2);}
\tikzstyle{impliesArrow} = [->, double distance = .13cm, shorten <=.5cm, shorten >=.5cm, >=implies]
\newcommand{\diamondX}{5}
\newcommand{\diamondY}{3}
\DeclarePairedDelimiter\abs{\lvert}{\rvert}
\DeclareMathOperator{\pair}{pair}
\newcommand{\euscr}{\EuScript}
\newcommand{\ti}{$\tau\textnormal{-invariant}$}
\newcommand{\dsim}{\underset{d}{\sim}}
\newcommand{\mutilde}{\tilde \mu}
\newcommand{\refSet}{\{1,2,4\}}
\newcommand{\bigRefSet}{\{s_1, s_2, s_4\}}
\newcommand{\DFourSet}{\{s_1, s_2, s_3, s_4\}}
\newcommand{\SDFour}{{J_0}}
\newcommand{\WDFour}{W_0}
\newcommand{\tDFour}{\tau_0}
\newcommand{\scA}{\EuScript A}
\newcommand{\scB}{\EuScript B}
\newcommand{\scC}{\EuScript C}
\newcommand{\scD}{\EuScript D}
\newcommand{\equivR}{\underset{R}{\sim}}
\newcommand{\equivL}{\underset{L}{\sim}}
\newcommand{\leqL}{\underset{L}{\leq}}
\newcommand{\leqR}{\underset{R}{\leq}}
\newcommand{\equivF}{\underset{\euscr F}{\sim}}
\newcommand{\equivGTF}{\underset{GT\euscr F}{\sim}}
\newcommand{\equivGTFPrime}{\underset{GT\euscr F'}{\sim}}
\newcommand{\equivUnder}[1]{\underset{#1}{\sim}}
\newcommand{\tauL}{\tau_L}
\newcommand{\tauR}{\tau_R}
\newcommand{\Dst}{D_{s,t}(W)}
\newcommand{\DstL}{D^L_{s,t}(W)}
\newcommand{\DstR}{D^R_{s,t}(W)}
\newcommand{\Tst}{T_{s,t}}
\newcommand{\TstL}{T^L_{s,t}}
\newcommand{\TstR}{T^R_{s,t}}
\newcommand{\citeKL}{\cite{kazhdan_lusztig_1979}}
\newcommand{\refDFourFigures}{\autoref{fig:type10Full}, \autoref{fig:type14aFull}, and \autoref{fig:type14bFull}}
\numberwithin{equation}{section}
\newtheorem{theorem}{Theorem}[section]
\newtheorem{corollary}[theorem]{Corollary}
\newtheorem{proposition}[theorem]{Proposition}
\newtheorem{lemma}[theorem]{Lemma}
\theoremstyle{definition}
\newtheorem{definition}[theorem]{Definition}
\newtheorem{notation}[theorem]{Notation}
\theoremstyle{remark}
\newtheorem*{remark}{Remark}
\newtheorem{remarkNumbered}[theorem]{Remark}
\begin{document}
  \title{Edge Transport from Parabolic Subgroups of Type $D_4$}
  \author{Devra Garfinkle Johnson}
  \email{devrajoh@sas.upenn.edu}
  \begin{abstract}
    This paper is part of the program to classify Kazhdan-Lusztig cells for Weyl groups of type $D_n$.
    We prove analogous results to those of section 4 of Kazhdan-Lusztig's original paper, this time related to a parabolic subgroup of type $D_4$.
    We also show how this is used in the definition of the generalized $\tau$-invariant.
  \end{abstract}

  \subjclass[2010]{Primary 20C08, Secondary 20F55}
  \maketitle
  \begingroup
    \hypersetup{linkcolor=black}
    \setcounter{tocdepth}{1}
    \tableofcontents
  \endgroup
  \section*{Introduction}

  This paper is the next paper in the series \cite{garfinkle_1990,garfinkle_1992,garfinkle_1993}.
  The goal achieved in those three papers was the classification of primitive ideals in the universal enveloping algebra of a complex simple Lie algebra of type $B_n$ or $C_n$.
  This problem was solved by classifying such primitive ideals by domino tableaux and by their generalized $\tau$-invariant.

  However, since the proof of the Kazhdan-Lusztig conjectures, the problem of classifying such primitive ideals is known to be equivalent to that of classifying left cells in the corresponding Weyl group.
  Moreover, by now there are many people studying Kazhdan-Lusztig cells for various other motivations.
  So, if possible, it's desirable to have a proof of the classification of left cells in Weyl groups which does not depend on the proof of the Kazhdan-Lusztig conjectures.
  Kazhdan-Lusztig provided that in their original paper, \citeKL, for the Weyl group of type $A_n$.
  Their main ingredient was Theorem 4.2 of \citeKL, which we're calling an ``edge transport theorem''.

  To carry this program out for other Weyl groups, what's needed first is the appropriate version of Theorem 4.2 of \citeKL.
  For types $B_n$ and $C_n$, the necessary edge transport theorem is already known, and appears in \cite{lusztig_1985}.
  This paper proves the version needed for the Weyl group of type $D_n$ (see \myrefS{sec:main}).

  \subsection*{Overall Context}

  Let $(W, S)$ be a Coxeter system.
  In \citeKL, Kazhdan-Lusztig defined the left (and right) cell equivalence relation on $W$.
  This equivalence relation is defined as coming from certain edges of the $W$ graph, where the edges are defined in terms of the Kazhdan-Lusztig polynomials.
  Though these polynomials are in principle computable, by recursion, the computation quickly becomes too large to manage.
  So, in practice, one would like to identify a smaller and easier-to-compute set of edges which yield the same equivalence classes.
  Once having found such a set, there remains the task of
  showing that they generate the same equivalence classes.

  For the latter task, we need some way of showing that two elements of the Coxeter group are not in the same left cell.
  A starting point is \cite[Proposition 2.4]{kazhdan_lusztig_1979}.
  This says that the right descent set, or \ti, is constant on left cells.
  This is a fairly weak requirement, but it can be made stronger.
  Basically, what we want is to have a lot of maps which take left cells to left cells.
  Then, if we have two members of the same left cell, we can apply one of these maps.
  The results will still have the same right \ti.
  Or, conversely, if we have two elements which are not in the same left cell, we can hope to find a sequence of such maps such that the results of applying the sequence to both elements do not have the same right \ti.
  This is the idea behind the generalized $\tau$-invariant.

  Edge transport theorems are a crucial ingredient in this program.
  Basically, an edge transport theorem says that, under certain conditions, if we're given an edge connecting two elements of the Coxeter group, then there is also an edge connecting two other elements.
  We can then apply the theorem to some of the edges used to define the left cell equivalence relation.

  This paper has two focuses.  The first is to prove the $D_4$ edge transport theorem, \myrefT{thm:mainA}.
  The second is to prove applications of edge transport theorems, most notably the generalized $\tau$-invariant.
  Here the main theorems are \myrefT{thm:genTau} and \myrefT{thm:genTauConclusion}, which say that the generalized $\tau$-invariant, when defined with respect to the edge transport functions which we are studying, is a weaker equivalence relation than that of being in the same (left or right) cell.
  Along the way, we'll also prove the analogue of (a stronger version of) \cite[Corollary 4.3]{kazhdan_lusztig_1979}, namely \myrefP{prop:edgeTransportLeqB}.
  (See also \myrefP{prop:edgeTransportKLOrderA} for the stronger version of \cite[Corollary 4.3]{kazhdan_lusztig_1979}.)

  \subsection*{Classification of Left Cells for Classical Weyl Groups}

  \begin{figure}
    \centering
    \begin{tikzpicture}
      \node[draw, inner sep=0.5cm, rectangle] (Left) at (-\diamondX, 0) {$y \equivUnder{\euscr F L} w$};
      \node[draw, inner sep=0.5cm, rectangle] (Top) at (0, \diamondY) {$y \equivL w$};
      \node[draw, inner sep=0.5cm, rectangle] (Right) at (\diamondX, 0) {$y \equivUnder{GTR} w$};
      \node[draw, inner sep=0.5cm, rectangle] (Bottom) at (0, -\diamondY) {$y \equivUnder{Tab R} w$};
      \draw[impliesArrow] (Left) -- (Top) node[midway, above=.13cm, xshift=-.2cm] {\large I};
      \draw[impliesArrow] (Top) -- (Right) node[midway, above=.15cm, xshift=.15cm] {\large II};
      \draw[impliesArrow] (Right) -- (Bottom) node[midway, above=.15cm, xshift=-.2cm] {\large III};
      \draw[impliesArrow] (Bottom) -- (Left) node[midway, above=.15cm, xshift=.3cm] {\large IV};
    \end{tikzpicture}
    \caption{Classification of Left Cells for Classical Weyl Groups}
    \label{fig:classification}
  \end{figure}

  Since classification of left cells is the author's main motivation for writing this paper, we'll describe here how this paper fits in to that result.
  The proof of the classification of left cells in classical Weyl groups follows in each case the pattern shown in \autoref{fig:classification}.
  This figure shows that, in addition to the left cell equivalence shown in the top box, there are three other equivalence relations on the Weyl group which are shown to coincide with it.
  We'll describe here each of the boxes and each of the arrows.

  The box on the left shows an equivalence relation generated by a family of functions.  (See \myrefD{def:equivalenceRelationF} and \myrefD{def:equivalenceRelationF2}.)
  For type $A_n$ these are the $*$ operations of \citeKL, which we'll call Knuth maps, acting on the left (see \myrefS{sec:KnuthMaps}).
  For type $BC_n$ we add to that family the $B_2$ maps (see \myrefS{sec:B2Maps}).
  For type $D_n$ we add the $D_4$ maps (see \myrefS{sec:D4Maps}).
  The box on the right is a generalized \ti\ equivalence relation (see \myrefS{sec:genTauA} and \myrefS{sec:genTauAB}).
  For a generalized $\tau$-invariant, we need to specify a family of maps.
  These will be the same families as listed above, this time acting on the right.

  The box on the bottom is an equivalence relation related to tableaux.
  That is, in each case we associate to an element of the Weyl group a pair of tableaux.
  For type $A_n$, this association is performed by the well-known Robinson-Schensted algorithm, or RSK.
  For the other classical Weyl groups, the association starts with the domino Robinson-Schensted algorithm (introduced by the author in \cite{garfinkle_1990}).
  After that, though, we need to apply another procedure to both tableaux to bring them to a special shape.
  (See \cite{garfinkle_1990} for both of these procedures.)
  The equivalence relation $y \equivUnder{Tab R} w$ specifies that for the two elements $y, w \in W$, the right-hand tableaux of the two pairs coincide.

  This paper is about the two top arrows of \autoref{fig:classification}.
  The material about type $D_n$ is new, but we will also recall the material for types $A_n$ and $BC_n$.
  The arrow labeled I is relatively straightforward.
  It says that certain maps defined on subsets of $W$ stay within cells.
  For type $D_n$, see \myrefP{prop:D4KLCell}.
  The arrow labeled II is more complicated.
  It requires an edge transport theorem and then some.
  For type $D_n$, see \myrefT{thm:genTau} and \myrefS{sec:D4Maps}.

  We'll outline here the material contained in the bottom two arrows.
  We are working with the various maps as listed above, that is, the Knuth maps, the $B_2$ maps, and the $D_4$ maps.
  The first step is to define such maps on pairs of tableaux.
  The next step is to show that the algorithm to associate a pair of tableaux to a Weyl group element commutes with the maps.
  For type $A_n$ this is well-known.
  For type $BC_n$, see \cite[Theorem~2.1.19 and Theorem~2.3.8]{garfinkle_1992}.
  For type $D_n$, this will be proved in the next paper in this series.
  After that, the two bottom arrows can be proved on pairs of tableaux.
  In the case of type $BC_n$, for implication IV, see
  \cite[Theorem~3.2.2]{garfinkle_1993}.
  For implication III, see
  \cite[Theorem~3.4.17]{garfinkle_1993}.
  For type $A_n$, this is also known. (For example, the $A_n$ versions of the type $BC_n$ proofs, which are much easier, can be used here.) For type $D_n$, this will be proved in the future papers in this series.  See for example \cite{ariki_2000} for an exposition of the overall $A_n$ situation.

  The net result of this is that there are three other equivalence relations which are the same as the left cell equivalence.
  Each are in principle useful.
  The one on the left shows that we can use fewer and easier-to-understand edges in place of the full set required to define the left cell equivalence.
  The one on the right, the generalized $\tau$-invariant, is used in type $A_n$ in the context of primitive ideals to show that we can use a tableau algorithm to compute annihilators of irreducible Harish-Chandra modules.
  (\cite{garfinkle_1993_a}.)
  The one on the bottom gives you a tableau which represents the cell.
  Some properties of the left cell are easier to read from the tableau associated to it.
  One example is the descent set.
  Further, the tableau makes it easier to visualize the generalized $\tau$-invariant.

  \subsection*{Other Applications}
  The generalized \ti\ and the more direct use of the edge transport theorems called the ``technique of strings'' have both been used in the classification of left cells for low-rank affine Weyl groups.
  See \cite{lusztig_1985}, \cite{bedard_1986}, and \cite{du_1988} for example.
  We discuss this some in \myrefS{sec:techniques}.

  \subsection*{Organization of the Paper}
  The paper is organized as follows.
  \myrefS{sec:Preliminaries} recalls and/or proves the results which we'll need about Coxeter groups and Kazhdan-Lusztig polynomials.
  \myrefS{sec:KnuthMaps} recalls the definitions and theorems about Knuth maps which we'll need.
  As part of that, we state the first edge transport theorem, which is the model for the two which follow.
  However, the second edge transport theorem, the one coming from a parabolic subgroup of type $B_2$, is a better model for the edge transport theorem which is the subject of this paper.
  So, we'll present that next, in \myrefS{sec:B2Maps}.

  \myrefS{sec:D4cells} describes the left cells in $D_4$ which are of interest to us.
  \myrefS{sec:main} proves the edge transport theorem coming from a parabolic subgroup of type $D_4$, \myrefT{thm:main}.

  To go from an edge transport theorem to a useful generalized $\tau$-invariant, we need some additional properties of the maps which we are using to define the generalized $\tau$-invariant.
  Since we'll be doing this three times, we'll formalize this with some definitions which we can reuse.
  We'll begin that formalism in \myrefS{sec:edgeTransportFunctions}.
  Along the way, we'll prove \myrefP{prop:edgeTransportKLOrderA}.
  We'll define the first version of the generalized \ti\ in \myrefS{sec:genTauA} and discuss how it is used.

  \myrefS{sec:edgeTransportPairs} continues the formalism of \myrefS{sec:edgeTransportFunctions}, this time to encompass the more complicated functions associated with the $B_2$ and $D_4$ edge transport theorems.
  \myrefS{sec:genTauAB} defines the more complicated generalized $\tau$-invariant which uses these functions, and proves \myrefT{thm:genTau}, which says that, in our circumstances, the generalized \ti\ is a weaker equivalence relation than that of being in the same (left or right) cell.

  \myrefS{sec:D4Maps} defines the $D_4$ maps and shows that they satisfy the conditions defined in \myrefS{sec:edgeTransportPairs}, and thus that the theorems of that section and the next also apply to the $D_4$ maps.  \myrefT{thm:genTauConclusion} summarizes the results on the generalized \ti\ as it applies to the maps which we have been considering.

  \myrefS{sec:otherMaps} introduces in our context some related maps, derived from those described in \myrefS{sec:edgeTransportPairs}, to which  the definitions, and thus the conclusions, of \myrefS{sec:edgeTransportFunctions} apply.
  Finally, \myrefS{sec:techniques} briefly discusses another application of the edge transport theorems previously proved, namely Lusztig's ``technique of strings''.
  We show how this can be extended using the current edge transport theorem.

  Note:  The $D_4$ maps, in the context of primitive ideals, were studied in \cite{garfinkle_vogan_1992}.
  \myrefT{thm:genTau}, in the context of the $B_2$ maps and affine Weyl groups, appears in \cite[Proposition 1.13]{du_1988}.

  \section{Preliminaries about Kazhdan-Lusztig Polynomials and Parabolic Subgroups}
  \label{sec:Preliminaries}

  In this section we'll first recall the facts about Kazhdan-Lusztig polynomials which we'll use in this paper.
  We'll next recall some basic facts about parabolic subgroups.
  Mostly, we need to know that, for a parabolic subgroup of a Coxeter group, every coset has a unique representative of minimal length.
  We'll use that in \myrefP{prop:KLPolysParabolic} to show that the Kazhdan-Lustig polynomial relating two elements in the same coset is the same as that coming from the parabolic subgroup.

  Let $(W, S)$ be a Coxeter system.
  For $y, w \in W$ with $y \leq w$ (Bruhat order), Kazhdan-Lusztig in \citeKL\ defined polynomials, $P_{y, w}(q)$.
  We have $P_{w, w} = 1$ for any $w \in W$.
  For $y < w$, the degree of $P_{y, w}$ is less than or equal to $d(y, w) = (l(w) - l(y) - 1) / 2$.
  If the degree of $P_{y, w}$ is equal to $d(y, w)$, write $y \prec w$.
  If $y, w \in W$ with $y \nleq w$ set $P_{y, w} = 0$.
  Similarly, for $y, w \in W$ if $y \leq w$ let $\mu(y, w)$ be the coefficient of $q^{d(y, w)}$ in $P_{y,w}$, otherwise set $\mu(y, w) = 0$.

  We'll also use the $\mutilde$ notation of \cite{lusztig_1985}, that is, $\mutilde(y, w)$ is defined by:
  \begin{enumerate}
    \item If $y \leq w$ then $\mutilde(y, w) = \mu(y, w)$.
    \item If $w < y$ then $\mutilde(y, w) = \mu(w, y)$.
    \item $\mutilde(y, w) = 0$ otherwise.
  \end{enumerate}

  Using the polynomials $P_{y, w}$, Kazhdan-Lusztig defined a $W$-graph, where there is an edge between $y$ and $w$ whenever $y \neq w$ and $\mutilde(y, w) \neq 0$.
  To define the left and right preorders, they also need the left and right descent sets, or \ti s, of an element $w$ of $W$.

  \begin{definition}
    Let $\tau_L(w) = \{s \in S \mid l(sw) < l(w)\}$.  Let $\tau_R(w) = \{s \in S \mid l(ws) < l(w)\}$.
  \end{definition}

  \begin{definition}
    We say $x \leqL y$ if there is a sequence $w_1,\dots,w_n$ of elements of $W$
    with $w_1 = x$ and $w_n = y$  such that $\mutilde(w_i,w_{i + 1}) > 0$ and $\tau_L(w_i) \not\subset \tau_L(w_{i + 1})$ for $1 \le i \le n-1$.
    The corresponding equivalence relation is denoted $\equivL$.

    We say $x \leqR y$ if there is a sequence $w_1,\dots,w_n$ of elements of $W$
    with $w_1 = x$ and $w_n = y$  such that $\mutilde(w_i,w_{i + 1}) > 0$ and $\tau_R(w_i) \not\subset \tau_R(w_{i + 1})$ for $1 \le i \le n - 1$.
    The corresponding equivalence relation is denoted $\equivR$.
  \end{definition}

  Up through the end of \myrefS{sec:main}, we'll be working on theorems, etc., which have left and right versions.
  To avoid having to subscript (or superscript) everything with $L$ and $R$, we'll work on the left.
  That is, we'll write the left version of everything, without subscript, and leave it to the reader to formulate the right version.
  So, for example, we'll write $\tau$ for $\tau_L$.
  In later sections, when we have to work with both sides at once, we'll put the subscripts back in.

  There are a few propositions from \citeKL\ which we'll be using frequently, so we'll recall them here.

  \begin{proposition}[Equation (2.2.c) of \citeKL]
    \label{prop:KL22c}
    Let $y, w \in W$, $s \in S$, with $sw < w$.  Then
    \begin{equation}
      \label{eq:KL22c}
      P_{y,w} = q^{1-c}P_{sy,sw} + q^cP_{y,sw} - \sum_{\substack{z\\y \le z \prec sw\\sz < z}} \mu(z,sw)q^{\frac {l(w) - l(z)} 2}P_{y,z}
    \end{equation}
    with $c=1$ if $sy < y$, $c=0$ if $sy > y$.
  \end{proposition}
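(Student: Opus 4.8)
The plan is to deduce the identity by working in the Iwahori--Hecke algebra $\mathcal{H}$ of $(W,S)$ and comparing coefficients in the Kazhdan--Lusztig basis. Recall that $\mathcal{H}$ has standard basis $\{T_w\}_{w\in W}$ over $\mathbb{Z}[q^{1/2},q^{-1/2}]$ with $T_sT_x = T_{sx}$ when $sx>x$ and $T_sT_x = qT_{sx}+(q-1)T_x$ when $sx<x$, and that Kazhdan--Lusztig construct a bar-invariant basis $\{C'_w\}$ with $C'_w = q^{-l(w)/2}\sum_{x\le w}P_{x,w}\,T_x$; in particular $C'_s = q^{-1/2}(T_e+T_s)$. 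The essential input, established in \citeKL\ together with the construction of this basis, is the multiplication rule: for $v\in W$ and $s\in S$ with $sv>v$,
\begin{equation*}
  C'_s C'_v \;=\; C'_{sv} \;+\; \sum_{\substack{z<v\\ sz<z}}\mu(z,v)\,C'_z .
\end{equation*}
I will take this formula, and the shape and bar-invariance of the $C'_w$, as known.

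First I would apply the rule with $v=sw$, which is legitimate because $sw<w$ forces $s(sw)=w>sw$, giving
\begin{equation*}
  C'_s C'_{sw} \;=\; C'_w \;+\; \sum_{\substack{z<sw\\ sz<z}}\mu(z,sw)\,C'_z .
\end{equation*}
Next I would expand the left-hand side directly: since $C'_s=q^{-1/2}(T_e+T_s)$ and $C'_{sw}=q^{-l(sw)/2}\sum_{x\le sw}P_{x,sw}T_x$, and $l(w)=l(sw)+1$, the product equals $q^{-l(w)/2}\sum_{x\le sw}P_{x,sw}\bigl(T_x+T_sT_x\bigr)$, and each $T_sT_x$ is rewritten via the quadratic relation according to the sign of $l(sx)-l(x)$. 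The core step is then to extract the coefficient of a fixed $T_y$ (with $y\le w$) on both sides. On the left only $x=y$ and $x=sy$ contribute, and a short case analysis on whether $sy<y$ or $sy>y$ shows that this coefficient is $q^{-l(w)/2}\bigl(q^{c}P_{y,sw}+q^{1-c}P_{sy,sw}\bigr)$ with $c=1$ if $sy<y$ and $c=0$ if $sy>y$ --- this is precisely where the asymmetric powers $q^{c}$ and $q^{1-c}$ in \eqref{eq:KL22c} originate. On the right, the coefficient of $T_y$ in $C'_w$ is $q^{-l(w)/2}P_{y,w}$ and in each $C'_z$ it is $q^{-l(z)/2}P_{y,z}$, where $\mu(z,sw)\neq 0$ forces $z\prec sw$ and $P_{y,z}\neq 0$ forces $y\le z$. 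Equating the two expressions, multiplying through by $q^{l(w)/2}$, and solving for $P_{y,w}$ produces \eqref{eq:KL22c} verbatim.

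I expect the main obstacle to be logical rather than computational: in \citeKL\ the multiplication rule for $\{C'_w\}$ is not proved prior to \eqref{eq:KL22c} but in tandem with it during the inductive construction (on $l(w)$) of the basis, where one must simultaneously check that the right-hand side of \eqref{eq:KL22c} is a polynomial of degree at most $d(y,w)$ and that bar-invariance is preserved. A fully self-contained proof would therefore interleave the coefficient comparison above with that induction. In the present paper, where the basis $\{C'_w\}$ and its multiplication rule may be cited wholesale from \citeKL, the displayed coefficient extraction is all that remains to be carried out.
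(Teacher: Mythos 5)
The paper does not prove this proposition at all; it simply cites it as equation (2.2.c) of Kazhdan--Lusztig \citeKL, so there is no in-paper argument to compare against. Your reconstruction is nevertheless correct, and it is the standard modern derivation: applying the multiplication rule $C'_sC'_{sw}=C'_w+\sum_{z<sw,\ sz<z}\mu(z,sw)C'_z$ (legitimate since $sw<w$ gives $s(sw)>sw$), expanding the left side via $C'_s=q^{-1/2}(T_e+T_s)$ and the quadratic relation for $T_sT_x$, and extracting the coefficient of $T_y$. Your case split on the sign of $l(sy)-l(y)$ is where the asymmetry $q^c$ versus $q^{1-c}$ comes from, and tracking it through gives exactly the displayed recursion. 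I checked the coefficient extraction: in the case $sy<y$ the $T_y$-coefficient on the left, after clearing $q^{-l(w)/2}$, is $P_{y,sw}+(q-1)P_{y,sw}+P_{sy,sw}=qP_{y,sw}+P_{sy,sw}$, matching $q^cP_{y,sw}+q^{1-c}P_{sy,sw}$ with $c=1$; in the case $sy>y$ it is $P_{y,sw}+qP_{sy,sw}$, matching $c=0$. Equating with the $T_y$-coefficient on the right and solving for $P_{y,w}$ yields (2.2.c) verbatim.

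Your caveat about the logical order in \citeKL\ is well taken and correctly stated: there, (2.2.c) is established \emph{during} the recursive construction of the $C'_w$ (it is essentially the definition of $P_{y,w}$ together with the verification that the degree bound and bar-invariance propagate), not as a corollary of a previously established multiplication rule. If the goal were a self-contained proof from the axioms of $\mathcal H$ alone, one would indeed have to interleave your coefficient computation with the induction on $l(w)$, proving at each stage that the candidate $P_{y,w}$ has degree $\le d(y,w)$ and that $C'_w$ is bar-invariant. But since the present paper treats the Kazhdan--Lusztig basis and its multiplication formula as known (quoting \citeKL\ wholesale), your derivation is an adequate and complete justification in context. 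No gap.
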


  \begin{remark}
    In proofs involving \autoref{eq:KL22c}, I'll refer the last part of the equation, the part with the summation sign, as the sum portion of the equation.
  \end{remark}

  \begin{remark}
    With our conventions, in the sum portion of  \autoref{eq:KL22c}, we can omit the requirement that $y \leq z$, since if $y \nleq z$ then $P_{y, w} = 0$.
    We can omit the requirement that $z \prec sw$, since if $z \nprec sw$ then $\mu(z, sw) = 0$.
    We can also replace the requirement $z \prec sw$ with $z \leq sw$.
    In what follows, we will use whichever version of the formula is most convenient.
  \end{remark}

  \begin{proposition}[(2.3.g) and (2.3.e) of \citeKL]
    \label{prop:KL23g}
    Suppose $x, x' \in W$, $s \in S$, and suppose $sx > x$, $sx' < x'$.
    Then $P_{x, x'} = P_{sx, x'}$.
    In particular, if $x' \neq sx$ then $\mu(x, x') = 0$.
  \end{proposition}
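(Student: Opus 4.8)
The plan is to prove $P_{x,x'}=P_{sx,x'}$ by induction on $l(x')$, applying the recursion of \myrefP{prop:KL22c} with $w=x'$ twice — once with $y=x$ and once with $y=sx$ — and then to read off the statement about $\mu(x,x')$ from the degree bound for Kazhdan--Lusztig polynomials.

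First I would dispose of the base case $l(x')=0$, which is vacuous because $sx'<x'$ cannot hold. In the inductive step, since $sx'<x'$ I can invoke \autoref{eq:KL22c} with $w=x'$. With $y=x$ the hypothesis $sx>x$ forces $c=0$, so
\[
  P_{x,x'}=q\,P_{sx,sx'}+P_{x,sx'}-\sum_{\substack{z\\ x\le z\prec sx'\\ sz<z}}\mu(z,sx')\,q^{\frac{l(x')-l(z)}{2}}P_{x,z},
\]
whereas with $y=sx$ the relation $s\cdot sx=x<sx$ forces $c=1$, so
\[
  P_{sx,x'}=P_{x,sx'}+q\,P_{sx,sx'}-\sum_{\substack{z\\ sx\le z\prec sx'\\ sz<z}}\mu(z,sx')\,q^{\frac{l(x')-l(z)}{2}}P_{sx,z}.
\]
Subtracting, the two ``non-sum'' terms $q\,P_{sx,sx'}$ and $P_{x,sx'}$ cancel, so everything comes down to showing the two sums are equal.

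To see that, I would argue that the two ranges of summation coincide: by the lifting property of the Bruhat order, whenever $sz<z$ and $x\le z$ one automatically has $sx\le z$ (and conversely $sx\le z$ gives $x\le z$ since $x<sx$), hence $\{z:x\le z\prec sx',\,sz<z\}=\{z:sx\le z\prec sx',\,sz<z\}$. For any $z$ in this common set, $z\prec sx'$ gives $l(z)\le l(sx')-1=l(x')-2<l(x')$, and $z$ plays the role of $x'$ in an instance of the proposition with the same $s$ and the same hypotheses $sx>x$, $sz<z$; so the inductive hypothesis yields $P_{x,z}=P_{sx,z}$. The summands therefore match one by one, giving $P_{x,x'}=P_{sx,x'}$.

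For the last sentence I would assume $x'\ne sx$. If $sx\not\le x'$ then $P_{sx,x'}=0=P_{x,x'}$ and $\mu(x,x')=0$ trivially. If $sx\le x'$ then in fact $sx<x'$, so the degree bound gives $\deg P_{sx,x'}\le\frac{l(x')-l(sx)-1}{2}=\frac{l(x')-l(x)-2}{2}<\frac{l(x')-l(x)-1}{2}=d(x,x')$, using $l(sx)=l(x)+1$; since $P_{x,x'}=P_{sx,x'}$, the coefficient $\mu(x,x')$ of $q^{d(x,x')}$ in $P_{x,x'}$ must vanish (regardless of the parity of $l(x')-l(x)$). The step I expect to be the main obstacle is the matching of the two sums: one has to get the lifting-property input precisely right so that the index sets are visibly identical, and one has to check that the inductive hypothesis genuinely applies to the pairs $(x,z)$ occurring inside — same reflection $s$, strictly smaller length. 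Beyond that, the proof is a direct substitution into \autoref{eq:KL22c}.
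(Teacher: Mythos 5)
The paper does not prove this proposition; it simply quotes it from Kazhdan--Lusztig (2.3.e) and (2.3.g). Your derivation from the recursion of \myrefP{prop:KL22c} is correct and is essentially the standard argument: the two applications of the recursion (with $y=x$ giving $c=0$, with $y=sx$ giving $c=1$) do cancel in the non-sum terms, the index sets agree by the lifting property (and because $x=z$ is impossible under $sx>x$, $sz<z$), and the inductive hypothesis applies to each $z$ since $l(z)<l(sx')<l(x')$. The deduction of $\mu(x,x')=0$ from the degree drop $\deg P_{sx,x'}\le d(x,x')-1$ when $sx<x'$, together with the trivial cases $sx\nleq x'$ and the parity remark, is also sound.
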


  \begin{proposition}[Proposition 2.4 of \citeKL]
    \label{prop:tauCell}
    Let $x, y \in W$.
    If $x \leqR y$ then $\tau_L(y) \subseteq \tau_L(x)$.
    If $x \equivR y$ then $\tau_L(x) = \tau_(y)$.
    Similarly, with left and right interchanged.
  \end{proposition}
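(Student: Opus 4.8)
The plan is to reduce everything to a single $W$-graph edge and then combine \myrefP{prop:KL23g} with a one-line length count. First I would reduce to the single-edge statement: suppose $x \leqR y$, witnessed by a sequence $w_1 = x, \dots, w_n = y$ with $\mutilde(w_i, w_{i+1}) > 0$ and $\tau_R(w_i) \not\subseteq \tau_R(w_{i+1})$ for each $i$; it then suffices to prove that whenever $\mutilde(w, w') > 0$ and $\tau_R(w) \not\subseteq \tau_R(w')$ one has $\tau_L(w') \subseteq \tau_L(w)$, since applying this to each consecutive pair gives $\tau_L(y) = \tau_L(w_n) \subseteq \cdots \subseteq \tau_L(w_1) = \tau_L(x)$. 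The equivalence statement is then immediate, $x \equivR y$ giving both $x \leqR y$ and $y \leqR x$ and hence $\tau_L(x) = \tau_L(y)$; the version with $L$ and $R$ interchanged is obtained by running the same argument with left and right swapped throughout (using the right-handed form of \myrefP{prop:KL23g}, valid by the symmetry $P_{u,v} = P_{u^{-1},v^{-1}}$).

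Next I would prove the single-edge statement. Fix $t \in \tau_R(w) \setminus \tau_R(w')$, take $s \in \tau_L(w')$, and assume toward a contradiction that $s \notin \tau_L(w)$, so $sw > w$ and $sw' < w'$. Since $\mutilde(w,w') \neq 0$, the elements $w$ and $w'$ are distinct and Bruhat-comparable, and I would split on which is larger. If $w < w'$, then $\mu(w,w') \neq 0$, so $sw > w$ and $sw' < w'$ force $w' = sw$ by \myrefP{prop:KL23g}, whence $l(w') = l(w)+1$; now $l(wt) = l(w) - 1$ while $l(w't) = l(w')+1 = l(w)+2$, which is absurd because $w't = s(wt)$ forces $l(w't) \le l(wt)+1 = l(w)$. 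If instead $w' < w$, then $\mu(w',w) \neq 0$, and applying the right-handed form of \myrefP{prop:KL23g} to $w't > w'$ and $wt < w$ forces $w = w't$, so $l(w) = l(w')+1$; then $l(sw') = l(w')-1$ while $l(sw) = l(w)+1 = l(w')+2$, again absurd since $sw = (sw')t$ forces $l(sw) \le l(sw')+1 = l(w')$. Either way $s \in \tau_L(w)$, as required.

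The hard part is not any single calculation but seeing how the right-descent hypothesis can possibly control left descents: \myrefP{prop:KL23g} by itself does not finish the argument, since it leaves open the exceptional configuration in which the edge joins $w$ to $sw$ (or $w$ to $w't$), and it is precisely there that the extra generator $t$ makes the length count collapse. So the real content is recognizing that one must play \myrefP{prop:KL23g} off against the $\tau_R$-hypothesis in this exceptional case, and noticing that the two Bruhat-order cases are mirror images of each other (one invoking the left form of \myrefP{prop:KL23g}, the other the right form). The remainder — the reduction to a single edge and the two short length checks — is routine.
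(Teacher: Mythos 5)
Your proof is correct and is essentially the proof in Kazhdan-Lusztig's original paper (the present paper simply cites Proposition~2.4 of \citeKL\ without reproving it): reduce to a single edge, and in each Bruhat-order case use \myrefP{prop:KL23g} to force the edge to be a simple multiplication, then derive a length contradiction from the other side's descent hypothesis. The only cosmetic difference is that Kazhdan-Lusztig work with the $\leq_L$/$\tau_R$ version while you work with $\leqR$/$\tau_L$, which is the same argument under the $w \mapsto w^{-1}$ symmetry you already invoke.
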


  We'll also make use of this result from \cite{elias_williamson_2014}:
  \begin{theorem}[Corollary 1.2 of \cite{elias_williamson_2014}]
    \label{thm:nonNegative}
    For $y, w \in W$, the coefficients of $P_{y, w}$, in particular $\mu(y, w)$, are non-negative.
  \end{theorem}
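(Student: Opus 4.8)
The plan is not to give an independent argument: the statement is exactly \cite[Corollary 1.2]{elias_williamson_2014}, so in the paper it is invoked as a black box. What follows is only an orientation for the reader, recalling the shape of the argument that underlies it, since this is precisely the input that makes it possible to reason about cells without the Kazhdan--Lusztig conjectures.

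The starting point is Soergel's reformulation of the polynomials $P_{y,w}$. One replaces the $W$-graph combinatorics by the category of Soergel bimodules over the coinvariant ring (or its diagrammatic incarnation): to each $w \in W$ there is an indecomposable object $B_w$, and Soergel's conjecture asserts that its graded rank in the standard basis of the Hecke algebra is $\sum_{y \le w} P_{y,w}(q)\,H_y$. Granting this, $P_{y,w}(q)$ is identified with the graded multiplicity of (a shift of) $B_y$ inside $B_w$, equivalently with a Poincar\'e-type polynomial of a genuine graded vector space; its coefficients, and in particular the leading coefficient $\mu(y,w)$, are then dimensions, hence non-negative. In the Weyl-group case this graded vector space is the local intersection cohomology of a Schubert variety, and one could alternatively cite the geometric proof via intersection cohomology; but the reason for using \cite{elias_williamson_2014} here is that it is valid for all Coxeter systems and does not rely on the Kazhdan--Lusztig conjectures, in keeping with the philosophy of this paper.

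The real content — and the step I would expect to be the main obstacle if one were to reconstruct it — is Soergel's conjecture itself, which was a long-standing open problem. Elias and Williamson prove it by importing the Hodge theory of the flag variety into the purely algebraic setting: they equip the relevant bimodules with a Lefschetz operator and establish, by a simultaneous induction on length, an algebraic hard Lefschetz theorem together with the Hodge--Riemann bilinear relations, propagating these statements along multiplication by the generators $B_s$ using the Rouquier-complex and Soergel-diagram calculus. The non-degeneracy forced by the Hodge--Riemann relations pins the indecomposable bimodules to the predicted size, which is Soergel's conjecture and hence the non-negativity asserted in \myrefT{thm:nonNegative}. None of this is reproved here; the theorem is used only to guarantee that $\mu(y,w) \ge 0$, so that conditions such as $\mu(y,w) > 0$ behave as expected in the cell-theoretic and edge-transport arguments of the later sections.
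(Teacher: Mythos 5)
Your proposal matches the paper exactly: the theorem is simply cited from \cite{elias_williamson_2014} with no proof given, and you correctly treat it as a black box. The additional background on Soergel's conjecture and the Hodge-theoretic argument is accurate orientation but goes beyond what the paper does, which is fine since you flagged it as such.
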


  Next, we'll need some facts about the Bruhat order and about parabolic subgroups. See for example \cite{bjorner_brenti_2005}.
  \myrefP{prop:subword} and \myrefP{prop:parabolic} can be found there.

  \begin{proposition}
    \label{prop:subword}
    Let $y,w \in W$.  The following are equivalent:
    \begin{enumerate}
      \item $y \leq w$.
      \item Every reduced expression for $w$ has a subword which is a reduced expression for $y$.
      \item Some reduced expression for $w$ has a subword which is a reduced expression for $y$.
    \end{enumerate}
  \end{proposition}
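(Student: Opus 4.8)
The plan is to prove the cyclic implications $(1)\Rightarrow(2)\Rightarrow(3)\Rightarrow(1)$. Of these, $(2)\Rightarrow(3)$ is immediate, since $w$ has at least one reduced expression. The two substantive implications are both short inductions on $\ell(w)$, and the only nontrivial ingredient is the \emph{lifting property} of the Bruhat order: if $v\le w$, $s\in S$, $\ell(sv)>\ell(v)$ and $\ell(sw)<\ell(w)$, then $v\le sw$ and $sv\le w$. I would take this as known from \cite{bjorner_brenti_2005} (or, if one wants to be self-contained, prove it first by induction on $\ell(w)$ using the strong exchange property); granted it, the rest is bookkeeping.

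For $(1)\Rightarrow(2)$, I would induct on $\ell(w)$, the case $\ell(w)=0$ being trivial. Fix an arbitrary reduced expression $w=s_1\cdots s_q$ and set $s=s_1$, so that $sw<w$ and $s_2\cdots s_q$ is a reduced expression for $sw$. If $sy>y$, the lifting property gives $y\le sw$, so by the induction hypothesis some subword of $s_2\cdots s_q$ is a reduced expression for $y$, and it is a fortiori a subword of $s_1\cdots s_q$. If $sy<y$, then $sy<y\le w$, so $sy\le w$; since $s(sy)>sy$ and $sw<w$, the lifting property gives $sy\le sw$, and the induction hypothesis yields a reduced subword of $s_2\cdots s_q$ spelling $sy$. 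Prepending $s_1$ then produces a subword of $s_1\cdots s_q$ spelling $y$, which is reduced because $\ell(y)=\ell(sy)+1$. As the chosen reduced expression of $w$ was arbitrary, this gives $(2)$.

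For $(3)\Rightarrow(1)$, I would again induct on $\ell(w)$, the base case being trivial. Fix a reduced expression $w=s_1\cdots s_q$ having a reduced subword $s_{i_1}\cdots s_{i_k}$ equal to $y$, and set $s=s_1$. If $i_1\ge 2$, the subword lies inside $s_2\cdots s_q$, a reduced expression for $sw$, so the induction hypothesis gives $y\le sw$, hence $y\le w$. If $i_1=1$, put $y'=s_{i_2}\cdots s_{i_k}$; this word is reduced (its length must be $\ell(y)-1$, else $sy'$ would be too short to equal $y$) and is a subword of $s_2\cdots s_q$, so the induction hypothesis gives $y'\le sw$, hence $y'\le w$. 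Then $sy'=y$ with $\ell(sy')=\ell(y')+1$, and $sw<w$, so the lifting property gives $y=sy'\le w$.

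The only step where the combinatorics of Coxeter groups genuinely enters is the lifting property invoked in the inductive steps of both implications; everything else is formal manipulation of reduced words. So the real ``main obstacle'' is whatever one is prepared to assume about Coxeter groups — and since this proposition is entirely standard, simply citing \cite{bjorner_brenti_2005}, as the surrounding text already does, is an equally legitimate resolution.
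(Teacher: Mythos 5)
Your proof is correct. The paper, however, does not give a proof of this proposition at all: the surrounding text says explicitly that \myrefP{prop:subword} and \myrefP{prop:parabolic} ``can be found there'' in \cite{bjorner_brenti_2005}, and leaves it at that. (In that reference, statement~(3) is essentially the definition of Bruhat order, and statement~(2) is the subword property, Theorem~2.2.2.) Your argument---inducting on $\ell(w)$ and invoking the lifting property in the two nontrivial implications $(1)\Rightarrow(2)$ and $(3)\Rightarrow(1)$---is the standard proof and is carried out correctly, including the small check that the truncated word $s_{i_2}\cdots s_{i_k}$ is reduced. So you have filled in an argument the paper chose to leave to the reference; as you yourself note, simply citing is what the paper actually does, and both resolutions are legitimate.
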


  \begin{definition}
    Let $J \subseteq S$.
    \begin{enumerate}
      \item Let $W_J$ be the subgroup of $W$ generated by the set $J$.
      \item Let $W^J = \{w \in W \mid sw > w\ \text{for all}\ s \in J$\}.
    \end{enumerate}
  \end{definition}

  \begin{proposition}
    \label{prop:parabolic}
    Let $J \subseteq S$.  We have the following:
    \begin{enumerate}
      \item $(W_J,J)$ is a Coxeter system.
      \item For all $w \in W_J$, we have $l_J(w) = l(w)$, where $l_J(w)$ is the length of $w$ in the Coxeter system $(W_J,J)$.
      \item Every $w \in W$ has a unique factorization $w = w_Jw^J$ such that $w_J \in W_J$ and $w^J \in W^J$.
      \item For this factorization, $l(w) = l(w_J) + l(w^J) = l_J(w_J) + l(w^J)$.
      \item Each right coset $W_Jw$ has a unique representative of minimal length.  The system of such minimal coset representatives is $W^J$.
    \end{enumerate}
  \end{proposition}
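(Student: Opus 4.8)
The plan is to prove the five assertions in the order (1)--(2), (5), (3)--(4), with the Exchange Condition of $(W,S)$ as the workhorse; all of this is classical (see \cite{bjorner_brenti_2005}), so I will only sketch.

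Everything rests on the preliminary claim that \emph{if $w \in W_J$ and $s_1\cdots s_k$ is a reduced expression for $w$ in $(W,S)$, then $s_i \in J$ for all $i$.} I would prove this geometrically: writing $\Phi = \Phi^+ \sqcup \Phi^-$ for the root system of $(W,S)$, with simple roots $\alpha_s$, and $\Phi_J^+$ for the positive roots lying in the span of $\{\alpha_s : s \in J\}$, each $s \in J$ sends $\Phi^+ \setminus \Phi_J^+$ into itself (if $\beta \in \Phi^+\setminus\Phi_J^+$ then $\beta \ne \alpha_s$, so $s\beta \in \Phi^+$, while $s\beta \in \Phi_J$ would force $\beta = s(s\beta) \in \Phi_J$), hence so does every $w \in W_J$; therefore $\{\beta\in\Phi^+ : w\beta\in\Phi^-\} \subseteq \Phi_J^+$ for $w \in W_J$, and since $\alpha_{s_k}$ always lies in this inversion set for a reduced word $s_1\cdots s_k$, we get $s_k \in J$, after which $ws_k \in W_J$ has reduced expression $s_1\cdots s_{k-1}$ and induction on $k$ finishes. (A purely combinatorial argument via iterated deletion also works.) Given the claim, (2) is immediate --- a reduced $S$-expression for $w\in W_J$ is a $J$-word of length $l(w)$, so $l_J(w)\le l(w)$, and the reverse is trivial --- and then reduced expressions in $(W_J,J)$ and in $(W,S)$ coincide for elements of $W_J$, so the Exchange Condition of $(W,S)$ restricts verbatim to $(W_J,J)$; by the standard characterization of Coxeter systems via the Exchange Condition this gives (1).

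For (5) the key lemma is: \emph{for $v\in W_J$ and $u\in W^J$, $l(vu)=l(v)+l(u)$.} I would induct on $l(v)=l_J(v)$ (using (2)): if $v\ne e$, write $v=s_1v'$ with $s_1\in J$ and $l(v')=l(v)-1$; by induction $l(v'u)=l(v')+l(u)$, so $l(vu)=l(s_1v'u)$ is either $l(v)+l(u)$ or $l(v)-2+l(u)$. In the latter case $s_1v'u<v'u$, so by the Exchange Condition one letter of the reduced word (reduced $J$-word for $v'$)$\,\cdot\,$(reduced word for $u$) may be deleted to yield $v'u$ after prepending $s_1$; deleting a letter from the $v'$-part produces a too-short expression for $v$, while deleting one from the $u$-part produces (after cancelling $v'$ on the left) a too-short expression for $u$ --- both impossible. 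From the lemma, $l(vu)\ge l(u)$ with equality iff $v=e$, so the minimal-length element of a coset $W_Jw$ (which exists since lengths are non-negative integers) is unique; it lies in $W^J$, because if $su<u$ for some $s\in J$ then $su\in W_Jw$ is shorter; and conversely each $u\in W^J$ is, by the lemma, the minimal-length element of $W_Ju$. Hence $W^J$ is exactly the set of minimal-length coset representatives.

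Finally, for (3)--(4): given $w\in W$, set $w^J$ equal to the minimal-length representative of $W_Jw$ and $w_J=w(w^J)^{-1}\in W_J$, so $w=w_Jw^J$ with $w^J\in W^J$ by (5). If also $w=v'u'$ with $v'\in W_J$, $u'\in W^J$, then $u'\in W_Ju'=W_Jw$ and $u'$ is minimal-length there, so $u'=w^J$ and hence $v'=w_J$; this is the uniqueness in (3). The length identity $l(w)=l(w_J)+l(w^J)=l_J(w_J)+l(w^J)$ is then the additivity lemma together with (2). The main obstacle is the preliminary claim behind (1)--(2): ruling out reduced words for elements of $W_J$ that stray outside $J$ --- equivalently, that $W_J$ carries no relations beyond the restricted Coxeter relations --- after which the remaining parts are bookkeeping with the Exchange Condition.
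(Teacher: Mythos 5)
The paper does not give its own proof of this proposition; it cites \cite{bjorner_brenti_2005}, so there is no internal argument to compare against, only the standard textbook development, which your sketch largely follows. The geometric proof of the preliminary claim, the deduction of (2) and (1), and the derivation of (3)--(5) from the additivity lemma are all fine. The gap is in the Exchange-Condition argument for the additivity lemma $l(vu)=l(v)+l(u)$ in the case where the deleted letter lies in the $u$-part. There one gets $v'u = s_1 v'\tilde u$, i.e.\ $\tilde u = (v')^{-1}s_1 v'\,u$; but $(v')^{-1}s_1 v'$ is merely a reflection in $W_J$, not an element of $J$, so ``cancelling $v'$ on the left'' does not produce a shorter expression of the form $s u$ with $s\in J$, and the hypothesis $u\in W^J$ (which only controls $l(su)$ for $s\in J$) gives no direct contradiction. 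Deriving one from $\tilde u$ being shorter and in the coset $W_J u$ is exactly what you are in the middle of proving, so as written the step is circular.

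Two standard repairs, either of which closes the gap cleanly. (i) Re-use the root-theoretic setup you already introduced for the preliminary claim: since $l(s_1v')>l(v')$ we have $(v')^{-1}\alpha_{s_1}\in\Phi_J^+$, while $u\in W^J$ forces $u^{-1}\alpha_s\in\Phi^+$ for all $s\in J$ and hence $u^{-1}(\Phi_J^+)\subset\Phi^+$, contradicting $(v'u)^{-1}\alpha_{s_1}=u^{-1}\bigl((v')^{-1}\alpha_{s_1}\bigr)\in\Phi^-$ which is equivalent to the assumed $l(s_1v'u)<l(v'u)$. (ii) Reorder the development: prove additivity first for $u$ a minimal-length element of its coset (then the deletion-from-$u$-part case yields $\tilde u = r u$ with $r\in W_J\setminus\{e\}$ and $l(\tilde u)<l(u)$, contradicting minimality directly), deduce uniqueness of minimal representatives and that they lie in $W^J$, and only then show conversely that every $u\in W^J$ is minimal. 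With either fix the remainder of your argument goes through as stated.
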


  \begin{remark}
    By \myrefP{prop:parabolic}--2, for $w_J \in W_J$, its length is the same whether computed in $W_J$ or in $W$.
    So, we can use $l(w_J)$ to refer to this common value.
  \end{remark}

  \begin{definition}
    With $J$, $W^J$, and $w = w_Jw^J$ as in \myrefP{prop:parabolic}, define $p_J:W \longrightarrow W_J$ by $p_J(w) = w_J$.
    For $a \in W^J$, define $i_a^J:W_J \longrightarrow W$ by $i_a^J(w_J) = w_Ja$ for $w_J \in W_J$.
  \end{definition}

  We'll need this easy consequence of the above:

  \begin{proposition}
    \label{prop:parabolic2}
    Let $J \subseteq S$.  Let $w \in W$ and write $w = w_Jw^J$ with $w_J \in W_J$ and $w^J \in W^J$.
    \begin{enumerate}
      \item Let $y_J \in W_J$. We have $p_J(y_Jw) = y_Jw_J$ and $(y_Jw)^J = w^J$.
      \item For $s \in J$, we have $s \in \tau(w)$ if and only if $s \in \tau(p_J(w))$.
    \end{enumerate}
  \end{proposition}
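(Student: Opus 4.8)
The plan is to deduce both parts directly from the uniqueness of the parabolic factorization in \myrefP{prop:parabolic}--3 and the length-additivity formula in \myrefP{prop:parabolic}--4, so this should be a short argument.

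For part 1, I would write $w = w_Jw^J$ as in the statement and observe that $y_Jw = (y_Jw_J)w^J$. Since $W_J$ is a subgroup, $y_Jw_J \in W_J$, and by hypothesis $w^J \in W^J$; hence $(y_Jw_J)w^J$ is a factorization of $y_Jw$ of the type appearing in \myrefP{prop:parabolic}--3. By the uniqueness asserted there, $p_J(y_Jw) = y_Jw_J$ and $(y_Jw)^J = w^J$.

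For part 2, fix $s \in J$. Applying part 1 with $y_J = s$ shows that the parabolic factorization of $sw$ is $(sw_J)w^J$, so \myrefP{prop:parabolic}--4 gives
\[
l(sw) = l(sw_J) + l(w^J)
\qquad\text{and}\qquad
l(w) = l(w_J) + l(w^J).
\]
Subtracting these, $l(sw) < l(w)$ holds if and only if $l(sw_J) < l(w_J)$; the former condition says $s \in \tau(w)$ and the latter says $s \in \tau(w_J) = \tau(p_J(w))$, which is the claim. (If one wishes to read $\tau(w_J)$ inside the Coxeter system $(W_J,J)$ rather than inside $W$, then \myrefP{prop:parabolic}--2 shows it makes no difference.)

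I do not expect any real obstacle here; the only point requiring attention is that the hypothesis $s \in J$ is precisely what guarantees $sw_J \in W_J$, so that left-multiplying the $W_J$-part of $w$ by $s$ again yields a valid parabolic factorization—were $s$ not in $J$ this step would fail. The right-hand versions of both statements follow from the symmetric argument with left cosets, in accordance with our standing convention.
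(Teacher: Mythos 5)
Your argument is essentially identical to the paper's: part 1 follows from uniqueness of the parabolic factorization (\myrefP{prop:parabolic}--3), and part 2 follows by comparing the length-additivity formulas $l(w) = l(w_J) + l(w^J)$ and $l(sw) = l(sw_J) + l(w^J)$. The proof is correct and takes the same route.
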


  \begin{proof}
    Statement 1 is clear from statement 3 of  \myrefP{prop:parabolic}.
    From that and statements 2 and 4 of \myrefP{prop:parabolic}, we have $l(w) = l_J(w_J) + l(w^J)$ and $l(sw) = l_J(sw_J) + l(w^J)$.
    Statement 2 follows easily from that.
  \end{proof}

  We'll also need this later.

  \begin{proposition}
    \label{prop:tauInterval}
    Let $J \subseteq S$.
    Suppose $x \leqR w \leqR y$, and suppose $\tauL(x) \cap J = \tauL(y) \cap J$.
    Then $\tauL(w) \cap J = \tauL(x) \cap J$.
  \end{proposition}

  \begin{proof}
    This follows easily from \myrefP{prop:tauCell}.
  \end{proof}

  The last part of this section is a proposition relating parabolic subgroups and Kazhdan-Lusztig polynomials, \myrefP{prop:KLPolysParabolic}, which we'll need for what follows.
  We'll write $S(s,y,x) = \{z \in W \mid sz < z \text{ and } y \le z \le x\}$.

  \begin{proposition}
    \label{prop:intermediate}
    Let $J \subseteq S$.  Let $y, w \in W$ with $y \le w$, and
    suppose $y$ and $w$ are in the same right coset of $W_J$.
    Let $a^J$ be the minimal length representative of the coset,
    and write $w = w_Ja^J$  and
    $y = y_Ja^J$ with $y_J, w_J \in W_J$.
    Let $z \in W$ with $y \le z \le w$.
    Then we have $z \in W_Ja^J$, and, writing $z = z_Ja^J$ with $z_J \in W_J$, we have $y_J \le z_J \le w_J$.

    In particular (setting $z = y$, say) we have $y_J \le w_J$.
  \end{proposition}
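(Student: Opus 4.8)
The plan is to extract the entire statement from \myrefP{prop:subword} (the subword criterion for Bruhat order) together with the uniqueness of the $W_J\cdot W^J$ factorization in \myrefP{prop:parabolic}. The basic move I will use repeatedly is the following. Given $u\le v$, choose reduced words for $v_J$ and $v^J$ and concatenate them; since $l(v)=l(v_J)+l(v^J)$ this is a reduced word for $v$ whose prefix spells $v_J$ and whose suffix spells $v^J$. By \myrefP{prop:subword}, $u$ has a reduced word that is a subword of it; cutting this subword at the junction between the two blocks writes $u=pq$, where $p$ is spelled by the part inside the $v_J$-block (a prefix of the subword, hence a reduced word in the alphabet $J$, so $p\in W_J$) and $q$ by the part inside the $v^J$-block (a suffix, hence reduced, and a subword of the chosen reduced word for $v^J$, so $q\le v^J$), with $l(u)=l(p)+l(q)$. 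Since $p\in W_J$ we have $W_Ju=W_Jq$, so $u^J=q^J$ (each is the unique element of $W^J$ in that coset, by \myrefP{prop:parabolic}); and $q^J\le q$ because a reduced word for $q^J$ is a suffix of a reduced word for $q$. Hence $u^J=q^J\le q\le v^J$; in particular $u\mapsto u^J$ is order preserving for the Bruhat order.

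Applying this to $y\le z\le w$ and using the hypothesis $y^J=w^J=a^J$ gives $a^J=y^J\le z^J\le w^J=a^J$, so $z^J=a^J$; thus the canonical factorization of $z$ is $z=z_Ja^J$, which already proves $z\in W_Ja^J$ and $l(z)=l(z_J)+l(a^J)$. For the upper bound on $z_J$, run the basic move on $z\le w$: it produces $z=pq$ with $p\in W_J$, $p\le w_J$ in $W_J$, and $q\le a^J$. Now feed in what we have just learned: $q^J=z^J=a^J$, so $q=q_Ja^J$ with $l(q)=l(q_J)+l(a^J)$, while $q\le a^J$ forces $l(q)\le l(a^J)$; hence $q_J=e$, i.e. $q=a^J$. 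Then $z=pa^J$ with $p\in W_J$, and uniqueness of the factorization in \myrefP{prop:parabolic} identifies $z_J=p$, so $z_J\le w_J$.

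The lower bound on $z_J$ is the same argument one step down: apply the basic move to $y\le z$ using the factorization $z=z_Ja^J$, obtaining $y=p'q'$ with $p'\in W_J$, $p'\le z_J$ in $W_J$, and $q'\le a^J$; since $y^J=a^J$ the identical length count forces $q'=a^J$, hence $y_J=p'\le z_J$. Taking $z=y$ then yields the ``in particular'' clause $y_J\le w_J$. I expect the work to be bookkeeping rather than a genuine obstacle: the points to watch are that prefixes and suffixes of reduced words are reduced (so lengths add across the cut), that $W^J$ is exactly the set of minimal-length right-coset representatives, and that the order of operations matters — one must first pin down $z^J=a^J$ using \emph{both} given inequalities, and only afterward reuse it to identify the $W_J$-components. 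If a shorter route is preferred, the order-preservation of $u\mapsto u^J$ can simply be quoted from \cite{bjorner_brenti_2005}, and the rest proceeds as above.
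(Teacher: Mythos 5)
Your proposal is correct and rests on the same core device the paper uses: concatenate reduced words for $w_J$ and $a^J$ to get a reduced word for $w$, then extract subwords via the subword criterion (\myrefP{prop:subword}), splitting at the junction between the $J$-block and the $a^J$-block. The difference is organizational rather than mathematical. You first isolate the order-preservation of the projection $u\mapsto u^J$ as a standalone lemma and then rerun the concatenate-and-split argument separately on $z\le w$ and on $y\le z$ to nail down the $W_J$-components, whereas the paper does a single pass — subword of $w$ gives $z$, subword of that gives $y$ — and notes in one stroke that dropping any letter of the $a^J$-block at either stage would produce an element of $W_Jy=W_Ja^J$ of length less than $l(a^J)$, contradicting minimality (\myrefP{prop:parabolic}). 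The paper's version is more economical; yours is more modular and, as you observe, allows the order-preservation step to be replaced by a citation to \cite{bjorner_brenti_2005}, but there is no genuine divergence in method.
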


  \begin{proof}
    Let $s_1\dots s_j$ with $s_i \in S$ be a reduced expression for $a^J$ and let $t_1\dots t_k$ with $t_i \in J$ be a reduced expression for $w_J$.
    Since $l(w) = l(w_J) + l(a^J)$, we have that $t_1\dots t_ks_1\dots s_j$ is a reduced expression for $w$.
    By \myrefP{prop:subword}, we can obtain a reduced expression for $z$ by removing some of the $s_i$ and $t_i$ elements from this reduced expression for $w$, and then we can obtain a reduced expression for $y$ from that reduced expression for $z$ by removing more of the $s_i$ and $t_i$ elements.
    If any of the $s_i$ elements are removed at either step, then the product of the remaining $s_i$ elements will form an element of shorter length than $a^J$ in the coset $W_Jy$, contradicting \myrefP{prop:parabolic}.
    So, only $t_i$ elements are removed at each stage, which gives the desired conclusion.
  \end{proof}

  \begin{corollary}
    \label{cor:iaJ}
    Let $J \subseteq S$.  Let $a \in W^J$.
    For $s \in J$ and $x,y \in W_J$, we have $i_a^J(S(s,y,x)) = S(s,i_a^J(y),i_a^J(x))$.
  \end{corollary}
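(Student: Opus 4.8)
The plan is to unwind both sides into explicit subsets of $W$ and then verify the two inclusions, with the reverse inclusion carrying the real content. Write $a$ for the given element $a \in W^J$, so that $i_a^J$ is right multiplication by $a$. By definition,
$i_a^J(S(s,y,x)) = \{\, z a : z \in W_J,\ sz < z,\ y \le z \le x \,\}$,
while $S(s, i_a^J(y), i_a^J(x)) = \{\, w \in W : sw < w,\ ya \le w \le xa \,\}$. (Note that $S(s,y,x) \subseteq W_J$ already, by \myrefP{prop:intermediate} applied to the trivial coset, so $i_a^J$ is defined on it and is injective there.) For the inclusion $\subseteq$, fix $z \in W_J$ with $sz < z$ and $y \le z \le x$. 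Since $s \in J$ we have $sz \in W_J$, so \myrefP{prop:parabolic}--4 applied to the factorizations $za = z\cdot a$ and $(sz)a = (sz)\cdot a$ gives $l(za) = l(z) + l(a)$ and $l((sz)a) = l(sz) + l(a)$; subtracting, $sz < z$ forces $s(za) < za$. For the Bruhat conditions, choose reduced words for $x$, $z$, $y$ in the alphabet $J$ (these are reduced in $W$ as well, by \myrefP{prop:parabolic}--2) nested as subwords $R_y \subseteq R_z \subseteq R_x$, which is possible by \myrefP{prop:subword}; appending a reduced word $R_a$ for $a$ and using $l(wa) = l(w) + l(a)$ for $w \in W_J$ shows $R_z R_a$ and $R_y R_a$ are reduced expressions for $za$ and $ya$, nested inside the reduced expression $R_x R_a$ for $xa$, so \myrefP{prop:subword} gives $ya \le za \le xa$. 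Hence $za \in S(s, ya, xa)$.

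For the inclusion $\supseteq$, fix $w \in W$ with $sw < w$ and $ya \le w \le xa$. Since $ya$ and $xa$ lie in the same right coset $W_J a$ and $a \in W^J$ is its minimal-length representative, \myrefP{prop:intermediate} gives $w = za$ for a (unique) $z \in W_J$ with $y \le z \le x$. Moreover $p_J(za) = z$, so by \myrefP{prop:parabolic2}--2 (valid since $s \in J$) the condition $s \in \tau(w) = \tau(za)$ is equivalent to $s \in \tau(z)$, i.e.\ to $sz < z$. Thus $z \in S(s,y,x)$ and $w = i_a^J(z)$, which finishes the argument.

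The only genuinely nontrivial ingredient is \myrefP{prop:intermediate}: a priori an element $w$ lying between $ya$ and $xa$ in Bruhat order need not have the form $za$ with $z \in W_J$, and even if it does, it is not obvious that $y \le z \le x$ — \myrefP{prop:intermediate} supplies both facts at once, and this is exactly what makes the reverse inclusion go through. Everything else is routine length bookkeeping via \myrefP{prop:parabolic}, \myrefP{prop:parabolic2}, and the subword criterion \myrefP{prop:subword}. I expect this latter bookkeeping to be the only place requiring care, and no step to present a real obstacle.
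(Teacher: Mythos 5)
Your proof is correct and follows essentially the same route as the paper: transfer the descent condition using \myrefP{prop:parabolic2}, handle the easy inclusion (forward direction), and use \myrefP{prop:intermediate} for the reverse inclusion, which is the only nontrivial step. You simply spell out more detail than the paper does in the forward direction (deriving the descent-compatibility from length additivity and proving $ya \le za \le xa$ via nested reduced words and \myrefP{prop:subword}, where the paper just says ``clear''), and you are right to note in passing that $S(s,y,x)\subseteq W_J$ so that $i_a^J$ is actually defined on it.
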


  \begin{proof}
    By \myrefP{prop:parabolic2}, for $z \in W_J$, we have $sz < z$ if and only if $si_a^J(z) < i_a^J(z)$.
    It's clear that $y \le z \le x$ implies that $i_a^J(y) \le i_a^J(z) \le i_a^J(x)$.
    On the other hand, if $i_a^J(y) \le z' \le i_a^J(x)$ for some $z' \in W$, then \myrefP{prop:intermediate} says that $z' = i_a^J(z)$ for some $z \in W_J$ with $y \le z \le x$.
  \end{proof}

  \begin{remark}
    As a consequence of \myrefP{prop:KL22c}, if $y_J, w_J \in W_J$, then $P_{y_J,w_J}$ is the same whether computed in $W_J$ or in $W$.
    So, we can use $P_{y_J,w_J}$ to refer to this common polynomial.
  \end{remark}

  \begin{proposition}
    \label{prop:KLPolysParabolic}
    Let $J \in S$ and let $y_J, w_J \in W_J$.
    Let $a \in W^J$ and let $y = i_a^J(y_J)$ and $w = i_a^J(w_J)$.
    Then $P_{y,w} = P_{y_J, w_J}$, where the latter polynomial is taken with respect to the Coxeter system $(W_J, J)$.
  \end{proposition}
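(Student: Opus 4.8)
The plan is to prove the statement by induction on $l(w_J)$, using the recursion \autoref{eq:KL22c} and mirroring it on both sides of the claimed equality. The base case $w_J = e$ forces $y_J = e$ (since $y_J \le w_J$ by \myrefP{prop:intermediate}), and then $y = w = a$, so $P_{y,w} = 1 = P_{y_J,w_J}$. For the inductive step, since $w_J \ne e$ there is some $s \in J$ with $sw_J < w_J$; then $sw < w$ as well, because $l(w) = l_J(w_J) + l(a)$ and $l(sw) = l_J(sw_J) + l(a)$ by \myrefP{prop:parabolic2}. I would apply \autoref{eq:KL22c} to the pair $(y,w)$ in $W$ and, separately, to the pair $(y_J, w_J)$ in $(W_J, J)$, and then check that the three corresponding pieces — the $q^{1-c}P_{sy,sw}$ term, the $q^c P_{y,sw}$ term, and the sum portion — agree term-by-term.

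For the first two terms: $sw = i_a^J(sw_J)$ with $sw_J \in W_J$ (as $s \in J$), and $sy$ is either $i_a^J(sy_J)$ if $s \notin \tau(y)$ or $i_a^J(sy_J)$ again with $sy_J < y_J$ if $s \in \tau(y)$; in all cases $sy, sw, y, sw$ are images under $i_a^J$ of elements of $W_J$ of strictly smaller $W_J$-length than $w_J$, so the inductive hypothesis gives $P_{sy,sw} = P_{sy_J, sw_J}$ and $P_{y,sw} = P_{y_J, sw_J}$. Moreover the exponent $c$ is the same on both sides, since $sy < y \iff sy_J < y_J$ by \myrefP{prop:parabolic2}. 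So the first two terms match. For the sum portion, the key point is that the index set $\{z \in W : y \le z \prec sw,\ sz < z\}$ is exactly $i_a^J$ applied to $\{z_J \in W_J : y_J \le z_J \le sw_J,\ sz_J < z_J\}$: the containment $y \le z \le sw = i_a^J(sw_J)$ forces $z = i_a^J(z_J)$ with $y_J \le z_J \le sw_J$ by \myrefP{prop:intermediate} (this is precisely \myrefC{cor:iaJ}, applied with $x = sw_J$), and the descent condition transfers by \myrefP{prop:parabolic2}. For each such $z = i_a^J(z_J)$ we have, by the inductive hypothesis, $\mu(z, sw) = \mu(z_J, sw_J)$ and $P_{y,z} = P_{y_J, z_J}$ (both $z_J$ and $sw_J$ have $W_J$-length below $l_J(w_J)$, and $y_J \le z_J$), and the exponents match since $l(w) - l(z) = l_J(w_J) - l_J(z_J)$ and $l(sw) - l(z) = l_J(sw_J) - l_J(z_J)$ by additivity of length across the factorization. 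Hence the sum portions agree, and therefore $P_{y,w} = P_{y_J, w_J}$.

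I expect the main obstacle to be the bookkeeping around the sum portion — specifically, making airtight the claim that summing over $z$ with $y \le z \prec sw$ in $W$ is the same as summing over $z_J$ with $y_J \le z_J \prec sw_J$ in $W_J$. The subtlety is that a priori $z$ need only satisfy $y \le z$ and $z \prec sw$, not $z \le w$, so one cannot directly invoke \myrefP{prop:intermediate} for the interval $[y, w]$; instead one uses $z \le sw < w$ (the relation $z \prec sw$ implies $z \le sw$) and applies \myrefP{prop:intermediate}/\myrefC{cor:iaJ} to the coset-compatible interval $[y, sw] = [i_a^J(y_J), i_a^J(sw_J)]$. Once that identification is in place, everything else is a routine matching of exponents using the length-additivity of \myrefP{prop:parabolic}--4, together with the observation (the Remark preceding the statement) that the polynomials $P_{y_J, w_J}$ computed inside $W_J$ agree with those computed in $W$ — which is what makes the two applications of \autoref{eq:KL22c} genuinely comparable.
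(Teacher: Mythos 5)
Your proposal is correct and follows essentially the same route as the paper: induction on $l(w_J)$, applying \autoref{eq:KL22c} in both $W$ and $W_J$, matching the exponent $c$ and the first two terms via \myrefP{prop:parabolic2}, and identifying the sum portions through \myrefP{prop:intermediate}/\myrefC{cor:iaJ} together with length-additivity. The point you flag as the main subtlety (that the sum runs over $z \le sw$, so one must use the interval $[y, sw]$ rather than $[y, w]$) is exactly what the paper's use of \myrefC{cor:iaJ} handles.
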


  \begin{proof}
    The proof is by induction on $l(w_J)$, using \myrefP{prop:KL22c} and \myrefP{prop:intermediate}.
    If $l(w_J) = 0$, then $l(y_J) = 0$ as well, so $y = w$ and the proposition holds.
    So assume $l(w_J) > 0$ and choose $s \in J$ with $l(sw_J) < l(w_J)$.
    By \myrefP{prop:parabolic2}, $sw < w$.
    Applying \myrefP{prop:KL22c}, first in $W$ and secondly in $W_J$, we obtain the following two equations:
    \begin{align*}
      P_{y,w} &= q^{1-c}P_{sy,sw} + q^cP_{y,sw} - \sum_{z \in S(s, y, sw)} \mu(z,sw)q^{\frac {l(w) - l(z)} 2}P_{y,z}\\
      P_{y_J,w_J} &= q^{1-c}P_{sy_J,sw_J} + q^cP_{y_J,sw_J} - \sum_{z' \in S(s,y_J,sw_J)} \mu(z',sw_J)q^{\frac {l(w_J) - l(z_J)} 2}P_{y_J,z'}
    \end{align*}
    with $c=1$ if $sy < y$, $c=0$ if $sy > y$.
    Again using \myrefP{prop:parabolic2}, we have $sy < y$ if and only if $sy_J < y_J$, so $c$ is the same in both equations.

    Now we need to match up the terms on the right-hand sides of the two equations and show that they are equal, by induction.
    The first two terms, that's clear.
    We need to show that the sum portions of the equations are the same.
    Since we've fixed $J$, we'll write $i_a$ for $i_a^J$.
    We have $sw = i_a(sw_J)$ (see \myrefP{prop:parabolic2}--1), so
    \begin{align*}
      &\sum_{z \in S(s, y, sw)} \mu(z,sw)q^{\frac {l(w) - l(z)} 2}P_{y,z}
      = \sum_{z \in i_a(S(s, y_J, sw_J))} \mu(z,sw)q^{\frac {l(w) - l(z)} 2}P_{y,z}\\
      &= \sum_{z' \in S(s, y_J, sw_J)} \mu(i_a(z'),i_a(sw_J))q^{\frac {l(i_a(w_J)) - l(i_a(z'))} 2}P_{i_a(y_J), i_a(z')}\\
      &=\sum_{z' \in S(s,y_J,sw_J)} \mu(z',sw_J)q^{\frac {l(w_J) - l(z_J)} 2}P_{y_J,z'}
    \end{align*}

    Here the first equality is \myrefC{cor:iaJ}, the second is just substitution, and the third is induction, together with the fact that $l(w_J) - l(z') = l(w_Ja) - l(z'a)$, by \myrefP{prop:parabolic}--4.
  \end{proof}

  \section{Knuth Maps and the \texorpdfstring{$A_2$}{A2} Edge Transport Theorem}
  \label{sec:KnuthMaps}

  In this section we'll recall the first edge transport theorem, which concerns parabolic subgroups of type $A_2$.
  We'll be emulating it and using it extensively.
  We'll also recall the maps associated with this edge transport theorem, the Knuth maps.

  First, we'll describe the overall pattern of the three edge transport theorems.
  They each concern a parabolic subgroup $(W_J, J)$, isomorphic to a Weyl group.
  More precisely, they concern the middle two-sided cell (call it $C$ for now) in the parabolic subgroup, where middle means that it is preserved by multiplication by the long element of $W_J$.
  We'll group the elements of this cell into types.
  We'll extend the grouping into types to the subset of $W$ of elements whose image under $p_J$ lie in $C$ by saying that such elements have the same type as their image.
  An edge transport theorem concerns edges connecting elements in $W$ of the same type.
  It transports edges connecting elements of one type to edges connecting elements of another type.

  Let's see this pattern in the first edge transport theorem.
  Here $W_J$ is of type $A_2$.
  Write $J = \{s, t\}$ (with $st$ of order 3).
  The middle cell has four elements, in two left cells.
  See \autoref{fig:A2}.

  \begin{figure}[!ht]
    \begin{center}
      \begin{tikzpicture}[scale=0.6, every node/.style={scale=.65}]
        % Nodes
        \node (ENode) at (0, 0) {\IdentityPictureATwo};
        \node at (-1 * \xSA, 1 * \ySA) {\SPictureATwo};
        \node at (1 * \xSA, 1 * \ySA) {\TPictureATwo};
        \node at (-1 * \xSA, 2 * \ySA) {\TPictureATwo};
        \node at (1 * \xSA, 2 * \ySA) {\SPictureATwo};
        \node at (0 * \xSA, 3 * \ySA) {\LongPictureATwo};

        % lines
        \leftLineLabelA{magenta}{dashdotted}{0}{0}{s}{left}
        \rightLineLabelA{green}{dashdotted}{0}{0}{t}{right}
        \upLineLabelA{green}{solid}{-1}{1}{t}{left}
        \upLineLabelA{magenta}{solid}{1}{1}{s}{right}
        \rightLineLabelA{magenta}{dashdotted}{-1}{2}{s}{left}
        \leftLineLabelA{green}{dashdotted}{1}{2}{t}{right}

        \draw[brown, thick] (-1 * \xSA, 1.5 * \ySA) ellipse (1.7cm and 2.4cm);
        \draw[brown, thick] (1 * \xSA, 1.5 * \ySA) ellipse (1.7cm and 2.4cm);

      \end{tikzpicture}
    \end{center}
    \caption{Weyl group of type $A_2$}
    \label{fig:A2}
  \end{figure}

  The two left cells of interest are circled.
  They are $\{s, ts\}$ and $\{t, st\}$.
  In this illustration, each mini Dynkin diagram represents an element of $W_J$.
  Each Dynkin diagram is marked with the left \ti\ of the element which it represents.
  Elements of $J$ not in the \ti\ of the Coxeter group element are colored blue, whereas elements of $J$  in the \ti\ are colored red and are a little larger.
  In this parabolic subgroup, an element's type is determined by its \ti.
  So, there are two types.

  Now, let's look at the edge transport theorem.
  \begin{theorem}[\citeKL, Theorem 4.2]
    \label{thm:stStringsA}
    Let $s, t \in S$ with $st$ of order 3.
    Let $J = \{s, t\}$.
    \begin{enumerate}
      \item Let $L, U, L', U' \in W$ with $p_J(L) = p_J(L') = s$, $U = tL$, $U' = tL'$, and suppose $L \leq L'$.
      Then $\mu(U, U') = \mu(L, L')$.
      \item Let $L, U, L', U' \in W$ with $p_J(L) = s$, $p_J(L') = t$, $U = tL$, $U' = sL'$, and suppose $U \leq L'$.  Assume further that $tL \neq sL'$.
      Then $\mu(L, U') = \mu(U, L')$.
      \item Let $L, U, L', U' \in W$ with $p_J(L) = s$, $p_J(L') = t$, $U = tL$, and $U' = sL'$, and assume that $tL = sL'$.
      Then $\mu(L, U') = \mu(L', U) = 1$.
    \end{enumerate}
  \end{theorem}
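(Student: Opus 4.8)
The plan is to derive (1) and (2) from a single application of \autoref{eq:KL22c} followed by extracting the top-degree coefficient, with \myrefP{prop:KL23g} controlling the error terms, and to dispatch (3) by hand. First I would translate the hypotheses on $p_J$ into descent data by \myrefP{prop:parabolic2}: from $p_J(L)=s$ one gets $\tau(L)\cap J=\{s\}$, $\tau(U)\cap J=\{t\}$ and $\tau(sU)\cap J=\{s,t\}$; in case (1), $p_J(L')=s$ gives $\tau(L')\cap J=\{s\}$ and $\tau(U')\cap J=\{t\}$, while in cases (2)--(3), $p_J(L')=t$ gives $\tau(L')\cap J=\{t\}$ and $\tau(U')\cap J=\{s\}$. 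The subword criterion \myrefP{prop:subword} then supplies the Bruhat comparabilities (e.g. in (1), $L\le L'$ forces $U=tL\le tL'=U'$, since prepending $t$ to a reduced subexpression for $L$ inside a reduced expression for $L'$ keeps it a subexpression of the reduced expression for $tL'$). For (3) I would set $m:=tL=sL'=U=U'$; since $t\notin\tau(L)$ and $s\notin\tau(L')$, $m$ covers both $L$ and $L'$, so $L\le U'$, $L'\le U$, and $P_{L,m}=P_{L',m}=1$ (length difference $1$, via \autoref{eq:KL22c}), giving $\mu(L,U')=\mu(L',U)=1$.

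For (1) I would first rewrite $P_{U,U'}=P_{L,U'}$ using \myrefP{prop:KL23g} (valid since $tL>L$ and $tU'=L'<U'$), then apply \autoref{eq:KL22c} to $P_{U,U'}$ with distinguished generator $t$ (legal since $tU'<U'$, with $c=1$ because $tU=L<U$), obtaining
\[
  P_{U,U'}=P_{L,L'}+q\,P_{U,L'}-\sum_{\substack{z\\ U\le z\prec L'\\ tz<z}}\mu(z,L')\,q^{\frac{l(U')-l(z)}{2}}P_{U,z}.
\]
Reading off the coefficient of $q^{d}$ with $d=d(U,U')=d(L,L')$: the first term gives $\mu(L,L')$; the second gives $\mu(sU,L')$ after rewriting $P_{U,L'}=P_{sU,L'}$ by \myrefP{prop:KL23g} ($sU>U$, $sL'<L'$) and using $d(sU,L')=d-1$; and the $z$-summand gives $\mu(z,L')\,\mu(U,z)$. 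The key step is then the identity $\sum_z\mu(z,L')\mu(U,z)=\mu(sU,L')$: I would show the only $z$ that can contribute is $z=sU$ --- if $sz<z$ then \myrefP{prop:KL23g} in the generator $s$ forces $z=sU$, and if $sz>z$ then $\tau(z)\cap\{s,t\}=\{t\}$, so \myrefP{prop:KL23g} in the generator $t$ forces $z=tL'=U'$, which is excluded since $U'\not\le L'$ --- and since $sU$ covers $U$ we have $\mu(U,sU)=1$. Thus the sum portion precisely cancels the cross term $q\,P_{U,L'}$, leaving $\mu(U,U')=\mu(L,L')$.

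For (2) the shape is the same, with the roles of $s$ and $t$ interchanged at the key step. Writing $L_0:=sL=L^J\in W^J$, I would apply \autoref{eq:KL22c} to $P_{L,U'}$ with distinguished generator $s$ ($sU'=L'<U'$, $c=1$ since $sL<L$):
\[
  P_{L,U'}=P_{L_0,L'}+q\,P_{L,L'}-\sum_{\substack{z\\ L\le z\prec L'\\ sz<z}}\mu(z,L')\,q^{\frac{l(U')-l(z)}{2}}P_{L,z},
\]
and extract the coefficient of $q^{e}$ with $e=d(L,U')$. Here $P_{L_0,L'}$ contributes $\mu(L_0,L')=0$ (by \myrefP{prop:KL23g} in the generator $t$: $t\notin\tau(L_0)$, $t\in\tau(L')$, and $L'\ne tL_0$ since $l(tL_0)=l(L)<l(U)\le l(L')$); the term $q\,P_{L,L'}$ contributes $\mu(U,L')$ via $P_{L,L'}=P_{tL,L'}=P_{U,L'}$ ($tL>L$, $tL'<L'$) and $d(U,L')=e-1$; and the sum has no surviving terms, since for $z$ in its range ($sz<z$) either $tz<z$, which via \myrefP{prop:KL23g} in the generator $t$ forces $z=tL=U$ --- impossible because $sU>U$ contradicts $sz<z$ --- or $tz>z$, which via \myrefP{prop:KL23g} in the generator $s$ forces $z=sL'=U'\not\le L'$. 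Hence $\mu(L,U')=\mu(U,L')$. I expect the only delicate point to be this sieve over the intermediate $z$ in the sum portion of \autoref{eq:KL22c}: one must notice that in (1) the summand $z=sU$ is genuinely present and exactly cancels the cross term, whereas in (2) the sum is empty and instead the auxiliary term $P_{L_0,L'}$ is the one that drops out; \myrefP{prop:tauCell} may be substituted for the explicit descent computations if preferred.
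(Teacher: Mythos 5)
The paper does not prove this theorem itself (it is cited from \citeKL); the only proof-related material in the paper is the remark following \myrefT{thm:stsStringsA}, which concerns Case 3. Your recipe for Cases 1--2 --- one application of \autoref{eq:KL22c}, extraction of the top-degree coefficient, and a sieve of the summation via \myrefP{prop:KL23g} --- is the standard Kazhdan--Lusztig argument, and your bookkeeping around $P_{U,L'}=P_{sU,L'}$, around $\mu(L_0,L')=0$, and around the cross term is correctly set up. But the sieve itself, which you rightly flag as the delicate point, uses the wrong generator in the second sub-case of each part. In Case 1, when $sz>z$ one has $\tau(z)\cap J=\{t\}$ and $\tau(L')\cap J=\{s\}$, so the hypothesis of \myrefP{prop:KL23g} is met with the generator $s$ (namely $sz>z$, $sL'<L'$), and its exceptional case is $L'=sz$, not $z=tL'$; this exception is ruled out because $\tau(sL')\cap J=\varnothing\neq\{t\}$, not by the Bruhat inequality ``$U'\not\le L'$'' you cite. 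The mirror error occurs in Case 2 (the correct generator there is $t$, with exceptional case $L'=tz$, excluded because $\tau(tL')\cap J=\varnothing\neq\{s\}$). The identities $\sum_z\mu(z,L')\mu(U,z)=\mu(sU,L')$ and $\sum_z\mu(z,L')\mu(L,z)=0$ are correct; the reasons you give are not.

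Case 3 is worse: the hypotheses as written are internally inconsistent, so your proof establishes a vacuous statement. From $p_J(L)=s$ one gets $p_J(tL)=ts$, from $p_J(L')=t$ one gets $p_J(sL')=st$, and $ts\neq st$ since $st$ has order $3$; hence $tL\neq sL'$ always, and the device $m:=tL=sL'=U=U'$ is never available. The Case 3 picture in \autoref{fig:stStringsA} and the remark after \myrefT{thm:stsStringsA} show the intended hypothesis is that $L$ and $L'$ lie in the same right $W_J$-coset, i.e. $sL=tL'$, so $L=sa$, $L'=ta$, $U=tsa$, $U'=sta$ for a common $a=L^J=(L')^J$. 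Then $L,U,L',U'$ are four distinct elements and there is no single $m$, but the conclusion still follows for the same elementary reason you invoke: $L<U'$ and $L'<U$ are Bruhat covers (each of length difference one), so $P_{L,U'}=P_{L',U}=1$ and $\mu(L,U')=\mu(L',U)=1$. You should rephrase this part (or, as the remark suggests, check it inside $W_J$ and transfer via \myrefP{prop:KLPolysParabolic}).
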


  \myrefT{thm:stStringsA} is pictured in \autoref{fig:stStringsA}.
  ($L$ is for lower, $U$ is for upper;  later we'll have $M$ for middle.)
  The light blue dotted lines are the edges which are the subject of the theorem.
  Their arrowheads indicate the direction of the Bruhat order comparison.  ($W$-graph edges are undirected.)
  The theorem says that if one of the blue edges is present in the $W$-graph, then so is the other.

  \begin{figure}[!ht]
    \begin{center}
      \begin{tikzpicture}[scale=0.48, every node/.style={scale=.65}]
        \node at (-4.5, 0) {\begin{tikzpicture}[scale=0.6, every node/.style={scale=.9}]
          % Nodes
          \node at (-1.03 * \xSA, 0 * \ySA) {\dynkinLabelLeftABTwo{\IdentityPictureATwo}{\phantom{L}}};
          \node (LNode) at (-1.03 * \xSA, 1 * \ySA)  {\dynkinLabelLeftABTwo{\SPictureATwo}{L}};
          \node (UNode) at (-1.03 * \xSA, 2 * \ySA) {\dynkinLabelLeftABTwo{\TPictureATwo}{U}};
          \node at (-1.03 * \xSA, 3 * \ySA) {\dynkinLabelLeftABTwo{\LongPictureATwo}{\phantom{U}}};

          \node at (1.03 * \xSA, 0 * \ySA) {\dynkinLabelABTwo{\IdentityPictureATwo}{\phantom{L'}}};
          \node (LPNode) at (1.03 * \xSA, 1 * \ySA) {\dynkinLabelABTwo{\SPictureATwo}{L'}};
          \node (UPNode) at (1.03 * \xSA, 2 * \ySA) {\dynkinLabelABTwo{\TPictureATwo}{U'}};
          \node at (1.03 * \xSA, 3 * \ySA) {\dynkinLabelABTwo{\LongPictureATwo}{\phantom{U'}}};

          \node at (0 * \xSA, -.5 * \xSA) {\Large Case 1};

          % lines
          \upLineLabelA{magenta}{dashdotted}{-.9}{0}{s}{left}
          \upLineLabelA{green}{solid}{-.9}{1}{t}{left}
          \upLineLabelA{magenta}{dashdotted}{-.9}{2}{s}{left}

          \upLineLabelA{magenta}{dashdotted}{.9}{0}{s}{right}
          \upLineLabelA{green}{solid}{.9}{1}{t}{right}
          \upLineLabelA{magenta}{dashdotted}{.9}{2}{s}{right}

          \draw[->, very thick, cyan, dotted] (LNode.east) -- (LPNode.west);
          \draw[->, very thick, cyan, dotted] (UNode.east) -- (UPNode.west);
        \end{tikzpicture}};
        \node at (4.5, 0) {\begin{tikzpicture}[scale=0.6, every node/.style={scale=.9}]
          % Nodes
          \node at (-1.03 * \xSA, 0 * \ySA) {\dynkinLabelLeftABTwo{\IdentityPictureATwo}{\phantom{L}}};
          \node (LNode) at (-1.03 * \xSA, 1 * \ySA)  {\dynkinLabelLeftABTwo{\SPictureATwo}{L}};
          \node (UNode) at (-1.03 * \xSA, 2 * \ySA) {\dynkinLabelLeftABTwo{\TPictureATwo}{U}};
          \node at (-1.03 * \xSA, 3 * \ySA) {\dynkinLabelLeftABTwo{\LongPictureATwo}{\phantom{U}}};

          \node at (1.03 * \xSA, 0 * \ySA) {\dynkinLabelABTwo{\IdentityPictureATwo}{\phantom{L'}}};
          \node (LPNode) at (1.03 * \xSA, 1 * \ySA) {\dynkinLabelABTwo{\TPictureATwo}{L'}};
          \node (UPNode) at (1.03 * \xSA, 2 * \ySA) {\dynkinLabelABTwo{\SPictureATwo}{U'}};
          \node at (1.03 * \xSA, 3 * \ySA) {\dynkinLabelABTwo{\LongPictureATwo}{\phantom{U'}}};

          \node at (0 * \xSA, -.5 * \xSA) {\Large Case 2};

          % lines
          \upLineLabelA{magenta}{dashdotted}{-.9}{0}{s}{left}
          \upLineLabelA{green}{solid}{-.9}{1}{t}{left}
          \upLineLabelA{magenta}{dashdotted}{-.9}{2}{s}{left}

          \upLineLabelA{green}{dashdotted}{.9}{0}{t}{right}
          \upLineLabelA{magenta}{solid}{.9}{1}{s}{right}
          \upLineLabelA{green}{dashdotted}{.9}{2}{t}{right}

          \draw[->, very thick, cyan, dotted] (LNode.east) -- (UPNode.west);
          \draw[->, very thick, cyan, dotted] (UNode.east) -- (LPNode.west);
        \end{tikzpicture}};
        \node at (13.5,0)
        {\begin{tikzpicture}[scale=0.6, every node/.style={scale=.9}]
          % Nodes
          \node at (-.13 * \xSA, 0 * \ySA) {\dynkinLabelLeftABTwo{\IdentityPictureATwo}{\phantom{L}}};
          \node (LNode) at (-1.03 * \xSA, 1 * \ySA)  {\dynkinLabelLeftABTwo{\SPictureATwo}{L}};
          \node (UNode) at (-1.03 * \xSA, 2 * \ySA) {\dynkinLabelLeftABTwo{\TPictureATwo}{U}};
          \node at (-.13 * \xSA, 3 * \ySA) {\dynkinLabelLeftABTwo{\LongPictureATwo}{\phantom{U}}};

          \node (LPNode) at (1.03 * \xSA, 1 * \ySA) {\dynkinLabelABTwo{\TPictureATwo}{L'}};
          \node (UPNode) at (1.03 * \xSA, 2 * \ySA) {\dynkinLabelABTwo{\SPictureATwo}{U'}};

          \node at (0 * \xSA, -.5 * \xSA) {\Large Case 3};

          % lines
          \leftLineLabelA{magenta}{dashdotted}{-0}{0}{s}{left}
          \upLineLabelA{green}{solid}{-.9}{1}{t}{left}
          \rightLineLabelA{magenta}{dashdotted}{-.9}{2}{s}{left}

          \rightLineLabelA{green}{dashdotted}{0}{0}{t}{right}
          \upLineLabelA{magenta}{solid}{.9}{1}{s}{right}
          \leftLineLabelA{green}{dashdotted}{.9}{2}{t}{right}

          \draw[->, very thick, cyan, dotted] (LNode.east) -- (UPNode.west);
          \draw[<-, very thick, cyan, dotted] (UNode.east) -- (LPNode.west);
        \end{tikzpicture}};
      \end{tikzpicture}
    \end{center}
    \caption{\myrefT{thm:stStringsA}}
    \label{fig:stStringsA}
  \end{figure}

  Note, except for the Case 3 picture, the pictures in \autoref{fig:stStringsA} don't accurately compare the lengths of the elements on the left to those of the elements on the right.
  In the Case 1 picture, if the light blue lines represent edges (that is, $\mu$ is non-zero), then $l(L')$ must be at least one greater than $l(L)$.
  In the Case 2 picture, if the light blue lines represent edges, then $l(L')$ must be at least one greater than $l(U)$.

  For some applications, we only care about $\mutilde$ values, in which case we can condense the theorem into two cases, as follows.

  \begin{theorem}[\citeKL, Theorem 4.2]
    \label{thm:stStrings}
    Let $s, t \in S$ with $st$ of order 3.
    Let $J = \{s, t\}$.
    \begin{enumerate}
      \item Let $L, U, L', U' \in W$ with $p_J(L) = p_J(L') = s$, $U = tL$, and $U' = tL'$.
      Then $\mutilde(U, U') = \mutilde(L, L')$.
      \item Let $L, U, L', U' \in W$ with $p_J(L) = s$, $p_J(L') = t$, $U = tL$, and $U' = sL'$.
      Then $\mutilde(L, U') = \mutilde(U, L')$.
    \end{enumerate}
  \end{theorem}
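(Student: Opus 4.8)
The plan is to deduce \myrefT{thm:stStrings} from \myrefT{thm:stStringsA} by a case analysis on the Bruhat order that converts the latter's one-sided statements about $\mu$ into the symmetric statements about $\mutilde$. The first step is to record the consequences of the hypotheses on $p_J$: by \myrefP{prop:parabolic2}--2 we have $\tau(L)\cap J=\{s\}$ and $\tau(U)\cap J=\{t\}$, and in Part 2 also $\tau(L')\cap J=\{t\}$, $\tau(U')\cap J=\{s\}$; in particular $t\notin\tau(L)$ gives $l(U)=l(L)+1$, so $L<U$, and likewise $L'<U'$. The same computation shows $p_J(U)=ts$ and $p_J(U')=st$, so $U\neq U'$; thus, in Part 2, the hypothesis ``$tL\neq sL'$'' appearing in the second part of \myrefT{thm:stStringsA} is automatic.

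For Part 1 I would split on whether $L\leq L'$, $L'\leq L$, or $L$ and $L'$ are Bruhat-incomparable. If $L\leq L'$: since $tL>L$ and $tL'>L'$, the subword criterion (\myrefP{prop:subword}) gives $U=tL\leq tL'=U'$; the first part of \myrefT{thm:stStringsA} gives $\mu(U,U')=\mu(L,L')$; and since both pairs are comparable in the same direction this is exactly $\mutilde(U,U')=\mutilde(L,L')$. The case $L'\leq L$ follows by interchanging the two columns, using that $\mutilde$ is symmetric. If $L$ and $L'$ are incomparable, then so are $U$ and $U'$: $U\leq U'$ would force $tU\leq tU'$ (both $U,U'$ have $t$ as a left descent, so this is the lifting property), i.e.\ $L\leq L'$, a contradiction, and symmetrically $U'\leq U$ is impossible; hence $\mutilde(U,U')=0=\mutilde(L,L')$.

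For Part 2 the organizing idea is to compare the $W_J$-cosets of $L$ and $L'$. If they coincide, then by \myrefP{prop:intermediate} the Bruhat order on $\{L,U,L',U'\}$ is the image under $i_a^J$ (with $a$ the minimal coset representative) of the Bruhat order on $\{s,ts,t,st\}\subset W_J$; in particular $L<U'$ and $L'<U$ are covering relations, so $P_{L,U'}=P_{U,L'}=1$ and $\mutilde(L,U')=\mutilde(U,L')=1$. If the cosets differ and $U\leq L'$, then the second part of \myrefT{thm:stStringsA} applies, and since $L<U\leq L'<U'$ forces $L<U'$, both $\mutilde$'s equal the corresponding $\mu$'s, so they agree; if instead $U'\leq L$, the same applies after the relabelling $(L,U,L',U',s,t)\mapsto(L',U',L,U,t,s)$. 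What remains is the configuration: distinct cosets, $U\not\leq L'$ and $U'\not\leq L$. In that case I claim both $\mutilde(U,L')$ and $\mutilde(L,U')$ vanish; if the relevant pair is incomparable this is immediate, and otherwise one uses that, because $l(U)=l(L)+1$ and $l(U')=l(L')+1$, a nonzero $\mutilde(U,L')$ or $\mutilde(L,U')$ would force $l(L)\equiv l(L')\pmod{2}$, whereupon a reduced-expression argument in the spirit of the proof of \myrefP{prop:intermediate} shows that either the two cosets were in fact equal or the Kazhdan--Lusztig polynomial in question is constant, so that $\mu=0$.

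The step I expect to be the real obstacle is exactly this last configuration in Part 2: once the Bruhat hypothesis ``$U\leq L'$'' and its mirror are both absent, one can no longer invoke \myrefT{thm:stStringsA} and must instead show directly that neither $\mutilde$ is nonzero. I anticipate this requires combining the partial-descent data coming from $p_J$ with \myrefP{prop:KL23g} and a careful bookkeeping of reduced words inside the relevant $W_J$-cosets. Everything else is a mechanical translation of \myrefT{thm:stStringsA} through the definition of $\mutilde$, together with standard lifting-property manipulations of the Bruhat order.
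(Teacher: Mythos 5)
Your Part 1 argument is sound, and your treatment of Part 2's first three subcases — same coset, $U \le L'$, and $U' \le L$ — is also correct (including the observation that $tL \neq sL'$ is automatic because $p_J(U) = ts \neq st = p_J(U')$). The genuine gap is exactly where you flag it: the remaining subcase of Part 2, where $L$ and $L'$ lie in distinct $W_J$-cosets and neither $U \le L'$ nor $U' \le L$. In this configuration one of $\mu(L',U)$, $\mu(L,U')$ can a priori be nonzero (for instance $L' < U$ while $L$ and $U'$ are incomparable, which does occur), and your proposed resolution — that a reduced-expression argument shows either the cosets coincide or the Kazhdan--Lusztig polynomial is constant, hence $\mu=0$ — is not substantiated and is not quite right as stated. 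Constancy of $P_{L',U}$ alone does not force $\mu(L',U)=0$; you would additionally need the length observation that a covering $L' \lessdot U$ forces $L'$ into the coset $W_Ja$ of $L$, so that in the surviving situation $l(U)-l(L') \ge 3$. And the dichotomy ``cosets equal or $P$ constant'' is merely asserted; nothing in the proof of \myrefP{prop:intermediate} produces it. What actually kills the surviving $\mu$ is a descent-set argument via \myrefP{prop:KL23g}: one must exhibit some $r \in S$ (typically $r \notin J$) with $r \in \tau(U)$, $r \notin \tau(L')$. You do not show that such an $r$ always exists, and that is the content that is genuinely missing.

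The source of the difficulty is that \myrefT{thm:stStringsA} Case 2 as written carries the Bruhat hypothesis $U \le L'$, making it strictly weaker than the coset form \myrefT{thm:talb}; the gap between the two is precisely your fourth subcase, so the passage from $\mu$ to $\mutilde$ is not a routine translation here. The paper itself does not carry out a derivation of \myrefT{thm:stStrings} from \myrefT{thm:stStringsA} — both are presented as restatements of Kazhdan--Lusztig's Theorem 4.2. The only explicit $\mu$-to-$\mutilde$ argument in the paper is for the $B_2$ analogue, where it really is easy: Cases 2 and 3 of \myrefT{thm:stsStringsA} carry no Bruhat-order hypothesis, and together they exhaust all configurations of $L$ and $L'$. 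Your case decomposition is the right shape, but until the fourth subcase is closed — either by producing the descent $r$ for \myrefP{prop:KL23g}, or by invoking the stronger form of the source theorem that \myrefT{thm:talb} records — the proof is incomplete.
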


  Now let's recall the definition of the Knuth maps.
  Each Knuth map is defined on a subset of $W$, as follows.

  \begin{definition}
    \label{def:domainKnuthA}
    Let $s,t \in S$ with $st$ of order 3.
    We set
    \begin{equation*}
      D_{s,t}(W) = \{w \in W \mid \tau(w)\cap \{s,t\} = t \}.
    \end{equation*}

    Equivalently, by \myrefP{prop:parabolic2}, we can write
    \begin{equation*}
      D_{s,t}(W) = \{w \in W \mid p_J(w) = t \text{ or } p_J(w) = ts \}.
    \end{equation*}
  \end{definition}
  (Note, this notation differs from that of \citeKL.
  Their $D_L(s,t)$ is our $D_{s,t}(W) \cup D_{t,s}(W)$.)
  \begin{definition}
    \label{def:KnuthA}
    Let $s,t \in S$ with $st$ of order 3.
    Let $J = \{s, t\}$.
    We define the Knuth map
    \begin{equation*}
      T_{s,t}: D_{s,t}(W) \longrightarrow D_{t,s}(W)
    \end{equation*}
    as follows:
    if $p_J(w) = t$ then $T_{s,t}(w) = sw$, else $T_{s,t}(w) = tw$.
  \end{definition}

  \begin{proposition}
    \label{prop:talbAltDef}
    We have
    \begin{equation*}
      T_{s,t}(w) = w'\ \text{where}\ \{w'\} = D_{t,s}(W) \cap \{sw, tw\}.
    \end{equation*}
  \end{proposition}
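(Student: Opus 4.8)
The plan is to verify the claim by a direct case analysis on the value of $p_J(w)$, using the equivalent description of the domains from \myrefD{def:domainKnuthA} together with the behavior of $p_J$ under left multiplication recorded in \myrefP{prop:parabolic2}.

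First I would set up the structure of $W_J$. Since $st$ has order $3$, $(W_J, J)$ is isomorphic to the symmetric group on three letters; its six elements are $e$, $s$, $t$, $st$, $ts$, and the longest element $sts = tst$, of lengths $0, 1, 1, 2, 2, 3$ respectively. I would record the $\tau$-invariants computed inside $W_J$, namely $\tau(s) = \{s\}$, $\tau(t) = \{t\}$, $\tau(st) = \{s\}$, $\tau(ts) = \{t\}$, $\tau(sts) = \{s, t\}$, together with the products we will need: $s\cdot t = st$, $s\cdot ts = sts$, $t\cdot t = e$, and $t\cdot ts = s$. By the equivalent forms in \myrefD{def:domainKnuthA}, $w \in D_{s,t}(W)$ if and only if $p_J(w) \in \{t, ts\}$, and $w' \in D_{t,s}(W)$ if and only if $p_J(w') \in \{s, st\}$.

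Next, fix $w \in D_{s,t}(W)$ and split into the cases $p_J(w) = t$ and $p_J(w) = ts$. In either case, \myrefn{Proposition}{prop:parabolic2}{1} gives $p_J(sw) = s\cdot p_J(w)$ and $p_J(tw) = t\cdot p_J(w)$, the products being taken in $W_J$. If $p_J(w) = t$, then $p_J(sw) = st$, so $sw \in D_{t,s}(W)$, while $p_J(tw) = e \notin \{s, st\}$, so $tw \notin D_{t,s}(W)$; hence $D_{t,s}(W) \cap \{sw, tw\} = \{sw\}$, and $sw = T_{s,t}(w)$ by \myrefD{def:KnuthA}. If $p_J(w) = ts$, then $p_J(tw) = s$, so $tw \in D_{t,s}(W)$, while $p_J(sw) = sts$, the longest element of $W_J$, which is not in $\{s, st\}$, so $sw \notin D_{t,s}(W)$; hence $D_{t,s}(W) \cap \{sw, tw\} = \{tw\}$, and $tw = T_{s,t}(w)$. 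This exhausts the cases and proves the proposition.

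There is no real obstacle here; the content is entirely bookkeeping. The one point requiring care is the application of \myrefn{Proposition}{prop:parabolic2}{1}: one must remember that $p_J(y_J w)$ is obtained by multiplying $y_J$ and $p_J(w)$ \emph{inside} $W_J$, so that when $p_J(w) = ts$ and we left-multiply by $s$ we land on the longest element $sts$ rather than on some spurious shorter element, and when $p_J(w) = t$ and we left-multiply by $t$ the $W_J$-part collapses to $e$. With that observation and the short table of elements of $W_J \cong S_3$ and their $\tau$-invariants in hand, the verification is immediate.
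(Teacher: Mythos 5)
Your proof is correct and follows essentially the same route as the paper's: both arguments split on whether $p_J(w) = t$ or $p_J(w) = ts$ and then use \myrefP{prop:parabolic2} to track how $p_J$ changes under left multiplication by $s$ and $t$, concluding that exactly one of $sw, tw$ lands in $D_{t,s}(W)$ and that it is $T_{s,t}(w)$. The paper writes the cases a bit more compactly (factoring $w = p_J(w)a$ with $a \in W^J$), but the content and case structure are identical.
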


  \begin{proof}
    Write $w = p_J(w)a$ with $a \in W^J$.
    Suppose first $p_J(w) = t$, so $T_{s, t}(w) = sw$.
    We have $sw = sta$ and $tw = a$.
    By \myrefP{prop:parabolic2}, $sta \in D_{t,s}(W)$ and $a \notin D_{t,s}(W)$, as desired.

    Suppose instead that $p_J(w) = ts$, so $T_{s, t}(w) = tw$.
    We have $sw = stsa$ and $tw = sa$.
    By \myrefP{prop:parabolic2}, $sa \in D_{t,s}(W)$ and $stsa \notin D_{t,s}(W)$, as desired.
  \end{proof}

  \begin{remark}
    With $s, t$ as in \myrefD{def:KnuthA}, we have $T_{t,s} = T_{s, t}^{-1}$.
  \end{remark}

  We'll also use the following:

  \begin{proposition}
    \label{prop:talbParabolic}
    Let $s,t \in S$ with $st$ of order 3.
    Let $J \subseteq S$ with $s, t \in J$.
    Let $w \in W$.
    Then $w \in D_{s,t}(W)$ if and only if $p_J(w) \in D_{s,t}(W_J)$.
    If $w \in D_{s,t}(W)$ then $T_{s,t}(w) = T_{s,t}(p_J(w))w^J$, where $w^J \in W^J$ is such that $w = p_J(w)w^J$.
  \end{proposition}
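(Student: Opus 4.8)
The plan is to reduce everything to two facts already established: the description of $D_{s,t}$ by the left descent set in \myrefD{def:domainKnuthA}, which behaves well under $p_J$ by \myrefP{prop:parabolic2}, and the alternative description of the Knuth map in \myrefP{prop:talbAltDef}. Using the latter is what lets me avoid ever having to compare the two different parabolic projections in play --- the big one $p_J : W \to W_J$ and the projection onto $\langle s, t\rangle$ that is built into \myrefD{def:KnuthA}.

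First I would prove the membership equivalence. Since $s, t \in J$, \myrefP{prop:parabolic2}--2 gives, for $r \in \{s,t\}$, that $r \in \tau(w)$ if and only if $r \in \tau(p_J(w))$; here $\tau(p_J(w))$ may be computed in $W$ or, equivalently on elements of $W_J$ by \myrefP{prop:parabolic}--2, in the Coxeter system $(W_J, J)$. Hence $\tau(w) \cap \{s,t\} = \tau(p_J(w)) \cap \{s,t\}$, and comparing each side with $\{t\}$ shows, via \myrefD{def:domainKnuthA}, that $w \in D_{s,t}(W)$ if and only if $p_J(w) \in D_{s,t}(W_J)$.

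Now suppose $w \in D_{s,t}(W)$ and write $w = va$ with $v = p_J(w) \in W_J$ and $a = w^J \in W^J$; by the first part $v \in D_{s,t}(W_J)$, and $(W_J, J)$ is a Coxeter system in which $st$ still has order $3$. Applying \myrefP{prop:parabolic2}--1 with $y_J = s$ and with $y_J = t$ gives $p_J(sw) = sv$ and $p_J(tw) = tv$, while trivially $sw = (sv)a$ and $tw = (tv)a$ with $sv, tv \in W_J$. The membership equivalence just proved (with $s$ and $t$ interchanged), applied to $sw$ and to $tw$, shows that $sw \in D_{t,s}(W)$ if and only if $sv \in D_{t,s}(W_J)$, and likewise with $t$ in place of $s$. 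By \myrefP{prop:talbAltDef}, applied in $W$ and in $(W_J, J)$, the element $T_{s,t}(w)$ is the unique member of $D_{t,s}(W) \cap \{sw, tw\}$ and $T_{s,t}(v)$ is the unique member of $D_{t,s}(W_J) \cap \{sv, tv\}$; combining this with the preceding equivalences forces $T_{s,t}(w) = (T_{s,t}(v))\,a = T_{s,t}(p_J(w))\,w^J$, whichever of $s$ or $t$ realizes the Knuth map on $v$.

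I do not expect any serious obstacle here: the only thing to watch is the bookkeeping between $W_J$ and the $A_2$ subgroup $\langle s, t\rangle$, and routing the argument through \myrefP{prop:talbAltDef} rather than \myrefD{def:KnuthA} is precisely the device that keeps this from becoming an issue.
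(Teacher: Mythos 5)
Your proof is correct and follows essentially the same route as the paper's: both use \myrefP{prop:parabolic2} to reduce the descent-set condition to the projection, then use \myrefP{prop:talbAltDef} together with $p_J(sw) = sp_J(w)$, $p_J(tw) = tp_J(w)$ to identify $T_{s,t}(w)$ with $T_{s,t}(p_J(w))w^J$. Your write-up simply spells out the steps that the paper's terse proof leaves implicit.
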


  \begin{proof}
    This follows from \myrefP{prop:talbAltDef} and \myrefP{prop:parabolic2}.
    The first statement is clear from \myrefP{prop:parabolic2}.
    Similarly, since by \myrefP{prop:parabolic2}--2, we have $p_J(sw) = sp_J(w)$ and $p_J(tw) = tp_J(w)$, then $sw \in D_{t,s}(W)$ if and only if $sp_J(w) \in D_{t,s}(W_J)$, and similarly for $tw$.
  \end{proof}

  With the above notation, we can rephrase cases 1 and 2 of \myrefT{thm:stStringsA} as follows:
  \begin{theorem}
    \label{thm:talb}
    Let $s, t \in S$ with $st$ of order 3.
    Let $y, w \in D_{s,t}(W)$, and suppose that $yw^{-1}$ is not in the subgroup generated by $s$ and $t$.
    Then $\mu(y,w) = \mu(T_{s,t}(y), T_{s,t}(w))$.
  \end{theorem}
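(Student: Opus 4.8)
The plan is to deduce this from \myrefT{thm:stStrings} in two steps: first prove the analogous statement with $\mu$ replaced by $\mutilde$, and then upgrade it by keeping track of the Bruhat order. Write $J=\{s,t\}$. Since $y,w\in\Dst$, \myrefD{def:domainKnuthA} gives $p_J(y),p_J(w)\in\{t,ts\}$, so there are three cases to treat: $p_J(y)=p_J(w)=ts$, $p_J(y)=p_J(w)=t$, and $p_J(y)\neq p_J(w)$. I would also note at the outset that, by \myrefP{prop:parabolic}--5, the hypothesis $yw^{-1}\notin W_J$ is equivalent to saying that $y$ and $w$ lie in distinct right cosets of $W_J$, that is, $y^J\neq w^J$.

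For the $\mutilde$ statement, in each case I would produce $L,U,L',U'\in W$ satisfying the hypotheses of the relevant part of \myrefT{thm:stStrings} in such a way that the pairs $(y,w)$ and $(\Tst(y),\Tst(w))$ correspond, respectively, to $(U,U')$ and $(L,L')$ in the first two cases, and to $(U,L')$ and $(L,U')$ (up to the symmetry of $\mutilde$) in the mixed case. Concretely: if $p_J(z)=ts$ then $\Tst(z)=tz$ and $z=t\,\Tst(z)$, whereas if $p_J(z)=t$ then $\Tst(z)=sz$; and when $p_J(y)=p_J(w)=t$ one applies \myrefT{thm:stStrings} with the roles of $s$ and $t$ interchanged, which is legitimate since $ts$ also has order $3$ and $\{t,s\}=J$. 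In every case this produces
\[
  \mutilde(y,w)=\mutilde(\Tst(y),\Tst(w)),
\]
with no use of the hypothesis on $yw^{-1}$.

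To pass from $\mutilde$ to $\mu$, recall that $\mu(x,x')=\mutilde(x,x')$ when $x\le x'$ and $\mu(x,x')=0$ when $x\nleq x'$, while $\mutilde(x,x')=0$ when $x$ and $x'$ are incomparable; consequently it suffices to show that whenever $\mutilde(y,w)\neq 0$ one has $y\le w$ if and only if $\Tst(y)\le\Tst(w)$. Now $\Tst(z)$ is obtained from $z$ by left multiplication by a single simple reflection, which raises $l(z)$ by $1$ if $p_J(z)=t$ and lowers it by $1$ if $p_J(z)=ts$ (one reads off the relevant descent from \myrefP{prop:parabolic2}--2). If $\mutilde(y,w)\neq 0$, then $y$ and $w$ are Bruhat-comparable, and so are $\Tst(y)$ and $\Tst(w)$, since $\mutilde(\Tst(y),\Tst(w))=\mutilde(y,w)\neq 0$; for comparable elements the order is determined by length. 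In the two cases $p_J(y)=p_J(w)$ the lengths of $y$ and $w$ are shifted by the same amount, so the desired equivalence is immediate.

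In the mixed case, $l(\Tst(y))-l(\Tst(w))=l(y)-l(w)\pm 2$, so length comparison still gives the equivalence except in one boundary configuration: when $l(y)$ and $l(w)$ differ by exactly $1$, that is, when whichever of $y,w$ has $p_J$-value $t$ is covered by the one with $p_J$-value $ts$. Excluding this configuration is the crux, and is the only point at which the hypothesis is used. If $u\lessdot v$ with $p_J(u)=t$ and $p_J(v)=ts$, I would show $u^J=v^J$ --- contradicting $y^J\neq w^J$ --- by two applications of the lifting property of the Bruhat order (\cite{bjorner_brenti_2005}): since $t$ is a common left descent of $u$ and $v$, one first passes to the cover $tu\lessdot tv$; and since the simple reflection $s$ is then a left descent of exactly one of $tu,tv$, the lifting property applied to this cover yields $u^J\le v^J$, whence $u^J=v^J$ because the two have equal length. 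I expect this last step, the interplay between the length count and the lifting property, to be the main technical obstacle; the remaining work is a routine verification that the hypotheses of \myrefT{thm:stStrings} are met, and one could instead run the whole argument directly from \myrefT{thm:stStringsA}, at the cost of interleaving the Bruhat-order checks with the case analysis.
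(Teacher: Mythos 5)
Your proposal is correct, but it organizes the argument differently than the paper intends. The paper presents \myrefT{thm:talb} as a direct rephrasing of cases 1 and 2 of \myrefT{thm:stStringsA}, using the Bruhat-order hypotheses built into that theorem and the exclusion of case 3 via $yw^{-1}\notin W_J$; no separate argument is given. You instead first deduce the $\mutilde$-version (which is \myrefT{thm:talbTilde} and needs no hypothesis on $yw^{-1}$) from \myrefT{thm:stStrings}, and then upgrade $\mutilde$ to $\mu$ by tracking lengths and Bruhat comparability. The one delicate step in the upgrade is the boundary configuration in the mixed case, where lengths differ by exactly $1$, and your two applications of the lifting property — showing that a covering pair $u < v$ with $l(v)=l(u)+1$, $p_J(u)=t$, $p_J(v)=ts$ forces $u^J=v^J$ and hence $uv^{-1}\in W_J$ — are exactly the structural fact underlying case 3 of Kazhdan--Lusztig's Theorem~4.2. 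The detour through $\mutilde$ defers all Bruhat-order bookkeeping to a single self-contained second step, which is arguably cleaner than interleaving it with the $p_J$-value case analysis; both routes are valid, and your lifting argument gives an explicit verification of what the paper's ``rephrasing'' leaves implicit.
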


  Similarly, we can rephrase \myrefT{thm:stStrings} as follows:
  \begin{theorem}
    \label{thm:talbTilde}
    Let $s, t \in S$ with $st$ of order 3.
    Let $y, w \in D_{s,t}(W)$.
    Then $\mutilde(y,w) = \mutilde(T_{s,t}(y), T_{s,t}(w))$.
  \end{theorem}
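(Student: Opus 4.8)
The plan is to deduce this as a direct rephrasing of \myrefT{thm:stStrings} by a case analysis on the cosets of $W_J$ (with $J=\{s,t\}$) containing $y$ and $w$. Since $y,w\in D_{s,t}(W)$, the second description in \myrefD{def:domainKnuthA} gives $p_J(y),p_J(w)\in\{t,ts\}$, while \myrefD{def:KnuthA} tells us $T_{s,t}$ sends $w$ with $p_J(w)=t$ to $sw$ and $w$ with $p_J(w)=ts$ to $tw$. This leaves four cases for the pair $(p_J(y),p_J(w))$; since $\mutilde$ is symmetric (immediate from its definition) and the unordered pair $\{T_{s,t}(y),T_{s,t}(w)\}$ is symmetric in $y$ and $w$, the case $(t,ts)$ reduces to the case $(ts,t)$. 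So it suffices to treat $(ts,ts)$, $(t,t)$, and $(ts,t)$.

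In the case $p_J(y)=p_J(w)=ts$, I would put $L=T_{s,t}(y)=ty$ and $L'=T_{s,t}(w)=tw$; by \myrefP{prop:parabolic2} we get $p_J(L)=p_J(L')=s$ and $y=tL$, $w=tL'$, so $(L,y,L',w)$ is an instance of $(L,U,L',U')$ in case 1 of \myrefT{thm:stStrings}, yielding $\mutilde(y,w)=\mutilde(L,L')=\mutilde(T_{s,t}(y),T_{s,t}(w))$. In the case $p_J(y)=p_J(w)=t$ we have $T_{s,t}(y)=sy$, $T_{s,t}(w)=sw$, and I would apply case 1 of \myrefT{thm:stStrings} \emph{with $s$ and $t$ interchanged} (legitimate since $ts$ also has order $3$) to $L=y$, $L'=w$, $U=sy$, $U'=sw$, obtaining $\mutilde(T_{s,t}(y),T_{s,t}(w))=\mutilde(y,w)$. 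In the case $p_J(y)=ts$, $p_J(w)=t$, I would put $L=T_{s,t}(y)=ty$ (so $p_J(L)=s$, $y=tL$) and $L'=w$ (so $p_J(L')=t$, $T_{s,t}(w)=sw$); then $(L,y,L',sw)$ is an instance of $(L,U,L',U')$ in case 2 of \myrefT{thm:stStrings}, giving $\mutilde(T_{s,t}(y),T_{s,t}(w))=\mutilde(L,sw)=\mutilde(y,w)$.

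The only point that requires care is the bookkeeping: in each case one must verify, via \myrefP{prop:parabolic2}, that the $p_J$-images of the relabelled elements match the hypotheses of the relevant case of \myrefT{thm:stStrings}, and one must remember that case 1 has to be invoked in both the $(s,t)$ and the $(t,s)$ orientation while case 2 is self-dual. Beyond this there is no real obstacle; all of the substance is already contained in \myrefT{thm:stStrings}, and the statement correctly carries no Bruhat-order hypothesis because the $\mutilde$-version of the edge transport theorem has none either.
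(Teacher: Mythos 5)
Your proof is correct and takes exactly the route the paper intends. The paper presents \myrefT{thm:talbTilde} as a ``rephrasing'' of \myrefT{thm:stStrings} without writing out the verification; your case analysis on $(p_J(y),p_J(w))\in\{t,ts\}^2$, together with the observation that case 1 must be read in both the $(s,t)$ and $(t,s)$ orientations while case 2 covers the mixed case (and its $\mutilde$ form absorbs what was case~3 of \myrefT{thm:stStringsA}), is precisely the bookkeeping that turns the one statement into the other.
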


  Suppose $w \in \Dst$, and let $y = \Tst(w)$.
  We know that $s \in \tau(y)$ and $t \notin \tau(y)$.
  Let's record here the other possible changes to $\tau(y)$.
  We'll need this:

  \begin{proposition}
    \label{prop:tauCommuting}
    Let $s, t \in S$ with $st = ts$.
    Let $w \in W$.
    We have $t \in \tau(sw)$ if and only if $t \in \tau(w)$.
  \end{proposition}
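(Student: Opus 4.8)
The plan is to reduce to the rank-two parabolic subgroup $W_{\{s,t\}}$ and then check finitely many cases. Since $st = ts$ we have $tsw = stw$, so the two conditions in question, $t \in \tau(sw)$ and $t \in \tau(w)$, unwind to $l(stw) < l(sw)$ and $l(tw) < l(w)$ respectively, and we must show they are equivalent.

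Set $J = \{s, t\}$. Because $st = ts$, the parabolic subgroup $W_J$ has exactly four elements, $e$, $s$, $t$, and $st$, with lengths $0$, $1$, $1$, and $2$. Write $w = p_J(w)\,w^J$ as in \myrefP{prop:parabolic}. Taking $y_J = s$ in \myrefP{prop:parabolic2}--1 gives $p_J(sw) = s\,p_J(w)$, and \myrefP{prop:parabolic2}--2 (applied with the element $t$, and to both $w$ and $sw$) shows that $t \in \tau(w)$ if and only if $t \in \tau(p_J(w))$, and $t \in \tau(sw)$ if and only if $t \in \tau(s\,p_J(w))$. So it is enough to prove the proposition when $w \in W_J$, and there \myrefP{prop:parabolic}--2 lets us compute the left descent set inside $W_J$.

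It remains to verify, for each of the four elements $v \in W_J$, that $t \in \tau(v)$ if and only if $t \in \tau(sv)$. This is immediate: the unordered pair $\{v, sv\}$ is either $\{e, s\}$ or $\{t, st\}$, and from the length values above neither of $e$, $s$ has $t$ in its left descent set, while both of $t$, $st$ do. This proves the proposition.

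I expect no real obstacle. The two points to watch are that $\tau$ in this part of the paper means $\tau_L$, so it is the left-handed readings of \myrefP{prop:parabolic2} that are being used, and that $s$ lies in $W_J$, which is what makes $s\,p_J(w) = p_J(sw)$ legitimate. One could instead argue directly with lengths: $l(sw)$ and $l(tw)$ each equal $l(w) \pm 1$, and $l(stw)$ equals each of $l(sw)$ and $l(tw)$ shifted by $\pm 1$; the case $l(sw) = l(w) - 1$ together with $l(stw) = l(w)$ is impossible, since it would give $\{s, t\} \subseteq \tau(stw)$ and hence $l(w) = l(ststw) = l(stw) - 2 = l(w) - 2$, and ruling it out pins down $l(stw)$ and yields the equivalence directly.
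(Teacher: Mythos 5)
Your argument is correct and follows the paper's own route exactly: reduce to the four-element parabolic subgroup $W_J$ via \myrefP{prop:parabolic2}, then verify the claim by inspection there. The paper's proof just states the $W_J$ case is "clearly true" and cites \myrefP{prop:parabolic2}; you have merely filled in the same details explicitly.
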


  \begin{proof}
    If we let $J = \{s, t\}$, then the statement is clearly true for $w \in W_J$.
    So then the proposition follows from \myrefP{prop:parabolic2}.
  \end{proof}

  \begin{proposition}
    \label{prop:tauNonCommuting}
    Let $s, t \in S$.
    Let $w \in W$.
    If $s, t \notin \tau(w)$ then $t \notin \tau(sw)$.
    If $s, t \in \tau(w)$ then $t \in \tau(sw)$.
  \end{proposition}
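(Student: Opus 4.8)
The plan is to reduce to the rank-two parabolic subgroup $W_J$ with $J=\{s,t\}$, exactly as in the proof of \myrefP{prop:tauCommuting}. (Throughout we take $s\neq t$; for $s=t$ the first assertion is already false.) Set $v=p_J(w)$. By \myrefP{prop:parabolic2}--2, for each $r\in\{s,t\}$ we have $r\in\tau(w)$ if and only if $r\in\tau(v)$; and by \myrefP{prop:parabolic2}--1 applied with $y_J=s$ we have $p_J(sw)=sv$, so \myrefP{prop:parabolic2}--2 also gives $t\in\tau(sw)$ if and only if $t\in\tau(sv)$. Thus both the hypotheses and the conclusions of the two implications depend only on $v$, and it suffices to prove the statement when $w\in W_J$.

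So suppose $w\in W_J$. First I would record the one fact about $W_J$ that is needed: every $v\in W_J$ with $v\neq e$ satisfies $\tau(v)\cap J\neq\emptyset$. Indeed, taking a reduced expression $r_1\cdots r_k$ for $v$ in $(W_J,J)$, by \myrefP{prop:parabolic}--2 its length $k$ equals $l(v)$, while $l(r_1v)\le k-1$, so $r_1\in\tau(v)\cap J$. Equivalently, $\tau(v)\cap J=\emptyset$ forces $v=e$. Now the two cases. If $s,t\notin\tau(w)$, then $\tau(w)\cap J=\emptyset$, so $w=e$, whence $sw=s$ and $t\notin\{s\}=\tau(s)$, i.e.\ $t\notin\tau(sw)$. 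If $s,t\in\tau(w)$, then $sw\neq e$ (otherwise $w=s$, but $t\notin\tau(s)$), and $s\notin\tau(sw)$ because $l(s\cdot sw)=l(w)>l(sw)$, the inequality holding since $s\in\tau(w)$; as $sw$ is a non-identity element of $W_J$, the recorded fact gives $\tau(sw)\cap\{s,t\}\neq\emptyset$, and since $s$ is excluded we conclude $t\in\tau(sw)$.

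I do not anticipate a genuine obstacle here. The only points that require care are that \myrefP{prop:parabolic2} must be applied to $sw$ as well as to $w$ (this is where part~1 of that proposition enters, to identify $p_J(sw)=s\,p_J(w)$), and that inside $W_J$ the condition $\tau(v)\cap J=\emptyset$ is genuinely \emph{equivalent} to $v=e$, not merely implied by it, which is exactly what drives the first implication.
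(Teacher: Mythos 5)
Your proof is correct and takes the same approach as the paper: reduce to the dihedral subgroup $W_J$ with $J=\{s,t\}$ via \myrefP{prop:parabolic2}, then verify the claim there. The paper leaves the dihedral-group check as an assertion; your explicit argument (using that any non-identity element of $W_J$ has a non-empty left descent set inside $J$) supplies it correctly, including the sensible observation that $s\neq t$ is implicitly required for the statement to hold.
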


  \begin{proof}
    If we let $J = \{s, t\}$, then the statement is true for $w$ in the dihedral group $W_J$.
    Using \myrefP{prop:parabolic2}, we reduce to this case.
  \end{proof}

  Finally, we have this:

  \begin{proposition}
    \label{prop:tauTalb}
    Suppose $s, t \in S$ with $st$ of order 3, and suppose $w \in \Dst$.
    Let $r \in S \smallsetminus \{s, t\}$.
    \begin{enumerate}
      \item Suppose $r \in \tau(w)$ and $r \notin \tau(\Tst(w))$.
      Then $\Tst(w) = sw$ and $rs \neq sr$.
      If $rs$ is of order 3 then $T_{s,r}(w) = \Tst(w)$.
      \item Suppose $r \notin \tau(w)$ and $r \in \tau(\Tst(w))$.
      Then $\Tst(w) = tw$ and $rt \neq tr$.
      If $rt$ is of order 3 then $T_{r,t}(w) = \Tst(w)$.
    \end{enumerate}
  \end{proposition}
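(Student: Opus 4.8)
The plan is to treat parts (1) and (2) as mirror images of one another, so I describe (1) in detail and indicate how (2) is obtained. The pervasive tool is the dichotomy from \myrefD{def:domainKnuthA} and \myrefD{def:KnuthA}: writing $J=\{s,t\}$, membership $w\in\Dst$ means $p_J(w)\in\{t,ts\}$, with $\Tst(w)=sw$ in the first case and $\Tst(w)=tw$ in the second. Moreover $\Tst(w)$ always lies in $D_{t,s}(W)$, so $s\in\tau(\Tst(w))$ and $t\notin\tau(\Tst(w))$ are available without work, and $w\in\Dst$ gives $t\in\tau(w)$, $s\notin\tau(w)$.

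First I would pin down which of $sw$, $tw$ the element $\Tst(w)$ is. For part (1), suppose toward a contradiction that $p_J(w)=ts$, so $\Tst(w)=tw$. Since $t\in\tau(w)$ and, by hypothesis, $r\in\tau(w)$, the second clause of \myrefP{prop:tauNonCommuting} (with $t$ and $r$ in the roles of its two reflections) yields $r\in\tau(tw)=\tau(\Tst(w))$, contradicting $r\notin\tau(\Tst(w))$. Hence $p_J(w)=t$ and $\Tst(w)=sw$, the first assertion of (1). For the second assertion, if $rs=sr$ then \myrefP{prop:tauCommuting} gives $r\in\tau(sw)\iff r\in\tau(w)$; since $r\in\tau(w)$ this forces $r\in\tau(sw)=\tau(\Tst(w))$, a contradiction, so $rs\neq sr$.

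For the last assertion of (1), assume $rs$ has order $3$, so that $T_{s,r}$ and $D_{s,r}(W)$ are defined. From $r\in\tau(w)$ and $s\notin\tau(w)$ we get $w\in D_{s,r}(W)$, and by \myrefP{prop:talbAltDef} it is enough to show $sw\in D_{r,s}(W)$; but $s\in\tau(sw)$ (because $sw=\Tst(w)\in D_{t,s}(W)$) and $r\notin\tau(sw)$ (hypothesis), so $\tau(sw)\cap\{r,s\}=\{s\}$, i.e.\ $sw\in D_{r,s}(W)$, and therefore $T_{s,r}(w)=sw=\Tst(w)$. Part (2) runs identically with the roles of $s$ and $r$ exchanged: one uses the \emph{first} clause of \myrefP{prop:tauNonCommuting} (since now $s\notin\tau(w)$ and $r\notin\tau(w)$) to rule out $p_J(w)=t$ and conclude $\Tst(w)=tw$; \myrefP{prop:tauCommuting} again gives $rt\neq tr$; and when $rt$ has order $3$ one checks $w\in D_{r,t}(W)$ and $tw\in D_{t,r}(W)$ exactly as above, with $t$ now playing the role that $s$ played.

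I do not anticipate a genuine obstacle here: everything reduces to the three elementary descent-set facts \myrefP{prop:tauNonCommuting}, \myrefP{prop:tauCommuting}, and \myrefP{prop:talbAltDef}, together with the two descriptions of $\Dst$ and $\Tst$. The only thing demanding care is bookkeeping — keeping straight which of the three reflections $s$, $t$, $r$ occupies which slot in each invoked proposition, and recalling that $\Tst(w)\in D_{t,s}(W)$ hands us $s\in\tau(\Tst(w))$ and $t\notin\tau(\Tst(w))$ for free.
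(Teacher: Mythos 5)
Your argument is correct and uses exactly the three ingredients the paper's one-line proof cites (\myrefP{prop:tauCommuting}, \myrefP{prop:tauNonCommuting}, and \myrefP{prop:talbAltDef}); you have simply filled in the details of how the pieces fit together. In particular, your observation that \myrefP{prop:talbAltDef} already guarantees the intersection $D_{r,s}(W)\cap\{sw,rw\}$ is a singleton (so checking $sw\in D_{r,s}(W)$ suffices, without separately ruling out $rw$) is the right shortcut.
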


  \begin{proof}
    This follows from the previous two propositions, and \myrefP{prop:talbAltDef}.
  \end{proof}

  For convenience in finite Coxeter groups, we have the following.

  \begin{proposition}
    \label{prop:longElement}
    Let $(W, S)$ be a Coxeter system with $\abs{W}$ finite.
    Let $w_0 \in W$ be the long element.
    Let $s, t \in S$ with $st$ of order 3.
    Let $w \in W$.
    Then $w \in D_{s,t}(W)$ if and only if $ww_0 \in D_{t,s}(W)$.
    If $w \in D_{s,t}(W)$ then $T_{t,s}(ww_0) = T_{s,t}(w)w_0$.
  \end{proposition}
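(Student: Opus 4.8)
The plan is to reduce everything to the single standard fact that right multiplication by the long element $w_0$ reverses length, i.e.\ $l(ww_0) = l(w_0) - l(w)$ for all $w \in W$, and hence reverses the (left) \ti: $\tau(ww_0) = S \smallsetminus \tau(w)$. To see the latter, I fix $s \in S$ and compute $l\big(s(ww_0)\big) = l\big((sw)w_0\big) = l(w_0) - l(sw)$, so, comparing with $l(ww_0) = l(w_0) - l(w)$, we get $s \in \tau(ww_0)$ precisely when $l(sw) > l(w)$, that is, precisely when $s \notin \tau(w)$. (This length property of $w_0$ is not proved in the excerpt, but it is recorded in the references on Coxeter groups.)

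Granting this, the equivalence is immediate. Working on the left, $w \in D_{s,t}(W)$ means $\tau(w) \cap \{s,t\} = \{t\}$, and $v \in D_{t,s}(W)$ means $\tau(v) \cap \{s,t\} = \{s\}$. From $\tau(ww_0) = S \smallsetminus \tau(w)$ we obtain $\tau(ww_0) \cap \{s,t\} = \{s,t\} \smallsetminus \big(\tau(w) \cap \{s,t\}\big)$, so $\tau(w) \cap \{s,t\} = \{t\}$ if and only if $\tau(ww_0) \cap \{s,t\} = \{s\}$; that is, $w \in D_{s,t}(W)$ if and only if $ww_0 \in D_{t,s}(W)$. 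Since $ts$ also has order $3$, the same statement holds with $s$ and $t$ interchanged, which I use next.

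For the formula I will invoke the alternative description of the Knuth map from \myrefP{prop:talbAltDef} twice. Assume $w \in D_{s,t}(W)$. Then $T_{s,t}(w)$ is the unique element of $\{sw, tw\} \cap D_{t,s}(W)$; in particular $T_{s,t}(w)w_0 \in \{(sw)w_0, (tw)w_0\}$, and since $T_{s,t}(w) \in D_{t,s}(W)$, the first part of the proposition applied with $s$ and $t$ interchanged gives $T_{s,t}(w)w_0 \in D_{s,t}(W)$. On the other hand, $ww_0 \in D_{t,s}(W)$, so applying \myrefP{prop:talbAltDef} to $ww_0$ with the pair $(t,s)$ shows that $\{s(ww_0), t(ww_0)\} \cap D_{s,t}(W)$ is exactly the singleton $\{T_{t,s}(ww_0)\}$. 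Since $s(ww_0) = (sw)w_0$ and $t(ww_0) = (tw)w_0$, the element $T_{s,t}(w)w_0$ lies in this singleton, hence $T_{s,t}(w)w_0 = T_{t,s}(ww_0)$, as desired.

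I do not expect a genuine obstacle: the argument is essentially bookkeeping. The only points requiring care are keeping the left/right conventions straight (we are on the left and multiply by $w_0$ on the right, so $w_0$ complements $\tau$ rather than fixing it) and the correct invocation of the just-proved equivalence with $s$ and $t$ swapped when passing from $D_{t,s}(W)$ back to $D_{s,t}(W)$.
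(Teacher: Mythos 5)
Your proposal is correct and follows the same route as the paper: both rest on $l(ww_0) = l(w_0) - l(w)$ and the consequent complementation $\tau(ww_0) = S \smallsetminus \tau(w)$. The paper stops there with ``the proposition follows easily''; you have simply filled in that elision by invoking \myrefP{prop:talbAltDef} twice, which is the natural way to make the last step precise.
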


  \begin{proof}
    For $w \in W$ we have $l(ww_0) = l(w_0) - l(w)$ (see for example Proposition 2.3.2 of \cite{bjorner_brenti_2005}.)
    It follows easily that $\tau(ww_0) = S \smallsetminus \tau(w)$.
    The proposition follows easily from that.
  \end{proof}

  \section{\texorpdfstring{$B_2$}{B2} maps and the \texorpdfstring{$B_2$}{B2} Edge Transport Theorem}
  \label{sec:B2Maps}

  In this section we'll recall the second edge transport theorem, which concerns parabolic subgroups of type $B_2$.
  We'll also recall the definition of the maps associated to the theorem.
  This situation has many features in common with the $D_4$ edge transport theorem, which is the subject of this paper.

  Write $J = \{s, t\}$ with $st$ of order 4.
  The middle cell in $W_J$ has six elements, in two left cells.
  See \autoref{fig:B2}

  \begin{figure}[!ht]
    \begin{center}
      \begin{tikzpicture}[scale=0.6, every node/.style={scale=.75}]
        % Nodes
        \node at (0, 0) {\IdentityPictureBTwo};
        \node at (-1 * \xSB, 1 * \ySB) {\SPictureBTwo};
        \node at (1 * \xSB, 1 * \ySB) {\TPictureBTwo};
        \node at (-1 * \xSB, 2 * \ySB) {\TPictureBTwo};
        \node at (1 * \xSB, 2 * \ySB) {\SPictureBTwo};
        \node at (-1 * \xSB, 3 * \ySB) {\SPictureBTwo};
        \node at (1 * \xSB, 3 * \ySB) {\TPictureBTwo};
        \node at (0 * \xSB, 4 * \ySB) {\LongPictureBTwo};

        % lines
        \leftLineLabelA{magenta}{dashdotted}{0}{0}{s}{left}
        \rightLineLabelA{green}{dashdotted}{0}{0}{t}{right}
        \upLineLabelA{green}{solid}{-1}{1}{t}{left}
        \upLineLabelA{magenta}{solid}{1}{1}{s}{right}
        \upLineLabelA{magenta}{solid}{-1}{2}{s}{left}
        \upLineLabelA{green}{solid}{1}{2}{t}{right}
        \rightLineLabelA{green}{dashdotted}{-1}{3}{t}{left}
        \leftLineLabelA{magenta}{dashdotted}{1}{3}{s}{right}
        \draw[brown, thick] (-1 * \xSB, 2 * \ySB) ellipse (1.9cm and 3.6cm);
        \draw[brown, thick] (1 * \xSB, 2 * \ySB) ellipse (1.9cm and 3.6cm);
      \end{tikzpicture}
    \end{center}
    \caption{Weyl group of type $B_2$}
    \label{fig:B2}
  \end{figure}

  The two left cells of interest are circled.
  They are $\{s, ts, sts\}$ and $\{t, st, tst\}$.
  Lusztig (\cite{lusztig_1985}, Section 10.2) calls these left cells, and the corresponding left cells in $A_2$, strings.

  In this parabolic subgroup, again, an element's type is determined by its \ti.
  So, there are two types.

  Now, let's look at the edge transport theorem.  Note, lacking \cite{elias_williamson_2014}, this is not stated in full generality in \cite{lusztig_1985}.

  \begin{theorem}[\cite{lusztig_1985}, 10.4.2]
    \label{thm:stsStringsA}
     Let $s, t \in S$ with $st$ of order 4.
     Let $J$ be the parabolic subgroup generated by $\{s, t\}$.
    \begin{enumerate}
      \item Let $L, M, U, L', M', U' \in W$ with $p_J(L) = p_J(L') = s$, $M = tL$, $U = sM$, $M' = tL'$, $U' = sM'$.
      Then
      \begin{align*}
        &\mu(M, M') = \mu(L, L') + \mu(U, L')\\
        &\mu(U, U') = \mu(L, L')\\
        &\mu(U, L') = \mu(L, U')
      \end{align*}
      \item Let $L, M, U, L', M', U' \in W$ with $p_J(L) = s$, $p_J(L') = t$, $M = tL$, $U = sM$, $M' = sL'$, $U' = tM'$, and suppose $tL \neq sL'$.
      Then
      \begin{equation*}
        \mu(L, M') = \mu(U, M') = \mu(M, L')  = \mu(M, U')
      \end{equation*}
      \item Let $L, M, U, L', M', U' \in W$ with $p_J(L) = s$, $p_J(L') = t$, $M = tL$, $U = sM$, $M' = sL'$, $U' = tM'$, and assume that $tL = sL'$.
      Then
      \begin{equation*}
         \mu(L, M') = \mu(M', U) = \mu(L', M) = \mu(M, U') = 1
      \end{equation*}
    \end{enumerate}
  \end{theorem}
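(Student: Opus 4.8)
The plan is to prove all of the displayed $\mu$-equalities at once, by induction, following the pattern of the proof of \myrefT{thm:stStringsA} recalled above: the recursion \myrefP{prop:KL22c} supplies every relation we need, and the non-negativity result \myrefT{thm:nonNegative} is what promotes those relations to the clean equalities stated (as the remark preceding the theorem notes, this is exactly the ingredient unavailable in \cite{lusztig_1985}). By \myrefP{prop:parabolic2}, the type of each element --- which of the two strings $\{s,ts,sts\}$, $\{t,st,tst\}$ contains its $p_J$-image --- is read off from $\tau(\cdot)\cap J$, so the left-descent behaviour of $L,M,U,L',M',U'$ under $s$ and $t$ is the one displayed in \autoref{fig:B2}: for instance $sL<L$, $M=tL$, $tM<M$, $U=sM$, $sU<U<tU$, and analogously for the primed elements. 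I assume throughout that the Bruhat comparisons relevant to each equality hold, since otherwise both sides vanish.

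I first dispose of the degenerate situation in which the two strings lie in a common right coset $W_Ja$ (the strings ``collide''); this is in effect Case 3. Here \myrefP{prop:KLPolysParabolic}, together with \myrefP{prop:intermediate} and \myrefC{cor:iaJ}, lets me compute every Kazhdan-Lusztig polynomial, hence every $\mu$, appearing in the statement inside the finite Coxeter system $(W_J,J)$ of type $B_2$; a direct inspection of this group and of its middle two-sided cell produces the stated numbers --- all of them $1$ in Case 3. This computation also anchors the induction, so from here on I assume the two strings lie in distinct cosets and induct on the lengths of the elements involved.

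For the inductive step, which is Cases 1 and 2, I determine each relevant $\mu(x,y)$ by applying \myrefP{prop:KL22c} to $P_{x,y}$ with a simple reflection $r\in J$ such that $ry<y$ (the top of each string admits such a descent), and reading off the coefficient of $q^{d(x,y)}$. After the length bookkeeping, the first two terms of \myrefP{prop:KL22c} reproduce the pieces on the right-hand side of the target identity: recursing $P_{M,M'}$ on $t$ gives $P_{M,M'}=P_{L,L'}+q\,P_{M,L'}-\Sigma$, whose first term contributes $\mu(L,L')$, while the $q\,P_{M,L'}$ term and the sum $\Sigma$ together account for $\mu(U,L')$; the equalities of Case 2 and the symmetry $\mu(U,L')=\mu(L,U')$ emerge by matching two such expansions against each other, and the $\mu$-values between differently-typed pairs that should not appear (such as $\mu(U,M')$ in Case 1) vanish by the parity clause of \myrefP{prop:KL23g}.

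The main obstacle is the sum portion $\Sigma=\sum_{z\in S(r,x,ry)}\mu(z,ry)\,q^{(l(y)-l(z))/2}P_{x,z}$ of \myrefP{prop:KL22c}. Its contribution to the coefficient of $q^{d(x,y)}$ is supported on intermediate elements $z$ with $x\prec z\prec ry$, and everything comes down to showing that this contribution is exactly what the statement demands --- in particular, that the extra middle element of each $B_2$ string, which has no counterpart in the $A_2$ case, is precisely what forces the correction term in $\mu(M,M')=\mu(L,L')+\mu(U,L')$ and produces nothing further. I would argue that every surviving $z$ has $p_J(z)$ in the middle cell of $W_J$ and is of a predictable type, so that the pair $(x,z)$ or $(z,ry)$ is an instance of the present theorem of strictly smaller length, handled by the inductive hypothesis; combined with the vanishing furnished by \myrefP{prop:KL23g} and with non-negativity (\myrefT{thm:nonNegative}) excluding accidental cancellation, this pins down $\Sigma$. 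Isolating which $z$ actually survive and verifying that each is governed by a smaller instance of the theorem is the delicate heart of the argument --- the $B_2$ analogue of the case analysis in Kazhdan-Lusztig's proof of their Theorem 4.2, made heavier here by the presence of the middle elements $M$ and $M'$.
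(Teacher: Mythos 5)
The paper does not actually prove this theorem: it is cited from Lusztig, with the accompanying note that full generality requires \cite{elias_williamson_2014}. What the paper itself supplies in \myrefS{sec:B2Maps} is the implication to \myrefT{thm:stsStrings}, together with the remark that Case~3 can be handled by restricting to $W_J$ via \myrefP{prop:KLPolysParabolic}; the natural template to compare against is the paper's fully worked proof of the $D_4$ analogue, \myrefT{thm:mainA}, in \myrefS{sec:main}.

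Your plan has the right ingredients (the recursion \myrefP{prop:KL22c}, the restriction to $W_J$ for the collision case, and \myrefT{thm:nonNegative} to promote estimates to equalities), and your reading of Case~3 as the common-coset situation matches the intent. But for Cases~1 and~2 your strategy diverges from the paper's template and leaves the essential work undone. The proof of \myrefT{thm:mainA} does not induct on length. It applies \myrefP{prop:KL22c} to several distinct $(y,w)$ pairs drawn from the sextuple (see \myrefP{prop:CTop}, \myrefL{lem:LHS}, \myrefL{lem:Cs2}, feeding into \myrefL{lem:mainA1010}), obtains from each an inequality among the target $\mu$-values after discarding the unidentified part of the sum portion via non-negativity, and then closes a cycle of inequalities that forces equality throughout. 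Your proposed induction would need the surviving summands $\mu(M,z)\mu(z,L')$ in $\Sigma$ to be strictly smaller instances of the theorem; but by \myrefP{prop:KL23g} the surviving $z$ satisfy $z=rM$ or $L'=rz$ with $r\in S\smallsetminus J$, and then for instance the pair $(z,L')$ has $p_J$-types $(ts,s)$, which is not one of the pair types the theorem speaks to. Moreover, a single expansion of $P_{M,M'}$ with non-negativity gives only $\mu(M,M')\le\mu(L,L')+\mu(U,L')$; the reverse inequality needs companion expansions for other pairs in the sextuple, which your sketch does not set up. You correctly flag the isolation of the surviving $z$ as the ``delicate heart,'' but it is not a detail to be absorbed by an inductive hypothesis --- it is the central content, and the proposal leaves it open.
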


  \begin{figure}[!ht]
    \begin{center}
      \begin{tikzpicture}[scale=0.48, every node/.style={scale=.65}]
        \node at (-4.5, 0) {\begin{tikzpicture}[scale=0.6, every node/.style={scale=.9}]
          % Nodes
          \node at (-1.03 * \xSB, 0 * \ySB) {\dynkinLabelLeftABTwo{\IdentityPictureBTwo}{\phantom{L}}};
          \node (LNode) at (-1.03 * \xSB, 1 * \ySB)  {\dynkinLabelLeftABTwo{\SPictureBTwo}{L}};
          \node (MNode) at (-1.03 * \xSB, 2 * \ySB) {\dynkinLabelLeftABTwo{\TPictureBTwo}{M}};
          \node (UNode) at (-1.03 * \xSB, 3 * \ySB) {\dynkinLabelLeftABTwo{\SPictureBTwo}{U}};
          \node at (-1.03 * \xSB, 4 * \ySB) {\dynkinLabelLeftABTwo{\LongPictureBTwo}{\phantom{U}}};

          \node at (1.03 * \xSB, 0 * \ySB) {\dynkinLabelABTwo{\IdentityPictureBTwo}{\phantom{L'}}};
          \node (LPNode) at (1.03 * \xSB, 1 * \ySB) {\dynkinLabelABTwo{\SPictureBTwo}{L'}};
          \node (MPNode) at (1.03 * \xSB, 2 * \ySB) {\dynkinLabelABTwo{\TPictureBTwo}{M'}};
          \node (UPNode) at (1.03 * \xSB, 3 * \ySB) {\dynkinLabelABTwo{\SPictureBTwo}{U'}};
          \node at (1.03 * \xSB, 4 * \ySB) {\dynkinLabelABTwo{\LongPictureBTwo}{\phantom{U'}}};

          \node at (0 * \xSB, -.5 * \xSB) {\Large Case 1};

          % lines
          \upLineLabelA{magenta}{dashdotted}{-.9}{0}{s}{left}
          \upLineLabelA{green}{solid}{-.9}{1}{t}{left}
          \upLineLabelA{magenta}{solid}{-.9}{2}{s}{left}
          \upLineLabelA{green}{dashdotted}{-.9}{3}{t}{left}

          \upLineLabelA{magenta}{dashdotted}{.9}{0}{s}{right}
          \upLineLabelA{green}{solid}{.9}{1}{t}{right}
          \upLineLabelA{magenta}{solid}{.9}{2}{s}{right}
          \upLineLabelA{green}{dashdotted}{.9}{3}{t}{right}

          \draw[->, very thick, cyan, dotted] (LNode.east) -- (LPNode.west);
          \draw[->, very thick, orange, dotted] (MNode.east) -- (MPNode.west);
          \draw[->, very thick, cyan, dotted] (UNode.east) -- (UPNode.west);
          \draw[->, very thick, indigo, dotted, shorten >=0.15cm] (LNode.east) -- (UPNode.west);
          \draw[->, very thick, indigo, dotted, shorten >=0.15cm] (UNode.east) -- (LPNode.west);
        \end{tikzpicture}};
        \node at (4.5, 0) {\begin{tikzpicture}[scale=0.6, every node/.style={scale=.9}]
          % Nodes
          \node at (-1.03 * \xSB, 0 * \ySB) {\dynkinLabelLeftABTwo{\IdentityPictureBTwo}{\phantom{L}}};
          \node (LNode) at (-1.03 * \xSB, 1 * \ySB)  {\dynkinLabelLeftABTwo{\SPictureBTwo}{L}};
          \node (MNode) at (-1.03 * \xSB, 2 * \ySB) {\dynkinLabelLeftABTwo{\TPictureBTwo}{M}};
          \node (UNode) at (-1.03 * \xSB, 3 * \ySB) {\dynkinLabelLeftABTwo{\SPictureBTwo}{U}};
          \node at (-1.03 * \xSB, 4 * \ySB) {\dynkinLabelLeftABTwo{\LongPictureBTwo}{\phantom{U}}};

          \node at (1.03 * \xSB, 0 * \ySB) {\dynkinLabelABTwo{\IdentityPictureBTwo}{\phantom{L'}}};
          \node (LPNode) at (1.03 * \xSB, 1 * \ySB) {\dynkinLabelABTwo{\TPictureBTwo}{L'}};
          \node (MPNode) at (1.03 * \xSB, 2 * \ySB) {\dynkinLabelABTwo{\SPictureBTwo}{M'}};
          \node (UPNode) at (1.03 * \xSB, 3 * \ySB) {\dynkinLabelABTwo{\TPictureBTwo}{U'}};
          \node at (1.03 * \xSB, 4 * \ySB) {\dynkinLabelABTwo{\LongPictureBTwo}{\phantom{U'}}};

          \node at (0 * \xSB, -.5 * \xSB) {\Large Case 2};

          % lines
          \upLineLabelA{magenta}{dashdotted}{-.9}{0}{s}{left}
          \upLineLabelA{green}{solid}{-.9}{1}{t}{left}
          \upLineLabelA{magenta}{solid}{-.9}{2}{s}{left}
          \upLineLabelA{green}{dashdotted}{-.9}{3}{t}{left}

          \upLineLabelA{green}{dashdotted}{.9}{0}{t}{right}
          \upLineLabelA{magenta}{solid}{.9}{1}{s}{right}
          \upLineLabelA{green}{solid}{.9}{2}{t}{right}
          \upLineLabelA{magenta}{dashdotted}{.9}{3}{s}{right}

          \draw[->, very thick, cyan, dotted, shorten >=0.1cm] (LNode.east) -- (MPNode.west);
          \draw[->, very thick, cyan, dotted] (MNode.east) -- (LPNode.west);
          \draw[->, very thick, cyan, dotted] (MNode.east) -- (UPNode.west);
          \draw[->, very thick, cyan, dotted] (UNode.east) -- (MPNode.west);
        \end{tikzpicture}};
        \node at (13.5, 0) {\begin{tikzpicture}[scale=0.6, every node/.style={scale=.9}]
          % Nodes
          \node at (-.13 * \xSB, 0 * \ySB) {\dynkinLabelLeftABTwo{\IdentityPictureBTwo}{\phantom{L}}};
          \node (LNode) at (-1.03 * \xSB, 1 * \ySB)  {\dynkinLabelLeftABTwo{\SPictureBTwo}{L}};
          \node (MNode) at (-1.03 * \xSB, 2 * \ySB) {\dynkinLabelLeftABTwo{\TPictureBTwo}{M}};
          \node (UNode) at (-1.03 * \xSB, 3 * \ySB) {\dynkinLabelLeftABTwo{\SPictureBTwo}{U}};
          \node at (-.13 * \xSB, 4 * \ySB) {\dynkinLabelLeftABTwo{\LongPictureBTwo}{\phantom{U}}};

          \node (LPNode) at (1.03 * \xSB, 1 * \ySB) {\dynkinLabelABTwo{\TPictureBTwo}{L'}};
          \node (MPNode) at (1.03 * \xSB, 2 * \ySB) {\dynkinLabelABTwo{\SPictureBTwo}{M'}};
          \node (UPNode) at (1.03 * \xSB, 3 * \ySB) {\dynkinLabelABTwo{\TPictureBTwo}{U'}};

          \node at (0 * \xSB, -.5 * \xSB) {\Large Case 3};

          % lines
          \leftLineLabelA{magenta}{dashdotted}{0}{0}{s}{left}
          \upLineLabelA{green}{solid}{-.9}{1}{t}{left}
          \upLineLabelA{magenta}{solid}{-.9}{2}{s}{left}
          \rightLineLabelA{green}{dashdotted}{-.9}{3}{t}{left}

          \rightLineLabelA{green}{dashdotted}{0}{0}{t}{right}
          \upLineLabelA{magenta}{solid}{.9}{1}{s}{right}
          \upLineLabelA{green}{solid}{.9}{2}{t}{right}
          \leftLineLabelA{magenta}{dashdotted}{.9}{3}{s}{right}

          \draw[->, very thick, cyan, dotted, shorten >=0.1cm] (LNode.east) -- (MPNode.west);
          \draw[<-, very thick, cyan, dotted, shorten <=0.1cm] (MNode.east) -- (LPNode.west);
          \draw[->, very thick, cyan, dotted] (MNode.east) -- (UPNode.west);
          \draw[<-, very thick, cyan, dotted] (UNode.east) -- (MPNode.west);
        \end{tikzpicture}};
      \end{tikzpicture}
    \end{center}
    \caption{\myrefT{thm:stsStringsA}}
    \label{fig:stsStringsA}
  \end{figure}

  \myrefT{thm:stsStringsA} is pictured in \autoref{fig:stsStringsA}.
  The dotted lines in the center of each diagram are the edges which are the subject of the theorem.
  Their arrowheads indicate the direction of the Bruhat order comparison.  ($W$-graph edges are undirected.)
  In Cases 2 and 3, the theorem says that if one of the blue edges is present in the $W$-graph, then so are the others.

  Case 1 is more complicated.
  It says that if one of the blue edges is present in the $W$-graph, then so is the other, and similarly for the purple edges.
  If one of either the blue edges or the purple edges is present, then so is the orange edge.
  (We need \myrefT{thm:nonNegative} as well as \myrefT{thm:stStringsA} to conclude this.)
  If the orange edge is present, then either the blue edges or the purple edges are present, or possibly all are.

  Note again, except for the Case 3 picture, the pictures in \autoref{fig:stsStringsA} don't accurately compare the lengths of the elements on the left to those of the elements on the right.
  In the Case 1 picture, if the light blue lines represent edges (that is, $\mu$ is non-zero), then $l(L')$ must be at least one greater than $l(L)$.
  If the purple lines represent edges , then $l(L')$ must be at least one greater than $l(U)$.
  In the Case 2 picture, if the light blue lines represent edges, then $l(M')$ must be at least one greater than $l(U)$.

  \begin{remark}
    To prove statement 3 of \myrefT{thm:stStringsA}, we can start by looking within $W_J$.
    Note that there, the blue edges are just the edges which we are already familiar with, but on the right.
    That is, the blue edges connect $s$ with $st$, $st$ with $sts$, $t$ with $ts$, and $ts$ with $tst$.
    We can go from $W_J$ to the general case using \myrefP{prop:KLPolysParabolic}.

    This is different than the argument in the proof of Theorem 4.2 of \citeKL, which relies on their Lemma 2.6(iii).
    We will, however, use this line of argument in the proof of Case 3 of \myrefT{thm:mainA}, where we won't always have Lemma 2.6(iii) available.
  \end{remark}

  Again, for some applications, we only care about $\mutilde$ values.
  The $\mutilde$ version of \myrefT{thm:stsStringsA} is as follows.

  \begin{theorem}[\cite{lusztig_1985}, 10.4.2]
    \label{thm:stsStrings}
     Let $s, t \in S$ with $st$ of order 4.
     Let $J$ be the parabolic subgroup generated by $\{s, t\}$.
    \begin{enumerate}
      \item Let $L, M, U, L', M', U' \in W$ with $p_J(L) = p_J(L') = s$, $M = tL$, $U = sM$, $M' = tL'$, and $U' = sM'$.  Then
      \begin{align*}
        &\mutilde(M, M') = \mutilde(L, L') + \mutilde(U, L')\\
        &\mutilde(U, U') = \mutilde(L, L')\\
        &\mutilde(U, L') = \mutilde(L, U')
      \end{align*}
      \item Let $L, M, U, L', M', U' \in W$ with $p_J(L) = s$, $p_J(L') = t$, $M = tL$, $U = sM$, $M' = sL'$, and $U' = tM'$.  Then
      \begin{equation*}
        \mutilde(L, M') = \mutilde(U, M') = \mutilde(M, L')  = \mutilde(M, U')
      \end{equation*}
    \end{enumerate}
  \end{theorem}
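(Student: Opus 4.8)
The statement is the $\mutilde$-reformulation of \myrefT{thm:stsStringsA}, so the plan is to deduce it from that theorem together with \myrefT{thm:nonNegative} and the lifting property of the Bruhat order (see \cite{bjorner_brenti_2005}). First I would collect the relevant length and comparability facts. In each configuration $M=tL$ and $U=sM$, and since $p_J(L)$ forces $t\notin\tau(L)$ and then $s\notin\tau(M)$ (via \myrefP{prop:parabolic2}), we have $tL>L$ and $sM>M$, so $l(U)=l(M)+1=l(L)+2$, and likewise on the primed side. Moreover, since $\mu(z,w)\neq 0$ forces $l(w)-l(z)$ to be odd, any two of the six elements admit at most one Bruhat comparison carrying a nonzero $\mu$; in particular $\mutilde$ between any two of them is unambiguously ``$\mu$ taken uphill,'' and it vanishes between two elements whose length difference is even (for instance between the two ends of a single string).

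The next step is a lifting lemma for strings: applying the left lifting property to multiplication by $t$ and then by $s$, one shows that in the Case~1 configuration $L\le L'$ implies $M\le M'$, $U\le U'$, $L\le M'$, $M\le U'$ and $L\le U'$ (and symmetrically if $L'\le L$); and one describes, in the Case~2 configuration, exactly which cross-string comparisons hold as a function of a single comparison. This last point is where the split into the generic sub-case $tL\neq sL'$ and the degenerate sub-case $tL=sL'$ enters.

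For Case~1 of the theorem: by symmetry of $\mutilde$ I may assume $L\le L'$ (if $L$ and $L'$ are Bruhat-incomparable, a short argument with \autoref{eq:KL22c} shows every relevant $\mutilde$ vanishes, since $\mutilde(L,L')=0$). Then \myrefT{thm:stsStringsA}, part~1, applies and yields the three $\mu$-identities. By the lifting lemma $\mutilde(L,L')=\mu(L,L')$, $\mutilde(M,M')=\mu(M,M')$, $\mutilde(U,U')=\mu(U,U')$ and $\mutilde(L,U')=\mu(L,U')$; together with $\mutilde(U,L')=\mu(U,L')=\mu(L,U')$ — where \myrefT{thm:nonNegative} guarantees that $\mu(U,L')$ vanishes exactly when $\mu(L,U')$ does, which covers the borderline case $U\not\le L'$ — this gives the second and third identities, and then the first by substitution into $\mu(M,M')=\mu(L,L')+\mu(U,L')$. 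Case~2 is handled the same way: in the sub-case $tL\neq sL'$ invoke \myrefT{thm:stsStringsA}, part~2, and in the sub-case $tL=sL'$ invoke part~3, where the strings partially coincide and all four $\mutilde$'s equal $1$ by inspection of adjacent elements; the lifting lemma again shows each $\mu$ appearing in \myrefT{thm:stsStringsA} is the uphill one, so the chain of equalities transfers verbatim.

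The main obstacle I anticipate is the bookkeeping: verifying that the asymmetric-looking identity $\mutilde(M,M')=\mutilde(L,L')+\mutilde(U,L')$ is consistent under interchanging the two strings — that is, that $\mutilde(U,L')=\mutilde(U',L)$ — which forces one to apply \myrefT{thm:stsStringsA}, part~1, to whichever of the two strings lies lower in the Bruhat order, and to use \myrefT{thm:nonNegative} to rule out cancellation between the two summands. Care is also needed in the coincident and incomparable boundary cases, where the cleanest recourse is the Kazhdan-Lusztig recursion \autoref{eq:KL22c} directly.
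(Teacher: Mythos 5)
Your plan is the same as the paper's: reduce each $\mutilde$ in the statement to the corresponding $\mu$ appearing in \myrefT{thm:stsStringsA} and then cite that theorem. The gap is in the one step of that reduction that actually requires an argument, namely that $\mutilde(U,L')=\mu(U,L')$ in Case~1. With $l(U)=l(L)+2$, the situation $L<L'$, $\mu(L,L')\neq 0$, and $l(L')=l(L)+1$ is entirely possible, and in that range the comparison $L'<U$ is live; if $L'<U$ with $\mu(L',U)\neq 0$, then $\mutilde(U,L')=\mu(L',U)$, not $\mu(U,L')$. What must therefore be shown is that $\mu(L',U)=0$. Your appeal to \myrefT{thm:nonNegative} ``guaranteeing that $\mu(U,L')$ vanishes exactly when $\mu(L,U')$ does'' does not touch this: those two $\mu$'s are already equal by \myrefT{thm:stsStringsA} (no positivity is needed for that), and neither of them is $\mu(L',U)$. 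The lifting lemma as you state it does not help either, because none of the comparisons you list ($M\le M'$, $U\le U'$, $L\le M'$, $M\le U'$, $L\le U'$) excludes $L'<U$.

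The paper closes exactly this gap with a move you gesture at in your final paragraph but never carry out: from $\mu(M,M')\neq 0$ (forced by any nonzero term via the first equation together with \myrefT{thm:nonNegative}) one has $L<M<M'<U'$, hence $\mu(U',L)=0$; then \myrefT{thm:stsStringsA} applied with the two strings interchanged gives $\mu(L',U)=\mu(U',L)=0$, and only now does $\mutilde(U,L')=\mu(U,L')$ follow. An alternative repair in the spirit of your lifting lemma is to note that $L'<U$ would give $L'\le sU=M$ and then $L'\le tM=L$, which combined with $L<L'$ forces $L=L'$; but that fact is not among the comparisons you derive, and the substitute you actually wrote down (non-negativity) does not do the job. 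Once this is fixed, the rest of your outline — including the separate treatment of $tL=sL'$ in Case~2 — matches the paper's argument.
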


  We'll note the proof that \myrefT{thm:stsStringsA} implies \myrefT{thm:stsStrings} here, since we'll need the same argument in \autoref{sec:main}.

  \begin{proof}[Proof of \myrefT{thm:stsStrings} given \myrefT{thm:stsStringsA}]
    Note that, in general, $\mutilde(x, y) = \mu(x, y)$ unless $y < x$.
    Given \myrefT{thm:stsStringsA}, we just need to show that all the $\mutilde$ terms in \myrefT{thm:stsStrings} are equal to their corresponding $\mu$ terms.
    We can assume that at least one of the $\mutilde$ terms in \myrefT{thm:stStrings} is non-zero.
    So then one of the $\mu$ terms must be non-zero. Without loss of generality, we can assume this term is of the form $\mu(x, y)$ with $x \in \{L, M, U\}$.
    Taking first the first statement of \myrefT{thm:stsStrings},
    since one of the $\mu$ terms is non-zero, then we must have $\mu(M, M') \neq 0$.
    Then $M < M'$, and so $L < M < M' < U'$, that is, $L < U'$, and thus $\mu(U', L) = 0$.
    Applying \myrefT{thm:stStringsA}, with the sides reversed, we conclude that $\mu(L', U) = 0$.
    Thus $\mutilde(L, U') = \mu(L, U')$ and $\mutilde(U, L') = \mu(U, L')$.
    The equality of the other $\mutilde$ terms to their corresponding $\mu$ terms is clear.

    For statement 2 the argument is easier.
    If one of the four $\mu$ terms in one direction is non-zero, then they all are, and thus are equal to their corresponding $\mutilde$ terms.
  \end{proof}

  It's a little more complicated to define the maps coming from this edge transport theorem than to define the Knuth (or $A_2$) maps, since here sometimes an element of one type corresponds to two elements of the other type.
  So for this situation, the image of an element in the domain of one of these maps will be a set of one or two elements.
  Otherwise, the definitions are like \myrefD{def:domainKnuthA} and \myrefD{def:KnuthA}.
  We'll call these maps $B_2$ maps, since they come from a parabolic subgroup of type $B_2$.

  \begin{definition}
    \label{def:domainKnuthB}
    Let $s,t \in S$ with $st$ of order 4.
    Let $J = \{s, t\}$.
    We set
    \begin{equation*}
      D_{s,t}(W) = \{w \in W \mid \tau(w)\cap \{s,t\} = t \}.
    \end{equation*}
    Equivalently, by \myrefP{prop:parabolic2}, we can write
    \begin{equation*}
      D_{s,t}(W) = \{w \in W \mid p_J(w) \in \{t, ts, tst\} \}.
    \end{equation*}
  \end{definition}

  \begin{definition}
    \label{def:KnuthB}
    Let $s,t \in S$ with $st$ of order 4.
    Let $J = \{s, t\}$.
    We define the map
    \begin{equation*}
      T_{s,t}: D_{s,t}(W) \longrightarrow \mathcal P \left({D_{t,s}(W)}\right)
    \end{equation*}
    as follows:
    \begin{enumerate}
      \item If $p_J(w) = t$ then $T_{s,t}(w) = \{sw\}$.
      \item If $p_J(w) = ts$ then $T_{s,t}(w) = \{sw, tw\}$.
      \item If $p_J(w) = tst$ then $T_{s,t}(w) = \{tw\}$.
    \end{enumerate}
  \end{definition}

  Again, we have the alternate characterization:

  \begin{proposition}
    \label{prop:KnuthBAltDef}
    We have
    \begin{equation*}
      T_{s,t}(w) = D_{t,s}(W) \cap \{sw, tw\}.
    \end{equation*}
  \end{proposition}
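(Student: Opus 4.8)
The plan is to mimic the proof of \myrefP{prop:talbAltDef}: the statement is the $B_2$ analogue of that result, and it reduces to a short computation inside the dihedral group $W_J$ of order $8$. The key observation is that both sides of the asserted identity depend only on $p_J(w)$. Indeed, by \myrefP{prop:parabolic2}--1 we have $p_J(sw) = s\,p_J(w)$ and $p_J(tw) = t\,p_J(w)$, and by \myrefP{prop:parabolic2}--2 whether an element of $W$ lies in $D_{t,s}(W)$ is determined by its image under $p_J$. So it suffices to check the identity for $w = p_J(w)$, that is, for the three elements $w \in \{t, ts, tst\}$ which make up $D_{s,t}(W) \cap W_J$.

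Then I would simply run through the three cases, using that $D_{t,s}(W) \cap W_J = \{s, st, sts\}$ (which is \myrefD{def:domainKnuthB} with $s$ and $t$ interchanged) and that the long element of $W_J$ is $stst = tsts$. For $w = t$, one has $sw = st \in \{s,st,sts\}$ but $tw = e \notin \{s,st,sts\}$, so $D_{t,s}(W) \cap \{sw, tw\} = \{sw\}$, matching case (1) of \myrefD{def:KnuthB}. For $w = ts$, both $sw = sts$ and $tw = s$ lie in $\{s,st,sts\}$, so the intersection is $\{sw, tw\}$, matching case (2). For $w = tst$, $sw = stst$ is the long element (length $4$), which is not in $\{s,st,sts\}$, while $tw = st$ is, so the intersection is $\{tw\}$, matching case (3). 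This exhausts \myrefD{def:KnuthB}.

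There is no genuine obstacle; the only thing to be careful about is the bookkeeping of the left cosets $W_J x$ (equivalently, the lengths $l_J(sw)$, $l_J(tw)$ inside $W_J$) so that one correctly decides which of $sw, tw$ lies in $D_{t,s}(W)$. If one prefers to avoid the reduction to $W_J$ altogether, one can argue directly as in \myrefP{prop:talbAltDef}: write $w = p_J(w) a$ with $a \in W^J$, compute $sw$ and $tw$ in each of the three cases for $p_J(w)$, and invoke \myrefP{prop:parabolic2} to read off membership in $D_{t,s}(W)$.
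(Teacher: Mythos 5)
The paper omits a proof of \myrefP{prop:KnuthBAltDef}, offering it as the evident $B_2$ analogue of \myrefP{prop:talbAltDef}, and your proposal carries out exactly that implicit argument: reduce to $W_J$ via \myrefP{prop:parabolic2} and verify the three cases $p_J(w) \in \{t, ts, tst\}$ against \myrefD{def:KnuthB}. The case analysis is correct and matches the approach the paper takes for \myrefP{prop:talbAltDef}.
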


  \begin{remark}
    Our definition is the Coxeter group version of the map $T_{\alpha\beta}$ of \cite[Definition 3.4 b]{vogan_1979}.
    Lusztig in Section 10.6 of \cite{lusztig_1985} defines a $*$ map and a map $w \mapsto \tilde w$, but this is neither.
    We'll see the map $w \mapsto \tilde w$ later, in \myrefS{sec:otherMaps}.
  \end{remark}

  \section{\texorpdfstring{$D_4$}{D4} Cells}
  \label{sec:D4cells}

  In this section we describe the left cells in the middle two-sided cell of a Weyl group of type $D_4$.
  For this section and the next, we assume that we have elements $s_1, s_2, s_3, s_4 \in S$ as shown below.

  \begin{figure}[H]
    \centering
    \begin{tikzpicture}
      \draw[blue, very thick] (-.293, .707) -- (-1, 0);
      \draw[blue, very thick] (-.293, -.707) -- (-1, 0);
      \filldraw[blue] (-.293, .707) circle (4pt) node [below=.15cm, black]{$s_1$};
      \filldraw[blue] (-.293, -.707) circle (4pt) node [below=.15cm, black]{$s_2$};
      \draw[blue, very thick] (-2, 0) -- (-1, 0);
      \filldraw[blue] (-1, 0) circle (4pt) node [below=.15cm, black]{$s_3$};
      \filldraw[blue] (-2, 0) circle (4pt) node [below=.15cm, black]{$s_4$};
    \end{tikzpicture}
    \caption{$D_4$}
    \label{fig:D4}
  \end{figure}

  That is, $s_1s_3$, $s_2s_3$, and $s_4s_3$ are of order 3, whereas $s_1s_2$, $s_1s_4$, and $s_2s_4$ are of order 2.

  We'll write $\WDFour$ for the parabolic subgroup of $W$ generated by $\SDFour = \DFourSet$.
  For $w \in W$, we'll write $\tDFour(w) = \tau(w) \cap \SDFour$.
  We will write $T_{i,j}$ as a shorthand for $T_{s_i,s_j}$.

  Note that the (nonstandard) numberical labels are chosen for compatibility with later papers in this series, which study the Weyl group of type $D_n$ using domino tableaux.
  In this paper, there is no significance to the choice of which of the three outer elements are labeled $s_1$, $s_2$, and $s_4$.

  The two-sided cell in the middle of $\WDFour$ is a union of eight left cells.
  In this section, we'll look at those cells and describe how they are divided into types.
  We'll state and prove some of the results which we'll need for what follows.

  The first two left cells are shown in \autoref{fig:type10Full}.

  % 10
  \begin{figure}[!ht]
    \begin{center}
      \begin{tikzpicture}[scale=0.7,  every node/.style={transform shape}]
        % Nodes
        \node (CNode) at (0, 0) {\dynkinLabel{\TypeCPicture}{C}};
        \node (DNode) at (0, \ySLT) {\dynkinLabel{\TypeDPicture}{D}};
        \node (A2Node) at (-1 * \xSLT, 2 * \ySLT) {\dynkinLabel{\TypeAPicture{2}}{A_2}};
        \node (A4Node) at (0, 2 * \ySLT) {\dynkinLabel{\TypeAPicture{4}}{A_4}};
        \node (A1Node) at (1 * \xSLT, 2 * \ySLT) {\dynkinLabel{\TypeAPicture{1}}{A_1}};
        \node (B1Node) at (-1 * \xSLT, 3 * \ySLT) {\dynkinLabel{\TypeBPicture{1}}{B_1}};
        \node (B4Node) at (0, 3 * \ySLT) {\dynkinLabel{\TypeBPicture{4}}{B_4}};
        \node (B2Node) at (1 * \xSLT, 3 * \ySLT) {\dynkinLabel{\TypeBPicture{2}}{B_2}};
        \node (CPNode) at (0, 4 * \ySLT) {\dynkinLabel{\TypeCPicture}{C}};
        \node (DPNode) at (0, 5 * \ySLT) {\dynkinLabel{\TypeDPicture}{D}};

        % lines
        \upLineT{darkblue}{solid}{0}{0}
        \upLineT{goldenpoppy}{dashed}{0}{1}
        \rightLineT{magenta}{dashed}{0}{1}
        \leftLineT{green}{dashed}{0}{1}
        \rightLineT{magenta}{solid}{-1}{2}
        \upLineT{goldenpoppy}{solid}{-1}{2}
        \leftLineT{green}{solid}{0}{2}
        \leftLineT{green}{solid}{1}{2}
        \upLineT{goldenpoppy}{solid}{1}{2}
        \rightLineT{magenta}{solid}{0}{2}
        \leftLineT{green}{dashed}{1}{3}
        \rightLineT{magenta}{dashed}{-1}{3}
        \upLineT{goldenpoppy}{dashed}{0}{3}
        \upLineT{darkblue}{solid}{0}{4}

        \draw[bend left = 60, gray, dashed] (DPNode) to (A1Node);
        \draw[bend right = 60, gray, dashed] (DPNode) to (A2Node);
        \draw[bend left = 30, gray, dashed] (DPNode) to (A4Node);
        \draw[bend left = 60, gray, dashed] (B2Node) to (CNode);
        \draw[bend right = 60, gray, dashed] (B1Node) to (CNode);
        \draw[bend right = 30, gray, dashed] (B4Node) to (CNode);

        % Legend
        \matrix [draw, below left, every node/.style={scale=.7}]
        at (current bounding box.north east)
        {
          \node [legendLine, draw=magenta,label=right:{$s_1$}] {}; \\
          \node [legendLine, draw=green,label=right:{$s_2$}] {}; \\
          \node [legendLine, draw=darkblue,label=right:{$s_3$}] {}; \\
          \node [legendLine, draw=goldenpoppy,label=right:{$s_4$}] {}; \\
        };
      \end{tikzpicture}
    \end{center}
    \caption{$C(10, a)$ or $C(10, b)$}
    \label{fig:type10Full}
  \end{figure}

  In any such illustration, each mini Dynkin diagram represents an element of $\WDFour$ or of $W$.
  Elements of $\SDFour$ not in the \ti\ of the Coxeter group element are colored blue, whereas elements of $\SDFour$  in the \ti\ are colored red and are a little larger.
  For the lines in the diagrams which appear in this section, and in subsequent sections, we'll use the following conventions:
  \begin{itemize}
    \item Solid lines are Knuth maps.
    \item Dashed lines which are not gray are $D_4$ maps. (To be defined later.)
    \item Gray lines are connections where $\mu(y, w) = 1$, but which don't come from multiplication by an element of $\SDFour$ on the left.
    \item Dash dotted lines are lines which are neither Knuth moves nor $D_4$ maps.
    \item Dotted lines are lines where we don't have enough information to determine which of the above situations apply.
    \item With the exception of the gray lines, lines come from multiplying on the left by elements of $\SDFour$.
    Colors are the same in all diagrams.
  \end{itemize}

  The first two left cells of interest each have the structure shown in \autoref{fig:type10Full}.
  They are the cells whose bottom elements are $s_1s_2s_4$ and $s_1s_2s_4s_3$.  We'll call them $C(10, a)$ and $C(10, b)$, respectively.

  The next three left cells are illustrated by \autoref{fig:type14aFull}.
  The cell shown in that figure has bottom element $s_4s_3s_4$.  We'll call this cell $C(14, a, 4)$.
  The other two cells of this type have bottom element $s_1s_3s_1$ or $s_2s_3s_2$.  We'll call them $C(14, a, 1)$ and $C(14, a, 2)$, respectively.

  %  14
  \begin{figure}[!ht]
    \begin{center}
      \begin{tikzpicture}[scale=0.7,  every node/.style={transform shape}]
        % Nodes
        \node (A4Node) at (0,0) {\dynkinLabel{\TypeAPicture{4}}{A_4}};
        \node (B2Node) at (-1 * \xSLF, 1 * \ySLF) {\dynkinLabel{\TypeBPicture{2}}{B_2}};
        \node (B1Node) at (1 * \xSLF, 1 * \ySLF) {\dynkinLabel{\TypeBPicture{1}}{B_1}};
        \node (A1Node) at (-2 * \xSLF, 2 * \ySLF) {\dynkinLabel{\TypeAPicture{1}}{A_1}};
        \node (CNode) at (0, 2 * \ySLF) {\dynkinLabel{\TypeCPicture}{C}};
        \node (A2Node) at (2 * \xSLF, 2 * \ySLF) {\dynkinLabel{\TypeAPicture{2}}{A_2}};
        \node (B4Node) at (-2 * \xSLF, 3 * \ySLF) {\dynkinLabel{\TypeBPicture{4}}{B_4}};
        \node (DNode) at (0, 3 * \ySLF) {\dynkinLabel{\TypeDPicture}{D}};
        \node (B4PNode) at (2 * \xSLF, 3 * \ySLF) {\dynkinLabel{\TypeBPicture{4}}{B_4}};
        \node (A2PNode) at (-1 * \xSLF, 4 * \ySLF) {\dynkinLabel{\TypeAPicture{2}}{A_2}};
        \node (A4PNode) at (0, 4 * \ySLF) {\dynkinLabel{\TypeAPicture{4}}{A_4}};
        \node (A1PNode) at (1 * \xSLF, 4 * \ySLF) {\dynkinLabel{\TypeAPicture{1}}{A_1}};
        \node (B1PNode) at (-1 * \xSLF, 5 * \ySLF) {\dynkinLabel{\TypeBPicture{1}}{B_1}};
        \node (B2PNode) at (1 * \xSLF, 5 * \ySLF) {\dynkinLabel{\TypeBPicture{2}}{B_2}};

        % lines
        \upLineF{darkblue}{solid}{0}{2}
        \upLineF{goldenpoppy}{dashed}{0}{3}
        \rightLineF{magenta}{dashed}{0}{3}
        \leftLineF{green}{dashed}{0}{3}
        \upLineF{goldenpoppy}{solid}{-1}{4}
        \leftLineF{green}{solid}{0}{4}
        \upLineF{goldenpoppy}{solid}{1}{4}
        \rightLineF{magenta}{solid}{0}{4}
        \rightLineF{darkblue}{solid}{-2}{3}
        \leftLineF{darkblue}{solid}{2}{3}
        \upLineF{green}{solid}{-2}{2}
        \upLineF{magenta}{solid}{2}{2}
        \leftLineF{darkblue}{solid}{-1}{1}
        \rightLineF{green}{dashed}{-1}{1}
        \leftLineF{magenta}{dashed}{1}{1}
        \rightLineF{darkblue}{solid}{1}{1}
        \leftLineF{magenta}{solid}{0}{0}
        \rightLineF{green}{solid}{0}{0}

        \draw[-, thick, gray, dashed] (CNode.west) -- (B4Node.south east);
        \draw[-, thick, gray, dashed] (DNode.west) -- (A1Node.east);
        \draw[-, thick, gray, dashed] (CNode.east) -- (B4PNode.south west);
        \draw[-, thick, gray, dashed] (DNode.east) -- (A2Node.west);

        \draw[bend left = 40, gray, dashed] (DNode) to (A4Node);
        \draw[bend right = 60, gray, dashed] (CNode) to (B2PNode);
        \draw[bend left = 60, gray, dashed] (CNode) to (B1PNode);

        % Legend
        \matrix [draw, below left, every node/.style={scale=.9}]
        at (current bounding box.north east)
        {
          \node [legendLine, draw=magenta,label=right:{$s_1$}] {}; \\
          \node [legendLine, draw=green,label=right:{$s_2$}] {}; \\
          \node [legendLine, draw=darkblue,label=right:{$s_3$}] {}; \\
          \node [legendLine, draw=goldenpoppy,label=right:{$s_4$}] {}; \\
        };
      \end{tikzpicture}
    \end{center}
    \caption{$C(14, a, 4)$}
    \label{fig:type14aFull}
  \end{figure}

  Finally, the last three cells of interest are illustrated by \autoref{fig:type14bFull}.
  The cell shown in that figure has bottom elements $s_1s_3s_1s_2$ and $s_2s_3s_2s_1$.
  We'll call this cell $C(14, b, 4)$.
  There is another cell of this type with bottom elements
  $s_2s_3s_2s_4$ and $s_4s_3s_4s_2$.
  We'll call this cell $C(14, b, 1)$.
  The last cell of this type has bottom elements $s_1s_3s_1s_4$ and $s_4s_3s_4s_1$.
  We'll call this cell $C(14, b, 2)$.

  % 14b
  \begin{figure}[!ht]
    \begin{center}
      \begin{tikzpicture}[scale=0.7,  every node/.style={transform shape}]
        % Nodes
        \node (A1Node) at (-1 * \xSLF, 0 * \ySLF)
        {\dynkinLabel{\TypeAPicture{1}}{A_1}};
        \node (A2Node) at (1 * \xSLF, 0 * \ySLF)
        {\dynkinLabel{\TypeAPicture{2}}{A_2}};
        \node (B2Node) at (-1 * \xSLF, 1 * \ySLF) {\dynkinLabel{\TypeBPicture{2}}{B_2}};
        \node (B4Node) at (0 * \xSLF, 1 * \ySLF) {\dynkinLabel{\TypeBPicture{4}}{B_4}};
        \node (B1Node) at (1 * \xSLF, 1 * \ySLF) {\dynkinLabel{\TypeBPicture{1}}{B_1}};
        \node (A4Node) at (-2 * \xSLF, 2 * \ySLF) {\dynkinLabel{\TypeAPicture{4}}{A_4}};
        \node (CNode) at (0 * \xSLF, 2 * \ySLF) {\dynkinLabel{\TypeCPicture}{C}};
        \node (A4PNode) at (2 * \xSLF, 2 * \ySLF) {\dynkinLabel{\TypeAPicture{4}}{A_4}};
        \node (B1PNode) at (-2 * \xSLF, 3 * \ySLF) {\dynkinLabel{\TypeBPicture{1}}{B_1}};
        \node (DNode)  at (0 * \xSLF, 3 * \ySLF) {\dynkinLabel{\TypeDPicture}{D}};
        \node (B2PNode)  at (2 * \xSLF, 3 * \ySLF) {\dynkinLabel{\TypeBPicture{2}}{B_2}};
        \node (A2PNode) at (-1 * \xSLF, 4 * \ySLF) {\dynkinLabel{\TypeAPicture{2}}{A_2}};
        \node (A1PNode) at (1 * \xSLF, 4 * \ySLF) {\dynkinLabel{\TypeAPicture{1}}{A_1}};
        \node (B4PNode) at (0 * \xSLF, 5 * \ySLF) {\dynkinLabel{\TypeBPicture{4}}{B_4}};

        % lines
        \upLineF{darkblue}{solid}{0}{2}
        \rightLineF{magenta}{dashed}{0}{3}
        \leftLineF{green}{dashed}{0}{3}
        \rightLineF{magenta}{solid}{-1}{4}
        \leftLineF{green}{solid}{1}{4}
        \upLineF{goldenpoppy}{dashed}{0}{1}
        \rightLineF{green}{solid}{-1}{0}
        \leftLineF{magenta}{solid}{1}{0}
        \leftLineF{magenta}{dashed}{1}{1}
        \rightLineF{green}{dashed}{-1}{1}
        \upLineF{goldenpoppy}{solid}{-1}{0}
        \upLineF{goldenpoppy}{solid}{1}{0}
        \leftLineF{darkblue}{solid}{-1}{1}
        \upLineF{green}{solid}{-2}{2}
        \rightLineF{darkblue}{solid}{-2}{3}
        \rightLineF{darkblue}{solid}{1}{1}
        \upLineF{magenta}{solid}{2}{2}
        \leftLineF{darkblue}{solid}{2}{3}

        \draw[-, thick, gray, dashed] (CNode.west) -- (B1PNode.east);
        \draw[-, thick, gray, dashed] (DNode.west) -- (A4Node.east);
        \draw[-, thick, gray, dashed] (CNode.east) -- (B2PNode.west);
        \draw[-, thick, gray, dashed] (DNode.east) -- (A4PNode.west);

        \draw[bend left = 40, gray, dashed] (CNode) to (B4PNode);
        \draw[bend right = 54, gray, dashed] (DNode) to (A1Node);
        \draw[bend left = 54, gray, dashed] (DNode) to (A2Node);

        % Legend
        \matrix [draw, below left, every node/.style={scale=.9}]
        at (current bounding box.north east)
        {
          \node [legendLine, draw=magenta,label=right:{$s_1$}] {}; \\
          \node [legendLine, draw=green,label=right:{$s_2$}] {}; \\
          \node [legendLine, draw=darkblue,label=right:{$s_3$}] {}; \\
          \node [legendLine, draw=goldenpoppy,label=right:{$s_4$}] {}; \\
        };
      \end{tikzpicture}
    \end{center}
    \caption{$C(14, b, 4)$}
    \label{fig:type14bFull}
  \end{figure}

  \begin{proposition}
    \label{prop:WGraph}
    The diagrams in \autoref{fig:type10Full}, \autoref{fig:type14aFull}, and \autoref{fig:type14bFull} show all the edges of the $W$-graph connecting pairs of elements shown in the diagram.
  \end{proposition}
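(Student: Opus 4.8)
The plan is to reduce \myrefP{prop:WGraph} to a finite computation inside $\WDFour\cong W(D_4)$ (which has $192$ elements) and then to organize that computation so that only a handful of Kazhdan--Lusztig polynomials actually have to be evaluated. First I would fix explicit reduced words for every element appearing in \autoref{fig:type10Full}, \autoref{fig:type14aFull}, and \autoref{fig:type14bFull}: each figure is already a recipe for this, since (apart from the gray lines) every line going upward is left multiplication by the indicated $s_i$ that increases length, so starting from the stated bottom element one reads off all $10$ (resp.\ $14$) elements of the cell together with their lengths and their sets $\tauL$ and $\tauR$. By \myrefP{prop:KLPolysParabolic}, all polynomials $P_{y,w}$ for these elements may be computed inside $\WDFour$.

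Next I would classify the candidate edges. If $\{y,w\}$ with $y<w$ is an edge then $l(w)-l(y)$ is odd, since $\mu(y,w)$ is the coefficient of $q^{d(y,w)}$ with $d(y,w)=(l(w)-l(y)-1)/2$ required to be a non-negative integer. When $l(w)-l(y)=1$ we have $P_{y,w}=1$, so every Bruhat cover is an edge with $\mu=1$; conversely each straight colored line in the figures, and each gray line joining elements of adjacent length, is exactly such a cover, so for these pairs nothing needs checking beyond confirming that the figures really do contain all Bruhat covers among the listed elements. When $l(w)-l(y)\ge 3$, \myrefP{prop:KL23g}, applied on the left and (via $x\mapsto x^{-1}$) on the right, shows that $\mu(y,w)\ne 0$ forces $\tauL(w)\subseteq\tauL(y)$ and $\tauR(w)\subseteq\tauR(y)$; running through the pairs surviving this double $\tau$-test leaves only the candidates attached to the curved gray edges together with a small number of spurious pairs.

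For each surviving candidate I would compute $P_{y,w}$ directly from the recursion \myrefP{prop:KL22c}, choosing $s\in\tauL(w)$ so that the first two terms involve shorter pairs and the sum portion ranges only over the finitely many $z$ with $y\le z\prec sw$ and $sz<z$, all of which again lie among (or just below) the listed elements. The outcome to be recorded is that for the pairs joined by a curved gray edge one gets $\deg P_{y,w}=d(y,w)$ with leading coefficient $1$ (hence $\mu(y,w)=1$, and the only nonconstant polynomial that occurs is $1+q$), while for every other pair passing the $\tau$-test one gets $\deg P_{y,w}<d(y,w)$ (hence $\mu(y,w)=0$). To cut the work, I would exploit \myrefT{thm:talb}: the solid lines are Knuth moves and $T_{s_i,s_j}$ preserves $\mu$ for pairs not lying in a common $\langle s_i,s_j\rangle$-coset, and the eight cells fall into a few $T$-equivalence classes, so the explicit recursions for one representative of each class transport to the rest; \myrefP{prop:longElement} can be used similarly to pass between a cell and its image under right multiplication by the longest element of $\WDFour$.

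The main obstacle is not any single step but the bookkeeping: one must make sure the $\tau$-filter has genuinely been applied to \emph{every} pair, so that no edge is missed, and that the handful of explicit recursions are carried out correctly, since a single $\mu$ computed wrongly would either insert a phantom edge or delete a genuine one. A secondary subtlety is the length-$1$ gray edges: one must verify that the corresponding pairs really are Bruhat-comparable (so that $P_{y,w}=1$ applies) and that no further Bruhat cover among the listed elements has been overlooked.
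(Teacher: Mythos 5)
Your plan is broadly sound and would produce the right answer, but it is substantially more computational than the paper's argument, and it builds in work that turns out not to be needed. The paper's own proof is short: colored lines are simple left multiples, hence edges; gray lines are edges either because \myrefP{prop:KLPolysParabolic} reduces them to the familiar $\mu=1$ edges in $A_2$ (length gap $1$) and $A_3$ (length gap $3$) parabolic subgroups, \emph{or} because each gray edge is the image of a colored edge under a Knuth map via \myrefT{thm:talbTilde}; and \emph{all} remaining pairs are killed by the odd-length-gap constraint together with \myrefP{prop:KL23g}. The key thing you hedge on — your ``small number of spurious pairs'' needing explicit $P_{y,w}$ recursion — does not actually arise: once the simple covers are recorded, the $\tau$-filter coming from \myrefP{prop:KL23g} is exactly tight on these figures, so no Kazhdan--Lusztig polynomial ever has to be evaluated from scratch. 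Your instinct to use \myrefT{thm:talb} and \myrefP{prop:longElement} to transport information is a good one and is essentially what the paper does for the gray edges; your instinct to run the KL recursion by hand is the part you can drop.

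Two minor remarks. First, your ``double $\tau$-test'' (applying \myrefP{prop:KL23g} on both the left and, via inversion, the right) adds nothing here: the $\tau_R$-invariant is the same for every element in each figure (one can check this directly, or note after the fact that each figure is a left cell and invoke \myrefP{prop:tauCell}), so the right filter never cuts more than the left one does. Second, your worry about ``overlooked Bruhat covers'' among the listed elements is already handled by the same $\tau$-filter: for a pair $y<w$ with $l(w)-l(y)=1$ that is \emph{not} a simple left multiple, if $y<w$ in Bruhat order then $\mu(y,w)=1$ automatically, yet \myrefP{prop:KL23g} would then force $w=sy$ for some $s\in S$ whenever $\tauL(w)\not\subseteq\tauL(y)$ --- a contradiction. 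So those pairs cannot be Bruhat-comparable, and there is nothing further to verify.
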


  \begin{proof}
    The colored lines are given by multiplication on the left by an element of $\SDFour$, so of course are edges.
    To see that the gray dashed lines and curves are edges, we could appeal to \myrefP{prop:KLPolysParabolic}, since these are known edges, where $\mu(y, w) = 1$, for Weyl groups of type $A_2$ (for the edges connecting elements whose lengths differ by 1) or type $A_3$ (for the edges connecting elements whose lengths differ by 3).
    Alternatively, we can appeal to \myrefT{thm:talbTilde}, since each gray edge can be obtained, using that theorem, from a colored edge.

    Finally, we need to know that there are no other edges.
    We first note that edges connect elements whose lengths differ by an odd number.
    After that, we can use \myrefP{prop:KL23g} to rule out the presence of any other edges.
    That is, \myrefP{prop:KL23g} says that if $\mu(x, y) \neq 0$ then either $y = sx$ for some $s \in S$ or $\tau(x) \subseteq \tau(y)$.
  \end{proof}

  \begin{proposition}
    \label{prop:D4LeftCells}
    The sets $C(10, a)$, $C(10, b)$, $C(14, a, i)$, and $C(14, b, i)$, for $i \in \refSet$, are left cells in $\WDFour$.
  \end{proposition}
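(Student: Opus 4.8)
The plan is to prove two inclusions: that each of the eight listed sets lies inside a single left cell of $\WDFour$, and, conversely, that no element of $\WDFour$ outside one of these sets is left-equivalent to an element inside it.

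For the first inclusion I would read the needed relations off \refDFourFigures. By \myrefP{prop:WGraph} these diagrams record exactly the $W$-graph edges among the elements of each cell, and every edge shown has $\mu = 1$: the colored lines come from left multiplication by some $s_i \in \SDFour$, and the gray lines have $\mu = 1$ by the same proposition. For an edge joining $w$ and $v$ the two $\tDFour$-invariants are visible in the marked Dynkin diagrams, and either they are incomparable, in which case $w \equivL v$ is immediate from the definition of $\leqL$, or one contains the other, in which case one gets only a one-directional relation $v \leqL w$ (resp.\ $w \leqL v$). One then checks, figure by figure, that these one-directional relations assemble into $\leqL$-cycles linking every pair of elements of the set: every element descends along the colored edges to the bottom element(s), while the gray edges (and the dashed $D_4$-map edges, read in the increasing direction) supply the complementary ascents. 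Hence all elements of each of the eight sets are mutually $\equivL$. This step is a finite bookkeeping exercise; the only delicate point is the closing of the cycles, since several of the relevant edges give only one direction.

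For the converse, recall from the discussion opening this section that the union $\mathcal C$ of the eight sets is precisely the middle two-sided cell of $\WDFour$, hence a union of left cells, and that by the first part each of the eight sets lies in a single left cell. Since the eight sets are pairwise disjoint and cover $\mathcal C$, every left cell contained in $\mathcal C$ is a union of some of them, so there are at most eight left cells in $\mathcal C$; and sending each of the eight sets to the left cell that contains it gives a map from an eight-element set onto the set of left cells of $\mathcal C$ (a left cell disjoint from all eight sets would be disjoint from $\mathcal C$), so that map is a bijection and a comparison of cardinalities forces each set to coincide with the left cell containing it. The only fact used as input here is that $\mathcal C$ is a union of exactly eight left cells; this can be supplied by the known cell decomposition of the $192$-element group $\WDFour$ — where computing the Kazhdan--Lusztig $W$-graph and its cells is routine — from which the eight left cells of $\mathcal C$ can simply be read off.

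I expect that last input to be the main obstacle. The first inclusion is essentially mechanical once one trusts the figures, but ruling out that two of the eight sets fuse into a single left cell — without invoking the generalized \ti, which is only constructed later in the paper (\myrefT{thm:genTau}) — is where the real content lies, and it is cleanest to discharge it by appeal to the explicit cell structure of $\WDFour$.
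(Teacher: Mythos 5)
Your proof of the first inclusion — that each of the eight sets lies inside a single left cell, by reading the $W$-graph edges off \refDFourFigures\ and checking that the resulting one- and two-directional $\leqL$ relations close up into cycles — matches the paper's argument (compare \autoref{fig:CellPreorderD4} and the surrounding discussion). For the converse, however, your route and the paper's diverge substantially, and your stated reason for rejecting the paper's route rests on a misreading. You worry that appealing to the generalized $\tau$-invariant would be circular because it ``is only constructed later in the paper,'' citing \myrefT{thm:genTau}. But the paper explicitly uses the \emph{first} form of the generalized $\tau$-invariant from \myrefS{sec:genTauA} (namely \myrefT{thm:genTauA} and \myrefT{thm:genTauKnuthConclusion}), which is built entirely from the $A_2$ (Knuth-map) edge transport theorem in \myrefS{sec:KnuthMaps} — the paper flags this explicitly: ``\myrefS{sec:genTauA} just relies on material from \ref{sec:KnuthMaps} and earlier.'' There is no circularity with the $D_4$ material; the proof is deliberately deferred so that the cells can double as worked examples of the generalized $\tau$-invariant, and the second half of the proof (given at the end of \myrefS{sec:genTauA}) proceeds by exhibiting the generalized $\tau$-invariants of all $192$ elements of $\WDFour$ and observing that elements outside each set have a different right generalized $\tau$-invariant than those inside. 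Your substitute — taking as external input that the middle two-sided cell of $\WDFour$ consists of exactly eight left cells from a direct computation of the $W$-graph — is a legitimate escape hatch, and the cardinality/bijection argument you build on top of it is correct (and indeed you also implicitly import the fact that the eight sets cover a full two-sided cell, which is likewise unproven in the paper's opening remarks and must come from the same computation). What you lose relative to the paper is self-containment: the paper replaces a brute-force enumeration of Kazhdan--Lusztig polynomials with a conceptual $\tau$-invariant argument that reuses machinery needed elsewhere anyway, whereas your version needs the full computed cell decomposition of $\WDFour$ as an unverified black box.
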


  \begin{proof}
    We first need to show that the elements of each set are in the same cell.
    We'll start with one of the sets shown in \autoref{fig:type10Full}.
    Let's recall the description of the edges which generate the left preorder.
    An unordered edge connecting two elements $y, w \in W$ contributes to the left preorder if $\tau(y) \neq \tau(w)$.
    In this case we have a directed edge pointing away from the element which has something in its \ti\ which the other element lacks.
    (That is, the smaller element in the left preorder has the larger \ti, or at least, has something in its \ti\ which the other doesn't.)
    So, one unordered edge might contribute two ordered edges.

    \autoref{fig:CellPreorderD4} shows the ordered edges coming from \autoref{fig:type10Full}.
    Arrows point from the smaller element to the larger.
    By the way, this picture is typical of the general situation for left cells.
    An unordered edge connecting an element $w$ to $sw$ for $s \in S$ will contribute a downward-pointing edge and possibly one pointing up.
    Any other edge will point up.
    As noted at the end of the proof ot the previous proposition, this is from \myrefP{prop:KL23g}.

    We see from \autoref{fig:CellPreorderD4} that the ten elements pictured there are in the same left cell.  We can see similarly that the elements pictured in \autoref{fig:type14aFull} and \autoref{fig:type14bFull} are in the same cell.

    % 10
    \begin{figure}[!ht]
      \begin{center}
        \begin{tikzpicture}[scale=0.7,  every node/.style={transform shape}]
          % Nodes
          \node (CNode) at (0, 0) {\dynkinLabelWeyl{\TypeCPicture}{ }};
          \node (DNode) at (0, \ySLT) {\dynkinLabelWeyl{\TypeDPicture}{}};
          \node (A2Node) at (-1 * \xSLT, 2 * \ySLT) {\dynkinLabelWeyl{\TypeAPicture{2}}{}};
          \node (A4Node) at (0, 2 * \ySLT) {\dynkinLabelWeyl{\TypeAPicture{4}}{}};
          \node (A1Node) at (1 * \xSLT, 2 * \ySLT) {\dynkinLabelWeyl{\TypeAPicture{1}}{}};
          \node (B1Node) at (-1 * \xSLT, 3 * \ySLT) {\dynkinLabelWeyl{\TypeBPicture{1}}{}};
          \node (B4Node) at (0, 3 * \ySLT) {\dynkinLabelWeyl{\TypeBPicture{4}}{}};
          \node (B2Node) at (1 * \xSLT, 3 * \ySLT) {\dynkinLabelWeyl{\TypeBPicture{2}}{}};
          \node (CPNode) at (0, 4 * \ySLT) {\dynkinLabelWeyl{\TypeCPicture}{}};
          \node (DPNode) at (0, 5 * \ySLT) {\dynkinLabelWeyl{\TypeDPicture}{}};
          \lineArrowCellT{0}{4}{-1}{3}
          \lineArrowCellTMiddle{0}{4}{0}{3}
          \lineArrowCellT{0}{4}{1}{3}

          \lineArrowCellT{-1}{2}{0}{1}
          \lineArrowCellTMiddle{0}{2}{0}{1}
          \lineArrowCellT{1}{2}{0}{1}

          \bendCellSmallest{A2Node}{B4Node}{right}
          \bendCellSmallest{B4Node}{A2Node}{right}
          \bendCellSmallest{A4Node}{B1Node}{right}
          \bendCellSmallest{B1Node}{A4Node}{right}
          \bendCell{A2Node}{B1Node}{right}
          \bendCell{B1Node}{A2Node}{right}
          \bendCellSmallest{A4Node}{B2Node}{right}
          \bendCellSmallest{B2Node}{A4Node}{right}
          \bendCellSmallest{A1Node}{B4Node}{right}
          \bendCellSmallest{B4Node}{A1Node}{right}
          \bendCell{A1Node}{B2Node}{right}
          \bendCell{B2Node}{A1Node}{right}

          \bendCell{CPNode}{DPNode}{right}
          \bendCell{DPNode}{CPNode}{right}
          \bendCell{CNode}{DNode}{right}
          \bendCell{DNode}{CNode}{right}
          \bendCellBig{A1Node}{DPNode}{right}
          \bendCellBig{A2Node}{DPNode}{left}
          \bendCellSmall{A4Node}{DPNode}{right}

          \bendCellBig{CNode}{B2Node}{right}
          \bendCellBig{CNode}{B1Node}{left}
          \bendCellSmall{CNode}{B4Node}{left}

        \end{tikzpicture}
      \end{center}
      \caption{Left Preorder in a Cell in the Weyl Group of Type $D_4$}
      \label{fig:CellPreorderD4}
    \end{figure}

    We now need to see that there are no other elements in the same cell as any of those elements.
    We can show this using the first form of the generalized $\tau$-invariant, as found in \myrefS{sec:genTauA}.
    So, we'll defer the rest of this proof until then, since we can use it to illustrate the generalized $\tau$-invariant.
    Note, \myrefS{sec:genTauA} just relies on material from \ref{sec:KnuthMaps} and earlier.
  \end{proof}

  The elements in \autoref{fig:type10Full}, \autoref{fig:type14aFull}, and \autoref{fig:type14bFull} are labeled with their types, cf.\ \cite{garfinkle_vogan_1992}.
  We can define the types as follows:
  \begin{definition}
    We say $w \in W$ is of type $\scA_1$ if its projection $p_{\SDFour}(w)$ onto $\WDFour$ is an element labeled $\scA_1$ in one of these eight cells.
    Similarly for the other types.
  \end{definition}

  For the results of \myrefS{sec:D4Maps}, we'll need another characterization of these types.

  \begin{proposition}
    \label{prop:D4OpsAltDef}
    Let $w \in W$.  Let $i \in \refSet$.
    Choose $j, k$ such that $\{i, j, k\} = \refSet$.
    \begin{enumerate}
      \item We have that $w$ is of type $\scA_i$ if and only if $\tDFour(w) = \{s_i, s_3\}$ and
      $\tDFour(T_{j, 3}(w)) = \{s_i, s_j\}$.

      \item We have that $w$ is of type $\scB_i$ if and only if $\tDFour(w) = \{s_j, s_k\}$ and
      $\tDFour(T_{3, j}(w)) = \{s_k, s_3\}$.

      \item We have that $w$ is of type $\scC$ if and only if $\tDFour(w) = \bigRefSet$ and
      $\tDFour(T_{3, i}(w)) = \{s_3\}$.

      \item We have that $w$ is of type $\scD$ if and only if $\tDFour(w) = \{s_3\}$ and
      $\tDFour(T_{3, i}(w)) = \bigRefSet$.
    \end{enumerate}
  \end{proposition}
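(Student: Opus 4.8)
The plan is to reduce the statement to the parabolic subgroup $\WDFour$ and then verify it there by inspecting Figures~\ref{fig:type10Full}, \ref{fig:type14aFull}, and \ref{fig:type14bFull}, which already display the types, the $\tDFour$-values, and the Knuth maps.

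First I would carry out the reduction. Set $J=\SDFour$, let $p=p_J(w)$, and write $w=p\,w^J$ with $w^J\in W^J$. By the definition of type, $w$ and $p$ have the same type, and by \myrefP{prop:parabolic2}(2) we have $\tDFour(w)=\tDFour(p)$. Every Knuth map $T_{a,b}$ appearing in the proposition has $s_as_b$ of order $3$ (one of $a,b$ equals $3$), so \myrefP{prop:talbParabolic} applies: $w$ lies in the domain of $T_{a,b}$ if and only if $p$ does, and in that case $T_{a,b}(w)=T_{a,b}(p)\,w^J$ with $T_{a,b}(p)\in\WDFour$; combining this with \myrefP{prop:parabolic2}(1)--(2) gives $\tDFour(T_{a,b}(w))=\tDFour(T_{a,b}(p))$. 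So both sides of each biconditional depend on $w$ only through $p$, and it suffices to prove the proposition for $w\in\WDFour$. I would also note here that in each case the hypothesis on $\tDFour(w)$ determines $\tau(w)\cap\SDFour$, and hence whether the relevant Knuth map is applicable, so the conditions are well posed.

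For the forward implications I would go case by case through the elements of $\WDFour$ carrying the labels $\scA_i$, $\scB_i$, $\scC$, $\scD$ in the three figures. There the red dots record $\tDFour$ of an element, and the solid lines are exactly the Knuth maps, with the colour of a solid line naming the generator by which one multiplies (so following a solid line from $w$ produces $T_{a,b}(w)$). Hence for each labelled element one reads off $\tDFour(w)$, follows the appropriate solid line to $T_{a,b}(w)$, reads off $\tDFour(T_{a,b}(w))$, and checks it against the asserted value. Since the types are defined precisely by these labels (cf.\ \myrefP{prop:D4LeftCells} and the definition following it), this settles one direction.

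The converse is where I expect the real work to lie, since one must rule out every element of $\WDFour$ not drawn in the figures. I would first observe that the eight prescribed values of $\tDFour(w)$---the three sets $\{s_i,s_3\}$, the three sets $\{s_j,s_k\}$, the set $\bigRefSet$, and the set $\{s_3\}$---are pairwise distinct, so the eight conditions are mutually exclusive; it then remains to show, for each of these four descent sets, that among all elements of $\WDFour$ having that left-descent set precisely the ones drawn (with the indicated label) also satisfy the second, Knuth-map condition. This is a finite but bookkeeping-heavy verification: one enumerates the elements of $\WDFour$ realizing each left-descent set, applies the relevant $T_{a,b}$, and compares. The second condition genuinely does work here---for instance $s_3\in\WDFour$ has $\tDFour(s_3)=\{s_3\}$ but fails the Knuth-map condition, so is correctly excluded from type $\scD$. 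Organizing this enumeration cleanly, equivalently confirming that the three figures already exhibit every element of $\WDFour$ with one of these descent sets, is the main obstacle; everything else is formal.
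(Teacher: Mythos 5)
Your proposal follows the paper's proof exactly: reduce to $\WDFour$ via \myrefP{prop:parabolic2}(2) and \myrefP{prop:talbParabolic}, verify the forward direction by inspecting the labelled elements in \refDFourFigures, and verify the converse by checking that no other element of $\WDFour$ satisfies any of the listed pairs of conditions. You flesh out the reduction and observe the mutual exclusivity of the descent-set conditions a bit more explicitly than the paper does, but the approach is the same.
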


  \begin{proof}
    For $w \in \WDFour$, this is by inspection. One can check from \autoref{fig:type10Full}, \autoref{fig:type14aFull}, and \autoref{fig:type14bFull} that the elements in question satisfy these properties.
    Then, one has to check all the other elements of $\WDFour$, to see that none of them satisfy any of the listed conditions.

    To go from $\WDFour$ to $W$, we can use \myrefP{prop:parabolic2}--2 and \myrefP{prop:talbParabolic}.
  \end{proof}

  \begin{proposition}
    \label{prop:D4OpRightCells}
    Suppose
    $x, y, w \in W$ with $x$ and $y$ of right type $\scA_1$ and $x \leqL w \leqL y$.
    Then $w$ is of right type $\scA_1$.
    In particular, if $x \equivL y$ and $x$ is of right type $\scA_1$, then so is $y$.
    Similarly for the other types, and similarly interchanging left and right.
  \end{proposition}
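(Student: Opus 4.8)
The plan is to verify the right-handed form of the criterion in \myrefP{prop:D4OpsAltDef}(1): taking $i=1$ and, say, $j=2$, $k=4$, it suffices to prove that $\tauR(w)\cap\SDFour=\{s_1,s_3\}$ and that $\tauR\bigl(T^R_{s_2,s_3}(w)\bigr)\cap\SDFour=\{s_1,s_2\}$, where $T^R_{s_2,s_3}$ is the right Knuth map. The first condition is immediate: since $x$ and $y$ are of right type $\scA_1$, \myrefP{prop:D4OpsAltDef} gives $\tauR(x)\cap\SDFour=\tauR(y)\cap\SDFour=\{s_1,s_3\}$, and as $x\leqL w\leqL y$ the right-handed version of \myrefP{prop:tauInterval} (with $J=\SDFour$) forces $\tauR(w)\cap\SDFour=\{s_1,s_3\}$ too. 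In particular $\tauR(w)\cap\{s_2,s_3\}=\{s_3\}$, so $w\in D^R_{s_2,s_3}(W)$ and $T^R_{s_2,s_3}(w)$ is defined; likewise $x,y\in D^R_{s_2,s_3}(W)$, and \myrefP{prop:D4OpsAltDef} applied to $x$ and $y$ gives $\tauR\bigl(T^R_{s_2,s_3}(x)\bigr)\cap\SDFour=\tauR\bigl(T^R_{s_2,s_3}(y)\bigr)\cap\SDFour=\{s_1,s_2\}$.

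The heart of the proof is then to transport $x\leqL w\leqL y$ through $T^R_{s_2,s_3}$, i.e.\ to show $T^R_{s_2,s_3}(x)\leqL T^R_{s_2,s_3}(w)\leqL T^R_{s_2,s_3}(y)$. Once this is available, a second application of the right-handed \myrefP{prop:tauInterval} (again with $J=\SDFour$) yields $\tauR\bigl(T^R_{s_2,s_3}(w)\bigr)\cap\SDFour=\{s_1,s_2\}$, and \myrefP{prop:D4OpsAltDef}(1) concludes that $w$ is of right type $\scA_1$. For the transport one uses that a Knuth map acting on the right carries the left preorder $\leqL$ into itself: along a chain realizing $x\leqL w$ the $\mutilde$-values are preserved by the right-handed \myrefT{thm:talbTilde}, and one must check that the left $\tau$-noncontainments are preserved as well. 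This last point is the right-handed analogue of \cite[Corollary~4.3]{kazhdan_lusztig_1979}, and I expect it to be the main obstacle: right multiplication can change a left descent set, so, unlike the $\mutilde$-bookkeeping, the $\tau_L$-bookkeeping along the chain is not automatic and must be argued carefully (for instance by analyzing which elements of the chain lie in $D^R_{s_2,s_3}(W)$ and how $\tauL$ behaves on the others).

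For the ``in particular'' clause, suppose $x\equivL y$ with $x$ of right type $\scA_1$; then $\tauR(x)=\tauR(y)$ by \myrefP{prop:tauCell}, so $y\in D^R_{s_2,s_3}(W)$, and the $\equivL$-version of the transport lemma (a Knuth map on the right preserves $\equivL$) gives $T^R_{s_2,s_3}(x)\equivL T^R_{s_2,s_3}(y)$, whence $\tauR\bigl(T^R_{s_2,s_3}(y)\bigr)=\tauR\bigl(T^R_{s_2,s_3}(x)\bigr)$ and $y$ is of right type $\scA_1$ by \myrefP{prop:D4OpsAltDef}. The remaining types are handled identically, replacing part (1) of \myrefP{prop:D4OpsAltDef} by the appropriate part and $T^R_{s_2,s_3}$ by the relevant Knuth map $T^R_{s_j,s_3}$ or $T^R_{s_3,s_i}$ named there; the left-handed statement follows from the symmetric argument with the roles of $\tauL$ and $\tauR$ interchanged.
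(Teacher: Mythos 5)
Your proof takes the same route as the paper's: verify the two conditions of \myrefP{prop:D4OpsAltDef} for $w$ using \myrefP{prop:tauInterval}, after transporting the chain $x \leqL w \leqL y$ through the Knuth map acting on the opposite side. So the structure is right. The one place you go astray is in flagging the $\tau_L$-bookkeeping as ``the main obstacle'' and ``not automatic.'' It is automatic in the paper's framework, and for a reason you almost state yourself: the right Knuth map $T^R_{s_2,s_3}$ is a right KL cell function (\myrefP{prop:KnuthAKLCellFunction}), hence by \myrefP{prop:tauCell} it is left $\tau$-invariant \emph{preserving}, i.e.\ $\tauL(T^R_{s_2,s_3}(u)) = \tauL(u)$ for every $u$ in its domain. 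So the noncontainments $\tauL(w_i) \not\subset \tauL(w_{i+1})$ along the chain are transported verbatim; there is nothing subtle about ``$\tauL$ on the others.'' The thing that actually needs the right KL-interval-set property (\myrefP{prop:KnuthAKLSet}, with sides swapped) is that every chain element lies in the domain $D^R_{s_2,s_3}(W)$ in the first place, so that the map can be applied at all. Put together, the precise lemma you want is \myrefP{prop:KnuthAKLOrder} (itself an instance of \myrefP{prop:edgeTransportKLOrderA}, the stronger form of \cite[Corollary~4.3]{kazhdan_lusztig_1979} you correctly gesture at): $T^R_{s_2,s_3}$ is left KL order preserving. You would be justified in feeling uneasy, though: that lemma lives in a later section than the proposition you are proving, so there is a genuine forward reference here, and the paper's own proof compounds this by citing \myrefP{prop:KnuthAKLSet} where \myrefP{prop:KnuthAKLOrder} is what is actually being used.
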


  \begin{proof}
    We'll use the characterization of the types in \myrefP{prop:D4OpsAltDef}.
    There are two conditions for $x$ to be of type $\scA_1$.
    The first is that $\tau_0(x) = \{s_1, s_3\}$.
    The second is that $\tau_0(T_{j, 3}(x)) = \{s_1, s_j\}$ for $j \in \{2, 3\}$.
    Now suppose that $x, y \in D$ and $w \in W$ with $x \leqR w \leqR y$.
    That $\tau_0(w) = \{s_1, s_3\}$ follows from \myrefP{prop:tauInterval}.

    Now, by \myrefP{prop:KnuthAKLSet}, we have
    $T_{j, 3}(x) \leqR T_{j, 3}(w) \leqR T_{j, 3}(y)$.
    So, again by \myrefP{prop:tauInterval}, we have $\tau_0(T_{j, 3}(w)) = \tau_0(T_{j, 3}(x)) = \{s_1, s_j\}$.

    The proofs for the other types are similar.
  \end{proof}

  In what follows, we'll be working with elements of $W$ whose projection onto $\WDFour$ sits in one of the eight cells described in the previous section.

  \begin{definition}
    We'll write $X(10, a)$ for elements of $W$ whose projection onto $\WDFour$ sit inside $C(10, a)$, and similarly $X(10, b)$, $X(14, a, 1)$, etc.
    If $w^{\SDFour} \in W^{\SDFour}$, we'll call $C(10, a)w^{\SDFour}$ a $C(10, a)$ clump, or simply a clump, and similarly for the other $C$s.
  \end{definition}

  \begin{proposition}
    \label{prop:clumpDiagram}
    If $C$ is a clump contained in $X(10, a)$, then the relative lengths of its elements, their $\tau$-invariants, and the $T_{i, j}$ maps connecting them, are as shown in \autoref{fig:type10Full}.
    Similarly for the other types of clumps.
  \end{proposition}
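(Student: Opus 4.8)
The plan is to reduce the statement to the corresponding assertion inside $\WDFour$ --- which is exactly what \autoref{fig:type10Full} (and the analogous figures, together with \myrefP{prop:WGraph} and \myrefP{prop:D4LeftCells}) records --- and then to transfer that assertion along right multiplication by $w^{\SDFour}$, using the parabolic results of \myrefS{sec:Preliminaries}.

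First I would fix $w^{\SDFour}\in W^{\SDFour}$ and write $C = C(10,a)\,w^{\SDFour}$, so that $c\mapsto c\,w^{\SDFour}$ is a bijection from $C(10,a)$ onto $C$ (the restriction of $i_{w^{\SDFour}}^{\SDFour}$) by \myrefP{prop:parabolic}--3. The length assertion is then \myrefP{prop:parabolic}--4: $l(c\,w^{\SDFour}) = l(c) + l(w^{\SDFour})$, whence $l(c_1 w^{\SDFour}) - l(c_2 w^{\SDFour}) = l(c_1) - l(c_2)$ for all $c_1,c_2\in C(10,a)$, so the relative lengths of the elements of the clump are those shown in the figure. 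The $\tau$-invariant assertion (the figure records $\tDFour$) is \myrefP{prop:parabolic2}--2, which gives $\tDFour(c\,w^{\SDFour}) = \tDFour(c)$, so the mini Dynkin diagram attached to $c\,w^{\SDFour}$ agrees with the one attached to $c$.

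Next I would treat the $T_{i,j}$ maps. Every such map in the figures has $3\in\{i,j\}$, so $s_i,s_j\in\SDFour$, and \myrefP{prop:talbParabolic} applies with $J=\SDFour$: an element $c\,w^{\SDFour}$ of the clump lies in $D_{s_i,s_j}(W)$ if and only if $c\in D_{s_i,s_j}(\WDFour)$, and in that case $T_{i,j}(c\,w^{\SDFour}) = T_{i,j}(c)\,w^{\SDFour}$. Hence the $T_{i,j}$-edges among the elements of the clump are precisely the $T_{i,j}$-edges among the elements of $C(10,a)$, i.e.\ the solid colored lines of \autoref{fig:type10Full}. The remaining seven types of clumps are handled identically, reading off the structure of $C(10,b)$, $C(14,a,i)$, and $C(14,b,i)$ from \autoref{fig:type14aFull} and \autoref{fig:type14bFull} and permuting the roles of $s_1,s_2,s_4$ for the indexed families.

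I do not expect any real obstacle in the three transfers above --- they are immediate from \myrefS{sec:Preliminaries}. The part that genuinely requires care is upstream and already carried out: the inspection behind \myrefP{prop:WGraph} and \myrefP{prop:D4LeftCells} that the eight figures do describe the left cells in the middle two-sided cell of $\WDFour$, with the stated relative lengths, $\tDFour$'s, and Knuth maps. The only subtlety worth flagging is that the proposition asserts nothing about the part of $\tau$ outside $\SDFour$ and refers only to the $T_{i,j}$ with $3\in\{i,j\}$; that is exactly why restricting to the parabolic $J=\SDFour$ loses no information.
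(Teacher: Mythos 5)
Your proposal is correct and follows essentially the same route as the paper, which simply cites \myrefP{prop:parabolic2} and \myrefP{prop:talbParabolic}; you spell out the three transfer steps (lengths via \myrefP{prop:parabolic}--4, $\tDFour$ via \myrefP{prop:parabolic2}--2, and Knuth maps via \myrefP{prop:talbParabolic}) that the paper leaves implicit.
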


  \begin{proof}
    This follows from \myrefP{prop:parabolic2} and \myrefP{prop:talbParabolic}.
  \end{proof}

  \begin{proposition}
    \label{prop:clumpSameCell}
    Let $C$ be a clump, and let $y, w \in C$.
    Then $y \equivL w$.
  \end{proposition}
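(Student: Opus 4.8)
The plan is to transport the connectivity argument from the first half of the proof of \myrefP{prop:D4LeftCells} from $\WDFour$ up to $W$ along the inclusion map $i_a := i_a^{\SDFour}$. Write the clump as $C = C_0 a$, where $a = w^{\SDFour} \in W^{\SDFour}$ and $C_0$ is one of the eight cells $C(10,a)$, $C(10,b)$, $C(14,a,i)$, $C(14,b,i)$ of \myrefP{prop:D4LeftCells}; thus $C = i_a(C_0)$, and \myrefP{prop:clumpDiagram} already records that the internal picture of $C$ matches that of $C_0$.

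The first thing I would verify is that the $W$-graph data is preserved by $i_a$. For $x, x' \in C_0$ we have $P_{i_a(x),\, i_a(x')} = P_{x,x'}$ by \myrefP{prop:KLPolysParabolic}, and $l(i_a(x')) - l(i_a(x)) = l(x') - l(x)$ by \myrefP{prop:parabolic}--4; hence $d(i_a(x), i_a(x')) = d(x,x')$, so $\mu(i_a(x), i_a(x')) = \mu(x,x')$, and since the Bruhat relations among the elements involved correspond under $i_a$ (\myrefP{prop:intermediate}), also $\mutilde(i_a(x), i_a(x')) = \mutilde(x,x')$. In particular each edge appearing in \autoref{fig:type10Full}, \autoref{fig:type14aFull}, and \autoref{fig:type14bFull} --- the colored and non-gray dashed lines (edges because they are left multiplication by an element of $\SDFour$) and the gray lines alike --- lifts to a genuine $W$-graph edge among the corresponding elements of $C$.

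The second thing I would verify is that the $\tau$-information used to orient the left preorder survives. For $s \in \SDFour$ and $x \in C_0$, \myrefP{prop:parabolic2}--2 gives $s \in \tau_L(i_a(x))$ if and only if $s \in \tDFour(x)$; so whenever $\tDFour(x) \not\subset \tDFour(x')$ --- with the failure witnessed by some $s \in \SDFour$ --- that same $s$ witnesses $\tau_L(i_a(x)) \not\subset \tau_L(i_a(x'))$, whatever further elements of $S \smallsetminus \SDFour$ may lie in these left descent sets. Combining the two verifications: any chain $x = u_1, \dots, u_n = x'$ inside $C_0$ with $\mutilde(u_i, u_{i+1}) > 0$ and $\tDFour(u_i) \not\subset \tDFour(u_{i+1})$ lifts under $i_a$ to a chain inside $C$ witnessing $i_a(x) \leqL i_a(x')$ in $W$. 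In the proof of \myrefP{prop:D4LeftCells} such chains were produced, in both directions, between any two elements of $C_0$ (cf.\ \autoref{fig:CellPreorderD4} for the type-$10$ shape, the other two shapes being handled the same way); transporting them shows $i_a(x) \equivL i_a(x')$ for all $x, x' \in C_0$, which is exactly the assertion $y \equivL w$ for all $y, w \in C$.

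I expect the only genuinely delicate point to be the one flagged in the previous paragraph: passing from $\WDFour$ to $W$ replaces $\tDFour$ by the possibly larger full descent set $\tau_L$, and one must be certain this cannot accidentally make an inclusion $\tau_L(i_a(x)) \subseteq \tau_L(i_a(x'))$ hold where $\tDFour(x) \subseteq \tDFour(x')$ fails. Since the witnessing reflection always lies in $\SDFour$, \myrefP{prop:parabolic2}--2 settles this immediately, so nothing more is needed beyond unwinding the established facts about $i_a^{\SDFour}$.
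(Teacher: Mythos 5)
Your proof is correct and takes essentially the same approach as the paper: lift the $W$-graph edges of \refDFourFigures\ from $\WDFour$ to $W$ via \myrefP{prop:KLPolysParabolic}, then transport the left-preorder chains from the first half of the proof of \myrefP{prop:D4LeftCells}. The details you supply---the $\mu$-preservation bookkeeping and the $\tau$-invariant check via \myrefP{prop:parabolic2}---are exactly what the paper's two-sentence proof leaves implicit.
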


  \begin{proof}
    We first need to know that the edges in \refDFourFigures\ are also edges in the clumps.
    This follows from \myrefP{prop:KLPolysParabolic}.
    Once we have that, we can argue as in the proof of \myrefP{prop:D4LeftCells}, above.
  \end{proof}

  \section{The \texorpdfstring{$D_4$}{D4} Edge Transport Theorem}
  \label{sec:main}

  In this section we prove the main theorem of the paper, the edge transport theorem coming from a parabolic subgroup of type $D_4$.

  For readers familiar with the proof of Theorem 4.2 of \citeKL, in broad outline this proof follows the same pattern.
  It starts with \autoref{eq:KL22c}.
  The differences are, first that it is more difficult to resolve the $P_{y, sw}$ term in \autoref{eq:KL22c}.
  This takes two steps, and results in two known terms, as well as (potentially) some unknown terms.
  In dealing with unknown terms, we rely on \myrefT{thm:nonNegative}, and accept inequalities in place of equalities.
  Secondly, we do not have the same ability to restrict a priori the terms coming from the sum portion of \autoref{eq:KL22c}.
  Again, after taking the terms which we need from it, we rely on \myrefT{thm:nonNegative} and obtain inequalities.
  Thirdly, we have many more cases.
  This is partly because the edge transport theorem involves elements of two essentially different types, and partly because there are eight left cells within the middle two-sided cell.
  Also, since we initially have inequalities, we need more inequalities so that we can solve them into equalities.
  In the end, after all this, we obtain the same equations as those in the $B_2$ edge transport theorem, \myrefT{thm:stsStrings}.

  As before we have two versions of the theorem.
  For the first theorem, case 3 is more complicated than in the previous edge transport theorems.
  We'll defer its more detailed statement until after we've proved the first two cases.

  \begin{theorem}
    \label{thm:mainA}
    Let $C$ and $C'$ be clumps.
    Fix $i \in \refSet$.
    We choose elements $L, M, U \in C$ as follows:
    if $\abs C = 10$ then $L$ and $U$ are the two elements of type $\scC$ in $C$, and $M$ is the one element of type $\scA_i$ in $C$.
    If instead $\abs C = 14$, then $L$ and $U$ are the two elements of type $\scA_i$ in $C$,
    and $M$ is the one element of type $\scC$ in $C$.
    We choose similarly $L', M', U' \in C'$.
    \begin{enumerate}
      \item Suppose $\abs C = \abs{C'}$, and suppose $C' \not\subset \WDFour C$.  Then
      \begin{align}
        \label{eqs:mainA1}
        \begin{split}
          &\mu(M, M') = \mu(L, L') + \mu(U, L')\\
          &\mu(U, U') = \mu(L, L')\\
          &\mu(U, L') = \mu(L, U')
        \end{split}
      \end{align}
      \item Suppose $\abs C \neq \abs{C'}$, and suppose $C' \not\subset \WDFour C$.  Then
      \begin{equation}
        \label{eqs:mainA2}
        \mu(L, M') = \mu(U, M') = \mu(M, L') = \mu(M, U')
      \end{equation}
      \item \myrefT{thm:main} Suppose $C' \subset \WDFour C$.  Then
      \begin{equation*}
        \mutilde(L, M') = \mutilde(U, M') = \mutilde(M, L') = \mutilde(M, U')
      \end{equation*}
      More precisely, \myrefP{prop:mainCase3} holds.
    \end{enumerate}
  \end{theorem}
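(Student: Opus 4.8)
The plan is to run the same machine used for Theorem~4.2 of \citeKL\ and for \myrefT{thm:stsStringsA}, but to push it through the larger combinatorics recorded in \refDFourFigures. The elements $L,M,U$ of a clump $C$ (and $L',M',U'$ of $C'$) are connected by chains of left multiplications by elements of $\SDFour$, with known relative lengths and known $\tDFour$-values, by \myrefP{prop:clumpDiagram}; in particular whether $s_3\in\tDFour$ of the various elements is fixed by the sizes of $C$ and $C'$, and this dictates which simple reflection to feed into \autoref{eq:KL22c}. For Cases~1 and~2, I would fix one of the target $\mu$-identities --- say $\mu(U,U')=\mu(L,L')$ --- choose $s\in\SDFour$ with $sU'<U'$ and $sU<U$, and expand $P_{U,U'}$ via \autoref{eq:KL22c}. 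This yields a term $P_{sU,sU'}$, which after reading off where $sU$ and $sU'$ sit in the two clumps is exactly one of the $P$-polynomials we want to match, a term $P_{U,sU'}$, and the sum portion. By symmetry of the $D_4$ diagram under permuting $s_1,s_2,s_4$, it suffices to treat one value of $i$; and one expects an induction on $l(w)$ to underlie the bookkeeping, as in \citeKL.

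The main obstacle is the resolution of the $P_{U,sU'}$ term and of the sum portion. Unlike in type $A_2$, $P_{U,sU'}$ cannot be disposed of in one step: I would apply \autoref{eq:KL22c} again with a second reflection of $\SDFour$ adapted to the $D_4$ geometry (e.g.\ one of $s_1,s_2,s_4$ after having first used $s_3$), and if necessary a third time, each application peeling off a recognizable term --- eventually the analogue of the $\mu(U,L')$ summand in Case~1 of \myrefT{thm:stsStringsA} --- together with possibly several terms about which we have no a priori control. For those, and for the terms of the sum portion, which (unlike in types $A_2$ and $B_2$) cannot be excluded a priori by $\tau$-invariant considerations, I would invoke \myrefT{thm:nonNegative}: each such term contributes with a definite sign, so the exact identity is replaced by an inequality in one direction. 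Running the same expansions for the companion polynomials, and for the versions with the two arguments interchanged (using $\mutilde(y,w)=\mutilde(w,y)$ where Bruhat comparability permits), produces the reverse inequalities; the hypothesis $C'\not\subset\WDFour C$ is precisely what prevents any stray term from coinciding with a term we are tracking, and is what forces the system of inequalities to collapse to the equalities \autoref{eqs:mainA1} and \autoref{eqs:mainA2}. This must be organized separately for clump sizes $10$ and $14$ and for each relevant pairing of types, which is the source of the case explosion; I would set up a uniform labelling of the clump diagrams so the eight cells can be handled a few at a time.

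For Case~3, where $C'\subset\WDFour C$ so that $C$ and $C'$ lie over a common coset representative, I would instead reduce everything to a finite computation inside $(\WDFour,\SDFour)$: by \myrefP{prop:KLPolysParabolic} the relevant $P$-polynomials, and hence all $\mu$- and $\mutilde$-values, agree with those computed in $\WDFour$, whose $W$-graph on the eight cells is given completely by \refDFourFigures\ together with \myrefP{prop:WGraph}. This is the device already used for Case~3 of \myrefT{thm:stStringsA}, and it is the right substitute here because, as remarked after \myrefT{thm:stsStringsA}, the argument of \citeKL\ via their Lemma~2.6(iii) is not available in all the configurations that arise in $D_4$. Carrying this out yields the stated $\mutilde$ equalities, and a finer inspection of the $W$-graph inside the coset produces the sharper count to be recorded as \myrefP{prop:mainCase3}. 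I expect the hardest and most error-prone step to be the second paragraph: enumerating exactly which unknown terms appear across the repeated applications of \autoref{eq:KL22c} and checking that the resulting collection of inequalities is rigid enough --- in every case --- to pin down the $B_2$-type relations.
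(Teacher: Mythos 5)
Your proposal matches the paper in broad outline for Cases 1 and 2 --- resolve $qP_{y,sw}$ by iterated application of \autoref{eq:KL22c}, use \myrefT{thm:nonNegative} to control the untracked terms, accept inequalities, then argue they close --- and matches it exactly for Case 3 (reduce to $\WDFour$ via \myrefP{prop:KLPolysParabolic} and read off the $W$-graph). But you omit one load-bearing ingredient: the composite-Knuth-map equalities. The paper's closing of the inequality systems rests on \myrefP{prop:semicircle} together with \myrefT{thm:talb}: in a clump of size $14$ the two type-$\scA_1$ elements are linked by the fixed chain $T_{3,4}T_{1,3}T_{3,2}T_{4,3}T_{3,1}T_{2,3}$, which gives \emph{equalities} such as $\mu(U,U')=\mu(L,L')$, $\mu(U,L')=\mu(L,U')$ when $\abs C = 14$, and $\mu(M,U')=\mu(M,L')$ when $\abs{C'}=14$, with no recourse to the $D_4$ recursion at all. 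Three of the four case-lemmas (\myrefL{lem:mainA1014}, \myrefL{lem:mainA1410}, \myrefL{lem:mainA1414}) depend on these shortcuts, and in the $14/14$ case only two additional inequalities are generated; without the semicircle equalities there is no reason to believe the inequalities you propose would be rigid enough.

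Two smaller deviations are worth naming. The mechanism you describe for obtaining reverse inequalities --- ``the versions with the two arguments interchanged, using $\mutilde(y,w)=\mutilde(w,y)$'' --- does not produce anything: that symmetry is definitional. The paper obtains the complementary inequalities by rerunning the expansion for a different pair $(y,w)$ in the same configuration (e.g.\ $y=M$, $w=M'$ vs.\ $y=M$, $w=H'=s_1 s_3 U'$ in \myrefL{lem:mainA1010}), in effect probing the clump from both ends. Relatedly, the paper's Cases 1 and 2 are not an induction on $l(w)$: once \myrefP{prop:lowerStar}, \myrefP{prop:CTop}--\myrefP{prop:ATopAlt}, and \myrefL{lem:LHS}--\myrefL{lem:As4} are in hand, the final four lemmas are pure algebra on a small system of inequalities. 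Finally, the hypothesis $C'\not\subset\WDFour C$ is used more narrowly than you describe: it makes the cosets $\WDFour y$ and $\WDFour w$ disjoint, so the sum portion of \autoref{eq:KL22c} can be split and the two pieces tracked independently (the step marked (****) in the proof of \myrefP{prop:CTop}); it is not there to prevent accidental coincidence between tracked and untracked terms.
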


  The version of this theorem using $\mutilde$ is as follows.

  \begin{theorem}
    \label{thm:main}
    Let $C$ and $C'$ be clumps.
    Fix $i \in \refSet$.
    We choose elements $L, M, U \in C$ as follows:
    if $\abs C = 10$ then $L$ and $U$ are the two elements of type $\scC$ in $C$, and $M$ is the one element of type $\scA_i$ in $C$.
    If instead $\abs C = 14$, then $L$ and $U$ are the two elements of type $\scA_i$ in $C$,
    and $M$ is the one element of type $\scC$ in $C$.
    We choose similarly $L', M', U' \in C'$.
    \begin{enumerate}
      \item Suppose $\abs C = \abs{C'}$.  Then
      \begin{align*}
        &\mutilde(M, M') = \mutilde(L, L') + \mutilde(U, L')\\
        &\mutilde(U, U') = \mutilde(L, L')\\
        &\mutilde(U, L') = \mutilde(L, U')
      \end{align*}
      \item Suppose $\abs C \neq \abs{C'}$.  Then
      \begin{equation*}
        \mutilde(L, M') = \mutilde(U, M') = \mutilde(M, L') = \mutilde(M, U')
      \end{equation*}
    \end{enumerate}
  \end{theorem}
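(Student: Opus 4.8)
The plan is to deduce \myrefT{thm:main} from \myrefT{thm:mainA}, by the same passage from $\mu$ to $\mutilde$ that was carried out in \myrefS{sec:B2Maps} when deducing \myrefT{thm:stsStrings} from \myrefT{thm:stsStringsA}. The only general fact needed is that $\mutilde(x,y)$ and $\mu(x,y)$ disagree precisely when $y<x$ and $\mu(y,x)\neq 0$, in which case $\mutilde(x,y)=\mu(y,x)$. Note also that every identity in \myrefT{thm:main} is symmetric under interchanging the clumps $C$ and $C'$; since $\WDFour C$ and $\WDFour C'$ are both full right cosets of $\WDFour$, this interchange also interchanges the hypotheses of \myrefT{thm:mainA}. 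So in each case it is enough to assume some relevant term is nonzero and to check that then the $\mutilde$'s coincide with the $\mu$'s produced by \myrefT{thm:mainA}; when they cannot, the same $\mu$-equalities applied to the interchanged pair $(C',C)$ still force the asserted $\mutilde$-equalities.

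I would organise the argument along the trichotomy of \myrefT{thm:mainA}. If $C'\subset\WDFour C$, there is nothing to prove: \myrefT{thm:mainA}(3), in the sharp form of \myrefP{prop:mainCase3}, already gives the conclusion in $\mutilde$-form (this is the case that, via \myrefP{prop:KLPolysParabolic}, reduces to a finite computation inside $\WDFour\cong W(D_4)$). Suppose instead $C'\not\subset\WDFour C$, so $C\neq C'$. When $\abs C\neq\abs{C'}$, apply \myrefT{thm:mainA}(2): the four numbers $\mu(L,M'),\mu(U,M'),\mu(M,L'),\mu(M,U')$ are equal; if their common value is nonzero, each of $L,U,M$ lies strictly below its partner in the Bruhat order, so all four $\mutilde$'s equal their $\mu$'s, and if it is zero, the same statement for $(C',C)$ shows all four ``reversed'' $\mu$'s have a common value --- either way the four $\mutilde$'s agree. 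When $\abs C=\abs{C'}$, apply \myrefT{thm:mainA}(1) and \myrefT{thm:nonNegative}. Assuming some term of the three equations is nonzero and (by the interchange symmetry) that it is a ``forward'' term $\mu(x,y)$ with $x\in\{L,M,U\}$, non-negativity and the three equations of \myrefT{thm:mainA}(1) force $\mu(M,M')\neq 0$, hence $M<M'$. Then \myrefT{thm:mainA}(1) for $(C',C)$ reads $\mu(M',M)=\mu(L',L)+\mu(U',L)$ together with the other two equations; since $M'\not\le M$ we have $\mu(M',M)=0$, and non-negativity collapses $\mu(L',L)$, $\mu(U',L)$, $\mu(U',U)$, $\mu(L',U)$ all to $0$. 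So every $\mutilde$ in the three identities equals the corresponding $\mu$, and substitution into \myrefT{thm:mainA}(1) gives the result.

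The substantive content lies entirely in \myrefT{thm:mainA}; the present deduction is routine, and the one point I expect to require attention is the interface with Case~3. Because \myrefT{thm:main} carries no hypothesis excluding $C'\subset\WDFour C$, the subcase $\abs C=\abs{C'}$ with $C'\subset\WDFour C$ --- which does occur, for instance for the two clumps of size $10$ lying in a common coset --- must be covered in the three-equation form of statement~1, whereas \myrefT{thm:mainA}(3) only advertises the four-term equality. I would therefore arrange \myrefP{prop:mainCase3} so that its conclusions are recorded in exactly the form \myrefT{thm:main} needs for each sub-configuration of $C'\subset\WDFour C$, and check that the finite verification inside $W(D_4)$ indeed produces those forms.
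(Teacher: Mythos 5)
Your proposal is correct and matches the paper's own route: the paper proves \myrefT{thm:main} as an immediate corollary of \myrefT{thm:mainA} via exactly the $\mu\to\mutilde$ argument given for \myrefT{thm:stsStrings}, which you reproduce. The one cosmetic difference is in the $\abs C=\abs{C'}$, $C'\not\subset\WDFour C$ subcase: the paper's B$_2$ model first deduces $\mu(U',L)=0$ from $L<U'$ and then invokes the reversed theorem for $\mu(L',U)=0$, while you go straight from $\mu(M',M)=0$ and non-negativity to collapse all the reversed $\mu$-terms at once; both are fine, and yours is arguably tidier. Your remark about checking that \myrefP{prop:mainCase3} supplies the three-equation form when $C'\subset\WDFour C$ and $\abs C=\abs{C'}$ is a reasonable sanity check, but it is already satisfied by the case list there (cases 1, 2, 5 give the requisite identities with all the ``crossed'' terms zero), so there is no gap.
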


  \begin{remark}
    The proof that \myrefT{thm:mainA} implies \myrefT{thm:main} is the same as the proof that  \myrefT{thm:stStringsA} implies \myrefT{thm:stsStrings}.  See \myrefS{sec:B2Maps}.
  \end{remark}

  \begin{remarkNumbered}
    \label{rem:chooseLU}
    It is enough to prove \myrefT{thm:mainA} for one choice of $L, U$
    and one choice of $L', U'$.
    To see this, first note that if we interchange $L$ and $U$ in \myref{Equations}{eqs:mainA1}, we get the same family of equations.
    If we interchange $L'$ and $U'$ in \myref{Equations}{eqs:mainA1}, we get an equivalent family of equations:
    \begin{align*}
      \begin{split}
        &\mu(M, M') = \mu(L, U') + \mu(U, U')\\
        &\mu(U, L') = \mu(L, U')\\
        &\mu(U, U') = \mu(L, L')
      \end{split}
    \end{align*}
    If we interchange $L$ and $U$ in \myref{Equations}{eqs:mainA2}, we get the same family of equations, and similarly for $L'$ and $U'$.
    In the cases which we study in detail, we will choose $L$ and $U$ with $l(L) < l(U)$.
  \end{remarkNumbered}

  \begin{remarkNumbered}
    \label{rem:relabel}
    As remarked before, there is no significance in this paper to the choice of which of the elements of $\SDFour$ are labeled $s_1$, $s_2$, and $s_4$.
    So, we will prove the theorems in some cases, and then deduce from those that it holds in the rest of the cases by renaming the elements of $\bigRefSet$.
    Specifically, we can and will do the following.
    Clumps of size 10 are symmetric in $s_1$, $s_2$, and $s_4$.
    So, to prove \myrefT{thm:mainA} when $\abs C = 10$ and $\abs{C'} = 14$, it suffices to prove it for one choice of $j$ with $C' \subset X(14, a, j)$ and one choice of $k$ with $C' \subset X(14, b, k)$.
    For convenience, we will choose $C' \subset X(14, a, 4)$ and $C' \subset X(14, b, 2)$.
    Similarly, when $\abs C = 14$ and $\abs{C'} = 10$, we will choose $C \subset X(14, a, 4)$ and $C \subset X(14, b, 2)$.
    Now suppose both $\abs C = 14$ and $\abs{C'} = 14$.  Again, we can choose that $C \subset X(14, a, 4)$ or $C \subset X(14, b, 2)$.
    Suppose that $C \subset X(14, a, 4)$.  Then, we see that it suffices to consider the cases of $C' \subset X(14, a, j)$ and $C' \subset X(14, b, j)$, with $j \neq 1$.
    We do not need to consider $C' \subset X(14, a, 1)$ since we can get to that case by interchanging the labels $s_1$ and $s_2$ when $C' \subset X(14, b, 2)$.
    Similarly, when $C \subset X(14, b, 2)$, it suffices to consider $C' \subset X(14, a, j)$ and $C' \subset X(14, b, j)$ with $j \neq 1$.
  \end{remarkNumbered}

  \begin{remark}
    Throughout this section, we will be using \myrefP{prop:clumpDiagram}.
  \end{remark}

  \begin{remark}
    I'd like to draw all the pictures analogous to those in \autoref{fig:stsStringsA}, but there are too many.
    Here is one, though, in \autoref{fig:mainACase1}.
    It shows one of the parts of Case 1 of \myrefT{thm:mainA}.
    As you can see, though there are different elements labeled $L, M, U$, etc., the blue, orange, and purple lines connecting them are in the same places, and have the same meaning, as in \autoref{fig:stsStringsA}.
  \end{remark}

  \begin{figure}[!ht]
    \centering
    \begin{tikzpicture}[scale=0.36,  every node/.style={transform shape}]
      \node at (0 * \xSLT, 0 * \ySLT) {
        \begin{tikzpicture}[opacity=0.8]
          % lines
          \upLineF{darkblue}{solid}{0}{2}
          \upLineF{magenta}{dashed}{0}{3}
          \rightLineF{green}{dashed}{0}{3}
          \leftLineF{goldenpoppy}{dashed}{0}{3}
          \upLineF{magenta}{solid}{-1}{4}
          \leftLineF{goldenpoppy}{solid}{0}{4}
          \upLineF{magenta}{solid}{1}{4}
          \rightLineF{green}{solid}{0}{4}
          \rightLineF{darkblue}{solid}{-2}{3}
          \leftLineF{darkblue}{solid}{2}{3}
          \upLineF{goldenpoppy}{solid}{-2}{2}
          \upLineF{green}{solid}{2}{2}
          \leftLineF{darkblue}{solid}{-1}{1}
          \rightLineF{goldenpoppy}{dashed}{-1}{1}
          \leftLineF{green}{dashed}{1}{1}
          \rightLineF{darkblue}{solid}{1}{1}
          \leftLineF{green}{solid}{0}{0}
          \rightLineF{goldenpoppy}{solid}{0}{0}

          % Nodes
          \node (A1Node) at (0,0) {\dynkinLabelWeylHuge{\TypeAPicture{1}}{L}};
          \node (B4Node) at (-1 * \xSLF, 1 * \ySLF) {\dynkinLabelWeylHuge{\TypeBPicture{4}}{ }};
          \node (B2Node) at (1 * \xSLF, 1 * \ySLF) {\dynkinLabelWeylHuge{\TypeBPicture{2}}{ }};
          \node (A2Node) at (-2 * \xSLF, 2 * \ySLF) {\dynkinLabelWeylHuge{\TypeAPicture{2}}{ }};
          \node (CNode) at (0, 2 * \ySLF) {\dynkinLabelWeylHuge{\TypeCPicture}{M}};
          \node (A4Node) at (2 * \xSLF, 2 * \ySLF) {\dynkinLabelWeylHuge{\TypeAPicture{4}}{ }};
          \node (B1Node) at (-2 * \xSLF, 3 * \ySLF) {\dynkinLabelWeylHuge{\TypeBPicture{1}}{ }};
          \node (DNode) at (0, 3 * \ySLF) {\dynkinLabelWeylHuge{\TypeDPicture}{ }};
          \node (B1PNode) at (2 * \xSLF, 3 * \ySLF) {\dynkinLabelWeylHuge{\TypeBPicture{1}}{ }};
          \node (A4PNode) at (-1 * \xSLF, 4 * \ySLF) {\dynkinLabelWeylHuge{\TypeAPicture{4}}{ }};
          \node (A1PNode) at (0, 4 * \ySLF) {\dynkinLabelWeylHuge{\TypeAPicture{1}}{U}};
          \node (A2PNode) at (1 * \xSLF, 4 * \ySLF) {\dynkinLabelWeylHuge{\TypeAPicture{2}}{ }};
          \node (B2PNode) at (-1 * \xSLF, 5 * \ySLF) {\dynkinLabelWeylHuge{\TypeBPicture{2}}{ }};
          \node (B4PNode) at (1 * \xSLF, 5 * \ySLF) {\dynkinLabelWeylHuge{\TypeBPicture{4}}{ }};

        \end{tikzpicture}
      };
      \node at (5 * \xSLT, 0 * \ySLT) {
        \begin{tikzpicture}[opacity=0.8]
          % lines
          \upLineF{darkblue}{solid}{0}{2}
          \rightLineF{magenta}{dashed}{0}{3}
          \leftLineF{goldenpoppy}{dashed}{0}{3}
          \rightLineF{magenta}{solid}{-1}{4}
          \leftLineF{goldenpoppy}{solid}{1}{4}
          \upLineF{green}{dashed}{0}{1}
          \rightLineF{goldenpoppy}{solid}{-1}{0}
          \leftLineF{magenta}{solid}{1}{0}
          \leftLineF{magenta}{dashed}{1}{1}
          \rightLineF{goldenpoppy}{dashed}{-1}{1}
          \upLineF{green}{solid}{-1}{0}
          \upLineF{green}{solid}{1}{0}
          \leftLineF{darkblue}{solid}{-1}{1}
          \upLineF{goldenpoppy}{solid}{-2}{2}
          \rightLineF{darkblue}{solid}{-2}{3}
          \rightLineF{darkblue}{solid}{1}{1}
          \upLineF{magenta}{solid}{2}{2}
          \leftLineF{darkblue}{solid}{2}{3}

          % Nodes
          \node (A1Node) at (-1 * \xSLF, 0 * \ySLF)
          {\dynkinLabelWeylHuge{\TypeAPicture{1}}{L'}};
          \node (A4Node) at (1 * \xSLF, 0 * \ySLF)
          {\dynkinLabelWeylHuge{\TypeAPicture{4}}{ }};
          \node (B4Node) at (-1 * \xSLF, 1 * \ySLF) {\dynkinLabelWeylHuge{\TypeBPicture{4}}{ }};
          \node (B2Node) at (0 * \xSLF, 1 * \ySLF) {\dynkinLabelWeylHuge{\TypeBPicture{2}}{ }};
          \node (B1Node) at (1 * \xSLF, 1 * \ySLF) {\dynkinLabelWeylHuge{\TypeBPicture{1}}{ }};
          \node (A2Node) at (-2 * \xSLF, 2 * \ySLF) {\dynkinLabelWeylHuge{\TypeAPicture{2}}{ }};
          \node (CNode) at (0 * \xSLF, 2 * \ySLF) {\dynkinLabelWeylHuge{\TypeCPicture}{M'}};
          \node (A2PNode) at (2 * \xSLF, 2 * \ySLF) {\dynkinLabelWeylHuge{\TypeAPicture{2}}{ }};
          \node (B1PNode) at (-2 * \xSLF, 3 * \ySLF) {\dynkinLabelWeylHuge{\TypeBPicture{1}}{ }};
          \node (DNode)  at (0 * \xSLF, 3 * \ySLF) {\dynkinLabelWeylHuge{\TypeDPicture}{ }};
          \node (B4PNode)  at (2 * \xSLF, 3 * \ySLF) {\dynkinLabelWeylHuge{\TypeBPicture{4}}{ }};
          \node (A4PNode) at (-1 * \xSLF, 4 * \ySLF) {\dynkinLabelWeylHuge{\TypeAPicture{4}}{ }};
          \node (A1PNode) at (1 * \xSLF, 4 * \ySLF) {\dynkinLabelWeylHuge{\TypeAPicture{1}}{U'}};
          \node (B2PNode) at (0 * \xSLF, 5 * \ySLF) {\dynkinLabelWeylHuge{\TypeBPicture{2}}{ }};

        \end{tikzpicture}
      };

      \coordinate (L) at (0 * \xSLT, -2.5 * \ySLT);
      \coordinate (LP) at (4 * \xSLT, -2.5 * \ySLT);
      \coordinate (M) at (0 * \xSLT, -.5 * \ySLT);
      \coordinate (MP) at (5 * \xSLT, -.5 * \ySLT);
      \coordinate (U) at (0 * \xSLT, 1.5 * \ySLT);
      \coordinate (UP) at (6 * \xSLT, 1.5 * \ySLT);

      \draw[->, bend left = 10, cyan, dotted, line width=0.45mm, shorten <=0.6cm, shorten >=0.7cm] (L) to (LP);

      \draw[->, bend left = 10, cyan, dotted, line width=0.45mm, shorten <=0.6cm, shorten >=0.7cm] (U) to (UP);

      \draw[->, bend left = 10, orange, dotted, line width=0.45mm, shorten <=0.6cm, shorten >=0.7cm] (M) to (MP);

      \draw[->, bend left = 8, indigo, dotted, line width=0.45mm, shorten <=0.5cm, shorten >=0.4cm] (L) to (UP);

      \draw[->, bend left = 10, indigo, dotted, line width=0.45mm, shorten <=0.5cm, shorten >=0.4cm] (U) to (LP);

    \end{tikzpicture}
    \caption{Part of Case 1 of \myrefT{thm:mainA}}
    \label{fig:mainACase1}
  \end{figure}

  We now move to the proof of \myrefT{thm:mainA}, Cases 1 and 2.
  We will be using \myrefP{prop:KL22c}, in situations where $sy < y$.
  Our biggest difficulty will be the resolution of the $qP_{y,sw}$ term in the expression for $P_{y,w}$ given by \myrefP{prop:KL22c}.
  Unlike the situation in the proof of Theorem 4.2 of \citeKL\ and 10.4 of \cite{lusztig_1985}, here multiplying $w$ by $s$ takes $s$ out of the \ti, but doesn't put anything in.
  So, we can't use \myrefP{prop:KL23g} at first.

  There will be three stages to the resolution of the $qP_{y,sw}$ term.
  First, a general proposition.
  The $x$ and $x'$ in this next proposition will later be $y$ and $sw$ in the main theorem.
  So, the purpose of this next proposition is to begin the resolution of the $qP_{y,sw}$ term by breaking it into two pieces, minus a residual.
  After that, we'll need to look at some different cases, to resolve the two terms which result from this following proposition.

  We'll use the notation $\sim$ from \citeKL, but with a little more data, as follows.
  \begin{definition}
    If $P$ and $P'$ are two polynomials, we say $P \dsim P'$ if $P$ and $P'$ are of degree at most $d$, and if $P - P'$ has degree less than $d$.
  \end{definition}

  \begin{proposition}
    \label{prop:lowerStar}
    Suppose $a, b \in S$.
    Suppose $x, x' \in W$ with $x < x'$.
    Suppose $ax < x$, $bx > x$, $ax' < x'$, and $bax' < ax'$.
    Suppose $l(x') - l(x)$ is even and suppose $bx \ne ax'$.
    Let $d = (l(x') - l(x) - 2)/2$
    Then
    \begin{equation*}
      P_{x,x'} \dsim P_{ax, ax'} + qP_{bx, ax'} - Cq^d
    \end{equation*}
    where $C$ is a non-negative integer.
  \end{proposition}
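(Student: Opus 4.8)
The plan is to apply the recursion of \myrefP{prop:KL22c} exactly once, with $s = a$, and then to dispose of the three resulting terms in turn. Since $ax' < x'$ and $ax < x$ (so the constant $c$ in \autoref{eq:KL22c} equals $1$), \myrefP{prop:KL22c} gives
\begin{equation*}
  P_{x,x'} = P_{ax,ax'} + q\,P_{x,ax'} - \Sigma,\qquad
  \Sigma := \sum_{\substack{x \le z \prec ax'\\ az < z}} \mu(z,ax')\,q^{(l(x')-l(z))/2}\,P_{x,z}.
\end{equation*}
For the middle term I would apply \myrefP{prop:KL23g} with the reflection $b$: the hypotheses $bx > x$ and $b(ax') < ax'$ (the latter being $bax' < ax'$) yield $P_{x,ax'} = P_{bx,ax'}$. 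So $P_{x,x'} = P_{ax,ax'} + q\,P_{bx,ax'} - \Sigma$, and it remains to check that all four polynomials in the asserted relation have degree $\le d$ and that $\Sigma \dsim Cq^d$ for a non-negative integer $C$.

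The degree bookkeeping uses everywhere that $l(x')-l(x)$ is even, which is just what makes $d$ an integer. From $\deg P_{y,w}\le (l(w)-l(y)-1)/2$ we get $\deg P_{x,x'}\le d$, and likewise $\deg P_{ax,ax'}\le d$ (if this polynomial is nonzero then $ax<ax'$, since $l(ax)=l(x)-1<l(x')-1=l(ax')$, and $l(ax')-l(ax)=l(x')-l(x)$). For $q\,P_{bx,ax'}$: as $bx>x$ and $ax'<x'$ we have $l(ax')-l(bx)=l(x')-l(x)-2$, so when $bx<ax'$ one has $\deg P_{bx,ax'}\le d-1$ (the hypothesis $bx\ne ax'$ rules out the degenerate equality of lengths with $P_{bx,ax'}=1$), and when $bx\not\le ax'$ the polynomial vanishes; either way $\deg(q\,P_{bx,ax'})\le d$. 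The same computation shows that $z=x$ never indexes a term of $\Sigma$: if $x\not\le ax'$ this is clear, while if $x\le ax'$ then $P_{bx,ax'}=P_{x,ax'}\ne 0$, so $bx<ax'$ (here $bx\ne ax'$ is used), whence $\deg P_{x,ax'}=\deg P_{bx,ax'}\le d-1 < d = d(x,ax')$ and therefore $x\not\prec ax'$. Thus every $z$ contributing to $\Sigma$ has $x<z$; for such $z$, $z\prec ax'$ forces $l(ax')-l(z)$ to be odd, so $(l(x')-l(z))/2$ is an integer, and adding $\deg P_{x,z}\le(l(z)-l(x)-1)/2$ gives degree at most $(l(x')-l(x)-1)/2$, hence at most $d$. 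So $\deg\Sigma\le d$.

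Finally, by \myrefT{thm:nonNegative} every $\mu(z,ax')$ and every coefficient of every $P_{x,z}$ is a non-negative integer, so $\Sigma$ has non-negative integer coefficients; let $C$ be its coefficient of $q^d$. Then $C\ge 0$ is an integer and $\deg(\Sigma-Cq^d)<d$, i.e.\ $\Sigma\dsim Cq^d$. Hence $P_{x,x'}-(P_{ax,ax'}+q\,P_{bx,ax'}-Cq^d)=Cq^d-\Sigma$ has degree $<d$, while each of $P_{x,x'}$, $P_{ax,ax'}$, $q\,P_{bx,ax'}$, $Cq^d$ has degree $\le d$; this is precisely the asserted relation $P_{x,x'}\dsim P_{ax,ax'}+q\,P_{bx,ax'}-Cq^d$. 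The step I expect to be the real point — everything else being routine degree arithmetic together with Elias--Williamson positivity — is ruling out the index $z=x$ in $\Sigma$, i.e.\ showing $x\not\prec ax'$, and it is exactly here that the side hypothesis $bx\ne ax'$ (and the strictness $x<x'$) get used.
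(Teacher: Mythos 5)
Your proof is correct and follows essentially the same route as the paper's: apply \myrefP{prop:KL22c} once with $s=a$, convert the middle term $qP_{x,ax'}$ to $qP_{bx,ax'}$ via \myrefP{prop:KL23g}, use the hypothesis $bx\ne ax'$ to show $\mu(x,ax')=0$ so the index $z=x$ drops from the sum, and then invoke Elias--Williamson positivity together with the degree arithmetic forced by $l(x')-l(x)$ being even. The paper phrases the exclusion of $z=x$ slightly more compactly --- since $P_{x,ax'}=P_{bx,ax'}$ and $bx\ne ax'$, the degree bound on $P_{bx,ax'}$ immediately kills the coefficient of $q^{d}=q^{d(x,ax')}$, i.e.\ $\mu(x,ax')=0$ --- but that is the same observation you make, and your identification of this point (together with positivity) as the crux of the proposition matches the structure of the paper's argument.
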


  \begin{figure}[!ht]
    \begin{center}
      \begin{tikzpicture}
        \node (r1y) at (0, 0) {$ax$};
        \node (y) at (0, 1.4) {$x$};
        \node (r2y) at (0, 2.8) {$bx$};

        \upLineLabel{blue}{r1y}{y}{a}
        \upLineLabel{brown}{y}{r2y}{b}

        \node (r2r1w) at (3, 0) {$bax'$};
        \node (r1w) at (3, 1.4) {$ax'$};
        \node (w) at (3, 2.8) {$x'$};

        \upLineLabel{brown}{r2r1w}{r1w}{b}
        \upLineLabel{blue}{r1w}{w}{a}
      \end{tikzpicture}
      \caption{\myrefP{prop:lowerStar}}
      \label{fig:lowerStar}
    \end{center}
  \end{figure}

  \begin{proof}
    Refer to \autoref{fig:lowerStar}.
    By \myrefP{prop:KL22c}, we have
    \begin{equation*}
      P_{x,x'} = P_{ax,ax'} + qP_{x,ax'} - \sum_{\substack{x\le z\prec ax'\\az < z}} \mu(z,ax')q^{(l(x') - l(z)) / 2}P_{x,z}.
    \end{equation*}

    We have $P_{x,ax'} = P_{bx, ax'}$ by \myrefP{prop:KL23g}.  So, we can put this information into the equation.
    Since by hypothesis $bx \ne ax'$, then $P_{x,ax'} = P_{bx, ax'}$ also implies that $\mu(x, ax') = 0$.
    So, we can remove $z = x$ from the sum portion of the equation.  Now we have

    \begin{equation*}
      P_{x,x'} = P_{ax,ax'} + qP_{bx,ax'} - \sum_{\substack{x < z\prec ax'\\az < z}} \mu(z,ax')q^{(l(x') - l(z)) / 2}P_{x,z}
    \end{equation*}

    Let
    \begin{equation*}
      Q_{x,x'} = \sum_{\substack{x < z\prec ax'\\az < z}} \mu(z,ax')q^{(l(x') - l(z)) / 2}P_{x,z}
    \end{equation*}
    We know that $Q_{x,x'}$ is a polynomial with non-negative coefficients by \myrefT{thm:nonNegative}.
    For each $z$ which contributes to the sum portion of the equation, we have $\mu(z,ax') \ne 0$, and thus $l(ax') - l(z)$ is odd, and thus $l(x') - l(z)$ is even.
    Since $l(x') - l(x)$ is even, we conclude that $l(x) - l(z)$ is also even, and thus (since $z \ne x$) that
    $P_{x,z}$ is of degree at most $(l(z) - l(x) - 2) / 2$.
    So, $q^{(l(x') - l(z)) / 2}P_{x,z}$ is of degree at most $(l(x') - l(x) - 2) / 2 = d$.
    So, the highest order term of $Q_{x,x'}$ is of the form $Cq^d$, where $C$ is a non-negative integer.
  \end{proof}

  In the next stage, we split into cases, and derive in each case an inequality involving $\mu$ terms.
  These inequalities will be our resolution of \autoref{eq:KL22c} as it applies to our situation.

  \begin{proposition}
    \label{prop:CTop}
    Let $y, w \in W$ with $l(w) - l(y)$ odd and $y \notin \WDFour w$.
    Suppose $y$ is of type $\scC$ and suppose $\tDFour(w) = \bigRefSet$, $\tDFour(s_1w) = \{s_2, s_4\}$, and $\tDFour(s_2s_1w) = \{s_3, s_4\}$.
    Then

    \begin{equation*}
      \begin{split}
        &\mu(y,w) + \sum_{\substack{z \in \WDFour y\\ s_1z < z}}\mu(y,z)\mu(z,s_1w) \leq\\ \mu(s_1y,s_1w) +  &\mu(s_3s_2y, s_2s_1w)
        + \mu(s_4s_3y, s_2s_1w)
         - \sum_{\substack{z \in \WDFour w\\ s_1z < z}} \mu(y,z)\mu(z,s_1w)
      \end{split}
    \end{equation*}
  \end{proposition}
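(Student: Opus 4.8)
The plan is to run the proof of Theorem~4.2 of \citeKL\ in coefficient form, with the extra step of resolving the middle term of \autoref{eq:KL22c} by means of \myrefP{prop:lowerStar}. Write $n = l(w) - l(y)$, which is odd, and $d = d(y,w) = (n-1)/2$. Since $y$ is of type $\scC$ we have $s_1 \in \tDFour(y)$, hence $s_1y < y$; and $s_1 \in \tDFour(w)$, hence $s_1w < w$. Applying \autoref{eq:KL22c} with $s = s_1$ (so $c = 1$) gives
\[
P_{y,w} = P_{s_1y,s_1w} + qP_{y,s_1w} - \sum_{z \in S(s_1,y,s_1w)} \mu(z,s_1w)\,q^{(l(w)-l(z))/2}P_{y,z}.
\]
First I would read off the coefficient of $q^{d}$ on both sides. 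On the left it is $\mu(y,w)$. A length count gives $d(s_1y,s_1w) = d$, so the $q^{d}$-coefficient of $P_{s_1y,s_1w}$ is $\mu(s_1y,s_1w)$. For each $z$ in the sum, $\mu(z,s_1w) \neq 0$ forces $l(w)-l(z)$ even and $l(z)-l(y)-1$ even, and then the $q^{d}$-coefficient of $\mu(z,s_1w)q^{(l(w)-l(z))/2}P_{y,z}$ is $\mu(z,s_1w)\mu(y,z)$; so the sum contributes $-\sum_z \mu(y,z)\mu(z,s_1w)$, where only $z$ with $y \le z \le s_1w$ matter. I split this over right cosets of $\WDFour$: the $z \in \WDFour y$ and the $z \in \WDFour w$ give exactly the two sums appearing in the statement (these cosets are distinct since $y \notin \WDFour w$), and every remaining $z$ contributes $\mu(y,z)\mu(z,s_1w) \ge 0$ by \myrefT{thm:nonNegative}. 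Dropping those nonnegative terms turns the identity into the desired inequality, \emph{provided} the $q^{d}$-coefficient of $qP_{y,s_1w}$ is at most $\mu(s_3s_2y,s_2s_1w) + \mu(s_4s_3y,s_2s_1w)$.

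To get that bound — equivalently, a bound on the $q^{(n-3)/2}$-coefficient of $P_{y,s_1w}$ — I would invoke \myrefP{prop:lowerStar} with $x = y$, $x' = s_1w$, $a = s_2$, $b = s_3$. Its hypotheses hold: $s_2y < y$ and $s_3y > y$ because $s_2 \in \tDFour(y)$, $s_3 \notin \tDFour(y)$; $s_2s_1w < s_1w$ because $s_2 \in \tDFour(s_1w)$; $s_3s_2s_1w < s_2s_1w$ because $s_3 \in \tDFour(s_2s_1w)$; $l(s_1w) - l(y) = n-1$ is even; and $s_3y \neq s_2s_1w$ since these lie in the distinct cosets $\WDFour y$ and $\WDFour w$. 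With $d' = (n-3)/2$, \myrefP{prop:lowerStar} then yields
\[
P_{y,s_1w} \dsim P_{s_2y,s_2s_1w} + qP_{s_3y,s_2s_1w} - Cq^{d'}, \qquad C \ge 0.
\]
Next I would resolve the two leading terms with \myrefP{prop:KL23g}. Using that $s_3 \notin \tau(s_2y)$ and $\tDFour(s_3y) = \{s_3\}$ (so $s_4 \notin \tau(s_3y)$), together with $s_3, s_4 \in \tau(s_2s_1w)$, \myrefP{prop:KL23g} gives $P_{s_2y,s_2s_1w} = P_{s_3s_2y,s_2s_1w}$ and $P_{s_3y,s_2s_1w} = P_{s_4s_3y,s_2s_1w}$; a length count shows $d(s_3s_2y,s_2s_1w) = d'$ and $d(s_4s_3y,s_2s_1w) = d'-1$, so the $q^{d'}$-coefficient of the right-hand side above equals $\mu(s_3s_2y,s_2s_1w) + \mu(s_4s_3y,s_2s_1w) - C$, hence is at most $\mu(s_3s_2y,s_2s_1w) + \mu(s_4s_3y,s_2s_1w)$. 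Feeding this into the first paragraph gives the claimed inequality. The structural facts $s_3 \notin \tau(s_2y)$ and $\tDFour(s_3y) = \{s_3\}$ I would extract from the local picture of a type $\scC$ element in \refDFourFigures, more precisely from \myrefP{prop:D4OpsAltDef}, passing from $\WDFour$ to $W$ by \myrefP{prop:parabolic2}.

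The main obstacle here is bookkeeping, not a single hard idea: one must keep every length parity and Bruhat comparison straight so that each ``coefficient of $q^{d}$'' really is the $\mu$ claimed, and one must verify those two structural facts uniformly over all type $\scC$ elements. The delicate one is $s_3 \notin \tau(s_2y)$, because $s_2y$ need not be the $\WDFour$-image of an element of one of the eight cells and so is not literally read off \autoref{fig:type10Full}; I expect to handle it with \myrefP{prop:tauCommuting} (which already forces $s_1,s_4 \in \tau(s_2y)$) combined with \myrefP{prop:D4OpsAltDef}, which pins down $\tDFour$ of the relevant Knuth image of $y$ and thereby rules out $s_3 \in \tau(s_2y)$. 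The degenerate case $y \not< s_1w$, where \myrefP{prop:lowerStar} does not apply, is harmless: then $P_{y,s_1w} = 0$ and the bound on its $q^{d'}$-coefficient is immediate.
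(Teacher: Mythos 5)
Your proposal is correct and follows essentially the same route as the paper's proof: apply \myrefP{prop:KL22c} with $s = s_1$, resolve $qP_{y,s_1w}$ via \myrefP{prop:lowerStar} (with $a = s_2$, $b = s_3$) and two applications of \myrefP{prop:KL23g}, read off $q^d$-coefficients, and then split the resulting sum over cosets of $\WDFour$ using \myrefT{thm:nonNegative}. You are a bit more careful than the published proof on two small points --- justifying $s_3 \notin \tau(s_2 y)$ via \myrefP{prop:D4OpsAltDef} rather than by inspection of the figure, and noting that the degenerate case $y \not< s_1 w$ (where \myrefP{prop:lowerStar} does not apply) makes the bound on the $q^{d'}$-coefficient of $P_{y,s_1w}$ trivial --- but neither changes the substance of the argument.
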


  \begin{figure}[!ht]
    \begin{center}
      \begin{tikzpicture}[scale=0.7,  every node/.style={transform shape}]
        \node (Left) at (0,0) {
        \begin{tikzpicture}[scale=0.8,  every node/.style={transform shape}]
          % Nodes
          \node (AFourNode)  {\dynkinLabelWeyl{\TypeAPicture{4}}{ }};
          \node (DNode) [below = \verticalToNode{AFourNode}] {\dynkinLabelWeyl{\TypeDPicture}{ }};
          \node (CNode) [below = \verticalToNode{DNode}] {\dynkinLabelWeyl{\TypeCPicture}{y}};
          \node (BTwoNode) [below = \verticalToNode{CNode}] {\dynkinLabelWeyl{\TypeBPicture{2}}{ }};

          \draw[-, ultra thick, goldenpoppy, dotted] (DNode.north) -- (AFourNode.south);
          \draw[-, very thick, green, dashed] (BTwoNode.north) -- (CNode.south);
          % \upLine{green}{dashed}{BTwoNode}{CNode}
          \upLine{darkblue}{solid}{CNode}{DNode}
          % \upLine{goldenpoppy}{dotted}{DNode}{AFourNode}
        \end{tikzpicture}
        };
        \node (Right) at (8.5,-4) {
        \begin{tikzpicture}[scale=0.8,  every node/.style={transform shape}]
          % Nodes
          \node (CNode) {\dynkinLabelWeyl{\TypeCPicture}{w}};
          \node (BOneNode) [below = \verticalToNode{CNode}] {\dynkinLabelWeyl{\TypeBPicture{1}}{ }};
          \node (AFourNode) [below = \verticalToNode{BOneNode}] {\dynkinLabelWeyl{\TypeAPicture{4}}{ }};

          \draw[-, ultra thick, magenta, dotted] (BOneNode.north) -- (CNode.south);
          % \upLine{magenta}{dotted}{BOneNode}{CNode}
          \upLine{green}{solid}{AFourNode}{BOneNode}
        \end{tikzpicture}
        };

        % Legend
        \matrix [draw, below left, every node/.style={scale=.7}]
        at (current bounding box.north east)
        {
          \node [legendLine, draw=magenta,label=right:{$s_1$}] {}; \\
          \node [legendLine, draw=green,label=right:{$s_2$}] {}; \\
          \node [legendLine, draw=darkblue,label=right:{$s_3$}] {}; \\
          \node [legendLine, draw=goldenpoppy,label=right:{$s_4$}] {}; \\
        };
      \end{tikzpicture}
    \end{center}
    \caption{\myrefP{prop:CTop}}
    \label{fig:CTop}
  \end{figure}

  \begin{proof}
    Refer to \autoref{fig:CTop}.
    Let $d = d(y, w) = (l(w) - l(y) - 1) / 2$.
    In this proof, we'll be using \myrefP{prop:KL22c} with $s=s_1$, so $c = 1$.
    Let's first examine the term $qP_{y,s_1w}$ from that equation, using \myrefP{prop:lowerStar}.
    We will show that
    \begin{equation*}
      qP_{y, s_1w} \dsim (\mu(s_3s_2y, s_2s_1w) + \mu(s_4s_3y, s_2s_1w) - C)q^d
      \tag{*}
    \end{equation*}
    where $C$ is a non-negative integer.
    To see this, let $x = y$, $x' = s_1w$, $s = s_2$, and $t = s_3$.
    So, $(l(x') - l(x) - 2)/2 = (l(s_1w) - l(y) - 2)/2 = (l(w) - l(y) - 3)/2 = d(y, w) - 1$.
    Since $s_4 \notin \tau(tx)$ and $s_4 \in \tau(sx')$, we have $tx \ne sx'$.
    If we apply \myrefP{prop:lowerStar} (the $d$ of that proposition is then one less than the $d$ of this proposition)
    the result, after multiplying both sides by $q$, is
    \begin{equation*}
      qP_{y,s_1w} \dsim qP_{s_2y, s_2s_1w} + q^2P_{s_3y, s_2s_1w} - Cq^d
      \tag{**}
    \end{equation*}
    for some non-negative integer $C$.
    Since $s_3 \in \tau(s_2s_1w)$ and $s_3 \notin \tau(s_2y)$,
    we can apply \myrefP{prop:KL23g} to obtain $P_{s_2y, s_2s_1w} = P_{s_3s_2y, s_2s_1w}$.
    Since $s_4 \in \tau(s_2s_1w)$ and $s_4 \notin \tau(s_3y)$,
    we can apply \myrefP{prop:KL23g} to obtain $P_{s_3y, s_2s_1w} = P_{s_4s_3y, s_2s_1w}$.
    Now, $l(s_3s_2y) = l(y)$ and $l(s_2s_1w) - l(w) - 2$, so $d(s_3s_2y, s_2s_1w) = d(y, w) - 1$.
    Thus
    \begin{equation*}
      qP_{s_2y, s_2s_1w} = qP_{s_3s_2y, s_2s_1w} \dsim \mu(s_3s_2y, s_2s_1w)q^d.
    \end{equation*}
    Also, $l(s_4s_3y) = l(y) - 1$, so $d(s_4s_3y, s_2s_1w) = d(y, w) - 2$.
    Thus
    \begin{equation*}
      q^2P_{s_3y, s_2s_1w} = q^2P_{s_4s_3y, s_2s_1w} \dsim \mu(s_4s_3y, s_2s_1w)q^d.
    \end{equation*}
    If we put these last two formulas into (**), we get (*), as desired.

    So, now, with this preparation in hand, let's use \myrefP{prop:KL22c}.  From that, we have
    \begin{equation*}
      P_{y,w} = P_{s_1y,s_1w} + qP_{y,s_1w} - \sum_{\substack{y\le z\prec s_1w\\s_1z < z}} \mu(z,s_1w)q^{d(z,w)/2}P_{y,z}
    \end{equation*}

    From this we obtain directly
    \begin{equation*}
      \mu(y,w)q^d \dsim \mu(s_1y,s_1w)q^d + qP_{y,s_1w} - \sum_{s_1z < z} \mu(y,z)\mu(z,s_1w)q^d.
    \end{equation*}

    Finally, we substitute in (*), to obtain
    \begin{equation*}
      \begin{split}
        \mu(y,w)q^d &\dsim \mu(s_1y,s_1w)q^d + \mu(s_3s_2y, s_2s_1w)q^d + \mu(s_4s_3y, s_2s_1w)q^d -Cq^d\\
        &- \sum_{s_1z < z} \mu(y,z)\mu(z,s_1w)q^d.
      \end{split}
    \end{equation*}

    This yields
    \begin{equation*}
      \begin{split}
        \mu(y,w) &\leq \mu(s_1y,s_1w) + \mu(s_3s_2y, s_2s_1w) + \mu(s_4s_3y, s_2s_1w)\\
        &- \sum_{s_1z < z} \mu(y,z)\mu(z,s_1w)
      \end{split}
      \tag{***}
    \end{equation*}

    Now, let's work with the sum portion of the inequality.
    Since by hypothesis, $y \notin \WDFour w$, we have
    \begin{equation*}
      \sum_{\substack{z \in \WDFour y\\ s_1z < z}}\mu(y,z)\mu(z,s_1w) + \sum_{\substack{z \in \WDFour w\\ s_1z < z}} \mu(y,z)\mu(z,s_1w) \leq  \sum_{s_1z < z} \mu(y,z)\mu(z,s_1w)
      \tag{****}
    \end{equation*}

    Using \myrefT{thm:nonNegative}, we can substitute the left-hand side of (****) for the right-hand side of (****) in (***).
    This yields the inequality of the proposition.
  \end{proof}

  \begin{proposition}
    \label{prop:CTopAlt}
    Let $y, w \in W$ with $l(w) - l(y)$ odd and $y \notin \WDFour w$.
    Suppose $y$ is of type $\scC$ and suppose $\tDFour(w) = \bigRefSet$, $\tDFour(s_1w) = \{s_2, s_4\}$, and $\tDFour(s_4s_1w) = \{s_3, s_2\}$.
    Then
    \begin{equation*}
      \begin{split}
        &\mu(y,w) + \sum_{\substack{z \in \WDFour y\\ s_1z < z}}\mu(y,z)\mu(z,s_1w) \leq\\ \mu(s_1y,s_1w) + &\mu(s_3s_4y, s_4s_1w)
        + \mu(s_2s_3y, s_4s_1w)
         - \sum_{\substack{z \in \WDFour w\\ s_1z < z}} \mu(y,z)\mu(z,s_1w)
      \end{split}
    \end{equation*}
  \end{proposition}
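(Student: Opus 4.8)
The plan is to obtain \myrefP{prop:CTopAlt} from \myrefP{prop:CTop} by relabeling. As noted in \myrefS{sec:D4cells} and \myrefR{rem:relabel}, there is no significance to which of the three outer nodes of the $D_4$ diagram is called $s_1$, $s_2$, or $s_4$; in particular, the permutation of $\SDFour$ fixing $s_1$ and $s_3$ and interchanging $s_2$ and $s_4$ is a symmetry of the whole setup. The first step is therefore to check that applying this interchange to \myrefP{prop:CTop} produces exactly \myrefP{prop:CTopAlt}. On the hypotheses: the $D_4$ Coxeter relations are preserved; the subgroup $\WDFour$ and the conditions ``$l(w)-l(y)$ odd'' and ``$y\notin\WDFour w$'' are unaffected; the condition that $y$ be of type $\scC$ is preserved, since by \myrefP{prop:D4OpsAltDef} type $\scC$ is characterized by $\tDFour(y)=\bigRefSet$ together with a condition symmetric in $s_1,s_2,s_4$; the condition $\tDFour(s_1w)=\{s_2,s_4\}$ is symmetric; and $\tDFour(s_2s_1w)=\{s_3,s_4\}$ becomes precisely $\tDFour(s_4s_1w)=\{s_3,s_2\}$. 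On the conclusion: the terms $\mu(y,w)$ and $\mu(s_1y,s_1w)$ and both residual sums involve only $s_1$, $s_3$, $y$, $w$, and $\WDFour$, hence are unchanged, while $\mu(s_3s_2y,s_2s_1w)$ and $\mu(s_4s_3y,s_2s_1w)$ become $\mu(s_3s_4y,s_4s_1w)$ and $\mu(s_2s_3y,s_4s_1w)$; this is the asserted inequality. So the proposition follows immediately from \myrefP{prop:CTop}.

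If one prefers not to lean on the relabeling symmetry, the same three-stage argument used for \myrefP{prop:CTop} runs verbatim with $s_2$ and $s_4$ swapped, and I would present it that way. One first resolves the $qP_{y,s_1w}$ term by applying \myrefP{prop:lowerStar} with $x=y$, $x'=s_1w$, $a=s_4$, $b=s_3$: the four chain conditions ($s_4y<y$, $s_3y>y$, $s_4s_1w<s_1w$, $s_3s_4s_1w<s_4s_1w$) come from ``$y$ of type $\scC$'' and the hypotheses on $w$; the parity hypothesis makes $l(s_1w)-l(y)$ even; and $s_3y\neq s_4s_1w$ because multiplication on the left by $s_3$ sends the type-$\scC$ element of a clump to the type-$\scD$ element (cf.\ \myrefP{prop:clumpDiagram}), so $\tDFour(s_3y)=\{s_3\}$ while $s_4\in\tDFour(s_4s_1w)$. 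Two applications of \myrefP{prop:KL23g} rewrite $P_{s_4y,s_4s_1w}=P_{s_3s_4y,s_4s_1w}$ and $P_{s_3y,s_4s_1w}=P_{s_2s_3y,s_4s_1w}$, and the degree bookkeeping (using $l(s_3s_4y)=l(y)$ and $l(s_4s_1w)=l(w)-2$) gives $qP_{y,s_1w}\dsim\bigl(\mu(s_3s_4y,s_4s_1w)+\mu(s_2s_3y,s_4s_1w)-C\bigr)q^{d}$ with $C\ge 0$, where $d=d(y,w)$.

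Feeding this into \myrefP{prop:KL22c} with $s=s_1$ (so $c=1$, as $s_1y<y$) yields $\mu(y,w)\le\mu(s_1y,s_1w)+\mu(s_3s_4y,s_4s_1w)+\mu(s_2s_3y,s_4s_1w)-\sum_{s_1z<z}\mu(y,z)\mu(z,s_1w)$, and then, since $y\notin\WDFour w$, the full residual sum dominates the sum of its restrictions to $z\in\WDFour y$ and $z\in\WDFour w$, so by \myrefT{thm:nonNegative} one may move the $\WDFour y$ part to the left and keep the $\WDFour w$ part on the right, which is the stated inequality. I do not expect a genuine obstacle: the content is identical to \myrefP{prop:CTop}, and the only point needing care is confirming that $s_2\leftrightarrow s_4$ is indeed a symmetry of every hypothesis — above all of the defining condition for type $\scC$, symmetric in $s_1,s_2,s_4$ by \myrefP{prop:D4OpsAltDef} — or, in the direct version, verifying the $\tau$-invariant conditions feeding \myrefP{prop:KL23g} and the inequality $s_3y\neq s_4s_1w$, all forced by the clump structure exactly as in \myrefP{prop:CTop}.
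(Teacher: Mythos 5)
Your proposal is correct and matches the paper exactly: the paper's proof is the one-line observation that this is \myrefP{prop:CTop} with $s_2$ and $s_4$ interchanged, which is precisely your primary argument. One small slip in your fallback direct argument: to see $s_3y\neq s_4s_1w$ you write ``$s_4\in\tDFour(s_4s_1w)$,'' but in fact $\tDFour(s_4s_1w)=\{s_3,s_2\}$ does not contain $s_4$; the correct discriminating generator (obtained from the proof of \myrefP{prop:CTop} under the $s_2\leftrightarrow s_4$ swap) is $s_2\in\tDFour(s_4s_1w)$ versus $s_2\notin\tDFour(s_3y)$, though of course $\tDFour(s_3y)=\{s_3\}\neq\{s_3,s_2\}$ already suffices.
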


  \begin{proof}
    This is just \myrefP{prop:CTop}, with the roles of $s_2$ and $s_4$ interchanged.
  \end{proof}

  \begin{proposition}
    \label{prop:ATopAlt}
    Let $y, w \in W$ with $l(w) - l(y)$ odd and $y \notin \WDFour w$.
    Suppose $y$ is of type $\scA_1$, with $s_4y$ of type $\scB_2$ and $\tDFour(s_2s_4y) = \bigRefSet$ and suppose $\tDFour(w) = \{s_1, s_3\}$ and $s_1(w)$ is of type $\scD$.
    Then
    \begin{equation*}
      \begin{split}
        &\mu(y,w) + \sum_{\substack{z \in \WDFour y\\ s_1z < z}}\mu(y,z)\mu(z,s_1w) \le \\ \mu(s_1y,s_1w) +  &\mu(s_4s_3y, s_3s_1w) + \mu(s_2s_4y, s_3s_1w)
         - \sum_{\substack{z \in \WDFour w\\ s_1z < z}} \mu(y,z)\mu(z,s_1w)
      \end{split}
    \end{equation*}
  \end{proposition}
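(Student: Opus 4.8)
The plan is to re-run the argument used for \myrefP{prop:CTop} in the new configuration; note that this \emph{cannot} be obtained from \myrefP{prop:CTop} by the relabelling trick, since there $y$ was of type $\scC$ while here $y$ is of type $\scA_1$, so the computation has to be carried out afresh (with $s_3,s_4$ playing the roles $s_2,s_3$ played before). Write $d=d(y,w)=(l(w)-l(y)-1)/2$. Since $y$ has type $\scA_1$ we have $\tDFour(y)=\{s_1,s_3\}$, so $s_1y<y$, and \myrefP{prop:KL22c} with $s=s_1$ (hence $c=1$) gives
\[
P_{y,w}=P_{s_1y,s_1w}+qP_{y,s_1w}-\sum_{\substack{y\le z\prec s_1w\\ s_1z<z}}\mu(z,s_1w)\,q^{(l(w)-l(z))/2}P_{y,z}.
\]
As in \myrefP{prop:CTop}, everything hinges on resolving the term $qP_{y,s_1w}$.

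To do this I would apply \myrefP{prop:lowerStar} with $x=y$, $x'=s_1w$, $a=s_3$, $b=s_4$. Its hypotheses hold: $s_3y<y$ and $s_4y>y$ from $\tDFour(y)=\{s_1,s_3\}$; $s_3\in\tau(s_1w)$ since $s_1w$ has type $\scD$, so $\tDFour(s_1w)=\{s_3\}$; $s_4\in\tau(s_3s_1w)$ since $s_3s_1w$ has type $\scC$ (it is $s_3$ times the type-$\scD$ element $s_1w$), so $\tDFour(s_3s_1w)=\bigRefSet$; the parity condition holds as $l(s_1w)-l(y)=l(w)-l(y)-1$ is even; and $s_4y\ne s_3s_1w$ because $s_4y$ has type $\scB_2$ (whence $s_2\notin\tau(s_4y)$) while $s_2\in\tau(s_3s_1w)$. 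Multiplying the conclusion of \myrefP{prop:lowerStar} by $q$ then gives $qP_{y,s_1w}\dsim qP_{s_3y,s_3s_1w}+q^2P_{s_4y,s_3s_1w}-Cq^d$ with $C\ge 0$. Next I would use \myrefP{prop:KL23g} twice: $P_{s_3y,s_3s_1w}=P_{s_4s_3y,s_3s_1w}$ (from $s_4\in\tau(s_3s_1w)$, $s_4\notin\tau(s_3y)$) and $P_{s_4y,s_3s_1w}=P_{s_2s_4y,s_3s_1w}$ (from $s_2\in\tau(s_3s_1w)$, $s_2\notin\tau(s_4y)$). Since $l(s_4s_3y)=l(y)$, $l(s_2s_4y)=l(y)+2$, and $l(s_3s_1w)=l(w)-2$, a degree count yields $qP_{y,s_1w}\dsim\bigl(\mu(s_4s_3y,s_3s_1w)+\mu(s_2s_4y,s_3s_1w)-C\bigr)q^d$.

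Substituting this back into the displayed identity, reading off the coefficient of $q^d$, and discarding $-C$ via \myrefT{thm:nonNegative} gives
\[
\mu(y,w)\le\mu(s_1y,s_1w)+\mu(s_4s_3y,s_3s_1w)+\mu(s_2s_4y,s_3s_1w)-\sum_{s_1z<z}\mu(y,z)\mu(z,s_1w);
\]
then, exactly as at the end of \myrefP{prop:CTop}, since $y\notin\WDFour w$ the cosets $\WDFour y$ and $\WDFour w$ are disjoint, so by \myrefT{thm:nonNegative} the full sum dominates $\sum_{z\in\WDFour y,\, s_1z<z}\mu(y,z)\mu(z,s_1w)+\sum_{z\in\WDFour w,\, s_1z<z}\mu(y,z)\mu(z,s_1w)$, and inserting that lower bound produces the asserted inequality. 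The only genuinely new verification — and the step where I expect to spend real effort — is the fact $s_4\notin\tau(s_3y)$ needed for the first \myrefP{prop:KL23g} application; for this I would pass to the rank-two parabolic $W_{\{s_3,s_4\}}$ and invoke \myrefP{prop:parabolic2}, noting that the hypothesis that $s_4y$ has type $\scB_2$ (so $\tDFour(s_4y)=\{s_1,s_4\}$, in particular $s_3\notin\tau(s_4y)$) forces $p_{\{s_3,s_4\}}(y)=s_3$, hence $p_{\{s_3,s_4\}}(s_3y)=e$ and $s_4\notin\tau(s_3y)$. The remaining type identifications of $s_3s_1w$, $s_4y$, and $s_2s_4y$, together with the locations of all these elements, come from \myrefP{prop:D4OpsAltDef} and \refDFourFigures.
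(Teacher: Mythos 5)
Your argument reproduces the paper's proof step for step: the same reduction to resolving $qP_{y,s_1w}$, the same invocation of \myrefP{prop:lowerStar} with $x=y$, $x'=s_1w$ and the pair $s_3,s_4$ in place of $s_2,s_3$, the same two applications of \myrefP{prop:KL23g} to replace $s_3y$ by $s_4s_3y$ and $s_4y$ by $s_2s_4y$, and the same final manipulation borrowed from \myrefP{prop:CTop}. Your length count $l(s_2s_4y)=l(y)+2$ is the correct one (the paper writes $l(y)-1$, evidently a slip, though the degree conclusion $d(s_2s_4y,s_3s_1w)=d(y,w)-2$ used downstream is stated correctly), and your spelled-out derivation of $s_4\notin\tau(s_3y)$ via the rank-two parabolic makes explicit what the paper only asserts.
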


  \begin{figure}[!ht]
    \begin{center}
      \begin{tikzpicture}[scale=0.7,  every node/.style={transform shape}]
        \node (Left) at (0,0) {
        \begin{tikzpicture}[scale=0.8,  every node/.style={transform shape}]
          % Nodes
          \node (CNode) {\dynkinLabelWeyl{\TypeCPicture}{ }};
          \node (BTwoNode) [below = \verticalToNode{CNode}] {\dynkinLabelWeyl{\TypeBPicture{2}}{ }};
          \node (AOneNode) [below = \verticalToNode{BTwoNode}] {\dynkinLabelWeyl{\TypeAPicture{1}}{y}};
          \node (DNode) [below = \verticalToNode{AOneNode}] {\dynkinLabelWeyl{\TypeDPicture}{ }};

          \draw[-, ultra thick, green, dotted] (BTwoNode.north) -- (CNode.south);
          % \upLine{green}{dotted}{BTwoNode}{CNode}
          \upLine{goldenpoppy}{solid}{AOneNode}{BTwoNode}
          \draw[-, very thick, magenta, dashed] (DNode.north) -- (AOneNode.south);
          % \upLine{magenta}{dashed}{DNode}{AOneNode}
        \end{tikzpicture}
        };
        \node (Right) at (8.5,-4) {
        \begin{tikzpicture}[scale=0.8,  every node/.style={transform shape}]
          % Nodes
          \node (AOneNode)  {\dynkinLabelWeyl{\TypeAPicture{1}}{w}};
          \node (DNode) [below = \verticalToNode{AOneNode}] {\dynkinLabelWeyl{\TypeDPicture}{ }};
          \node (CNode) [below = \verticalToNode{DNode}] {\dynkinLabelWeyl{\TypeCPicture}{ }};

          \upLine{darkblue}{solid}{CNode}{DNode}
          \draw[-, very thick, magenta, dashed] (DNode.north) -- (AOneNode.south);
          % \upLine{magenta}{dashed}{DNode}{AOneNode}
        \end{tikzpicture}
        };

        % Legend
        \matrix [draw, below left, every node/.style={scale=.7}]
        at (current bounding box.north east)
        {
          \node [legendLine, draw=magenta,label=right:{$s_1$}] {}; \\
          \node [legendLine, draw=green,label=right:{$s_2$}] {}; \\
          \node [legendLine, draw=darkblue,label=right:{$s_3$}] {}; \\
          \node [legendLine, draw=goldenpoppy,label=right:{$s_4$}] {}; \\
        };
      \end{tikzpicture}
    \end{center}
    \caption{\myrefP{prop:ATopAlt}}
    \label{fig:ATopAlt}
  \end{figure}

  \begin{proof}
    Refer to \autoref{fig:ATopAlt}.
    Just as in the proof of \myrefP{prop:CTop}, we need to evaluate the term $qP_{y,s_1w}$.
    Once that's done, the rest of the proof of this proposition will be the same as the proof of \myrefP{prop:CTop}.

    Let $d = d(y, w) = (l(w) - l(y) - 1) / 2$.
    We will show that
    \begin{equation*}
        qP_{y, s_1w} \dsim (\mu(s_4s_3y, s_3s_1w) + \mu(s_2s_4y, s_3s_1w) - C)q^d
      \tag{*}
    \end{equation*}
    where $C$ is a non-negative integer.
    To see this, let $x = y$, $x' = s_1w$, $s = s_3$, and $t = s_4$.
    As before, $(l(x') - l(x) - 2)/2 = d(y, w) - 1$.
    Since $s_2 \notin \tau(tx)$ and $s_2 \in \tau(sx')$, we have $tx \ne sx'$.
    If we apply \myrefP{prop:lowerStar} (the $d$ of that proposition is then one less than the $d$ of this proposition)
    the result, after multiplying both sides by $q$, is
    \begin{equation*}
      qP_{y,s_1w} \dsim qP_{s_3y, s_3s_1w} + q^2P_{s_4y, s_3s_1w} - Cq^d
      \tag{**}
    \end{equation*}
    for some non-negative integer $C$.
    Since $s_3 \notin \tau(s_4y)$, we know that $s_4 \notin \tau(s_3y)$.
    By hypothesis $s_3 \in \tau(s_3s_1w)$, so
    we can apply \myrefP{prop:KL23g} to obtain $P_{s_3y, s_3s_1w} = P_{s_4s_3y, s_3s_1w}$.
    Since $s_2 \in \tau(s_3s_1w)$ and $s_2 \notin \tau(s_4y)$,
    we can apply \myrefP{prop:KL23g} to obtain $P_{s_4y, s_3s_1w} = P_{s_2s_4y, s_3s_1w}$.
    Now, $l(s_4s_3y) = l(y)$ and $l(s_3s_1w) - l(w) - 2$, so $d(s_4s_3y, s_3s_1w) = d(y, w) - 1$.
    Thus
    \begin{equation*}
      qP_{s_3y, s_3s_1w} = qP_{s_4s_3y, s_3s_1w} \dsim \mu(s_4s_3y, s_3s_1w)q^d.
    \end{equation*}
    Also, $l(s_2s_4y) = l(y) - 1$, so $d(s_2s_4y, s_3s_1w) = d(y, w) - 2$.
    Thus
    \begin{equation*}
      q^2P_{s_4y, s_3s_1w} = q^2P_{s_2s_4y, s_3s_1w} \dsim \mu(s_2s_4y, s_3s_1w)q^d.
    \end{equation*}
    If we put these last two formulas into (**), we get (*), as desired.
  \end{proof}

  In the third stage, we improve on our understanding of each side of the inequalities derived in the previous three propositions.  The left-hand side of the inequalities in these propositions is the same, so we'll treat that first, in the cases which we'll need later.
  After that, we have three more lemmas, one for each of the three right-hand sides.

  \begin{lemma}
    \label{lem:LHS}
    With all notation as in \myrefT{thm:mainA}, assume that $i = 1$, and that if $\abs{C'} = 14$, then $C' \subset X(14, a, j)$ or $C' \subset X(14, b, j)$  with $j \neq 1$.
    Choose $L', U'$ with $l(L') < l(U')$.
    If $\abs{C'} = 10$, let $H' = s_1s_3U'$.

    Let
    \begin{equation*}
        LHS = \mu(y,w) + \sum_{\substack{z \in \WDFour w\\ s_1z < z}} \mu(y,z)\mu(z,s_1w)
    \end{equation*}
    \begin{enumerate}
      \item If $w = H'$ then $\mu(y, M') \leq LHS$.
      \item If $w = U'$ then $\mu(y, U') + \mu(y, L') \leq LHS$.
      \item If $w = M'$ then $\mu(y, M') \leq LHS$.
    \end{enumerate}
  \end{lemma}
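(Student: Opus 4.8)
The plan is to obtain all three inequalities from the non-negativity of Kazhdan--Lusztig $\mu$-coefficients (\myrefT{thm:nonNegative}): in each case $LHS$ is a sum of non-negative quantities, so it suffices to isolate one or two particular summands whose total is already at least the claimed bound and discard the rest. The genuine content is therefore combinatorial --- identify the clump element $s_1 w$, and find the relevant summand $\mu(y, z)\mu(z, s_1 w)$ --- and this I would do by reading off the clump diagrams \refDFourFigures, which is legitimate by \myrefP{prop:clumpDiagram} and \myrefP{prop:WGraph}, together with \myrefP{prop:KLPolysParabolic} (so that the gray ``$\mu = 1$'' edges of those figures survive in the clumps).

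Statement~3 is immediate: with $w = M'$ the entire sum in $LHS$ is non-negative, so $LHS \ge \mu(y, M')$. For statement~2, put $w = U'$; since $L'$ has type $\scC$ if $\abs{C'} = 10$ and type $\scA_1$ if $\abs{C'} = 14$, in either case $s_1 \in \tDFour(L')$, so $s_1 L' < L'$, and $L' \in C' \subseteq \WDFour U'$, so $z = L'$ is one of the indices of the sum. Keeping that summand and the term $\mu(y, U')$ and discarding the rest (all non-negative) gives $LHS \ge \mu(y, U') + \mu(y, L')\,\mu(L', s_1 U')$, so the statement reduces to the single claim $\mu(L', s_1 U') \ge 1$. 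For statement~1, with $w = H' = s_1 s_3 U'$ so that $s_1 w = s_3 U'$, the same argument ($M'$ has type $\scA_1$, hence $s_1 \in \tDFour(M')$, and $M' \in C' \subseteq \WDFour U' = \WDFour H'$) makes $z = M'$ an index of the sum, and statement~1 reduces to $\mu(M', s_3 U') \ge 1$.

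What remains is a short case analysis on the shape of $C'$. Using the freedom to relabel $s_2, s_4$ (cf.\ \myrefR{rem:relabel}) I would reduce the $\abs{C'} = 14$ possibilities to $C' \subset X(14, a, 4)$ and $C' \subset X(14, b, 4)$, leaving three cases: $\abs{C'} = 10$, and those two. In each I would trace the $s_1$-edge (and, for statement~1, the $s_3$-edge) out of $U'$ to pin down $s_1 U'$ and $s_3 U'$ inside the clump: in the size-$10$ clump, $s_1 U'$ is the type-$\scB_1$ element and $s_3 U'$ is the longer type-$\scD$ element; in the two size-$14$ clumps, $s_1 U'$ is the unique type-$\scD$ element. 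Then the required $\mu$-value is exactly that of the gray edge of the corresponding figure joining that element to $L'$ (for statement~2) or to $M'$ (for statement~1), which is $1$.

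I do not expect a real obstacle here; this is a bookkeeping lemma, and once the pictures are read the conclusions are forced. The only step needing care is that reading --- keeping straight which of the two type-$\scC$ (respectively type-$\scA_1$) elements is the longer one $U'$, and distinguishing the gray $\mu = 1$ edges from the colored left-multiplication edges in the clump diagrams --- so I would double-check each such identification against \myrefP{prop:clumpDiagram}.
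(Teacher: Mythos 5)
Your proposal is correct and follows essentially the same route as the paper's proof: in each statement you single out the summand $z = L'$ (or $z = M'$) together with $\mu(y,w)$, discard the rest of $LHS$ using \myrefT{thm:nonNegative}, and then verify $\mu(L', s_1U') = 1$ (resp.\ $\mu(M', s_1H') = \mu(M', s_3U') = 1$) from the gray edges of the clump diagrams via \myrefP{prop:clumpDiagram} and \myrefP{prop:KLPolysParabolic}. The extra case bookkeeping you supply (identifying $s_1U'$ as the $\scB_1$ element for $\abs{C'}=10$ and as the $\scD$ element for $\abs{C'}=14$, and $s_3U'$ as the upper $\scD$ element) is consistent with the figures and fills in exactly the ``inspection'' step the paper leaves implicit.
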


  \begin{proof}
    For statement 1, we need to note that $s_1M' < M'$ and  $\mu(M', s_1H') = 1$.
    Thus $\mu(y, M')$ occurs in the sum portion of the equation.
    For statement 2, we need to note that $s_1L' < L'$ and $\mu(L', s_1U') = 1$.
    These both can be seen by inspection of the relevant diagram, and then applying \myrefP{prop:KLPolysParabolic}.
    The rest is obvious (given \myrefT{thm:nonNegative}).
  \end{proof}

  In the proofs of the next three lemmas, we will use expanded diagrams of our clumps, showing relevant nearby elements.
  It is easy to verify that the additional elements have the displayed $\tDFour$ values, either by looking at the explicit elements in $\WDFour$ or by using \myrefP{prop:parabolic2}, applied to parabolic subgroups of type $A_1 \times A_1$ and $A_2$.
  Also, we can see that solid lines correspond to $T_{i,j}$ maps just by looking at the $\tDFour$ values of the elements which they connect.

  \begin{lemma}
    \label{lem:Cs2}
    With all notation as in \myrefT{thm:mainA}, let $y \in \{L, M, U\}$, and suppose that $y$ is type $\scC$.
    Assume in addition that $i = 1$, and that if $\abs C = 14$ then $C \subset X(14, a, 4)$ or $C \subset X(14, b, 2)$.
    Let $w \in W$ with $\tDFour(w) = \bigRefSet$, and suppose $s_1w$ is type $\scB_1$, and $s_2s_1w$ is type $\scA_4$.
    Let
    \begin{equation*}
      \begin{split}
        &RHS = \\ &\mu(s_1y,s_1w) + \mu(s_3s_2y, s_2s_1w) + \mu(s_4s_3y, s_2s_1w)
        - \sum_{\substack{z \in \WDFour y\\ s_1z < z}}\mu(y,z)\mu(z,s_1w)
      \end{split}
    \end{equation*}
    Let $K = T_{3,4}T_{1,3}T_{3,2}(s_1w)$ (so $K$ is type $\scA_1$).
    Then we have the following:
    \begin{enumerate}
      \item If $y = U$ then $RHS \leq \mu(M, K)$.
      \item If $y = M$ then $RHS \leq \mu(L, K) + \mu(U, K)$.
      \item If $y = L$ then $RHS \leq \mu(M, K)$.
    \end{enumerate}
  \end{lemma}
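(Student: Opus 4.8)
\emph{Strategy.} The quantity $RHS$ is a sum of three $\mu$-terms minus a non-negative tail, while $K$ is produced from $s_1w$ by the chain of Knuth maps ``$T_{3,2}$, then $T_{1,3}$, then $T_{3,4}$''. The plan is to push each of the three $\mu$-terms of $RHS$ along this \emph{same} chain, invoking the $A_2$ edge transport theorem \myrefT{thm:talb} at each step to keep the $\mu$-value unchanged, until the second argument becomes $K$; then to read off from the clump diagrams which of $L$, $M$, $U$ the first argument has turned into; and finally to collect the resulting $\mu(\,\cdot\,,K)$-terms, discarding the non-negative tail by \myrefT{thm:nonNegative} and, whenever two type-$\scA_1$ terms appear against $K$, merging them by the $B_2$ edge transport theorem \myrefT{thm:stsStringsA}.

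\emph{Setting up the chain.} First I would check, from $\tDFour(w) = \bigRefSet$, from ``$s_1w$ is type $\scB_1$'', and from ``$s_2s_1w$ is type $\scA_4$'', using \myrefP{prop:D4OpsAltDef}, \myrefP{prop:parabolic2} and \myrefP{prop:talbParabolic}, that $s_2s_1w = T_{3,2}(s_1w)$, that $T_{1,3}$ is defined on $s_2s_1w$, and that $T_{3,4}$ is defined on $T_{1,3}(s_2s_1w)$, so that $K = T_{3,4}T_{1,3}(s_2s_1w)$. I would then draw the expanded diagram of the elements $s_1w$, $s_2s_1w$, $T_{1,3}(s_2s_1w)$, $K$ and their relevant neighbours, as announced just before this lemma, together with the corresponding expanded diagram of the clump $C$ around $y$ (using \myrefP{prop:clumpDiagram}); the extra $\tDFour$-values are read off via \myrefP{prop:parabolic2} for parabolic subgroups of type $A_1 \times A_1$ and $A_2$, and the solid lines are recognized as $T_{i,j}$-maps from those $\tDFour$-values.

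\emph{Transporting and collecting.} Since $C' \not\subset \WDFour C$ we have $y \notin \WDFour w$, and hence the elements $s_1y\,(s_1w)^{-1}$, $s_3s_2y\,(s_2s_1w)^{-1}$ and $s_4s_3y\,(s_2s_1w)^{-1}$ all lie outside every rank-two parabolic subgroup generated by two of $s_1,s_2,s_3,s_4$; so \myrefT{thm:talb} applies freely. Applying the maps $T_{3,2}$, $T_{1,3}$, $T_{3,4}$ in turn to the first pair, and $T_{1,3}$, $T_{3,4}$ to the second and third pairs (whose second entry $s_2s_1w$ is already one step along the chain) — each map being defined on the relevant $y$-side element by the same $\tDFour$-checks, and no ``diagonal'' coincidence of the form $tL = sL'$ occurring, which is excluded by comparing $\tDFour$-values exactly as in the proofs of \myrefP{prop:CTop} and \myrefP{prop:ATopAlt} — one concludes that $RHS$ equals $\mu(T_{3,4}T_{1,3}T_{3,2}(s_1y),K) + \mu(T_{3,4}T_{1,3}(s_3s_2y),K) + \mu(T_{3,4}T_{1,3}(s_4s_3y),K)$ minus the same non-negative tail. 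In each admissible sub-case ($\abs C = 10$ with $y = U$ or $y = L$, and $\abs C = 14$ with $y = M$ and $C \subset X(14,a,4)$ or $C \subset X(14,b,2)$), I would then identify the three first arguments from the expanded diagrams; dropping the non-negative tail by \myrefT{thm:nonNegative} and replacing, wherever the pair $\mu(L,K)$, $\mu(U,K)$ occurs, its sum by $\mu(M,K)$ via Case~1 of \myrefT{thm:stsStringsA} yields the stated bound $\mu(M,K)$ for $y=U$ and $y=L$, and $\mu(L,K)+\mu(U,K)$ for $y=M$.

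\emph{Main obstacle.} The crux is this last identification. The elements $s_3s_2y$ and $s_4s_3y$ are obtained from elements of $C$ by left multiplication by a \emph{lengthening} simple reflection, so they need not lie in $C$ at all, and one must locate them — and their images under the Knuth chain — using the expanded diagrams and \myrefP{prop:parabolic2} for the type $A_1\times A_1$ and $A_2$ parabolics, working case by case through the four admissible configurations. A second subtlety is that only an \emph{inequality} is obtained here: the subtracted non-negative tail together with the $B_2$-merging of two terms into one preclude an exact equality at this stage, which is precisely why \myrefT{thm:nonNegative} is indispensable, and why the corresponding equalities will be recovered only later, by running the whole argument in the two directions $y \leftrightarrow w$.
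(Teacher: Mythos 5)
Your overall strategy---transporting all three $\mu$-terms of $RHS$ \emph{forward} along the chain $T_{3,2}, T_{1,3}, T_{3,4}$ to land on $\mu(\,\cdot\,,K)$ terms---does not go through, because the middle two terms of $RHS$ have first arguments that are not in the required domains. Take statement 1, so $y=U$ is the upper type-$\scC$ element of a clump of size 10. Then $s_2 y$ has $\tDFour=\{s_1,s_4\}$ (type $\scB_2$), and consequently $s_3 s_2 y$ has $\tDFour=\{s_1,s_3,s_4\}$ (the element labelled $\scF_2$ in \autoref{fig:type10Extra}). In particular $s_1\in\tau(s_3s_2y)$, so $s_3s_2y\notin D_{s_1,s_3}(W)$ and $T_{1,3}$ is not defined on it; \myrefT{thm:talb} requires \emph{both} arguments to lie in $D_{s,t}(W)$, so you cannot transport $\mu(s_3s_2y,\,s_2s_1w)$ forward along $T_{1,3}$ at all. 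The same difficulty reappears in several of the other sub-cases.

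The paper's proof handles these terms by an entirely different device: it applies a single \emph{backward} Knuth step $T_{2,3}$ to both sides (sending $s_2s_1w\mapsto s_1w$ and $s_3s_2y\mapsto z_1=s_2s_3s_2y$), checks that $z_1\in\WDFour y$ with $s_1z_1<z_1$ and $\mu(y,z_1)=1$, and then observes that the resulting $\mu(z_1,s_1w)$ cancels exactly against one of the terms in the \emph{subtracted} sum $\sum_{z\in\WDFour y,\, s_1z<z}\mu(y,z)\mu(z,s_1w)$ of $RHS$; similarly for $s_4s_3y$ via $z_2=s_3s_4s_3y$. Only the first term $\mu(s_1y,s_1w)$ is transported forward along the full chain to give $\mu(M,K)$. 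Moreover for $y=L$ the pattern changes again: two of the three terms are simply $0$ by \myrefP{prop:KL23g} (the paper takes one $T_{3,2}$-step and then compares $s_4$-descents), and it is now the \emph{third} term $\mu(s_4s_3y,s_2s_1w)$---not the first---that is transported to $\mu(M,K)$, via $T_{3,4}T_{1,3}$. So the identification of which term yields $\mu(M,K)$ (or $\mu(L,K)+\mu(U,K)$) genuinely varies case by case, and your uniform ``push everything to $K$, then read off $L,M,U$, then merge with the $B_2$ theorem'' does not reflect what actually happens. (The appeal to \myrefT{thm:stsStringsA} to merge $\mu(L,K)+\mu(U,K)$ into $\mu(M,K)$ is also out of place: $L,M,U,K$ live in $D_4$ clumps, not $B_2$ strings, and such a merge is never needed in the lemma.) In short, the method is wrong, not just incompletely carried out; you need the backward-and-cancel mechanism against the subtracted sum, together with the vanishing arguments via \myrefP{prop:KL23g}, worked out separately in each of the four configurations.
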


  \begin{figure}[!ht]
    \begin{center}
      \begin{tikzpicture}[scale=0.8,  every node/.style={transform shape}]
        \node (CNode) at (0 * \xSLT, 0 * \ySLT) {\dynkinLabelWeyl{\TypeCPicture}{w}};
        \node (B1Node) at (0 * \xSLT, -1 * \ySLT) {\dynkinLabelWeyl{\TypeBPicture{1}}{ }};
        \node (A4Node) at (0 * \xSLT, -2 * \ySLT) {\dynkinLabelWeyl{\TypeAPicture{4}}{ }};

        \draw[-, very thick, shorten <=0.4cm, shorten >=0.4cm, magenta, dotted] (0 * \xSLT, -1 * \ySLT) -- (0 * \xSLT, -1 * \ySLT + \ySLT);
        % \upLineT{magenta}{dotted}{0}{-1}
        \upLineT{green}{solid}{0}{-2}
      \end{tikzpicture}
    \end{center}
    \caption{$w$ for \myrefL{lem:Cs2}}
    \label{fig:wCs2}
  \end{figure}

  \begin{proof}
    As in \myrefR{rem:chooseLU}, we can
    choose $L, U$ with $l(L) < l(U)$.
    For $w$, refer to \autoref{fig:wCs2}.
    Note that $T_{3,2}(s_1w) = s_2s_1w$.
    To prove statement 1, note that if $y = U$ then $\abs C = 10$. Refer to \autoref{fig:type10Extra}.

    Using the figures, we see that $T_{3,4}T_{1,3}T_{3,2}(s_1U) = M$, so $\mu(s_1y, s_1w) = \mu(M, K)$.
    Now let $z_1 = T_{2,3}(s_3s_2y) = s_2s_3s_2y$.
    Then $\mu(s_3s_2y, s_2s_1w) = \mu(z_1, s_1w)$.
    Note that $s_1z_1 < z_1$ and $\mu(y, z_1) = 1$.
    Similarly, let $z_2 = T_{2,3}(s_4s_3y) = s_3s_4s_3y$.
    Then $\mu(s_4s_3y, s_2s_1w) = \mu(z_2, s_1w)$, $s_1z_2 < z_2$ and $\mu(y, z_2) = 1$.
    So, $z_1$ and $z_2$ occur in the sum portion of the definition of $RHS$.

    Putting this all together, we have
    \begin{equation*}
      \begin{split}
        &RHS = \\
        &\mu(s_1y,s_1w) + \mu(s_3s_2y, s_2s_1w) + \mu(s_4s_3y, s_2s_1w)
        - \sum_{\substack{z \in \WDFour y\\ s_1z < z}}\mu(y,z)\mu(z,s_1w)\\
        &= \mu(M,K) + \mu(z_1, s_1w) + \mu(z_2, s_1w) -  \mu(z_1, s_1w) - \mu(z_2, s_1w)\\
        &- \sum_{\substack{z \in \WDFour y\\ s_1z < z\\ z \notin \{z_1,z_2\}}}\mu(y,z)\mu(z,s_1w)\\
        &\leq \mu(M,K)
      \end{split}
    \end{equation*}
    This proves statement 1.

    We'll prove statement 3 next, since that also has $\abs C = 10$. So $y = L$ in \autoref{fig:type10Extra}.
    First we note that $\mu(s_1y, s_1w) = \mu(T_{3, 2}(s_1y), T_{3, 2}(s_1w)) = 0$, the latter equality by \myrefP{prop:KL23g} since $s_4 \in \tau(T_{3, 2}(s_1w))$, $s_4 \notin \tau(T_{3, 2}(s_1y))$.
    Similarly, $\mu(s_3s_2y, s_2s_1w) = 0$.
    Now
    \begin{equation*}
      \mu(s_4s_3y, s_2s_1w) = \mu(T_{3,4}T_{1,3}(s_4s_3y), T_{3,4}T_{1,3}(s_2s_1w)) = \mu(M, K).
    \end{equation*}
    Statement 3 follows easily from these.

    To prove statement 2, note that if $y = M$ then $\abs C = 14$. So, we'll have two cases.
    First, we assume that $C \subset X(14, a, 4)$.  Refer to \autoref{fig:type14aExtra}.
    Here, we see that $T_{3,4}T_{1,3}T_{3,2}(s_1M) = L$, so $\mu(s_1y, s_1w) = \mu(L, K)$.
    We have $\mu(s_3s_2y, s_2s_1w) = 0$ since $s_4 \in \tau(s_2s_1w)$, $s_4 \notin \tau(s_3s_2y)$.
    Now $\mu(s_4s_3y, s_2s_1w) = \mu(T_{3,4}T_{1,3}(s_4s_3y), T_{3,4}T_{1,3}(s_2s_1w)) = \mu(U, K)$.  Statement 2 in this case follows easily from these.

    Finally, we assume that $C \subset X(14, b, 2)$.  Refer to \autoref{fig:type14bExtraColorChange}.
    Here, we see again that $T_{3,4}T_{1,3}T_{3,2}(s_1M) = L$, so $\mu(s_1y, s_1w) = \mu(L, K)$.
    For $\mu(s_3s_2y, s_2s_1w)$, we argue as in the proof of statement 1.
    We let $z_1 = T_{2,3}(s_3s_2y) = s_2s_3s_2y$.
    Then $\mu(s_3s_2y, s_2s_1w) = \mu(z_1, s_1w)$.
    Note that $s_1z_1 < z_1$ and $\mu(y, z_1) = 1$.
    For the last term, we have
    \begin{equation*}
      \mu(s_4s_3y, s_2s_1w) = \mu(T_{3,4}T_{1,3}(s_4s_3y), T_{3,4}T_{1,3}(s_2s_1w)) = \mu(U, K).
    \end{equation*}
    Statement 2 in this case now follows from arguments which we have seen already in this proof.
    This completes the proof of the Lemma.
  \end{proof}

  % 10 Extras
  \begin{figure}[!ht]
    \begin{center}
      \begin{tikzpicture}[scale=0.7,  every node/.style={transform shape}]
        % Nodes
        \node (CNode) at (0, 0) {\dynkinLabelWeyl{\TypeCPicture}{L}};
        \node (DNode) at (0, \ySLT) {\dynkinLabelWeyl{\TypeDPicture}{ }};
        \node (A2Node) at (-1 * \xSLT, 2 * \ySLT) {\dynkinLabelWeyl{\TypeAPicture{2}}{ }};
        \node (A4Node) at (0, 2 * \ySLT) {\dynkinLabelWeyl{\TypeAPicture{4}}{ }};
        \node (A1Node) at (1 * \xSLT, 2 * \ySLT) {\dynkinLabelWeyl{\TypeAPicture{1}}{M}};
        \node (B1Node) at (-1 * \xSLT, 3 * \ySLT) {\dynkinLabelWeyl{\TypeBPicture{1}}{ }};
        \node (B4Node) at (0, 3 * \ySLT) {\dynkinLabelWeyl{\TypeBPicture{4}}{ }};
        \node (B2Node) at (1 * \xSLT, 3 * \ySLT) {\dynkinLabelWeyl{\TypeBPicture{2}}{ }};
        \node (CPNode) at (0, 4 * \ySLT) {\dynkinLabelWeyl{\TypeCPicture}{U}};
        \node (DPNode) at (0, 5 * \ySLT) {\dynkinLabelWeyl{\TypeDPicture}{ }};

        \node (F4Node) at (-1 * \xSLT, 4 * \ySLT) {\dynkinLabelWeyl{\TypeFPicture{4}}{ }};
        \node (GNode) at (-1 * \xSLT, 5 * \ySLT) {\dynkinLabelWeyl{\TypeCPicture}{ }};
        \node (E4Node) at (-1 * \xSLT, 6 * \ySLT) {\dynkinLabelWeyl{\TypeAPicture{4}}{ }};

        \node (F2Node) at (1 * \xSLT, 4 * \ySLT) {\dynkinLabelWeyl{\TypeFPicture{2}}{ }};
        \node (GPNode) at (1 * \xSLT, 5 * \ySLT) {\dynkinLabelWeyl{\TypeCPicture}{ }};
        \node (E2Node) at (1 * \xSLT, 6 * \ySLT) {\dynkinLabelWeyl{\TypeAPicture{2}}{ }};

        \node (I2Node) at (-1 * \xSLT, 1 * \ySLT) {\dynkinLabelWeyl{\TypeIPicture{2}}{ }};
        \node (KNode) at (-1 * \xSLT, 0 * \ySLT) {\dynkinLabelWeyl{\TypeDPicture}{ }};
        \node (J2Node) at (-1 * \xSLT, -1 * \ySLT) {\dynkinLabelWeyl{\TypeBPicture{2}}{ }};

        \node (I1Node) at (1 * \xSLT, 1 * \ySLT) {\dynkinLabelWeyl{\TypeIPicture{1}}{ }};
        \node (KPNode) at (1 * \xSLT, 0 * \ySLT) {\dynkinLabelWeyl{\TypeDPicture}{ }};
        \node (J1Node) at (1 * \xSLT, -1 * \ySLT) {\dynkinLabelWeyl{\TypeBPicture{1}}{ }};

        % lines
        \upLineT{darkblue}{solid}{0}{0}
        \upLineT{goldenpoppy}{dashed}{0}{1}
        \rightLineT{magenta}{dashed}{0}{1}
        \leftLineT{green}{dashed}{0}{1}
        \rightLineT{magenta}{solid}{-1}{2}
        \upLineT{goldenpoppy}{solid}{-1}{2}
        \leftLineT{green}{solid}{0}{2}
        \leftLineT{green}{solid}{1}{2}
        \upLineT{goldenpoppy}{solid}{1}{2}
        \rightLineT{magenta}{solid}{0}{2}
        \leftLineT{green}{dashed}{1}{3}
        \rightLineT{magenta}{dashed}{-1}{3}
        \upLineT{goldenpoppy}{dashed}{0}{3}
        \upLineT{darkblue}{solid}{0}{4}

        \leftLineT{darkblue}{dashdotted}{0}{3}
        \upLineT{goldenpoppy}{solid}{-1}{4}
        \upLineT{darkblue}{solid}{-1}{5}
        \leftLineT{goldenpoppy}{dashdotted}{0}{5}

        \upLineT{darkblue}{dashdotted}{1}{3}
        \upLineT{green}{solid}{1}{4}
        \upLineT{darkblue}{solid}{1}{5}
        \rightLineT{green}{dashdotted}{0}{5}

        \upLineT{darkblue}{dashdotted}{-1}{1}
        \upLineT{green}{solid}{-1}{0}
        \upLineT{darkblue}{solid}{-1}{-1}
        \rightLineT{green}{dashdotted}{-1}{-1}

        \upLineT{darkblue}{dashdotted}{1}{1}
        \upLineT{magenta}{solid}{1}{0}
        \upLineT{darkblue}{solid}{1}{-1}
        \leftLineT{magenta}{dashdotted}{1}{-1}

        \leftLineT{gray}{dashdotted}{0}{4}
        \rightLineT{gray}{dashdotted}{0}{4}

        % Legend
        \matrix  [draw, below left, every node/.style={scale=.8}]
         at (2 * \xSLT, 6.25 * \ySLT)
        {
          \node [legendLine, draw=magenta,label=right:{$s_1$}] {}; \\
          \node [legendLine, draw=green,label=right:{$s_2$}] {}; \\
          \node [legendLine, draw=darkblue,label=right:{$s_3$}] {}; \\
          \node [legendLine, draw=goldenpoppy,label=right:{$s_4$}] {}; \\
        };

        \draw [color=white] (-2 * \xSLT, 5.25 * \ySLT) rectangle (-1.5 * \xSLT, 6.25 * \ySLT);
      \end{tikzpicture}
    \end{center}
    \caption{$C(10, a)$ or $C(10, b)$}
    \label{fig:type10Extra}
  \end{figure}

  % 14a Extras
  \begin{figure}[!ht]
    \begin{center}
      \begin{scaletikzpicturetowidth}{\textwidth}
        \begin{tikzpicture}[scale=\tikzscale, every node/.style={transform shape}]
          % Nodes
          \node (A4Node) at (0,0) {\dynkinLabelWeyl{\TypeAPicture{4}}{ }};
          \node (B2Node) at (-1 * \xSLF, 1 * \ySLF) {\dynkinLabelWeyl{\TypeBPicture{2}}{ }};
          \node (B1Node) at (1 * \xSLF, 1 * \ySLF) {\dynkinLabelWeyl{\TypeBPicture{1}}{ }};
          \node (A1Node) at (-2 * \xSLF, 2 * \ySLF) {\dynkinLabelWeyl{\TypeAPicture{1}}{L}};
          \node (CNode) at (0, 2 * \ySLF) {\dynkinLabelWeyl{\TypeCPicture}{M}};
          \node (A2Node) at (2 * \xSLF, 2 * \ySLF) {\dynkinLabelWeyl{\TypeAPicture{2}}{ }};
          \node (B4Node) at (-2 * \xSLF, 3 * \ySLF) {\dynkinLabelWeyl{\TypeBPicture{4}}{ }};
          \node (DNode) at (0, 3 * \ySLF) {\dynkinLabelWeyl{\TypeDPicture}{ }};
          \node (B4PNode) at (2 * \xSLF, 3 * \ySLF) {\dynkinLabelWeyl{\TypeBPicture{4}}{ }};
          \node (A2PNode) at (-1 * \xSLF, 4 * \ySLF) {\dynkinLabelWeyl{\TypeAPicture{2}}{ }};
          \node (A4PNode) at (0, 4 * \ySLF) {\dynkinLabelWeyl{\TypeAPicture{4}}{ }};
          \node (A1PNode) at (1 * \xSLF, 4 * \ySLF) {\dynkinLabelWeyl{\TypeAPicture{1}}{U}};
          \node (B1PNode) at (-1 * \xSLF, 5 * \ySLF) {\dynkinLabelWeyl{\TypeBPicture{1}}{ }};
          \node (B2PNode) at (1 * \xSLF, 5 * \ySLF) {\dynkinLabelWeyl{\TypeBPicture{2}}{ }};

          \node (F4Node) at (0 * \xSLF, 5 * \ySLF) {\dynkinLabelWeyl{\TypeFPicture{4}}{ }};
          \node (GNode) at (0 * \xSLF, 6 * \ySLF) {\dynkinLabelWeyl{\TypeCPicture}{ }};

          \node (GPNode) at (3 * \xSLF, 4 * \ySLF) {\dynkinLabelWeyl{\TypeCPicture}{ }};
          \node (E1Node) at (3 * \xSLF, 5 * \ySLF) {\dynkinLabelWeyl{\TypeAPicture{1}}{ }};
          \node (F2Node) at (2 * \xSLF, 6 * \ySLF) {\dynkinLabelWeyl{\TypeFPicture{2}}{ }};

          \node (I4Node) at (1 * \xSLF, 3 * \ySLF) {\dynkinLabelWeyl{\TypeIPicture{4}}{ }};
          \node (KNode) at (1 * \xSLF, 2 * \ySLF) {\dynkinLabelWeyl{\TypeDPicture}{ }};
          \node (J4Node) at (0 * \xSLF, 1 * \ySLF) {\dynkinLabelWeyl{\TypeBPicture{4}}{ }};

          % lines
          \upLineF{darkblue}{solid}{0}{2}
          \upLineF{goldenpoppy}{dashed}{0}{3}
          \rightLineF{magenta}{dashed}{0}{3}
          \leftLineF{green}{dashed}{0}{3}
          \upLineF{goldenpoppy}{solid}{-1}{4}
          \leftLineF{green}{solid}{0}{4}
          \upLineF{goldenpoppy}{solid}{1}{4}
          \rightLineF{magenta}{solid}{0}{4}
          \rightLineF{darkblue}{solid}{-2}{3}
          \leftLineF{darkblue}{solid}{2}{3}
          \upLineF{green}{solid}{-2}{2}
          \upLineF{magenta}{solid}{2}{2}
          \leftLineF{darkblue}{solid}{-1}{1}
          \rightLineF{green}{dashed}{-1}{1}
          \leftLineF{magenta}{dashed}{1}{1}
          \rightLineF{darkblue}{solid}{1}{1}
          \leftLineF{magenta}{solid}{0}{0}
          \rightLineF{green}{solid}{0}{0}

          \rightLineF{magenta}{dashdotted}{-1}{4}
          \leftLineF{green}{dashdotted}{1}{4}
          \upLineF{goldenpoppy}{solid}{0}{5}
          \rightLineF{magenta}{dashdotted}{-1}{5}
          \leftLineF{green}{dashdotted}{1}{5}

          \rightLineF{goldenpoppy}{dashdotted}{2}{3}
          \upLineF{darkblue}{solid}{3}{4}
          \rightLineF{darkblue}{dashdotted}{1}{5}
          \leftLineF{goldenpoppy}{dashdotted}{3}{5}

          \leftLineF{darkblue}{dashdotted}{1}{3}
          \upLineF{goldenpoppy}{solid}{1}{2}
          \rightLineF{darkblue}{solid}{0}{1}
          \upLineF{goldenpoppy}{dashdotted}{0}{1}

          \draw[-, thick, shorten <=1cm, shorten >=0.5cm, gray, dashdotted] (1 * \xSLF, 4 * \ySLF) -- (3 * \xSLF, 5 * \ySLF);

          % Legend
          \matrix [draw, below right, every node/.style={scale=.9}]
          at (current bounding box.north west)
          {
            \node [legendLine, draw=magenta,label=right:{$s_1$}] {}; \\
            \node [legendLine, draw=green,label=right:{$s_2$}] {}; \\
            \node [legendLine, draw=darkblue,label=right:{$s_3$}] {}; \\
            \node [legendLine, draw=goldenpoppy,label=right:{$s_4$}] {}; \\
          };
        \end{tikzpicture}
      \end{scaletikzpicturetowidth}
    \end{center}
    \caption{$C(14, a, 4)$}
    \label{fig:type14aExtra}
  \end{figure}

  % 14b Extras Color Change
  \begin{figure}[!ht]
    \begin{center}
        \begin{tikzpicture}[scale=.7, every node/.style={transform shape}]
          % Nodes
          \node (A1Node) at (-1 * \xSLF, 0 * \ySLF)
          {\dynkinLabelWeyl{\TypeAPicture{1}}{L}};
          \node (A4Node) at (1 * \xSLF, 0 * \ySLF)
          {\dynkinLabelWeyl{\TypeAPicture{4}}{ }};
          \node (B4Node) at (-1 * \xSLF, 1 * \ySLF) {\dynkinLabelWeyl{\TypeBPicture{4}}{ }};
          \node (B2Node) at (0 * \xSLF, 1 * \ySLF) {\dynkinLabelWeyl{\TypeBPicture{2}}{ }};
          \node (B1Node) at (1 * \xSLF, 1 * \ySLF) {\dynkinLabelWeyl{\TypeBPicture{1}}{ }};
          \node (A2Node) at (-2 * \xSLF, 2 * \ySLF) {\dynkinLabelWeyl{\TypeAPicture{2}}{ }};
          \node (CNode) at (0 * \xSLF, 2 * \ySLF) {\dynkinLabelWeyl{\TypeCPicture}{M}};
          \node (A2PNode) at (2 * \xSLF, 2 * \ySLF) {\dynkinLabelWeyl{\TypeAPicture{2}}{ }};
          \node (B1PNode) at (-2 * \xSLF, 3 * \ySLF) {\dynkinLabelWeyl{\TypeBPicture{1}}{ }};
          \node (DNode)  at (0 * \xSLF, 3 * \ySLF) {\dynkinLabelWeyl{\TypeDPicture}{ }};
          \node (B4PNode)  at (2 * \xSLF, 3 * \ySLF) {\dynkinLabelWeyl{\TypeBPicture{4}}{ }};
          \node (A4PNode) at (-1 * \xSLF, 4 * \ySLF) {\dynkinLabelWeyl{\TypeAPicture{4}}{ }};
          \node (A1PNode) at (1 * \xSLF, 4 * \ySLF) {\dynkinLabelWeyl{\TypeAPicture{1}}{U}};
          \node (B2PNode) at (0 * \xSLF, 5 * \ySLF) {\dynkinLabelWeyl{\TypeBPicture{2}}{ }};

          \node (E2Node) at (0 * \xSLF, 4 * \ySLF) {\dynkinLabelWeyl{\TypeAPicture{2}}{ }};
          \node (F1Node) at (-1 * \xSLF, 5 * \ySLF) {\dynkinLabelWeyl{\TypeFPicture{1}}{ }};
          \node (F4Node) at (1 * \xSLF, 5 * \ySLF) {\dynkinLabelWeyl{\TypeFPicture{4}}{ }};
          \node (GNode) at (0 * \xSLF, 6 * \ySLF) {\dynkinLabelWeyl{\TypeCPicture}{ }};

          \node (GPNode) at (2 * \xSLF, 4 * \ySLF) {\dynkinLabelWeyl{\TypeCPicture}{ }};
          \node (E1Node) at (2 * \xSLF, 5 * \ySLF) {\dynkinLabelWeyl{\TypeAPicture{1}}{ }};
          \node (F2Node) at (1 * \xSLF, 6 * \ySLF) {\dynkinLabelWeyl{\TypeFPicture{2}}{ }};

          \node (F2PNode) at (1 * \xSLF, 2 * \ySLF) {\dynkinLabelWeyl{\TypeFPicture{2}}{ }};
          \node (GPPNode) at (1 * \xSLF, 3 * \ySLF) {\dynkinLabelWeyl{\TypeCPicture}{ }};

          % lines
          \upLineF{darkblue}{solid}{0}{2}
          \rightLineF{magenta}{dashed}{0}{3}
          \leftLineF{goldenpoppy}{dashed}{0}{3}
          \rightLineF{magenta}{solid}{-1}{4}
          \leftLineF{goldenpoppy}{solid}{1}{4}
          \upLineF{green}{dashed}{0}{1}
          \rightLineF{goldenpoppy}{solid}{-1}{0}
          \leftLineF{magenta}{solid}{1}{0}
          \leftLineF{magenta}{dashed}{1}{1}
          \rightLineF{goldenpoppy}{dashed}{-1}{1}
          \upLineF{green}{solid}{-1}{0}
          \upLineF{green}{solid}{1}{0}
          \leftLineF{darkblue}{solid}{-1}{1}
          \upLineF{goldenpoppy}{solid}{-2}{2}
          \rightLineF{darkblue}{solid}{-2}{3}
          \rightLineF{darkblue}{solid}{1}{1}
          \upLineF{magenta}{solid}{2}{2}
          \leftLineF{darkblue}{solid}{2}{3}

          \upLineF{green}{dashdotted}{0}{3}
          \upLineF{green}{dashdotted}{-1}{4}
          \leftLineF{goldenpoppy}{dashdotted}{0}{4}
          \upLineF{green}{dashdotted}{1}{4}
          \rightLineF{magenta}{dashdotted}{0}{4}
          \upLineF{green}{dashdotted}{0}{5}
          \rightLineF{magenta}{solid}{-1}{5}
          \leftLineF{goldenpoppy}{solid}{1}{5}

          \upLineF{goldenpoppy}{dashdotted}{2}{3}
          \upLineF{darkblue}{solid}{2}{4}
          \rightLineF{darkblue}{dashdotted}{0}{5}
          \leftLineF{goldenpoppy}{dashdotted}{2}{5}

          \rightLineF{darkblue}{dashdotted}{0}{1}
          \upLineF{green}{solid}{1}{2}
          \leftLineF{darkblue}{solid}{1}{3}

          \rightLineF{gray}{dashdotted}{0}{2}
          \rightLineF{gray}{dashdotted}{1}{4}

          % Legend
          \matrix [draw, below right, every node/.style={scale=.9}]
          at (current bounding box.north west)
          {
            \node [legendLine, draw=magenta,label=right:{$s_1$}] {}; \\
            \node [legendLine, draw=green,label=right:{$s_2$}] {}; \\
            \node [legendLine, draw=darkblue,label=right:{$s_3$}] {}; \\
            \node [legendLine, draw=goldenpoppy,label=right:{$s_4$}] {}; \\
          };
        \end{tikzpicture}
    \end{center}
    \caption{$C(14, b, 2)$}
    \label{fig:type14bExtraColorChange}
  \end{figure}

  \begin{lemma}
    \label{lem:Cs4}
    With all notation as in \myrefT{thm:mainA}, let  $y \in \{L, M, U\}$.
    Assume in addition that $i = 1$, and that if $\abs C = 14$ then $C \subset X(14, a, 4)$ or $C \subset X(14, b, 2)$.
    Let
    \begin{equation*}
      \begin{split}
        &RHS = \\ &\mu(s_1y,s_1w) + \mu(s_3s_4y, s_4s_1w) + \mu(s_2s_3y, s_4s_1w)
        - \sum_{\substack{z \in \WDFour y\\ s_1z < z}}\mu(y,z)\mu(z,s_1w)
      \end{split}
    \end{equation*}
    where $y$ is type $\scC$, and $w$ satisfies $\tDFour(w) = \bigRefSet$, $s_1w$ is type $\scB_1$ and $s_4s_1w$ is type $\scA_2$.
    Let $K = T_{3,2}T_{1,3}T_{3,4}(s_1w)$ (so $K$ is type $\scA_1$).
    \begin{enumerate}
      \item If $y = U$ then $RHS \leq \mu(M, K)$.
      \item If $y = M$ then $RHS \leq \mu(L, K) + \mu(U, K)$.
      \item If $y = L$ then $RHS \leq \mu(M, K)$.
    \end{enumerate}
  \end{lemma}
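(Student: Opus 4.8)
The plan is to follow the proof of \myrefL{lem:Cs2} with the roles of $s_2$ and $s_4$ interchanged on the $w$-side of the argument, exactly as \myrefP{prop:CTopAlt} was deduced from \myrefP{prop:CTop}. Because the $D_4$ relations among $s_1,s_2,s_3,s_4$ are symmetric in $s_2$ and $s_4$, renaming $s_2\leftrightarrow s_4$ throughout the statement of \myrefL{lem:Cs2} yields another true statement; under this renaming the quantities $RHS$ and $K$ of \myrefL{lem:Cs2} become precisely the $RHS$ and $K$ of \myrefL{lem:Cs4}, and the condition ``$s_2s_1w$ of type $\scA_4$'' becomes ``$s_4s_1w$ of type $\scA_2$''. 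The one subtlety is that the clumps are defined relative to the labeling, so the renaming permutes them: it fixes $C(10,a)$ and $C(10,b)$, but sends $X(14,a,4)$ to $X(14,a,2)$ and $X(14,b,2)$ to $X(14,b,4)$. Consequently, statements~1 and~3 of \myrefL{lem:Cs4} (where $\abs C=10$) follow immediately from statements~1 and~3 of \myrefL{lem:Cs2} by this renaming.

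For statement~2 ($y=M$, so $\abs C=14$ and $C\subset X(14,a,4)$ or $C\subset X(14,b,2)$) the renaming of \myrefL{lem:Cs2} would instead deliver the claim for $C\subset X(14,a,2)$ or $C\subset X(14,b,4)$, so here I would reproduce the argument directly. Fix $w$ with $s_1w$ of type $\scB_1$ and $s_4s_1w$ of type $\scA_2$ and put $K=T_{3,2}T_{1,3}T_{3,4}(s_1w)$, noting that $T_{3,4}(s_1w)=s_4s_1w$ (as $s_1w\in D_{3,4}(W)$ and $s_4s_1w\in D_{4,3}(W)$). Splitting into the subcase $C\subset X(14,a,4)$, using \autoref{fig:type14aExtra}, and the subcase $C\subset X(14,b,2)$, using \autoref{fig:type14bExtraColorChange}, I would: (i) read off the value of $T_{3,2}T_{1,3}T_{3,4}(s_1M)$ from the extended diagram, so that by the Knuth-map invariance of $\mu$ (\myrefT{thm:talb}) one gets $\mu(s_1y,s_1w)=\mu(z_0,K)$ for some $z_0\in\{L,U\}$; (ii) treat the two terms $\mu(s_3s_4y,s_4s_1w)$ and $\mu(s_2s_3y,s_4s_1w)$, each of which should either vanish by \myrefP{prop:KL23g} (because some element of $\SDFour$ lies in the $\tau$-invariant of the $w$-side element but not of the $y$-side element, and the two are not related by that generator) or equal $\mu(z,s_1w)$ for some $z\in\WDFour y$ with $s_1z<z$ and $\mu(y,z)=1$, in which case it cancels exactly against one term of the subtracted sum in $RHS$; (iii) discard any non-negative terms that remain, invoking \myrefT{thm:nonNegative}. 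Assembling these, as in the displayed computation at the end of the proof of \myrefL{lem:Cs2}, gives $RHS\le\mu(L,K)+\mu(U,K)$.

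I expect the main obstacle to be the bookkeeping in step~(ii) in the subcase $C\subset X(14,b,2)$: by analogy with statement~1 of \myrefL{lem:Cs2}, one of the two $\mu$-terms in $RHS$ will not vanish but must be matched, via a $T_{j,3}$ map (the mirror of the $z_1=T_{2,3}(s_3s_2y)$ used there, so here presumably $z_1=T_{4,3}(s_3s_4y)=s_4s_3s_4y$), to an element $z_1\in\WDFour y$ that occurs in the subtracted sum, and one must check from \autoref{fig:type14bExtraColorChange} that this $z_1$ genuinely satisfies $s_1z_1<z_1$ and $\mu(y,z_1)=1$, so that the cancellation is exact. Once that single delicate matching is verified, everything else is a routine read of the extended clump diagrams, exactly parallel to \myrefL{lem:Cs2}, and by \myref{Remark}{rem:relabel} the cases actually needed in the sequel are covered. (An alternative to the direct argument for statement~2 would be to first establish \myrefL{lem:Cs2} for the clumps $X(14,a,2)$ and $X(14,b,4)$ and then apply the renaming, but this merely transfers the same verification to a different diagram, so I would simply carry it out directly as above.)
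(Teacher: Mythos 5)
Your high-level structure matches the paper: treat the two $\abs C = 10$ cases by interchanging $s_2$ and $s_4$ in \myrefL{lem:Cs2}, then argue statement~2 directly by inspecting \autoref{fig:type14aExtra} and \autoref{fig:type14bExtraColorChange}. That much is exactly the paper's strategy, and your observation about how the renaming permutes the $X(14,\cdot,\cdot)$ sets (which is why statement~2 needs a fresh argument) is correct and is precisely the paper's own justification for revisiting only statement~2.

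The gap is in step~(ii). You write that each of $\mu(s_3s_4y, s_4s_1w)$ and $\mu(s_2s_3y, s_4s_1w)$ ``should either vanish by \myrefP{prop:KL23g} \dots or equal $\mu(z,s_1w)$ \dots in which case it cancels.'' That dichotomy is right for statement~1 of \myrefL{lem:Cs2}, where the final bound is a single $\mu(M,K)$, but it is wrong for statement~2, where the bound is $\mu(L,K)+\mu(U,K)$. One of the two middle terms must be \emph{kept} and identified, via a Knuth-map chain $T_{3,2}T_{1,3}$, with $\mu(L,K)$: in the subcase $C\subset X(14,a,4)$ the paper finds $\mu(s_1y,s_1w)=\mu(U,K)$, $\mu(s_3s_4y,s_4s_1w)=0$, and $\mu(s_2s_3y,s_4s_1w)=\mu(L,K)$; in the subcase $C\subset X(14,b,2)$ the paper finds $\mu(s_1y,s_1w)=\mu(U,K)$ again, $\mu(s_3s_4y,s_4s_1w)=\mu(L,K)$ (kept, not cancelled), and it is $\mu(s_2s_3y,s_4s_1w)$ that cancels, via $z_1=T_{2,3}(s_2s_3y)=s_3s_2s_3y$. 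If both middle terms vanished or cancelled, your assembly would only give $RHS\le\mu(z_0,K)$, not the asymmetric two-term bound you state, so your step~(ii) as written cannot produce the displayed conclusion.

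Relatedly, your anticipated ``main obstacle'' is aimed at the wrong term. You propose $z_1=T_{4,3}(s_3s_4y)=s_4s_3s_4y$ as a cancellation partner for $\mu(s_3s_4y,s_4s_1w)$; but that term is the one that is kept as $\mu(L,K)$ in the $X(14,b,2)$ subcase. The naive $s_2\leftrightarrow s_4$ mirror of the $z_1$ from \myrefL{lem:Cs2} does not apply here precisely because $X(14,b,2)$ is not fixed by that relabeling. So carrying out your plan, you would try to verify $\mu(y,s_4s_3s_4y)=1$ and to cancel the wrong term, and you would be missing the Knuth-map identification $\mu(s_3s_4y,s_4s_1w)=\mu\bigl(T_{3,2}T_{1,3}(s_3s_4y),T_{3,2}T_{1,3}(s_4s_1w)\bigr)=\mu(L,K)$ that supplies the second summand of the bound. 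The fix is to redo step~(ii) allowing three outcomes for each middle term: vanish by \myrefP{prop:KL23g}, cancel against the subtracted sum, or be identified (via $T_{3,2}T_{1,3}$ acting on both arguments) with $\mu(L,K)$; in each of the two $\abs C=14$ subcases exactly one middle term falls in that third class.
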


  \begin{figure}[!ht]
    \centering
    \begin{tikzpicture}[scale=0.8,  every node/.style={transform shape}]
      \node (CNode) at (0 * \xSLT, 0 * \ySLT) {\dynkinLabelWeyl{\TypeCPicture}{w}};
      \node (B1Node) at (0 * \xSLT, -1 * \ySLT) {\dynkinLabelWeyl{\TypeBPicture{1}}{ }};
      \node (A2Node) at (0 * \xSLT, -2 * \ySLT) {\dynkinLabelWeyl{\TypeAPicture{2}}{ }};

      \draw[-, very thick, shorten <=0.4cm, shorten >=0.4cm, magenta, dotted] (0 * \xSLT, -1 * \ySLT) -- (0 * \xSLT, -1 * \ySLT + \ySLT);
      % \upLineT{magenta}{dotted}{0}{-1}
      \upLineT{goldenpoppy}{solid}{0}{-2}
    \end{tikzpicture}
    \caption{$w$ for \myrefL{lem:Cs4}}
    \label{fig:wCs4}
  \end{figure}

  \begin{proof}
    This lemma is the same as the previous lemma, just with $s_2$ and $s_4$ interchanged.
    For $w$, refer to \autoref{fig:wCs4}.
    Choose $L, U$ with $l(L) < l(U)$.
    Since any clump of size 10 is symmetric in $s_2$ and $s_4$, we only need to revisit statement 2, that is, the two cases where $\abs C = 14$ and $y = M$.
    First, we assume that $C \subset X(14, a, 4)$.  Refer to \autoref{fig:type14aExtra}.
    Here, we see that $T_{3,2}T_{1,3}T_{3,4}(s_1M) = U$, so $\mu(s_1y, s_1w) = \mu(U, K)$.
    We have $\mu(s_3s_4y, s_4s_1w) = 0$ since $s_2 \in \tau(s_4s_1w)$, $s_2 \notin \tau(s_3s_4y)$.
    Now
    \begin{equation*}
      \mu(s_2s_3y, s_4s_1w) = \mu(T_{3,2}T_{1,3}(s_2s_3y), T_{3,2}T_{1,3}(s_4s_1w)) = \mu(L, K).
    \end{equation*}
    Statement 2 in this case follows easily from these.

    Finally, we assume that $C \subset X(14, b, 2)$.  Refer to \autoref{fig:type14bExtraColorChange}.
    Here, we see again that $T_{3,2}T_{1,3}T_{3,4}(s_1M) = U$, so $\mu(s_1y, s_1w) = \mu(U, K)$.
    We have
    \begin{equation*}
      \mu(s_3s_4y, s_4s_1w) = \mu(T_{3,2}T_{1,3}(s_3s_4y), T_{3,2}T_{1,3}(s_4s_1w)) = \mu(L, K).
    \end{equation*}
    For $\mu(s_2s_3y, s_4s_1w)$, we argue as we have done before.
    We let $z_1 = T_{2,3}(s_2s_3y) = s_3s_2s_3y$.
    Then $\mu(s_2s_3y, s_4s_1w) = \mu(z_1, s_1w)$.
    Note that $s_1z_1 < z_1$ and $\mu(y, z_1) = 1$.
    Statement 2 in this case now follows from arguments which we have seen already in these proofs.
    This completes the proof of the Lemma.
  \end{proof}

  \begin{lemma}
    \label{lem:As4}
    With all notation as in \myrefT{thm:mainA}, let  $y \in \{L, M, U\}$.
    Assume in addition that $i = 1$, and that if $\abs C = 14$ then $C \subset X(14, a, 4)$ or $C \subset X(14, b, 2)$.
    Choose $L, U$ with $l(L) < l(U)$.
    Let
    \begin{equation*}
      \begin{split}
        &RHS = \\ &\mu(s_1y,s_1w) + \mu(s_4s_3y, s_3s_1w) + \mu(s_2s_4y, s_3s_1w)
        - \sum_{\substack{z \in \WDFour y\\ s_1z < z}}\mu(y,z)\mu(z,s_1w)
      \end{split}
    \end{equation*}
    where $y$ is type $\scA_1$, $s_4(y)$ is of type $\scB_2$, and $w$ satisfies $\tDFour(w) = \{s_1, s_3\}$ and $s_1w$ is type $\scD$.
    Let $K = s_3s_1w = T_{i,3}(s_1w)$ for $i \in \refSet$ (so $K$ is type $\scC$).
    \begin{enumerate}
      \item If $y = U$ then $RHS \leq \mu(M, K)$.
      \item If $y = M$ then $RHS \leq \mu(L, K) + \mu(U, K)$.
    \end{enumerate}
  \end{lemma}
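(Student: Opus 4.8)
The plan is to prove \myrefL{lem:As4} in exactly the way \myrefL{lem:Cs2} and \myrefL{lem:Cs4} were proved: this lemma does for the right-hand side of \myrefP{prop:ATopAlt} what those two do for the right-hand sides of \myrefP{prop:CTop} and \myrefP{prop:CTopAlt}. First I would draw the expanded picture of $w$, as in \autoref{fig:wCs2} and \autoref{fig:wCs4}: here $w$ sits below the type-$\scD$ element $s_1w$, which sits below $K=s_3s_1w$, and the stated $\tDFour$-values of $s_1w$ and $K$ are read off either from the explicit elements of $\WDFour$ or from \myrefP{prop:parabolic2}. I would also reduce to the cases that actually occur. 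Since $y\in\{L,M,U\}$ must be of type $\scA_1$ with $s_4y$ of type $\scB_2$, inspection of \autoref{fig:type10Full}, \autoref{fig:type14aFull}, and \autoref{fig:type14bFull} (together with \myrefP{prop:clumpDiagram}) shows that the only such $y$ is $M$ when $\abs C=10$ and is $U$ when $\abs C=14$ (with $C\subset X(14,a,4)$ or $C\subset X(14,b,2)$, the subcases allowed here by \myrefR{rem:relabel}); in particular $y=L$ never arises, which is why the lemma has only statements 1 and 2. So I would handle three configurations: $y=U$ with $C\subset X(14,a,4)$, $y=U$ with $C\subset X(14,b,2)$, and $y=M$ with $\abs C=10$.

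In each configuration I would chase the three positive $\mu$-terms of $RHS$ through the expanded clump diagrams \autoref{fig:type10Extra}, \autoref{fig:type14aExtra}, and \autoref{fig:type14bExtraColorChange}, using \myrefP{prop:clumpDiagram} to identify the relevant $T_{i,j}$ maps and $\tDFour$-values. The three mechanisms are those of \myrefL{lem:Cs2}: (i) since $s_1w$ is of type $\scD$ and $K=T_{i,3}(s_1w)$ for each $i\in\refSet$ (so $T_{3,i}(K)=s_1w$), any term $\mu(v,s_3s_1w)=\mu(v,K)$ whose first argument lies in the appropriate Knuth domain is rewritten via \myrefT{thm:talb} as $\mu(T_{3,i}(v),s_1w)$, the subgroup hypothesis of \myrefT{thm:talb} being satisfied because $C$ and the clump of $s_1w$ lie in distinct $\WDFour$-cosets (cf. the condition $y\notin\WDFour w$ of \myrefP{prop:ATopAlt}); (ii) $\mu(v,K)=0$ by \myrefP{prop:KL23g} whenever some $s_j\in\SDFour$ lies in $\tau(K)$ but not in $\tau(v)$, which one reads off from the $\tDFour$-values; (iii) when (i) produces $T_{3,i}(v)=z\in\WDFour y$ with $s_1z<z$ and $\mu(y,z)=1$, the resulting term $\mu(z,s_1w)$ cancels against the corresponding summand of $\sum_{z\in\WDFour y,\,s_1z<z}\mu(y,z)\mu(z,s_1w)$. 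Applying these: $\mu(s_1y,s_1w)$ becomes $\mu(M,K)$ when $\abs C=14$ (there $s_1y$ is the $\scD$-element of $C$, carried by $T_{i,3}$ to the unique $\scC$-element $M$ of $C$) and becomes $\mu(L,K)$ when $\abs C=10$ (there $s_1y=s_1M$ is the lower $\scD$-element of $C$, carried to the lower $\scC$-element $L$); when $\abs C=10$ a second positive term equals $\mu(s_2s_4M,s_3s_1w)=\mu(U,K)$, since $s_2s_4M$ is literally the upper $\scC$-element of $C$; and every remaining positive term either vanishes by (ii) or is absorbed by (iii). What is left is $\mu(M,K)$ (resp. $\mu(L,K)+\mu(U,K)$) minus a sum of non-negative quantities, and \myrefT{thm:nonNegative} finishes the proof.

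The main obstacle, as with the two preceding lemmas, is not a new idea but the size of the case analysis: in each of the three configurations (really four, counting both $\abs C=14$ subcases) one must follow the composite Knuth maps through the expanded diagrams and decide term by term which of the three mechanisms applies. I expect the two $\abs C=14$ configurations to be the fussiest, just as the $X(14,b,2)$ subcase was in \myrefL{lem:Cs2} and \myrefL{lem:Cs4}: there neither of the two terms $\mu(s_4s_3U,K)$ and $\mu(s_2s_4U,K)$ vanishes outright, each instead surviving as a genuine sum-portion term $\mu(z_i,s_1w)$ (with $z_i$ a type-$\scA_1$ element satisfying $s_1z_i<z_i$ and $\mu(U,z_i)=1$), so the cancellation against the subtracted sum must be arranged by hand. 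Nothing beyond \myrefT{thm:talb}, \myrefP{prop:KL23g}, \myrefT{thm:nonNegative}, and \myrefP{prop:clumpDiagram} is needed.
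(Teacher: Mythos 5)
Your proposal is correct and takes essentially the same approach as the paper: the paper's proof treats $y=U$ (with $\abs C=14$, in both the $X(14,a,4)$ and $X(14,b,2)$ subcases) and $y=M$ (with $\abs C=10$), rewriting $\mu(s_1y,s_1w)$ as $\mu(M,K)$ or $\mu(L,K)$ via $T_{1,3}$, and disposing of the other two positive terms exactly by your mechanisms (ii) and (iii), with cancellation against the sum being used for both remaining terms when $y=U$ and direct vanishing plus a direct identification $s_2s_4M=U$ when $y=M$.

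One factual slip worth flagging: your explanation that ``$y=L$ never arises'' is not quite right. When $C\subset X(14,b,2)$ one has $L=s_1s_3s_1s_4$, and a direct check (or reading off the clump diagram) gives $\tDFour(s_4L)=\{s_1,s_4\}$, so $s_4L$ is indeed of type $\scB_2$ and $y=L$ does satisfy the stated hypotheses. The lemma simply does not claim any conclusion for $y=L$ because the four applications in \myrefL{lem:mainA1010}--\myrefL{lem:mainA1414} only invoke it with $y=U$ or $y=M$. This does not affect your proof plan at all -- you correctly prove statements 1 and 2 -- it only affects your side remark about \emph{why} there is no statement 3. Likewise the parenthetical that the cancelling sum-terms $z_i$ are ``type-$\scA_1$ elements'' is not accurate (e.g.\ in the $X(14,a,4)$ subcase $z_2=s_2y$ is the element the paper calls $F_4$, with $\tDFour=\{s_1,s_2,s_3\}$), but again this label plays no role in the argument; what matters, and what you correctly require, is $z_i\in\WDFour y$, $s_1z_i<z_i$, and $\mu(y,z_i)=1$.
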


  \begin{proof}
    To prove statement 1, note that if $y = U$ then $\abs C = 14$. So, we'll have two cases.
    First, we assume that $C \subset X(14, a, 4)$.  Refer to \autoref{fig:type14aExtra}.

    Using the figure, we see that $T_{1,3}(s_1U) = M$, so $\mu(s_1y, s_1w) = \mu(M, K)$.
    Now let $z_1 = T_{3,4}(s_4s_3y) = s_3s_4s_3y$.
    So $\mu(s_4s_3y, s_3s_1w) = \mu(z_1, s_1w)$.
    Note that $s_1z_1 < z_1$ and $\mu(y, z_1) = 1$.
    Similarly, let $z_2 = T_{3,4}(s_2s_4y) = s_2y$.
    Then $\mu(s_4s_3y, s_2s_1w) = \mu(z_2, s_1w)$, $s_1z_2 < z_2$ and $\mu(y, z_2) = 1$.
    So, $z_1$ and $z_2$ occur in the sum portion of the definition of $RHS$.
    As usual, this proves statement 1 in this case.
    The proof of statement 1 when $C \subset X(14, b, 2)$ is the same.  (Refer to \autoref{fig:type14bExtraColorChange}.)

    For statement 2, we have $\abs C = 10$.
    Refer to \autoref{fig:type10Extra}.
    Using the figure, we see that $T_{1,3}(s_1M) = L$, so $\mu(s_1y, s_1w) = \mu(L, K)$.
    For $\mu(s_4s_3y, s_3s_1w)$, we note that $s_2 \notin \tau(s_3M)$, and thus $s_2 \notin \tau(s_4s_3M)$.
    Since $s_2 \in \tau(s_3s_1w)$, we have $\mu(s_4s_3y, s_3s_1w) = 0$, by \myrefP{prop:KL23g}.
    We have $s_2s_4M = U$, so $\mu(s_2s_4y, s_3s_1w) = \mu(U, K)$.
    As usual, these combine to prove statement 2.
  \end{proof}

  With these lemmas in hand, we can now prove statements 1 and 2 of \myrefT{thm:mainA}.
  For convenience, we'll break this proof into four lemmas, one for each of the cases, based on $\abs C$ and $\abs{C'}$.

  \begin{lemma}
    \label{lem:mainA1010}
    \myrefT{thm:mainA} holds when $\abs C = 10$ and $\abs{C'} = 10$.
  \end{lemma}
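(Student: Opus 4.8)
The plan is to reduce \myrefT{thm:mainA}, in the case $\abs C=\abs{C'}=10$, to four inequalities among $\mu(L,L')$, $\mu(U,L')$, $\mu(L,U')$, $\mu(U,U')$, $\mu(M,M')$, and then to deduce the three asserted equalities from those inequalities by elementary manipulation together with \myrefT{thm:nonNegative}. By \myrefR{rem:relabel} I take $i=1$, and by \myrefR{rem:chooseLU} I fix $L,U$ with $l(L)<l(U)$ and $L',U'$ with $l(L')<l(U')$; since both clumps have size $10$ there is no remaining freedom. I also assume $l(L')-l(L)$ is odd, for otherwise (using that in a size-$10$ clump $l(M)=l(L)+2$ and $l(U)=l(L)+4$, and likewise for $C'$) all five of the above $\mu$-values vanish on parity grounds and the three equations read $0=0+0$. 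Since $C'\not\subset\WDFour C$, the clumps $C$ and $C'$ lie in distinct right cosets of $\WDFour$, hence $y\notin\WDFour w$ whenever $y\in C$ and $w\in C'$; this, with the parity assumption, is what is needed to apply the three preceding ``top'' propositions and the lemmas \myrefL{lem:LHS}, \myrefL{lem:Cs2}, \myrefL{lem:As4} to the pairs below.

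The heart of the argument is four runs of the three-stage machinery --- a top proposition giving $LHS\le RHS$, then \myrefL{lem:LHS} bounding one $\mu$-value below by $LHS$, then one of \myrefL{lem:Cs2}, \myrefL{lem:As4} bounding $RHS$ above by one or two $\mu$-values. \textbf{Run (a):} take $y=M$, $w=s_1s_3U'$. Reading off the diagram of $C'$, $\tDFour(w)=\{s_1,s_3\}$ (here $s_3\in\tDFour(w)$ follows from the braid relation $s_3s_1s_3=s_1s_3s_1$ with \myrefP{prop:tauNonCommuting}) and $s_1w=s_3U'$ is of type $\scD$, so \myrefP{prop:ATopAlt} and \myrefL{lem:As4} apply with $K=s_3s_1w=U'$; \myrefL{lem:LHS}(1) together with \myrefL{lem:As4}(2) then give $\mu(M,M')\le\mu(L,U')+\mu(U,U')$. \textbf{Run (b):} take $y=M$, $w=s_1s_3L'$; by the diagram this $w$ equals $M'$, of type $\scA_1$, with $s_1w$ the lower type-$\scD$ element of $C'$ and $K=s_3s_1w=L'$; now \myrefP{prop:ATopAlt}, \myrefL{lem:LHS}(3) and \myrefL{lem:As4}(2) give $\mu(M,M')\le\mu(L,L')+\mu(U,L')$. \textbf{Runs (c) and (d):} take $w=U'$ and $y=U$, resp.\ $y=L$, both of type $\scC$; the diagram of $C'$ shows $s_1U'$ of type $\scB_1$ and $s_2s_1U'$ of type $\scA_4$, so \myrefP{prop:CTop} applies, and following the solid edges one checks $T_{3,4}T_{1,3}T_{3,2}(s_1U')=M'$, so the element $K$ of \myrefL{lem:Cs2} is $M'$; \myrefL{lem:LHS}(2) with \myrefL{lem:Cs2}(1) (resp.\ \myrefL{lem:Cs2}(3)) then gives $\mu(U,U')+\mu(U,L')\le\mu(M,M')$ (resp.\ $\mu(L,U')+\mu(L,L')\le\mu(M,M')$).

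Finally I combine the four inequalities. From (a) with (c), $\mu(U,L')\le\mu(L,U')$; from (b) with (d), the reverse; so $\mu(U,L')=\mu(L,U')$. From (b) with (c), $\mu(U,U')\le\mu(L,L')$; from (a) with (d), the reverse; so $\mu(U,U')=\mu(L,L')$. Substituting both equalities into (c) gives $\mu(M,M')\ge\mu(L,L')+\mu(U,L')$, which together with (b) forces $\mu(M,M')=\mu(L,L')+\mu(U,L')$. These are exactly the three identities of \eqref{eqs:mainA1}.

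The main obstacle is the middle paragraph: for each run one must verify, from \myrefP{prop:clumpDiagram} and \autoref{fig:type10Full} (with the expanded \autoref{fig:type10Extra}), that all the auxiliary elements demanded by the hypotheses --- the various $s_1w$, $s_2s_1w$, $s_3s_2y$, $s_4s_3y$, $s_2s_4y$, the elements contributing to the sum portions of \myrefP{prop:KL22c}, and the successive images under the Knuth maps --- have exactly the prescribed $\tDFour$-values and lie in the prescribed clumps, so that in particular $K=M'$ in runs (c),(d) and $K\in\{L',U'\}$ in runs (a),(b). The one point that is not a direct reading of a figure is $\tDFour(s_1s_3U')=\{s_1,s_3\}$ in run (a), since $s_1s_3U'$ lies just outside the clump; this is handled via the braid relation in $\langle s_1,s_3\rangle$ and \myrefP{prop:tauCommuting}, \myrefP{prop:tauNonCommuting}.
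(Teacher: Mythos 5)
Your proposal takes essentially the same route as the paper: the same four $(y,w)$ pairs $(M,H')$, $(M,M')$, $(U,U')$, $(L,U')$ fed through the same three-stage machinery (\myrefP{prop:ATopAlt}/\myrefP{prop:CTop} for the middle step, \myrefL{lem:LHS} for the lower bound, \myrefL{lem:As4}/\myrefL{lem:Cs2} for the upper bound), followed by the same elementary manipulation of the four resulting inequalities. The only deviation is minor and cosmetic: you dispose of the cases $i\in\{2,4\}$ by invoking the relabeling principle of \myrefR{rem:relabel} at the outset, whereas the paper proves the $i=1$ case and then transfers to $i=2,4$ explicitly by applying $T_{3,1}T_{2,3}$ (resp.\ $T_{3,1}T_{4,3}$) to both arguments of $\mu(M(1),M'(1))$ and using \myrefT{thm:talb}; both devices are valid, and the Knuth-map transfer arguably sits more comfortably inside the formalism since the ``Specifically\dots'' clause of \myrefR{rem:relabel} does not explicitly list the $10\times 10$ reduction you are using, even though it is covered by the remark's general principle.
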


  \begin{proof}
    First, we'll prove the lemma under the additional assumption that $i = 1$.
    We can apply \myrefP{prop:CTop}, \myrefL{lem:LHS}, and \myrefL{lem:Cs2} to two $y, w$ pairs.
    They, with the resulting inequalities, are as follows:
    \begin{equation*}
      \begin{array}{| c | c | c |}
        \hline
        y & w & \text{Inequality}\\ \hline
        U & U' & \mu(U,U') + \mu(U, L') \leq \mu(M, M')\\ \hline
        L & U' & \mu(L,U') + \mu(L, L') \leq \mu(M, M')\\ \hline
      \end{array}
    \end{equation*}

    Similarly, we can apply \myrefP{prop:ATopAlt}, \myrefL{lem:LHS}, and \myrefL{lem:As4} to two $y, w$ pairs.
    They, with the resulting inequalities, are as follows:

    \begin{equation*}
      \begin{array}{| c | c | c |}
        \hline
        y & w & \text{Inequality}\\ \hline
        M & M' & \mu(M, M') \leq \mu(L, L') + \mu(U, L')\\ \hline
        M & H' & \mu(M, M') \leq \mu(L, U') + \mu(U, U')\\ \hline
      \end{array}
    \end{equation*}

    It's easy to go from these inequalities to the equalities of \myrefT{thm:mainA}.

    So, the lemma is proved when $i = 1$.
    Now, for $j \in \refSet$, write $M(j)$ for the element of type $\scA_j$ in $C$, and similarly $M'(j)$ in $C'$.

    Using this notation, what we've proved so far is that
    \begin{align*}
      &\mu(M(1), M'(1)) = \mu(L, L') + \mu(U, L')\\
      &\mu(U, U') = \mu(L, L')\\
      &\mu(U, L') = \mu(L, U')
    \end{align*}

    We want to show that $\mu(M(2), M'(2)) = \mu(M(1), M'(1))$, and similarly with 4 in place of 2.
    Now $M(2) = T_{3,1}T_{2,3}(M(1))$ and $M'(2) = T_{3,1}T_{2,3}(M'(1))$.
    So, by \myrefT{thm:talb}, we have
    \begin{equation*}
      \mu(M(2), M'(2)) = \mu(M(1), M'(1))
    \end{equation*}
    Similarly, $\mu(M(4), M'(4)) = \mu(M(1), M'(1))$.
    So, we have now proved the lemma in all cases.
  \end{proof}

  We'll use the following proposition in the rest of the cases.

  \begin{proposition}
    \label{prop:semicircle}
    Let $C$ be a clump and $w \in C$ an element of type $\scA_1$.
    Let $T = T_{3,4}T_{1,3}T_{3,2}T_{4,3}T_{3,1}T_{2,3}$.
    Then $T(w)$ is defined and is also of type $\scA_1$.
    If $\abs C = 10$ then $T(w) = w$.
    If $\abs C = 14$ then $T(w) \neq w$.
  \end{proposition}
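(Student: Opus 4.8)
The plan is to reduce the statement to the parabolic subgroup $\WDFour$ by means of \myrefP{prop:talbParabolic}, and then to settle it by direct inspection of the cell diagrams. Observe first that each of the six Knuth maps occurring in $T$ has the form $T_{i,j}$ with $3 \in \{i,j\}$, and that $s_i s_3$ has order $3$ for every $i \in \refSet$; so each factor really is a Knuth map in the sense of \myrefD{def:KnuthA}. Taking $J = \SDFour$ and writing $w = p_J(w)\,w^J$, \myrefP{prop:talbParabolic} tells us that applying such a map to $w$ is defined precisely when it is defined for $p_J(w)$, that the result is obtained from the result on $p_J(w)$ by reattaching the right factor $w^J$, and that the two have the same type (types depending only on $\tDFour$, hence only on $p_J$). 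Iterating six times, $T(w)$ is defined iff $T(p_J(w))$ is, $T(w)$ is of type $\scA_1$ iff $T(p_J(w))$ is, and --- the factorization $w = p_J(w)w^J$ being unique --- $T(w) = w$ iff $T(p_J(w)) = p_J(w)$. So it is enough to prove the proposition for $w \in \WDFour$ of type $\scA_1$.

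For such $w$ I would run the six maps symbolically, tracking only $\tDFour$ and using the type characterization of \myrefP{prop:D4OpsAltDef}. Starting from $\tDFour(w) = \{s_1,s_3\}$, one reads off that the successive images have types
\[
  \scA_1 \longrightarrow \scB_4 \longrightarrow \scA_2 \longrightarrow \scB_1 \longrightarrow \scA_4 \longrightarrow \scB_2 \longrightarrow \scA_1,
\]
and at each stage the $\tDFour$-value of the current element displays it as a member of the domain of the next map --- for instance $\{s_1,s_3\} \cap \{s_2,s_3\} = \{s_3\}$ places a type-$\scA_1$ element in $D_{s_2,s_3}(W)$, then $\{s_1,s_2\} \cap \{s_1,s_3\} = \{s_1\}$ places a type-$\scB_4$ element in $D_{s_3,s_1}(W)$, and so on through all six steps. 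In particular $T(w)$ is defined and again of type $\scA_1$, irrespective of which of the eight cells $w$ lies in.

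It remains to decide whether $T(w) = w$. By \myrefP{prop:clumpDiagram} the relative lengths, the $\tDFour$-values, and the $T_{i,j}$-edges inside any clump are exactly those shown in \autoref{fig:type10Full}, \autoref{fig:type14aFull}, and \autoref{fig:type14bFull}; and since every type occurring in the chain above appears in each of the eight cells, the orbit of $w$ under the six maps never leaves its clump and can simply be traced in the relevant figure. If $\abs C = 10$, the six maps run around the closed hexagon $\scA_1$--$\scB_4$--$\scA_2$--$\scB_1$--$\scA_4$--$\scB_2$ of \autoref{fig:type10Full}, and since that cell has a unique element of type $\scA_1$ we conclude $T(w) = w$. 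If $\abs C = 14$, the same six solid coloured edges instead form a path joining the two \emph{distinct} type-$\scA_1$ elements of the cell (in \autoref{fig:type14aFull}, from the lower one to the upper one; likewise in \autoref{fig:type14bFull}), whence $T(w) \neq w$.

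The size-$14$ clumps occur in the two families $C(14,a,i)$ and $C(14,b,i)$, $i \in \refSet$, each member of which differs from $C(14,a,4)$, respectively $C(14,b,4)$, only by a permutation of the labels $\bigRefSet$ (cf.\ \myrefR{rem:relabel}); the identical bookkeeping handles every one. (Alternatively, since $T$ is a composition of the bijections $D_{s,t}(W) \to D_{t,s}(W)$ it is injective, so inside each size-$14$ cell it either fixes or interchanges the two type-$\scA_1$ elements, and the tracing excludes the first possibility.) The one genuinely laborious part of the argument is the step-by-step bookkeeping of $\tau$-invariants through the six maps: at each application one must keep straight the asymmetry between $D_{s_i,s_3}(W)$ and $D_{s_3,s_i}(W)$, and must check in the figures that the edge being followed is the one produced by the relevant Knuth map rather than by left multiplication by the other reflection. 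None of this is deep, but it is where care is required.
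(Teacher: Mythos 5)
Your proposal is correct and takes essentially the same approach as the paper: reduce from $W$ to $\WDFour$ via the parabolic machinery (the paper cites \myrefP{prop:clumpDiagram} directly, you re-derive the same reduction from \myrefP{prop:talbParabolic} and then also invoke \myrefP{prop:clumpDiagram}), and then settle the claim by inspection of the figures. The paper's own proof is terser --- ``This can be seen by inspection; to go from $\WDFour$ to $W$ we use \myrefP{prop:clumpDiagram}'' --- but the type-chain $\scA_1 \to \scB_4 \to \scA_2 \to \scB_1 \to \scA_4 \to \scB_2 \to \scA_1$ you track is exactly what that inspection amounts to.
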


  \begin{proof}
    This can be seen by inspection.  To go from $\WDFour$ to $W$ we use \myrefP{prop:clumpDiagram}.
  \end{proof}

  \begin{lemma}
    \label{lem:mainA1014}
    \myrefT{thm:mainA} holds when $\abs C = 10$,  $\abs{C'} = 14$, and $C' \subset X(14, a, 4)$ or $C' \subset X(14, b, 2)$.
  \end{lemma}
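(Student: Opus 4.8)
The plan is to follow the template of the proof of \myrefL{lem:mainA1010}, feeding the three ``top'' propositions of this section into \myrefL{lem:LHS} on one side and into the matching right-hand side lemma on the other. Since a clump of size $10$ is symmetric in $s_1,s_2,s_4$ (cf.\ \myrefR{rem:relabel}), I would first prove everything under the extra assumption $i=1$ and recover the other values of $i$ at the end by a Knuth-map argument. Throughout I would invoke \myrefP{prop:clumpDiagram} to read off, from \autoref{fig:type14aFull}, \autoref{fig:type14bFull} and their expanded forms \autoref{fig:type14aExtra}, \autoref{fig:type14bExtraColorChange}, which element of $C'$ plays each role; the two cases $C'\subset X(14,a,4)$ and $C'\subset X(14,b,2)$ are handled identically, the figures being the only difference. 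Note that since $\abs C=10\neq 14=\abs{C'}$ we are in statement~2 of \myrefT{thm:mainA}, so here $L,U$ are the two type~$\scC$ elements of $C$ and $M$ its type~$\scA_1$ element, while $M'$ is the type~$\scC$ element of $C'$ and $L',U'$ (with $l(L')<l(U')$) are its type~$\scA_1$ elements.

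The first step is to apply \myrefP{prop:CTop} with $y\in\{L,U\}$ and $w=M'$. Combining its conclusion with statement~3 of \myrefL{lem:LHS} (the trivial bound $\mu(y,M')\le LHS$, as $\mu(y,M')$ itself occurs in $LHS$) on the left, and with statements~1 and~3 of \myrefL{lem:Cs2} on the right, gives $\mu(L,M')\le\mu(M,K_1)$ and $\mu(U,M')\le\mu(M,K_1)$, where $K_1=T_{3,4}T_{1,3}T_{3,2}(s_1M')$ is a type~$\scA_1$ element of $C'$. Running the identical argument with \myrefP{prop:CTopAlt} and \myrefL{lem:Cs4} in place of \myrefP{prop:CTop} and \myrefL{lem:Cs2} gives $\mu(L,M'),\mu(U,M')\le\mu(M,K_2)$ with $K_2=T_{3,2}T_{1,3}T_{3,4}(s_1M')$. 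Inspection of the clump diagram shows $K_1\neq K_2$, hence $\{K_1,K_2\}=\{L',U'\}$. The second step is to apply \myrefP{prop:ATopAlt} with $y=M$ (which is of type~$\scA_1$, with $s_4M$ of type~$\scB_2$ and $s_2s_4M$ of type~$\scC$ in a size-$10$ clump) and $w=U'$ (the type~$\scA_1$ element of $C'$ with $s_1U'$ of type~$\scD$); combining with statement~2 of \myrefL{lem:LHS} on the left and statement~2 of \myrefL{lem:As4} on the right, and noting that there $K=s_3s_1U'=M'$, yields $\mu(M,L')+\mu(M,U')\le\mu(L,M')+\mu(U,M')$.

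These inequalities close up the $i=1$ case. Adding the $y=L$ instance of the \myrefL{lem:Cs2} bound to the $y=U$ instance of the \myrefL{lem:Cs4} bound gives $\mu(L,M')+\mu(U,M')\le\mu(M,K_1)+\mu(M,K_2)=\mu(M,L')+\mu(M,U')$, which together with the \myrefP{prop:ATopAlt} inequality forces equality throughout; in particular $\mu(L,M')=\mu(M,K_1)$ and $\mu(U,M')=\mu(M,K_2)$. Feeding these back into the $y=U$ instance of the \myrefL{lem:Cs2} bound and the $y=L$ instance of the \myrefL{lem:Cs4} bound gives $\mu(U,M')\le\mu(M,K_1)=\mu(L,M')$ and $\mu(L,M')\le\mu(M,K_2)=\mu(U,M')$, so $\mu(L,M')=\mu(U,M')$; since $\{\mu(M,K_1),\mu(M,K_2)\}=\{\mu(M,L'),\mu(M,U')\}$, all four quantities coincide, which is the conclusion of \myrefT{thm:mainA} in this case. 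For general $i$, writing $M(i),L'(i),U'(i)$ for the type~$\scA_i$ members, one has $M(i)=T_{3,1}T_{j,3}(M(1))$ and $L'(i)=T_{3,1}T_{j,3}(L'(1))$, $U'(i)=T_{3,1}T_{j,3}(U'(1))$ for the appropriate $j$, so \myrefT{thm:talb} (applicable since $C$ and $C'$ lie in different $\WDFour$-cosets, the hypothesis of statement~2) gives $\mu(M(i),L'(i))=\mu(M(1),L'(1))$ and $\mu(M(i),U'(i))=\mu(M(1),U'(1))$; as $\mu(L,M')$ and $\mu(U,M')$ do not depend on $i$, this finishes the lemma for all $i$.

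The step I expect to be the main obstacle is the bookkeeping in the first two steps: checking that the specific elements $M'$ and $U'$ of $C'$ satisfy the $\tDFour$-hypotheses of \myrefP{prop:CTop}, \myrefP{prop:CTopAlt}, and \myrefP{prop:ATopAlt} (the prescribed types of $s_1M'$, $s_2s_1M'$, $s_4s_1M'$, $s_1U'$, etc.), that auxiliary elements such as $L'$ genuinely occur in the sum portions as \myrefL{lem:LHS} requires, and above all that $K_1\neq K_2$, so that the two $\scC$-type chains bound $\mu(L,M')$ and $\mu(U,M')$ against \emph{both} $L'$ and $U'$ rather than only one of them. All of this is routine given \myrefP{prop:clumpDiagram} and the figures, but it must be verified separately for $X(14,a,4)$ and $X(14,b,2)$.
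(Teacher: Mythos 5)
Your argument differs from the paper's in a crucial way, and unfortunately the difference introduces a genuine gap. The paper's proof of this lemma begins by using \myrefT{thm:talb} together with \myrefP{prop:semicircle} to establish the equality $\mu(M,U')=\mu(M,L')$ outright, and then needs only a single chain of inequalities coming from \myrefP{prop:CTop}, \myrefL{lem:LHS}, and \myrefL{lem:Cs2} (with $y\in\{L,U\}$, $w=M'$), plus the one from \myrefP{prop:ATopAlt} and \myrefL{lem:As4}. You instead try to recover the symmetry between $L'$ and $U'$ by running a \emph{second} chain through \myrefP{prop:CTopAlt} and \myrefL{lem:Cs4}. That second chain does not exist in the relevant cases: with $w=M'$ and $C'\subset X(14,a,4)$, one has $M'=s_1s_2s_4s_3s_4$, $s_1M'=s_2s_4s_3s_4$, and $s_4s_1M'=s_2s_3s_4$, whose $\tDFour$ is $\{s_2\}$, not $\{s_2,s_3\}$; similarly for $C'\subset X(14,b,2)$, $s_4s_1M'=s_2s_3s_4s_1$ has $\tDFour=\{s_2\}$. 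So neither the hypothesis of \myrefP{prop:CTopAlt} ($\tDFour(s_4s_1w)=\{s_3,s_2\}$) nor that of \myrefL{lem:Cs4} ($s_4s_1w$ of type $\scA_2$) is satisfied. The paper applies the CTopAlt/Cs4 route only in \myrefL{lem:mainA1414}, and only when $C'\subset X(14,a,2)$ or $C'\subset X(14,b,4)$ — precisely the orientations excluded from this lemma.

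Note also that your claim $K_1\neq K_2$, which you attribute to ``inspection of the clump diagram,'' is exactly \myrefP{prop:semicircle}: writing $T=T_{3,4}T_{1,3}T_{3,2}T_{4,3}T_{3,1}T_{2,3}$, one has $T(K_2)=K_1$ by cancellation, so $K_1\neq K_2$ iff $T$ has no fixed point on the type $\scA_1$ elements of $C'$, which is the $\abs{C'}=14$ case of that proposition. So even setting aside the failure of CTopAlt/Cs4, you would be invoking the same semicircle fact the paper uses, just in a less direct form. The fix is to follow the paper: establish $\mu(M,U')=\mu(M,L')$ first by transporting along the semicircle (applicable by \myrefT{thm:talb} since $C'\not\subset\WDFour C$, and $T$ fixes $M$ because $\abs C=10$), at which point the single $\mu(M,K)$ bound from \myrefL{lem:Cs2} suffices regardless of whether $K$ equals $L'$ or $U'$.
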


  \begin{proof}
    As before, we'll first prove the lemma when $i = 1$.
    Using \myrefT{thm:talb} and \myrefP{prop:semicircle}, we have the equality:
    \begin{equation*}
      \mu(M, U') = \mu(M, L')
    \end{equation*}

    We will need three inequalities in addition.

    We can apply \myrefP{prop:CTop}, \myrefL{lem:LHS}, and \myrefL{lem:Cs2} to two $y, w$ pairs.
    They, with the resulting inequalities, are as follows:
    \begin{equation*}
      \begin{array}{| c | c | c |}
        \hline
        y & w & \text{Inequality}\\ \hline
        U & M' & \mu(U,M') \leq \mu(M, L')\\ \hline
        L & M' & \mu(L,M') \leq \mu(M, L')\\ \hline
      \end{array}
    \end{equation*}

    We can apply \myrefP{prop:ATopAlt}, \myrefL{lem:LHS}, and \myrefL{lem:As4} with $y = M$ and $w = U'$ to obtain the inequality
    \begin{equation*}
        \mu(M, U') + \mu(M,L') \leq \mu(L, M') + \mu(U, M')
    \end{equation*}

    The equalities of \myrefT{thm:mainA} now follow trivially.

    This proves the lemma when $i = 1$.
    Now, for $j \in \refSet$, write $M(j)$ for the element of type $\scA_j$ in $C$.
    For $j \in \refSet$, let $L'(j), U'(j)$ be the two elements of type $\scA_j$ in $C'$, with $l(L'(j)) \leq l(U'(j))$.
    Using this notation, what we've proved so far is
    \begin{equation*}
        \mu(U,M') = \mu(M(1), L'(1)) = \mu(L,M') = \mu(M(1), U'(1))
    \end{equation*}

    We want to have the same equations with 2 in place of 1, and similarly with 4 in place of 1.
    We have $M(2) = T_{3,1}T_{2,3}(M(1))$ and $M(4) = T_{3,1}T_{4,3}(M(1))$.
    We also have $T_{3,1}T_{2,3}(\{L'(1), U'(1)\}) = \{L'(2), U'(2)\}$, and similarly with 4 in place of 2.
    So, clearly, we also have the desired equations.
  \end{proof}

  \begin{lemma}
    \label{lem:mainA1410}
    \myrefT{thm:mainA} holds when $\abs C = 14$, $\abs{C'} = 10$, and $C \subset X(14, a, 4)$ or $C \subset X(14, b, 2)$.
  \end{lemma}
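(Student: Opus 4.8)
The plan is to mimic the proof of \myrefL{lem:mainA1014}, interchanging the roles of the size‑$10$ clump and the size‑$14$ clump. By \myrefR{rem:relabel} we may assume $C\subset X(14,a,4)$ or $C\subset X(14,b,2)$, and, exactly as in \myrefL{lem:mainA1014}, it suffices to treat $i=1$; the passage to $i=2,4$ is then handled by the Knuth maps $T_{3,1}T_{2,3}$ and $T_{3,1}T_{4,3}$ together with \myrefT{thm:talb}. So we have $L,U\in C$ of type $\scA_1$, $M\in C$ of type $\scC$, elements $L',U'\in C'$ of type $\scC$ with $l(L')<l(U')$, and $M'\in C'$ of type $\scA_1$, and we must prove $\mu(L,M')=\mu(U,M')=\mu(M,L')=\mu(M,U')$.

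First I would record the ``free'' equality $\mu(L,M')=\mu(U,M')$. Let $T=T_{3,4}T_{1,3}T_{3,2}T_{4,3}T_{3,1}T_{2,3}$ be the semicircle composite of \myrefP{prop:semicircle}. Since $\abs C=14$, $T$ interchanges the two type‑$\scA_1$ elements $L$ and $U$ of $C$; since $\abs{C'}=10$, $T$ fixes the type‑$\scA_1$ element $M'$ of $C'$. Because $C$ and $C'$ lie in distinct cosets of $\WDFour$ (this is what $C'\not\subset\WDFour C$ means when $\abs C\neq\abs{C'}$), at every one of the six stages the two elements being compared differ by an element not lying in $\WDFour$, hence not in the relevant rank‑two parabolic, so \myrefT{thm:talb} applies at each step and gives $\mu(L,M')=\mu(T(L),T(M'))=\mu(U,M')$.

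Next I would harvest three inequalities by running the three‑proposition chains on carefully chosen pairs $(y,w)$, reading the output elements $K$ off the expanded diagrams. With $y=M$ (the type‑$\scC$ element of $C$) and $w=U'$ (the type‑$\scC$ element of $C'$ for which $s_1U'$ has type $\scB_1$), applying \myrefP{prop:CTop} together with \myrefL{lem:LHS}~(2) and \myrefL{lem:Cs2}~(2) — whose output element $K=T_{3,4}T_{1,3}T_{3,2}(s_1U')$ is the unique type‑$\scA_1$ element $M'$ of $C'$ — gives $\mu(M,U')+\mu(M,L')\le\mu(L,M')+\mu(U,M')$. With $y=U$ (the type‑$\scA_1$ element of $C$ for which $s_4U$ has type $\scB_2$) and $w=M'$ (so $s_1M'$ has type $\scD$), applying \myrefP{prop:ATopAlt} together with \myrefL{lem:LHS}~(3) and \myrefL{lem:As4}~(1) — whose $K=s_3s_1M'$ is $L'$ — gives $\mu(U,M')\le\mu(M,L')$. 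With $y=U$ and $w=H'=s_1s_3U'$ (so $s_1H'=s_3U'$ has type $\scD$), applying the same chain with \myrefL{lem:LHS}~(1) and \myrefL{lem:As4}~(1) — whose $K=s_3s_1H'$ is $U'$ — gives $\mu(U,M')\le\mu(M,U')$. Writing $a=\mu(L,M')=\mu(U,M')$, $p=\mu(M,L')$, $q=\mu(M,U')$, these three inequalities read $q+p\le 2a$, $a\le p$, and $a\le q$, which force $a=p=q$; this is \eqref{eqs:mainA2}. Throughout, \myrefT{thm:nonNegative} is used to pass between the various summations and \myrefP{prop:clumpDiagram} is used to transport the combinatorics from $\WDFour$ to $W$. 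Finally the case $i=1$ is promoted to $i=2,4$ as in \myrefL{lem:mainA1014}.

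The main obstacle is the bookkeeping: one must check that each chosen pair $(y,w)$ actually satisfies the $\tDFour$‑hypotheses of \myrefP{prop:CTop}, \myrefP{prop:ATopAlt}, \myrefL{lem:LHS}, \myrefL{lem:Cs2}, and \myrefL{lem:As4}, and that the auxiliary elements ($K$, together with the $z_i$ that appear in the sum portions inside those lemmas) are the elements claimed, and this has to be carried out separately in the subcase $C\subset X(14,a,4)$ and in the subcase $C\subset X(14,b,2)$, by inspecting \autoref{fig:type10Extra}, \autoref{fig:type14aExtra}, and \autoref{fig:type14bExtraColorChange} and applying \myrefP{prop:parabolic2} to the rank‑two parabolics of type $A_1\times A_1$ and $A_2$. (If the length offset between $C$ and $C'$ falls outside the range in which \myrefP{prop:CTop} and \myrefP{prop:ATopAlt} are non‑vacuous, then all four $\mu$‑values in \eqref{eqs:mainA2} vanish and there is nothing to prove.)
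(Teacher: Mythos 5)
Your proof is correct and follows essentially the same route as the paper's: the semicircle composite of \myrefP{prop:semicircle} gives $\mu(U,M')=\mu(L,M')$, and the three inequalities come from exactly the same pairs $(y,w)\in\{(M,U'),(U,M'),(U,H')\}$ passed through the \myrefP{prop:CTop}/\myrefL{lem:LHS}/\myrefL{lem:Cs2} chain and the \myrefP{prop:ATopAlt}/\myrefL{lem:LHS}/\myrefL{lem:As4} chain, respectively, with the reduction to $i=1$ and the promotion to $i=2,4$ handled as in \myrefL{lem:mainA1014}. The only differences are small explicit checks (the $yw^{-1}\notin\langle s,t\rangle$ hypothesis of \myrefT{thm:talb}, the parity remark) that the paper leaves implicit.
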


  \begin{proof}
    Assume first $i = 1$.
    Using \myrefT{thm:talb} and \myrefP{prop:semicircle}, we have the equality:
    \begin{equation*}
      \mu(U, M') = \mu(L, M')
    \end{equation*}

    We will need three inequalities in addition.

    We can apply \myrefP{prop:CTop}, \myrefL{lem:LHS}, and \myrefL{lem:Cs2} with $y = M$ and $w = U'$ to obtain the inequality
    \begin{equation*}
      \mu(M, U') + \mu(M,L') \leq \mu(L, M') + \mu(U, M')
    \end{equation*}

    Similarly, we can apply \myrefP{prop:ATopAlt}, \myrefL{lem:LHS}, and \myrefL{lem:As4} to two $y, w$ pairs.
    They, with the resulting inequalities, are as follows:
    \begin{equation*}
      \begin{array}{| c | c | c |}
        \hline
        y & w & \text{Inequality}\\ \hline
        U & H' & \mu(U,M') \leq \mu(M, U')\\ \hline
        U & M' & \mu(U,M') \leq \mu(M, L')\\ \hline
      \end{array}
    \end{equation*}

    The equalities of \myrefT{thm:mainA} now follow trivially.
    This proves the lemma when $i = 1$.  The arguments in the previous lemma for the cases where $i \neq 1$ work here as well.
  \end{proof}

  \begin{lemma}
    \label{lem:mainA1414}
    \myrefT{thm:mainA} holds when $\abs C = 14$,  $\abs{C'} = 14$, $C \subset X(14, a, 4)$ or $C \subset X(14, b, 2)$, and $C' \subset X(14, a, j)$ or $C' \subset X(14, b, j)$  with $j \neq 1$.
  \end{lemma}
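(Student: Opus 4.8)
The plan is to treat this lemma as the last of the four case-lemmas establishing statements~1 and~2 of \myrefT{thm:mainA}, so in particular one may assume $C'\not\subset\WDFour C$ (the case $C'\subset\WDFour C$ belonging to Case~3), and, since $\abs C=\abs{C'}=14$, the goal is the three equalities of \eqref{eqs:mainA1}. Following the pattern of \myrefL{lem:mainA1010}, I would first prove everything with $i=1$; by \myrefR{rem:relabel} the hypothesis already reduces us to $C\subset X(14,a,4)$ or $X(14,b,2)$ together with $C'\subset X(14,a,j)$ or $X(14,b,j)$ with $j\in\{2,4\}$, and I would choose $L,U$ and $L',U'$ with $l(L)<l(U)$ and $l(L')<l(U')$. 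The first observation is that two of the three equalities come essentially for free: applying \myrefP{prop:semicircle} with $T=T_{3,4}T_{1,3}T_{3,2}T_{4,3}T_{3,1}T_{2,3}$ and using that $\abs C=\abs{C'}=14$ forces $T$ to interchange the two type-$\scA_1$ elements inside $C$ (so $T(L)=U$) and inside $C'$ (so $T(L')=U'$), while $C'\not\subset\WDFour C$ guarantees that each of the six Knuth maps composing $T$ is legitimate on the pairs in question, repeated application of \myrefT{thm:talb} gives $\mu(L,L')=\mu(U,U')$ and $\mu(L,U')=\mu(U,L')$, which are the second and third lines of \eqref{eqs:mainA1}.

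It then remains to prove $\mu(M,M')=\mu(L,L')+\mu(U,L')$, which I would get as two inequalities from the three-stage machinery. For $\le$, note that $M$ is of type $\scC$, so put $y=M$ and $w=M'$ in \myrefP{prop:CTop} or \myrefP{prop:CTopAlt} (according to the configuration of $M'$ inside $C'$): \myrefL{lem:LHS}--3 bounds the left-hand side below by $\mu(M,M')$, the proposition bounds it above by the corresponding $RHS$, and \myrefL{lem:Cs2}--2 or \myrefL{lem:Cs4}--2 (the case $y=M$, which forces $\abs C=14$) bounds $RHS$ above by $\mu(L,K)+\mu(U,K)$ with $K$ the type-$\scA_1$ element of $C'$ produced there; identifying $K$ with $L'$ from \autoref{fig:type14aExtra} and \autoref{fig:type14bExtraColorChange} yields $\mu(M,M')\le\mu(L,L')+\mu(U,L')$. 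For $\ge$, note that $U$ is of type $\scA_1$, so put $y=U$ and $w=U'$ in \myrefP{prop:ATopAlt} (with $U'$ the type-$\scA_1$ element of $C'$ for which $s_1U'$ is of type $\scD$ and $s_3s_1U'=M'$): \myrefL{lem:LHS}--2 bounds the left-hand side below by $\mu(U,U')+\mu(U,L')$ and \myrefL{lem:As4}--1 (the case $y=U$, again forcing $\abs C=14$) bounds $RHS$ above by $\mu(M,K)=\mu(M,M')$, giving $\mu(U,U')+\mu(U,L')\le\mu(M,M')$; since $\mu(U,U')=\mu(L,L')$ by the previous step this is $\mu(L,L')+\mu(U,L')\le\mu(M,M')$. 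The two inequalities give the equality, so \eqref{eqs:mainA1} holds when $i=1$.

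For $i\ne1$ one transfers as in \myrefL{lem:mainA1014}: $M$ and $M'$ do not depend on $i$, while a single Knuth-map composite (say $T_{3,1}T_{2,3}$ for $i=2$ and $T_{3,1}T_{4,3}$ for $i=4$) carries the type-$\scA_1$ elements $L,U$ of $C$ and $L',U'$ of $C'$ to the corresponding type-$\scA_i$ elements, and \myrefT{thm:talb} preserves all the $\mu$-values, so \eqref{eqs:mainA1} for general $i$ follows from the $i=1$ case. The one real obstacle is the bookkeeping: across the handful of sub-subcases ($C$ in $X(14,a,4)$ or $X(14,b,2)$, $C'$ in $X(14,a,j)$ or $X(14,b,j)$ with $j\in\{2,4\}$) one has to verify, using \refDFourFigures, the extended diagrams \autoref{fig:type14aExtra} and \autoref{fig:type14bExtraColorChange}, and \myrefP{prop:clumpDiagram}, that the $\tDFour$-hypotheses of \myrefP{prop:CTop}/\myrefP{prop:CTopAlt}/\myrefP{prop:ATopAlt} are satisfied by the chosen $y,w$, that the relevant solid lines are the $T_{i,j}$ maps claimed, and that the elements $K$ delivered by \myrefL{lem:Cs2}, \myrefL{lem:Cs4}, and \myrefL{lem:As4} are indeed $L'$ or $M'$; granting that, passing from the two inequalities and the two free equalities to \eqref{eqs:mainA1} is immediate, exactly as at the close of the proof of \myrefL{lem:mainA1010}.
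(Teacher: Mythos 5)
Your proposal is correct and follows essentially the same route as the paper: $i=1$ first, with the two ``free'' equalities $\mu(U,U')=\mu(L,L')$ and $\mu(U,L')=\mu(L,U')$ coming from \myrefP{prop:semicircle} together with \myrefT{thm:talb} (applicable because $C'\not\subset\WDFour C$), and the remaining equality $\mu(M,M')=\mu(L,L')+\mu(U,L')$ established by the pair of inequalities from \myrefP{prop:CTop}/\myrefP{prop:CTopAlt} + \myrefL{lem:LHS} + \myrefL{lem:Cs2}/\myrefL{lem:Cs4} with $(y,w)=(M,M')$ on one side and \myrefP{prop:ATopAlt} + \myrefL{lem:LHS} + \myrefL{lem:As4} with $(y,w)=(U,U')$ on the other. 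The only difference is in the $i\neq1$ transfer: you go directly $\scA_1\to\scA_2$ and $\scA_1\to\scA_4$ by a single composite $T_{3,1}T_{2,3}$ or $T_{3,1}T_{4,3}$, whereas the paper routes $\scA_1\to\scA_4\to\scA_2$; both work, since either way the composite is defined on the type-$\scA_1$ elements, preserves $\mu$ by \myrefT{thm:talb}, and any resulting permutation of $\{L(i),U(i)\}$ or $\{L'(i),U'(i)\}$ (which does arise when the two type-$\scA_2$ elements of a clump in $X(14,b,2)$ have equal length) is immaterial by \myrefR{rem:chooseLU}. Your version of the transfer step is in fact slightly cleaner.
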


  \begin{proof}
    Assume first that $i = 1$.
    Using \myrefT{thm:talb} and \myrefP{prop:semicircle}, we have two equalities:
    \begin{enumerate}
      \item $\mu(U, U') = \mu(L, L')$
      \item $\mu(U, L') = \mu(L, U')$
    \end{enumerate}

    These are statements 2 and 3 of the theorem.
    We'll need two inequalities to supplement them.

    If $C' \subset X(14, a, 4)$ or $C' \subset X(14, b, 2)$, we can apply \myrefP{prop:CTop}, \myrefL{lem:LHS}, and \myrefL{lem:Cs2} with $y = M$ and $w = M'$ to obtain the inequality
    \begin{equation*}
        \mu(M, M') \leq \mu(L, L') + \mu(U, L')
    \end{equation*}
    If instead $C' \subset X(14, a, 2)$ or $C' \subset X(14, b, 4)$, we can apply \myrefP{prop:CTopAlt}, \myrefL{lem:LHS}, and \myrefL{lem:Cs4} with $y = M$ and $w = M'$ to obtain the same inequality.

    We can apply \myrefP{prop:ATopAlt}, \myrefL{lem:LHS}, and \myrefL{lem:As4} with $y = U$ and $w = U'$ to obtain the inequality
    \begin{equation*}
        \mu(U,U') + \mu(U, L') \leq \mu(M, M')
    \end{equation*}
    The equalities of \myrefT{thm:mainA} now follow trivially.
    This proves the lemma when $i = 1$.

    Now we'll do the other cases.  For $j \in \refSet$, let $L(j), U(j)$ be the two elements of type $\scA_j$ in $C$, with $l(L(j)) \leq l(U(j))$.
    Let $L'(j), U'(j)$ be the two elements of type $\scA_j$ in $C'$, with $l(L'(j)) \leq l(U'(j))$.

    Using this notation, what we've proved so far is that
    \begin{align}
      \label{eqs:thmA1mod}
      \begin{split}
        &\mu(M, M') = \mu(L(1), L'(1)) + \mu(U(1), L'(1))\\
        &\mu(U(1), U'(1)) = \mu(L(1), L'(1))\\
        &\mu(U(1), L'(1)) = \mu(L(1), U'(1))
      \end{split}
    \end{align}

    We want to prove the same equations with 2 in place of 1 and with 4 in place of 1.
    Now $U(4) = T_{3,1}T_{4,3}(U(1))$ and $L(4) = T_{3,1}T_{4,3}(L(1))$.
    Also, $U(2) = T_{3,1}T_{2,3}(U(4))$ when $C \in X(14, a, 4)$.
    When $C \in X(14, b, 2)$, the two elements of type $A_2$ in $C$ have the same length, so we can choose $U(2)$ so that $U(2) = T_{3,1}T_{2,3}(U(4))$ in this case.
    Then in both cases, $L(2) = T_{3,1}T_{2,3}(L(4))$.
    In general (as can easily be seen) $T_{3,1}T_{4,3}(\{L'(1), U'(1)\}) = \{L'(4), U'(4)\}$ and  $T_{3,1}T_{2,3}(\{L'(4), U'(4)\}) = \{L'(2), U'(2)\}$.

    So, if we apply $T_{3,1}T_{4,3}$ to all the terms in the above three equations,  \myref{Equations}{eqs:thmA1mod}, and use \myrefT{thm:talb}, we obtain one of the following
    sets of equations:
    \begin{align}
      \label{eqs:thmA1modTransform1}
      \begin{split}
        &\mu(M, M') = \mu(L(4), L'(4)) + \mu(U(4), L'(4))\\
        &\mu(U(4), U'(4)) = \mu(L(4), L'(4))\\
        &\mu(U(4), L'(4)) = \mu(L(4), U'(4))
      \end{split}
    \end{align}
    or
    \begin{align}
      \label{eqs:thmA1modTransform2}
      \begin{split}
        &\mu(M, M') = \mu(L(4), U'(4)) + \mu(U(4), U'(4))\\
        &\mu(U(4), L'(4)) = \mu(L(4), U'(4))\\
        &\mu(U(4), U'(4)) = \mu(L(4), L'(4))
      \end{split}
    \end{align}

    \myref{Equations}{eqs:thmA1modTransform1} is the desired outcome, and the three equations in \myref{Equations}{eqs:thmA1modTransform2} are easily seen to be equivalent to those in   \myref{Equations}{eqs:thmA1modTransform1}.

    In the same way, we can go from \myref{Equations}{eqs:thmA1modTransform1} to
    \begin{align}
      \label{eqs:thmA1modTransform3}
      \begin{split}
        &\mu(M, M') = \mu(L(2), L'(2)) + \mu(U(2), L'(2))\\
        &\mu(U(2), U'(2)) = \mu(L(2), L'(2))\\
        &\mu(U(2), L'(2)) = \mu(L(2), U'(2))
      \end{split}
    \end{align}

    Finally, I need to address the fact that, when $C \subset X(14, b, 2)$, we made a choice of $L(2)$ and $U(2)$.
    If we make the other choice, this will interchange $L(2)$ and $U(2)$ in \myref{Equations}{eqs:thmA1modTransform3}, which is easily seen to result in an equivalent family of equations.

    This completes the proof of the lemma.
  \end{proof}

  \begin{proof}[Proof of \myrefT{thm:mainA}, Statements 1 and 2]
    This just combines \myrefL{lem:mainA1010}, \myrefL{lem:mainA1414}, \myrefL{lem:mainA1014}, and \myrefL{lem:mainA1410}, given \myrefR{rem:relabel}.
  \end{proof}

  Now we can go on to prove statement 3 of \myrefT{thm:mainA}.
  More precisely, we have the following.

  \begin{proposition}
    \label{prop:mainCase3}
    Let $C$ and $C'$ be clumps.
    Fix $i \in \refSet$.
    We define elements $L, M, U \in C$ as follows:
    if $\abs C = 10$ then $L$ and $U$ are the two elements of type $\scC$ in $C$, and $M$ is the one element of type $\scA_i$ in $C$.
    If instead $\abs C = 14$, then $L$ and $U$ are the two elements of type $\scA_i$ in $C$,
    and $M$ is the one element of type $\scC$ in $C$.
    We define similarly $L', M', U' \in C'$.
    Assume further that $C' \subset \WDFour C$.
    Let $y \in C$ and $w \in C'$, with $y$ and $w$ of the same type.
    We have the following:
    \begin{enumerate}
      \item If $C \subseteq X(10, a)$ and $C' \subseteq X(10, b)$ then $\mutilde(y, w) = 1$ for the edges shown as dotted gray lines in \autoref{fig:mainCase3Case1}, plus the seven edges obtained from those edges using \myrefT{thm:talbTilde}.
      For any other $y$ and $w$ as above, we have $\mutilde(y, w) = 0$.
      \item If $C \subseteq X(14, a, 1)$ and $C' \subseteq X(14, b, 2)$ then $\mutilde(y, w) = 1$ for the edges shown as dotted gray lines in \autoref{fig:mainCase3Case2}, plus the eleven edges obtained from those edges using \myrefT{thm:talbTilde}.
      For any other $y$ and $w$ as above, we have $\mutilde(y, w) = 0$.
      We have analogous statements for $C \subseteq X(14, a, j)$ and $C' \subseteq X(14, b, k)$ for $j, k \in \refSet$ with $j \neq k$.
      $C \subseteq X(14, a, 1)$ and $C' \subseteq X(14, b, 2)$
      \item If $C \subseteq X(10, a)$ and $C' \subseteq X(14, b, j)$ for $j \in \refSet$ then $\mutilde(y, w) = 1$ for any  $y$ and $w$ of the same type. (See  \autoref{fig:mainCase3Case3}.)
      \item If $C \subseteq X(14, a, j)$ and $C' \subseteq X(10, b)$ for $j \in \refSet$ then $\mutilde(y, w) = 1$ for any  $y$ and $w$ of the same type.
      \item For any pairs $C$ and $C'$ not listed in the previous cases, we have $\mutilde(y, w) = 0$.
    \end{enumerate}
  \end{proposition}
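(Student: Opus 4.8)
The plan is to reduce the whole statement to a finite computation inside $\WDFour$, and then to carry out that computation by combining the known structure of the eight left cells with \myrefT{thm:talbTilde}.

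For the reduction, recall that a clump has the form $C_1 w^{\SDFour}$, where $C_1$ is one of the eight left cells of the middle two-sided cell of $\WDFour$ and $w^{\SDFour} \in W^{\SDFour}$. Write $C = C_1 w^{\SDFour}$ and $C' = C_2 (w')^{\SDFour}$ accordingly. Since $C_1 \subseteq \WDFour$, we have $\WDFour C = \WDFour w^{\SDFour}$, so the hypothesis $C' \subseteq \WDFour C$ gives $(w')^{\SDFour} \in \WDFour w^{\SDFour}$, and since both $(w')^{\SDFour}$ and $w^{\SDFour}$ are minimal coset representatives, \myrefP{prop:parabolic}--5 forces $(w')^{\SDFour} = w^{\SDFour}$. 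Thus, for $y = y_0 w^{\SDFour} \in C$ and $w = w_0 w^{\SDFour} \in C'$ with $y_0 \in C_1$ and $w_0 \in C_2$, the Bruhat comparison of $y$ and $w$ agrees with that of $y_0$ and $w_0$ (see \myrefP{prop:intermediate}), the lengths satisfy $l(w) - l(y) = l(w_0) - l(y_0)$ by \myrefP{prop:parabolic}--4, and $P_{y,w} = P_{y_0,w_0}$ by \myrefP{prop:KLPolysParabolic}; together these give $\mutilde(y,w) = \mutilde(y_0,w_0)$. Since the type of an element is read off from its $p_{\SDFour}$-image, ``$y$ and $w$ of the same type'' means precisely ``$y_0$ and $w_0$ of the same type''. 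Hence it suffices to prove the proposition when $C = C_1$ and $C' = C_2$ are themselves left cells inside $\WDFour$, in which case $\WDFour C = \WDFour$ and the coset hypothesis is automatic.

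It now remains to determine, for each ordered pair of left cells in the middle two-sided cell of a Weyl group of type $D_4$, the values $\mutilde(y_0, w_0)$ for $y_0$ and $w_0$ of the same type lying in the two cells --- a finite list of numbers. I would organize this as follows. The edges of the $W$-graph lying inside a single cell are already pinned down by \myrefP{prop:WGraph}. For the cross-cell edges joining elements of the same type, I would first exhibit the ``seed'' edges drawn as dotted gray lines in \autoref{fig:mainCase3Case1}, \autoref{fig:mainCase3Case2}, and \autoref{fig:mainCase3Case3}: each such edge has $\mu$-value exactly $1$, and is either an instance of the equality from case~3 of an earlier edge transport theorem applied inside $\WDFour$ --- the line of argument announced in the remark following \myrefT{thm:stsStringsA} for precisely this purpose --- or is lifted from a known $\mu = 1$ edge in a parabolic subgroup of type $A_2$ or $A_3$ via \myrefP{prop:KLPolysParabolic}, exactly as for the gray edges in the proof of \myrefP{prop:WGraph}. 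Applying \myrefT{thm:talbTilde} then propagates each seed edge along its Knuth orbit, and the counts ``plus the seven (resp.\ eleven) edges obtained using \myrefT{thm:talbTilde}'' in the statement are precisely the sizes of these orbits. Finally one must check that no other same-type pair is joined by an edge; here the bookkeeping is cut down by \myrefR{rem:relabel}, so that it suffices to treat, say, the pairs $(C(10,a), C(10,b))$ and $(C(14,a,1), C(14,b,2))$, with the remaining variants following by relabeling $s_1$, $s_2$, $s_4$.

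The main obstacle is this last step --- proving the non-edges. For two elements $y_0, w_0$ of the same type one has $\tDFour(y_0) = \tDFour(w_0)$, so \myrefP{prop:KL23g} is vacuous (there is no $s$ with $s \in \tau(w_0)$ and $s \notin \tau(y_0)$), and the easy descent-set argument used in \myrefP{prop:WGraph} to rule out unwanted edges is not available. Most pairs are still disposed of cheaply, because the two elements are Bruhat-incomparable (whence $\mutilde(y_0,w_0) = 0$) or because their lengths differ by an even number; but the small residue of Bruhat-comparable pairs whose lengths differ by an odd number must be handled either by appealing to the fully known $W$-graph of the middle two-sided cell of $D_4$ or by bounding the degrees of the relevant Kazhdan-Lusztig polynomials by hand via \myrefP{prop:KL22c}. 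This, together with the case analysis, is where essentially all of the work of the proof lies.
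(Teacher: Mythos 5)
Your reduction to $\WDFour$ matches the paper exactly, and your outline for establishing the gray-line edges (lift from parabolics of type $A_2$ or $A_3$ via \myrefP{prop:KLPolysParabolic}, or invoke case 3 of an earlier edge transport theorem) is a workable alternative to what the paper actually does. But the way you handle non-edges has a genuine gap, and in fact you have talked yourself out of the very tool that resolves it.

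You write that for $y_0, w_0$ of the same left type, $\tDFour(y_0) = \tDFour(w_0)$ makes \myrefP{prop:KL23g} ``vacuous.'' That is only true for the \emph{left} descent sets. The key point---and the engine of the paper's proof---is that $y_0$ and $w_0$ live in \emph{different} left cells of the middle two-sided cell of $\WDFour$, and each such left cell has a single, well-defined \emph{right} type: $C(10,a)$ is entirely of right type $\scC$, $C(10,b)$ of right type $\scD$, $C(14,a,j)$ of right type $\scA_j$, and $C(14,b,j)$ of right type $\scB_j$. (This is established by noting that the bottom element of each cell is an involution, so its right type equals its left type, and then invoking \myrefP{prop:D4OpRightCells}.) Since the eight types all have distinct $\tau_R$-values, $y_0$ and $w_0$ have \emph{different} right descent sets, and \myrefP{prop:KL23g} applied to right multiplication is very much not vacuous. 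It immediately kills $\mutilde(y_0, w_0)$ whenever $\tau_R(y_0)$ and $\tau_R(w_0)$ are incomparable and $y_0, w_0$ are not joined by a single right multiplication; and for the remaining pairs, the observation that edges between elements in distinct left cells occur only within right cells lets one read the answer off the same figures, viewed as depicting the right cell $W$-graph. So there is no residue that needs to be handled by ``bounding degrees by hand,'' and appealing to ``the fully known $W$-graph'' is circular, since that is precisely what is being established. To repair your proof you would need to add the right-type-per-left-cell fact and then redo the non-edge analysis using right descent sets, at which point you would essentially have reproduced the paper's argument.
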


  \begin{figure}[!ht]
    \centering
      \begin{tikzpicture}[scale=0.4,  every node/.style={transform shape}]
        \node at (0 * \xSLT, 0 * \ySLT) {
          \begin{tikzpicture}
            % lines
            \upLineT{darkblue}{solid}{0}{0}
            \upLineT{goldenpoppy}{dashed}{0}{1}
            \rightLineT{magenta}{dashed}{0}{1}
            \leftLineT{green}{dashed}{0}{1}
            \rightLineT{magenta}{solid}{-1}{2}
            \upLineT{goldenpoppy}{solid}{-1}{2}
            \leftLineT{green}{solid}{0}{2}
            \leftLineT{green}{solid}{1}{2}
            \upLineT{goldenpoppy}{solid}{1}{2}
            \rightLineT{magenta}{solid}{0}{2}
            \leftLineT{green}{dashed}{1}{3}
            \rightLineT{magenta}{dashed}{-1}{3}
            \upLineT{goldenpoppy}{dashed}{0}{3}
            \upLineT{darkblue}{solid}{0}{4}

            % Nodes
            \node (CNode) at (0, 0) {\dynkinLabelWeylHuge{\TypeCPicture}{L}};
            \node (DNode) at (0, \ySLT) {\dynkinLabelWeylHuge{\TypeDPicture}{ }};
            \node (A2Node) at (-1 * \xSLT, 2 * \ySLT) {\dynkinLabelWeylHuge{\TypeAPicture{2}}{ }};
            \node (A4Node) at (0, 2 * \ySLT) {\dynkinLabelWeylHuge{\TypeAPicture{4}}{M}};
            \node (A1Node) at (1 * \xSLT, 2 * \ySLT) {\dynkinLabelWeylHuge{\TypeAPicture{1}}{ }};
            \node (B1Node) at (-1 * \xSLT, 3 * \ySLT) {\dynkinLabelWeylHuge{\TypeBPicture{1}}{ }};
            \node (B4Node) at (0, 3 * \ySLT) {\dynkinLabelWeylHuge{\TypeBPicture{4}}{ }};
            \node (B2Node) at (1 * \xSLT, 3 * \ySLT) {\dynkinLabelWeylHuge{\TypeBPicture{2}}{ }};
            \node (CPNode) at (0, 4 * \ySLT) {\dynkinLabelWeylHuge{\TypeCPicture}{U}};
            \node (DPNode) at (0, 5 * \ySLT) {\dynkinLabelWeylHuge{\TypeDPicture}{ }};

          \end{tikzpicture}
        };
        \node at (3 * \xSLT, 1 * \ySLT) {
          \begin{tikzpicture}
            % lines
            \upLineT{darkblue}{solid}{0}{0}
            \upLineT{goldenpoppy}{dashed}{0}{1}
            \rightLineT{magenta}{dashed}{0}{1}
            \leftLineT{green}{dashed}{0}{1}
            \rightLineT{magenta}{solid}{-1}{2}
            \upLineT{goldenpoppy}{solid}{-1}{2}
            \leftLineT{green}{solid}{0}{2}
            \leftLineT{green}{solid}{1}{2}
            \upLineT{goldenpoppy}{solid}{1}{2}
            \rightLineT{magenta}{solid}{0}{2}
            \leftLineT{green}{dashed}{1}{3}
            \rightLineT{magenta}{dashed}{-1}{3}
            \upLineT{goldenpoppy}{dashed}{0}{3}
            \upLineT{darkblue}{solid}{0}{4}

            % Nodes
            \node (CNode) at (0, 0) {\dynkinLabelWeylHuge{\TypeCPicture}{L'}};
            \node (DNode) at (0, \ySLT) {\dynkinLabelWeylHuge{\TypeDPicture}{ }};
            \node (A2Node) at (-1 * \xSLT, 2 * \ySLT) {\dynkinLabelWeylHuge{\TypeAPicture{2}}{ }};
            \node (A4Node) at (0, 2 * \ySLT) {\dynkinLabelWeylHuge{\TypeAPicture{4}}{M'}};
            \node (A1Node) at (1 * \xSLT, 2 * \ySLT) {\dynkinLabelWeylHuge{\TypeAPicture{1}}{ }};
            \node (B1Node) at (-1 * \xSLT, 3 * \ySLT) {\dynkinLabelWeylHuge{\TypeBPicture{1}}{ }};
            \node (B4Node) at (0, 3 * \ySLT) {\dynkinLabelWeylHuge{\TypeBPicture{4}}{ }};
            \node (B2Node) at (1 * \xSLT, 3 * \ySLT) {\dynkinLabelWeylHuge{\TypeBPicture{2}}{ }};
            \node (CPNode) at (0, 4 * \ySLT) {\dynkinLabelWeylHuge{\TypeCPicture}{U'}};
            \node (DPNode) at (0, 5 * \ySLT) {\dynkinLabelWeylHuge{\TypeDPicture}{ }};

          \end{tikzpicture}
        };

        \draw[-, gray, dotted, thick, shorten <=0.7cm, shorten >=0.7cm] (0 * \xSLT, -2.5 * \ySLT) -- (3 * \xSLT, -1.5 * \ySLT);

        \draw[-, gray, dotted, thick, shorten <=0.7cm, shorten >=0.7cm] (0 * \xSLT, -.5 * \ySLT) -- (3 * \xSLT, .5 * \ySLT);

        \draw[-, gray, dotted, thick, shorten <=0.7cm, shorten >=0.7cm] (0 * \xSLT, 1.5 * \ySLT) -- (3 * \xSLT, 2.5 * \ySLT);
      \end{tikzpicture}
    \caption{\myrefP{prop:mainCase3} Case 1}
    \label{fig:mainCase3Case1}
  \end{figure}

  \begin{figure}[!ht]
    \centering
    \begin{scaletikzpicturetowidth}{\textwidth}
      \begin{tikzpicture}[scale=\tikzscale, every node/.style={transform shape}]
        \node at (0 * \xSLT, 0 * \ySLT) {
          \begin{tikzpicture}
            % lines
            \upLineF{darkblue}{solid}{0}{2}
            \upLineF{magenta}{dashed}{0}{3}
            \rightLineF{green}{dashed}{0}{3}
            \leftLineF{goldenpoppy}{dashed}{0}{3}
            \upLineF{magenta}{solid}{-1}{4}
            \leftLineF{goldenpoppy}{solid}{0}{4}
            \upLineF{magenta}{solid}{1}{4}
            \rightLineF{green}{solid}{0}{4}
            \rightLineF{darkblue}{solid}{-2}{3}
            \leftLineF{darkblue}{solid}{2}{3}
            \upLineF{goldenpoppy}{solid}{-2}{2}
            \upLineF{green}{solid}{2}{2}
            \leftLineF{darkblue}{solid}{-1}{1}
            \rightLineF{goldenpoppy}{dashed}{-1}{1}
            \leftLineF{green}{dashed}{1}{1}
            \rightLineF{darkblue}{solid}{1}{1}
            \leftLineF{green}{solid}{0}{0}
            \rightLineF{goldenpoppy}{solid}{0}{0}

            % Nodes
            \node (A1Node) at (0,0) {\dynkinLabelWeylHuge{\TypeAPicture{1}}{L}};
            \node (B4Node) at (-1 * \xSLF, 1 * \ySLF) {\dynkinLabelWeylHuge{\TypeBPicture{4}}{ }};
            \node (B2Node) at (1 * \xSLF, 1 * \ySLF) {\dynkinLabelWeylHuge{\TypeBPicture{2}}{ }};
            \node (A2Node) at (-2 * \xSLF, 2 * \ySLF) {\dynkinLabelWeylHuge{\TypeAPicture{2}}{ }};
            \node (CNode) at (0, 2 * \ySLF) {\dynkinLabelWeylHuge{\TypeCPicture}{M}};
            \node (A4Node) at (2 * \xSLF, 2 * \ySLF) {\dynkinLabelWeylHuge{\TypeAPicture{4}}{ }};
            \node (B1Node) at (-2 * \xSLF, 3 * \ySLF) {\dynkinLabelWeylHuge{\TypeBPicture{1}}{ }};
            \node (DNode) at (0, 3 * \ySLF) {\dynkinLabelWeylHuge{\TypeDPicture}{ }};
            \node (B1PNode) at (2 * \xSLF, 3 * \ySLF) {\dynkinLabelWeylHuge{\TypeBPicture{1}}{ }};
            \node (A4PNode) at (-1 * \xSLF, 4 * \ySLF) {\dynkinLabelWeylHuge{\TypeAPicture{4}}{ }};
            \node (A1PNode) at (0, 4 * \ySLF) {\dynkinLabelWeylHuge{\TypeAPicture{1}}{U}};
            \node (A2PNode) at (1 * \xSLF, 4 * \ySLF) {\dynkinLabelWeylHuge{\TypeAPicture{2}}{ }};
            \node (B2PNode) at (-1 * \xSLF, 5 * \ySLF) {\dynkinLabelWeylHuge{\TypeBPicture{2}}{ }};
            \node (B4PNode) at (1 * \xSLF, 5 * \ySLF) {\dynkinLabelWeylHuge{\TypeBPicture{4}}{ }};

          \end{tikzpicture}
        };
        \node at (5 * \xSLT, 1 * \ySLT) {
          \begin{tikzpicture}
            % lines
            \upLineF{darkblue}{solid}{0}{2}
            \rightLineF{magenta}{dashed}{0}{3}
            \leftLineF{goldenpoppy}{dashed}{0}{3}
            \rightLineF{magenta}{solid}{-1}{4}
            \leftLineF{goldenpoppy}{solid}{1}{4}
            \upLineF{green}{dashed}{0}{1}
            \rightLineF{goldenpoppy}{solid}{-1}{0}
            \leftLineF{magenta}{solid}{1}{0}
            \leftLineF{magenta}{dashed}{1}{1}
            \rightLineF{goldenpoppy}{dashed}{-1}{1}
            \upLineF{green}{solid}{-1}{0}
            \upLineF{green}{solid}{1}{0}
            \leftLineF{darkblue}{solid}{-1}{1}
            \upLineF{goldenpoppy}{solid}{-2}{2}
            \rightLineF{darkblue}{solid}{-2}{3}
            \rightLineF{darkblue}{solid}{1}{1}
            \upLineF{magenta}{solid}{2}{2}
            \leftLineF{darkblue}{solid}{2}{3}

            % Nodes
            \node (A1Node) at (-1 * \xSLF, 0 * \ySLF)
            {\dynkinLabelWeylHuge{\TypeAPicture{1}}{L'}};
            \node (A4Node) at (1 * \xSLF, 0 * \ySLF)
            {\dynkinLabelWeylHuge{\TypeAPicture{4}}{ }};
            \node (B4Node) at (-1 * \xSLF, 1 * \ySLF) {\dynkinLabelWeylHuge{\TypeBPicture{4}}{ }};
            \node (B2Node) at (0 * \xSLF, 1 * \ySLF) {\dynkinLabelWeylHuge{\TypeBPicture{2}}{ }};
            \node (B1Node) at (1 * \xSLF, 1 * \ySLF) {\dynkinLabelWeylHuge{\TypeBPicture{1}}{ }};
            \node (A2Node) at (-2 * \xSLF, 2 * \ySLF) {\dynkinLabelWeylHuge{\TypeAPicture{2}}{ }};
            \node (CNode) at (0 * \xSLF, 2 * \ySLF) {\dynkinLabelWeylHuge{\TypeCPicture}{M'}};
            \node (A2PNode) at (2 * \xSLF, 2 * \ySLF) {\dynkinLabelWeylHuge{\TypeAPicture{2}}{ }};
            \node (B1PNode) at (-2 * \xSLF, 3 * \ySLF) {\dynkinLabelWeylHuge{\TypeBPicture{1}}{ }};
            \node (DNode)  at (0 * \xSLF, 3 * \ySLF) {\dynkinLabelWeylHuge{\TypeDPicture}{ }};
            \node (B4PNode)  at (2 * \xSLF, 3 * \ySLF) {\dynkinLabelWeylHuge{\TypeBPicture{4}}{ }};
            \node (A4PNode) at (-1 * \xSLF, 4 * \ySLF) {\dynkinLabelWeylHuge{\TypeAPicture{4}}{ }};
            \node (A1PNode) at (1 * \xSLF, 4 * \ySLF) {\dynkinLabelWeylHuge{\TypeAPicture{1}}{U'}};
            \node (B2PNode) at (0 * \xSLF, 5 * \ySLF) {\dynkinLabelWeylHuge{\TypeBPicture{2}}{ }};

          \end{tikzpicture}
        };

        \draw[-, gray, dotted, thick, shorten <=0.5cm, shorten >=0.7cm] (0 * \xSLT, -2.5 * \ySLT) -- (4 * \xSLT, -1.5 * \ySLT);

        \draw[-, gray, dotted, thick, shorten <=0.5cm, shorten >=0.7cm] (0 * \xSLT, -.5 * \ySLT) -- (5 * \xSLT, .5 * \ySLT);

        \draw[-, gray, dotted, thick, shorten <=0.5cm, shorten >=0.7cm] (0 * \xSLT, 1.5 * \ySLT) -- (6 * \xSLT, 2.5 * \ySLT);
      \end{tikzpicture}
    \end{scaletikzpicturetowidth}
    \caption{\myrefP{prop:mainCase3} Case 2}
    \label{fig:mainCase3Case2}
  \end{figure}

  \begin{figure}
    \centering
      \begin{tikzpicture}[scale=0.4,  every node/.style={transform shape}]
        \node at (0 * \xSLT, 0 * \ySLT) {
          \begin{tikzpicture}
            % lines
            \upLineT{darkblue}{solid}{0}{0}
            \upLineT{goldenpoppy}{dashed}{0}{1}
            \rightLineT{magenta}{dashed}{0}{1}
            \leftLineT{green}{dashed}{0}{1}
            \rightLineT{magenta}{solid}{-1}{2}
            \upLineT{goldenpoppy}{solid}{-1}{2}
            \leftLineT{green}{solid}{0}{2}
            \leftLineT{green}{solid}{1}{2}
            \upLineT{goldenpoppy}{solid}{1}{2}
            \rightLineT{magenta}{solid}{0}{2}
            \leftLineT{green}{dashed}{1}{3}
            \rightLineT{magenta}{dashed}{-1}{3}
            \upLineT{goldenpoppy}{dashed}{0}{3}
            \upLineT{darkblue}{solid}{0}{4}

            % Nodes
            \node (CNode) at (0, 0) {\dynkinLabelWeylHuge{\TypeCPicture}{L}};
            \node (DNode) at (0, \ySLT) {\dynkinLabelWeylHuge{\TypeDPicture}{ }};
            \node (A2Node) at (-1 * \xSLT, 2 * \ySLT) {\dynkinLabelWeylHuge{\TypeAPicture{2}}{ }};
            \node (A4Node) at (0, 2 * \ySLT) {\dynkinLabelWeylHuge{\TypeAPicture{4}}{ }};
            \node (A1Node) at (1 * \xSLT, 2 * \ySLT) {\dynkinLabelWeylHuge{\TypeAPicture{1}}{M}};
            \node (B1Node) at (-1 * \xSLT, 3 * \ySLT) {\dynkinLabelWeylHuge{\TypeBPicture{1}}{ }};
            \node (B4Node) at (0, 3 * \ySLT) {\dynkinLabelWeylHuge{\TypeBPicture{4}}{ }};
            \node (B2Node) at (1 * \xSLT, 3 * \ySLT) {\dynkinLabelWeylHuge{\TypeBPicture{2}}{ }};
            \node (CPNode) at (0, 4 * \ySLT) {\dynkinLabelWeylHuge{\TypeCPicture}{U}};
            \node (DPNode) at (0, 5 * \ySLT) {\dynkinLabelWeylHuge{\TypeDPicture}{ }};

          \end{tikzpicture}
        };
        \node at (4 * \xSLT, 1 * \ySLT) {
          \begin{tikzpicture}
            % lines
            \upLineF{darkblue}{solid}{0}{2}
            \rightLineF{magenta}{dashed}{0}{3}
            \leftLineF{goldenpoppy}{dashed}{0}{3}
            \rightLineF{magenta}{solid}{-1}{4}
            \leftLineF{goldenpoppy}{solid}{1}{4}
            \upLineF{green}{dashed}{0}{1}
            \rightLineF{goldenpoppy}{solid}{-1}{0}
            \leftLineF{magenta}{solid}{1}{0}
            \leftLineF{magenta}{dashed}{1}{1}
            \rightLineF{goldenpoppy}{dashed}{-1}{1}
            \upLineF{green}{solid}{-1}{0}
            \upLineF{green}{solid}{1}{0}
            \leftLineF{darkblue}{solid}{-1}{1}
            \upLineF{goldenpoppy}{solid}{-2}{2}
            \rightLineF{darkblue}{solid}{-2}{3}
            \rightLineF{darkblue}{solid}{1}{1}
            \upLineF{magenta}{solid}{2}{2}
            \leftLineF{darkblue}{solid}{2}{3}

            % Nodes
            \node (A1Node) at (-1 * \xSLF, 0 * \ySLF)
            {\dynkinLabelWeylHuge{\TypeAPicture{1}}{L'}};
            \node (A4Node) at (1 * \xSLF, 0 * \ySLF)
            {\dynkinLabelWeylHuge{\TypeAPicture{4}}{ }};
            \node (B4Node) at (-1 * \xSLF, 1 * \ySLF) {\dynkinLabelWeylHuge{\TypeBPicture{4}}{ }};
            \node (B2Node) at (0 * \xSLF, 1 * \ySLF) {\dynkinLabelWeylHuge{\TypeBPicture{2}}{ }};
            \node (B1Node) at (1 * \xSLF, 1 * \ySLF) {\dynkinLabelWeylHuge{\TypeBPicture{1}}{ }};
            \node (A2Node) at (-2 * \xSLF, 2 * \ySLF) {\dynkinLabelWeylHuge{\TypeAPicture{2}}{ }};
            \node (CNode) at (0 * \xSLF, 2 * \ySLF) {\dynkinLabelWeylHuge{\TypeCPicture}{M'}};
            \node (A2PNode) at (2 * \xSLF, 2 * \ySLF) {\dynkinLabelWeylHuge{\TypeAPicture{2}}{ }};
            \node (B1PNode) at (-2 * \xSLF, 3 * \ySLF) {\dynkinLabelWeylHuge{\TypeBPicture{1}}{ }};
            \node (DNode)  at (0 * \xSLF, 3 * \ySLF) {\dynkinLabelWeylHuge{\TypeDPicture}{ }};
            \node (B4PNode)  at (2 * \xSLF, 3 * \ySLF) {\dynkinLabelWeylHuge{\TypeBPicture{4}}{ }};
            \node (A4PNode) at (-1 * \xSLF, 4 * \ySLF) {\dynkinLabelWeylHuge{\TypeAPicture{4}}{ }};
            \node (A1PNode) at (1 * \xSLF, 4 * \ySLF) {\dynkinLabelWeylHuge{\TypeAPicture{1}}{U'}};
            \node (B2PNode) at (0 * \xSLF, 5 * \ySLF) {\dynkinLabelWeylHuge{\TypeBPicture{2}}{ }};

          \end{tikzpicture}
        };

        \draw[-, gray, dotted, thick, shorten <=0.7cm, shorten >=0.7cm] (0 * \xSLT, -2.5 * \ySLT) -- (4 * \xSLT, .5 * \ySLT);

        \draw[-, gray, dotted, thick, shorten <=0.7cm, shorten >=0.7cm] (1 * \xSLT, -.5 * \ySLT) -- (3 * \xSLT, -1.5 * \ySLT);

        \draw[-, gray, dotted, thick, shorten <=0.7cm, shorten >=0.5cm] (1 * \xSLT, -.5 * \ySLT) -- (5 * \xSLT, 2.5 * \ySLT);

        \draw[-, gray, dotted, thick, shorten <=0.7cm, shorten >=0.7cm] (0 * \xSLT, 1.5 * \ySLT) -- (4 * \xSLT, .5 * \ySLT);
      \end{tikzpicture}
    \caption{\myrefP{prop:mainCase3} Case 3}
    \label{fig:mainCase3Case3}
  \end{figure}

  \begin{proof}
    We'll prove that the theorem holds for $\WDFour$. Given that, the proof of the theorem in general follows directly from \myrefP{prop:KLPolysParabolic}.

    Looking at the cells in $\WDFour$, we have many more cases than for parabolic subgroups of type $A_2$ or $B_2$, but in fact the situation, in large, is the same.
    Elements in different left cells are connected by edges when they are in the same right cell.

    Let's start by looking at $C(10, a)$.
    I claim that all elements of $C(10, a)$ are of right type $\scC$.
    To see that, note that the bottom element of that cell, $s_1s_2s_4$, is an involution, and thus has the same left type and right type.
    Now, we can appeal to \myrefP{prop:D4OpRightCells}, which says that elements which are in the same left cell have the same right type (if any).
    Similarly, elements of $C(10, b)$ are of right type $\scD$, elements of $C(14, a, j)$ are of right type $\scA_j$, and elements of $C(14, b, j)$ are of right type $\scB_j$.

    Turning to \autoref{fig:mainCase3Case1}, we see that the bottom and top dotted lines correspond to the two edges connecting elements of type $\scC$ to elements of type $\scD$ in \autoref{fig:type10Full}, if we consider \autoref{fig:type10Full} as showing connections on the right instead of on the left.

    Similarly, the middle dotted line in \autoref{fig:mainCase3Case1} corresponds to the line connecting the element of type $\scC$ to the element of type $\scD$ in \autoref{fig:type14aFull}.

    The other cases are analogous.
  \end{proof}

  \begin{remark}
    We can also use the example shown in \autoref{fig:mainCase3Case2} to illustrate the three cases of the first edge transport theorem, \myrefT{thm:stStringsA}.
    We'll be transporting the edges using Knuth maps on the right.
    Starting with the left cell $C(14, a, 1)$, we see from \autoref{fig:type14aFull} that there are 25 edges connecting elements of this left cell.
    Though the elements of the cell have different left \ti s,
    as per \myrefP{prop:tauCell}, all the elements of this cell have the same right \ti, namely $\{s_1, s_3\}$.
    In particular, $C(14, a, 1) \subseteq D^R_{s_4, s_3}(W)$.
    The map $T^R_{s_4, s_3}$ takes the left cell $C(14, a, 1)$ to the left cell $C(14, b, 2)$.
    So, it will transport each of the 25 edges connecting two elements of $C(14, a, 1)$ to an edge connecting the corresponding two elements of $C(14, b, 2)$.
    Most of those transports fall under case 1 of \myrefT{thm:stStringsA}.
    However, there are two instances of case 2 and four instances of case 3 of the theorem.
    We'll show an example of each.
    Refer to \autoref{fig:cases13StStringsA} and \autoref{fig:case2StStringsA}.
    In those figures, elements are labeled $L, U, L', U'$ as in \myrefT{thm:stStringsA}, but with a subscript 1, 2, or 3, to show which case they belong in.
    Lines labeled $s_3$ or $s_4$ refer to multiplication on the right by that element.
    \begin{figure}[!ht]
      \centering
      \begin{scaletikzpicturetowidth}{\textwidth}
        \begin{tikzpicture}[scale=\tikzscale, every node/.style={transform shape}]
          \node at (0 * \xSLT, 0 * \ySLT) {
            \begin{tikzpicture}
              % lines
              \upLineF{darkblue}{solid}{0}{2}
              \upLineF{magenta}{dashed}{0}{3}
              \rightLineF{green}{dashed}{0}{3}
              \leftLineF{goldenpoppy}{dashed}{0}{3}
              \upLineF{magenta}{solid}{-1}{4}
              \leftLineF{goldenpoppy}{solid}{0}{4}
              \upLineF{magenta}{solid}{1}{4}
              \rightLineF{green}{solid}{0}{4}
              \rightLineF{darkblue}{solid}{-2}{3}
              \leftLineF{darkblue}{solid}{2}{3}
              \upLineF{goldenpoppy}{solid}{-2}{2}
              \upLineF{green}{solid}{2}{2}
              \leftLineF{darkblue}{solid}{-1}{1}
              \rightLineF{goldenpoppy}{dashed}{-1}{1}
              \leftLineF{green}{dashed}{1}{1}
              \rightLineF{darkblue}{solid}{1}{1}
              \leftLineF{green}{solid}{0}{0}
              \rightLineF{goldenpoppy}{solid}{0}{0}

              % Nodes
              \node (A1Node) at (0,0) {\dynkinLabelWeylHuge{\TypeAPicture{1}}{}};
              \node (B4Node) at (-1 * \xSLF, 1 * \ySLF) {\dynkinLabelWeylHuge{\TypeBPicture{4}}{ }};
              \node (B2Node) at (1 * \xSLF, 1 * \ySLF) {\dynkinLabelWeylHuge{\TypeBPicture{2}}{L_3}};
              \node (A2Node) at (-2 * \xSLF, 2 * \ySLF) {\dynkinLabelWeylHuge{\TypeAPicture{2}}{ }};
              \node (CNode) at (0, 2 * \ySLF) {\dynkinLabelWeylHuge{\TypeCPicture}{}};
              \node (A4Node) at (2 * \xSLF, 2 * \ySLF) {\dynkinLabelWeylHuge{\TypeAPicture{4}}{U'_3}};
              \node (B1Node) at (-2 * \xSLF, 3 * \ySLF) {\dynkinLabelWeylHuge{\TypeBPicture{1}}{ }};
              \node (DNode) at (0, 3 * \ySLF) {\dynkinLabelWeylHuge{\TypeDPicture}{}};
              \node (B1PNode) at (2 * \xSLF, 3 * \ySLF) {\dynkinLabelWeylHuge{\TypeBPicture{1}}{ }};
              \node (A4PNode) at (-1 * \xSLF, 4 * \ySLF) {\dynkinLabelWeylHuge{\TypeAPicture{4}}{L_1}};
              \node (A1PNode) at (0, 4 * \ySLF) {\dynkinLabelWeylHuge{\TypeAPicture{1}}{}};
              \node (A2PNode) at (1 * \xSLF, 4 * \ySLF) {\dynkinLabelWeylHuge{\TypeAPicture{2}}{ }};
              \node (B2PNode) at (-1 * \xSLF, 5 * \ySLF) {\dynkinLabelWeylHuge{\TypeBPicture{2}}{L_1'}};
              \node (B4PNode) at (1 * \xSLF, 5 * \ySLF) {\dynkinLabelWeylHuge{\TypeBPicture{4}}{}};

            \end{tikzpicture}
          };
          \node at (5 * \xSLT, 1 * \ySLT) {
            \begin{tikzpicture}
              % lines
              \upLineF{darkblue}{solid}{0}{2}
              \rightLineF{magenta}{dashed}{0}{3}
              \leftLineF{goldenpoppy}{dashed}{0}{3}
              \rightLineF{magenta}{solid}{-1}{4}
              \leftLineF{goldenpoppy}{solid}{1}{4}
              \upLineF{green}{dashed}{0}{1}
              \rightLineF{goldenpoppy}{solid}{-1}{0}
              \leftLineF{magenta}{solid}{1}{0}
              \leftLineF{magenta}{dashed}{1}{1}
              \rightLineF{goldenpoppy}{dashed}{-1}{1}
              \upLineF{green}{solid}{-1}{0}
              \upLineF{green}{solid}{1}{0}
              \leftLineF{darkblue}{solid}{-1}{1}
              \upLineF{goldenpoppy}{solid}{-2}{2}
              \rightLineF{darkblue}{solid}{-2}{3}
              \rightLineF{darkblue}{solid}{1}{1}
              \upLineF{magenta}{solid}{2}{2}
              \leftLineF{darkblue}{solid}{2}{3}

              % Nodes
              \node (A1Node) at (-1 * \xSLF, 0 * \ySLF)
              {\dynkinLabelWeylHuge{\TypeAPicture{1}}{}};
              \node (A4Node) at (1 * \xSLF, 0 * \ySLF)
              {\dynkinLabelWeylHuge{\TypeAPicture{4}}{L'_3}};
              \node (B4Node) at (-1 * \xSLF, 1 * \ySLF) {\dynkinLabelWeylHuge{\TypeBPicture{4}}{ }};
              \node (B2Node) at (0 * \xSLF, 1 * \ySLF) {\dynkinLabelWeylHuge{\TypeBPicture{2}}{U_3}};
              \node (B1Node) at (1 * \xSLF, 1 * \ySLF) {\dynkinLabelWeylHuge{\TypeBPicture{1}}{ }};
              \node (A2Node) at (-2 * \xSLF, 2 * \ySLF) {\dynkinLabelWeylHuge{\TypeAPicture{2}}{ }};
              \node (CNode) at (0 * \xSLF, 2 * \ySLF) {\dynkinLabelWeylHuge{\TypeCPicture}{}};
              \node (A2PNode) at (2 * \xSLF, 2 * \ySLF) {\dynkinLabelWeylHuge{\TypeAPicture{2}}{ }};
              \node (B1PNode) at (-2 * \xSLF, 3 * \ySLF) {\dynkinLabelWeylHuge{\TypeBPicture{1}}{ }};
              \node (DNode)  at (0 * \xSLF, 3 * \ySLF) {\dynkinLabelWeylHuge{\TypeDPicture}{}};
              \node (B4PNode)  at (2 * \xSLF, 3 * \ySLF) {\dynkinLabelWeylHuge{\TypeBPicture{4}}{}};
              \node (A4PNode) at (-1 * \xSLF, 4 * \ySLF) {\dynkinLabelWeylHuge{\TypeAPicture{4}}{U_1}};
              \node (A1PNode) at (1 * \xSLF, 4 * \ySLF) {\dynkinLabelWeylHuge{\TypeAPicture{1}}{}};
              \node (B2PNode) at (0 * \xSLF, 5 * \ySLF) {\dynkinLabelWeylHuge{\TypeBPicture{2}}{U_1'}};

            \end{tikzpicture}
          };

          \draw[-, brown, dotted, thick, shorten <=0.4cm, shorten >=0.7cm] (2 * \xSLT, -.5 * \ySLT) --node[right, black]{\LARGE{$s_3$}} (6 * \xSLT, -1.5 * \ySLT);

          \draw[-, brown, dotted, thick, shorten <=0.5cm, shorten >=0.7cm] (1 * \xSLT, -1.5 * \ySLT) -- node[below, black]{\LARGE{$s_4$}}(5 * \xSLT, -.5 * \ySLT);

          \draw[-, brown, dotted, thick, shorten <=0.5cm, shorten >=0.7cm] (-1 * \xSLT, 1.5 * \ySLT) --node[above, black]{\LARGE{$s_4$}} (4 * \xSLT, 2.5 * \ySLT);

          \draw[-, brown, dotted, thick, shorten <=0.5cm, shorten >=0.7cm] (-1 * \xSLT, 2.5 * \ySLT) --node[above, black]{\LARGE{$s_4$}} (5 * \xSLT, 3.5 * \ySLT);
        \end{tikzpicture}
      \end{scaletikzpicturetowidth}
      \caption{Cases 1 and 3 of  \myrefT{thm:stStringsA}}
      \label{fig:cases13StStringsA}
    \end{figure}

    \begin{figure}[!ht]
      \centering
      \begin{scaletikzpicturetowidth}{\textwidth}
        \begin{tikzpicture}[scale=\tikzscale, every node/.style={transform shape}]
          \node at (0 * \xSLT, 0 * \ySLT) {
            \begin{tikzpicture}
              % lines
              \upLineF{darkblue}{solid}{0}{2}
              \upLineF{magenta}{dashed}{0}{3}
              \rightLineF{green}{dashed}{0}{3}
              \leftLineF{goldenpoppy}{dashed}{0}{3}
              \upLineF{magenta}{solid}{-1}{4}
              \leftLineF{goldenpoppy}{solid}{0}{4}
              \upLineF{magenta}{solid}{1}{4}
              \rightLineF{green}{solid}{0}{4}
              \rightLineF{darkblue}{solid}{-2}{3}
              \leftLineF{darkblue}{solid}{2}{3}
              \upLineF{goldenpoppy}{solid}{-2}{2}
              \upLineF{green}{solid}{2}{2}
              \leftLineF{darkblue}{solid}{-1}{1}
              \rightLineF{goldenpoppy}{dashed}{-1}{1}
              \leftLineF{green}{dashed}{1}{1}
              \rightLineF{darkblue}{solid}{1}{1}
              \leftLineF{green}{solid}{0}{0}
              \rightLineF{goldenpoppy}{solid}{0}{0}

              % Nodes
              \node (A1Node) at (0,0) {\dynkinLabelWeylHuge{\TypeAPicture{1}}{}};
              \node (B4Node) at (-1 * \xSLF, 1 * \ySLF) {\dynkinLabelWeylHuge{\TypeBPicture{4}}{ }};
              \node (B2Node) at (1 * \xSLF, 1 * \ySLF) {\dynkinLabelWeylHuge{\TypeBPicture{2}}{ }};
              \node (A2Node) at (-2 * \xSLF, 2 * \ySLF) {\dynkinLabelWeylHuge{\TypeAPicture{2}}{ }};
              \node (CNode) at (0, 2 * \ySLF) {\dynkinLabelWeylHuge{\TypeCPicture}{L_2}};
              \node (A4Node) at (2 * \xSLF, 2 * \ySLF) {\dynkinLabelWeylHuge{\TypeAPicture{4}}{}};
              \node (B1Node) at (-2 * \xSLF, 3 * \ySLF) {\dynkinLabelWeylHuge{\TypeBPicture{1}}{ }};
              \node (DNode) at (0, 3 * \ySLF) {\dynkinLabelWeylHuge{\TypeDPicture}{}};
              \node (B1PNode) at (2 * \xSLF, 3 * \ySLF) {\dynkinLabelWeylHuge{\TypeBPicture{1}}{ }};
              \node (A4PNode) at (-1 * \xSLF, 4 * \ySLF) {\dynkinLabelWeylHuge{\TypeAPicture{4}}{ }};
              \node (A1PNode) at (0, 4 * \ySLF) {\dynkinLabelWeylHuge{\TypeAPicture{1}}{}};
              \node (A2PNode) at (1 * \xSLF, 4 * \ySLF) {\dynkinLabelWeylHuge{\TypeAPicture{2}}{ }};
              \node (B2PNode) at (-1 * \xSLF, 5 * \ySLF) {\dynkinLabelWeylHuge{\TypeBPicture{2}}{ }};
              \node (B4PNode) at (1 * \xSLF, 5 * \ySLF) {\dynkinLabelWeylHuge{\TypeBPicture{4}}{U'_2}};

              \draw[bend right = 54, gray, dashed] (CNode) to (B4PNode);
            \end{tikzpicture}
          };
          \node at (5 * \xSLT, 1 * \ySLT) {
            \begin{tikzpicture}
              % lines
              \upLineF{darkblue}{solid}{0}{2}
              \rightLineF{magenta}{dashed}{0}{3}
              \leftLineF{goldenpoppy}{dashed}{0}{3}
              \rightLineF{magenta}{solid}{-1}{4}
              \leftLineF{goldenpoppy}{solid}{1}{4}
              \upLineF{green}{dashed}{0}{1}
              \rightLineF{goldenpoppy}{solid}{-1}{0}
              \leftLineF{magenta}{solid}{1}{0}
              \leftLineF{magenta}{dashed}{1}{1}
              \rightLineF{goldenpoppy}{dashed}{-1}{1}
              \upLineF{green}{solid}{-1}{0}
              \upLineF{green}{solid}{1}{0}
              \leftLineF{darkblue}{solid}{-1}{1}
              \upLineF{goldenpoppy}{solid}{-2}{2}
              \rightLineF{darkblue}{solid}{-2}{3}
              \rightLineF{darkblue}{solid}{1}{1}
              \upLineF{magenta}{solid}{2}{2}
              \leftLineF{darkblue}{solid}{2}{3}

              % Nodes
              \node (A1Node) at (-1 * \xSLF, 0 * \ySLF)
              {\dynkinLabelWeylHuge{\TypeAPicture{1}}{}};
              \node (A4Node) at (1 * \xSLF, 0 * \ySLF)
              {\dynkinLabelWeylHuge{\TypeAPicture{4}}{}};
              \node (B4Node) at (-1 * \xSLF, 1 * \ySLF) {\dynkinLabelWeylHuge{\TypeBPicture{4}}{ }};
              \node (B2Node) at (0 * \xSLF, 1 * \ySLF) {\dynkinLabelWeylHuge{\TypeBPicture{2}}{ }};
              \node (B1Node) at (1 * \xSLF, 1 * \ySLF) {\dynkinLabelWeylHuge{\TypeBPicture{1}}{ }};
              \node (A2Node) at (-2 * \xSLF, 2 * \ySLF) {\dynkinLabelWeylHuge{\TypeAPicture{2}}{ }};
              \node (CNode) at (0 * \xSLF, 2 * \ySLF) {\dynkinLabelWeylHuge{\TypeCPicture}{U_2}};
              \node (A2PNode) at (2 * \xSLF, 2 * \ySLF) {\dynkinLabelWeylHuge{\TypeAPicture{2}}{ }};
              \node (B1PNode) at (-2 * \xSLF, 3 * \ySLF) {\dynkinLabelWeylHuge{\TypeBPicture{1}}{ }};
              \node (DNode)  at (0 * \xSLF, 3 * \ySLF) {\dynkinLabelWeylHuge{\TypeDPicture}{}};
              \node (B4PNode)  at (2 * \xSLF, 3 * \ySLF) {\dynkinLabelWeylHuge{\TypeBPicture{4}}{L'_2}};
              \node (A4PNode) at (-1 * \xSLF, 4 * \ySLF) {\dynkinLabelWeylHuge{\TypeAPicture{4}}{ }};
              \node (A1PNode) at (1 * \xSLF, 4 * \ySLF) {\dynkinLabelWeylHuge{\TypeAPicture{1}}{}};
              \node (B2PNode) at (0 * \xSLF, 5 * \ySLF) {\dynkinLabelWeylHuge{\TypeBPicture{2}}{ }};

              \draw[-, thick, gray, dashed] (CNode.east) -- (B4PNode.west);
            \end{tikzpicture}
          };

          \draw[-, brown, dotted, thick, shorten <=0.5cm, shorten >=0.7cm] (1 * \xSLT, 2.5 * \ySLT) --node[above, black]{\LARGE{$s_3$}} (7 * \xSLT, 1.5 * \ySLT);

          \draw[-, brown, dotted, thick, shorten <=0.5cm, shorten >=0.7cm] (0 * \xSLT, -.5 * \ySLT) -- node[above, black]{\LARGE{$s_4$}}(5 * \xSLT, .5 * \ySLT);

        \end{tikzpicture}
      \end{scaletikzpicturetowidth}
      \caption{Case 2 of \myrefT{thm:stStringsA}}
      \label{fig:case2StStringsA}
    \end{figure}
  \end{remark}

  \section{Edge Transport Functions, Part 1}
  \label{sec:edgeTransportFunctions}
  The edge transport theorems, \myrefT{thm:stStrings}, \myrefT{thm:stsStrings}, and \myrefT{thm:main}, are associated with maps.
  In the case of \myrefT{thm:stStrings} and \myrefT{thm:stsStrings} we have already defined the maps.
  In the case of \myrefT{thm:main} we will define the maps in \myrefS{sec:D4Maps}.

  These maps all have additional properties, which, in conjunction with the edge transport theorems, will allow us to define the generalized \ti\ using them, and prove that it is a weaker equivalence relation than that of being in the same left cell.

  We'll describe these properties next.
  The first one is a property of the domain of the functions.

  \begin{definition}
    \label{def:KLIntervalSet}
    Let $D \subset W$.
    We say $D$ is a left KL interval set if, whenever $x, y \in D$ and $w \in W$ with $x \leqL w \leqL y$, then $w \in D$.
    We define similarly right KL interval set.
  \end{definition}

  \begin{proposition}
    \label{prop:KLIntervalEquiv}
    If $D \subset W$ is a left (resp.\ right) KL interval set and $x, y \in W$ with $x \equivL y$ (resp.\ $x \equivR y$) then $x \in D$ if and only if $y \in D$.
  \end{proposition}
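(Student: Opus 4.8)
The plan is to deduce this immediately from \myrefD{def:KLIntervalSet} by taking a degenerate KL interval. I will argue the left-handed statement; the right-handed one is obtained by replacing $L$ with $R$ throughout and is otherwise identical, so it needs no separate treatment.

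First I would unwind the hypothesis $x \equivL y$: by definition of the equivalence relation $\equivL$ associated to the preorder $\leqL$, it means $x \leqL y$ and $y \leqL x$. Assume $x \in D$. I then apply the defining property of a left KL interval set with both endpoints equal to $x$: since $x, x \in D$ and $x \leqL y \leqL x$, \myrefD{def:KLIntervalSet} forces $y \in D$. For the reverse implication, note that $\equivL$ is symmetric, so $y \equivL x$ as well, and the same argument with the roles of $x$ and $y$ exchanged gives $x \in D$ whenever $y \in D$. Hence $x \in D$ if and only if $y \in D$, as claimed.

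There is essentially no obstacle here; the only point worth flagging is that the interval in \myrefD{def:KLIntervalSet} is permitted to be degenerate (its two endpoints may coincide), so that the interval $\{w \in W \mid x \leqL w \leqL x\}$ is precisely the $\equivL$-class of $x$. Thus the proposition is just the observation that a left KL interval set, being closed under passing to intermediate elements, is automatically a union of $\equivL$-classes. The proof is a single application of the definition and will be no more than a couple of lines.
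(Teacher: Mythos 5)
Your argument is correct and is exactly what the paper intends when it writes ``This is clear'': take both endpoints of the KL interval to be $x$ itself, observe $x \leqL y \leqL x$, and apply \myrefD{def:KLIntervalSet}. No further comment is needed.
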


  \begin{proof}
    This is clear.
  \end{proof}

  The second property concerns the image of the function.
  We'll define it first for functions such as the Knuth maps.

  \begin{definition}
    \label{def:tauPreserving}
    A function $T: D \longrightarrow W$ with $D \subset W$ is left \ti\ preserving (or left descent set preserving) if $\tau_L(T(w)) = \tau_L(w)$.
    We define similarly right \ti\ preserving.
  \end{definition}

  For use with the generalized \ti, the above property is all we need.
  Recall, however, that we also want to use the same maps to define an equivalence relation which is stronger than that of being in the same (left or right) cell.
  For that we'll need the following definition.

  \begin{definition}
    \label{def:KLCellFunction}
    A function $T: D \longrightarrow W$ with $D \subset W$ is a left KL cell function if $T(w) \equivL w$ for all $w \in D$.
    We define similarly right KL cell function.
  \end{definition}

  \begin{remarkNumbered}
    \label{rem:KLcellTauPreserving}
    A left (resp.\ right) KL cell function is right (resp.\ left) \ti\ preserving by \myrefP{prop:tauCell}.
  \end{remarkNumbered}

  Now, let's see that the Knuth maps have these properties.

  \begin{proposition}
    \label{prop:KnuthAKLSet}
    Suppose $s, t \in S$ with $st$ of order 3.
    Then $\DstL$ is a right KL interval set, and $\DstR$ is a left KL interval set.
  \end{proposition}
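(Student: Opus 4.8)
The plan is to deduce this immediately from \myrefP{prop:tauInterval}, applied with $J = \{s, t\}$. Recall that in its left form \myrefD{def:domainKnuthA} reads $\DstL = \{w \in W \mid \tau_L(w) \cap \{s, t\} = \{t\}\}$, so whether or not an element lies in $\DstL$ depends only on the set $\tau_L(w) \cap \{s, t\}$; the same is true of $\DstR$ with $\tau_R$ in place of $\tau_L$.

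First I would verify that $\DstL$ is a right KL interval set in the sense of \myrefD{def:KLIntervalSet}. So let $x, y \in \DstL$ and let $w \in W$ with $x \leqR w \leqR y$. Then $\tau_L(x) \cap \{s, t\} = \{t\} = \tau_L(y) \cap \{s, t\}$, which is exactly the hypothesis $\tau_L(x) \cap J = \tau_L(y) \cap J$ of \myrefP{prop:tauInterval} with $J = \{s, t\}$. That proposition then yields $\tau_L(w) \cap \{s, t\} = \tau_L(x) \cap \{s, t\} = \{t\}$, so $w \in \DstL$. Second, the assertion that $\DstR$ is a left KL interval set follows by running the identical argument with left and right interchanged, using the left--right mirror of \myrefP{prop:tauInterval} (whose proof goes through \myrefP{prop:tauCell}, which is itself symmetric in left and right).

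There is essentially no obstacle. The only point that needs care is bookkeeping: the superscript $L$ in $\DstL$ records which descent set cuts out the domain, whereas the word ``right'' in ``right KL interval set'' records which preorder the interval $x \leqR w \leqR y$ uses, and these are deliberately opposite — \myrefP{prop:tauInterval} is precisely the statement that couples $\tau_L$ with the $\leqR$-preorder, so the two match up as needed.
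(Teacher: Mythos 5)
Your proof is correct and follows essentially the same route as the paper, which simply cites \myrefP{prop:tauInterval}; you have merely spelled out the details of that citation, including the necessary bookkeeping that the superscript $L$ on the domain is paired with the $\leqR$-preorder exactly as \myrefP{prop:tauInterval} requires.
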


  \begin{proof}
  This follows from \myrefP{prop:tauInterval}.
  \end{proof}

  \begin{proposition}
    \label{prop:KnuthAKLCellFunction}
    Suppose $s, t \in S$ with $st$ of order 3.
    Then $\TstL$ is a left KL cell function and
    $\TstR$ is a right KL cell function.
  \end{proposition}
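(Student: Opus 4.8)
The plan is to show that for each $w$ in the domain $\DstL$ of the Knuth map, the single $W$-graph edge joining $w$ to $\TstL(w)$ already witnesses \emph{both} $w \leqL \TstL(w)$ and $\TstL(w) \leqL w$, so that $w \equivL \TstL(w)$ and $\TstL$ is a left KL cell function. The assertion for $\TstR$ is the mirror-image statement, obtained by interchanging ``left'' and ``right'' everywhere (replacing $\tauL$ by $\tauR$, $\leqL$ by $\leqR$, left multiplication by right multiplication, and $\DstL$, $\TstL$ by $\DstR$, $\TstR$), so it suffices to treat the left case.

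First I would record that the relevant edge is present. By \myrefD{def:KnuthA} we have $\TstL(w) = rw$ for some $r \in \{s,t\}$, with $rw \neq w$. If $rw > w$ then $l(rw) - l(w) = 1$, so $d(w, rw) = 0$ and $P_{w,rw}$ is a constant; feeding the pair $(w, rw)$ and the reflection $r$ into \myrefP{prop:KL22c} (here $c = 0$, and every term on the right-hand side vanishes except the ``$q^cP_{y,sw}$'' term, which is $P_{w,w} = 1$) shows this constant is $1$, so $\mu(w, rw) = 1$. If instead $rw < w$, the same computation with the roles of $w$ and $rw$ exchanged gives $\mu(rw, w) = 1$. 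In either case $\mutilde(w, \TstL(w)) = 1 > 0$, and $\mutilde$ is symmetric in its two arguments.

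Next I would read off the descent sets. Since $w \in \DstL$ we have $\tauL(w) \cap \{s,t\} = \{t\}$, and since $\TstL(w) \in D^L_{t,s}(W)$ (by \myrefD{def:KnuthA}, equivalently \myrefP{prop:talbAltDef}) we have $\tauL(\TstL(w)) \cap \{s,t\} = \{s\}$. Hence $t \in \tauL(w) \smallsetminus \tauL(\TstL(w))$ and $s \in \tauL(\TstL(w)) \smallsetminus \tauL(w)$, so neither of $\tauL(w)$, $\tauL(\TstL(w))$ contains the other. Combined with the previous paragraph, the length-two sequence $(w, \TstL(w))$ establishes $w \leqL \TstL(w)$ (the required non-inclusion coming from $t$), and the reversed sequence $(\TstL(w), w)$ establishes $\TstL(w) \leqL w$ (this time from $s$). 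Therefore $w \equivL \TstL(w)$, which is precisely the claim that $\TstL$ is a left KL cell function.

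There is essentially no serious obstacle: the proposition falls out of \myrefD{def:KnuthA} together with the elementary fact that consecutive Bruhat neighbours have Kazhdan--Lusztig polynomial $1$. The only thing needing care is the bookkeeping in the third paragraph --- making sure the two containments fail in \emph{opposite} directions, which is exactly what the defining property distinguishing $\DstL$ from $D^L_{t,s}(W)$ supplies, and is what lets a single edge give both inequalities.
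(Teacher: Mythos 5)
Your proposal is correct and follows essentially the same route as the paper: exhibit the length-one $W$-graph edge $\mutilde(w,\TstL(w))=1$ (you derive this from \myrefP{prop:KL22c}, while the paper treats it as immediate since $\TstL(w) \in \{sw, tw\}$), and then read off from $\tauL(w)\cap\{s,t\}=\{t\}$ versus $\tauL(\TstL(w))\cap\{s,t\}=\{s\}$ that the two descent sets are mutually non-contained, giving $\leqL$ in both directions. The only difference is expository detail; the substance matches.
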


  \begin{proof}
    This is clear from the definitions.
    That is, if $x \in \DstL$ and $y = \TstL(x)$, then either $y = sx$ or $y = tx$, so in either case $\mutilde(x, y) = 1$.
    Also, since $y \in D^L_{t,s}(W)$, we have $\tauL(x) \not\subset \tauL(y)$ and $\tauL(y) \not\subset \tauL(x)$.
  \end{proof}

  Finally, let's encapsulate the edge transport theorem in a definition which we can apply to Knuth maps immediately, and then to other families of maps in \myrefS{sec:otherMaps}.

  \begin{definition}
    \label{def:edgeTransportFunction}
    Let $T: D \longrightarrow W$, where $D \subset W$.
    The function $T$ is called an edge transport function if it is an injection and if $\mutilde(T(x), T(y)) = \mutilde(x, y)$ for all $x, y \in D$.
  \end{definition}

  \begin{proposition}
    \label{prop:KnuthAEdgeTransportFunction}
    Let $s, t \in S$ with $st$ of order 3.
    Then $\TstL$ (resp.\ $\TstR$) is an edge transport function.
  \end{proposition}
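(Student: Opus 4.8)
The plan is to check directly the two conditions in \myrefD{def:edgeTransportFunction}; both have essentially been established already in this section, so the argument is short. First I would verify that $\TstL$ is an injection. Working on the left, $\TstL$ is the map $T_{s,t}$ of \myrefD{def:KnuthA}, which by definition sends $\DstL$ into $D^L_{t,s}(W)$. By the remark following \myrefP{prop:talbAltDef}, $T^L_{t,s}$ is a two-sided inverse of $\TstL$, so $\TstL$ is in fact a bijection onto $D^L_{t,s}(W)$; in particular it is injective. The same reasoning, applied to the right-hand versions of \myrefD{def:KnuthA} and \myrefP{prop:talbAltDef}, shows that $\TstR$ is an injection.

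Next I would establish multiplicativity of $\mutilde$ under $\TstL$: for $x, y \in \DstL$, the required identity $\mutilde(\TstL(x), \TstL(y)) = \mutilde(x, y)$ is exactly the content of \myrefT{thm:talbTilde}, and the analogous statement for $\TstR$ is the right-hand version of that theorem. Combining this with the injectivity just noted gives both defining properties, so $\TstL$ and $\TstR$ are edge transport functions.

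There is no real obstacle here: the proposition is a repackaging of \myrefT{thm:talbTilde} (the $A_2$ edge transport theorem in $\mutilde$-form) together with the fact that $T_{t,s} = T_{s,t}^{-1}$. The only point to watch is the left/right bookkeeping — \myrefT{thm:talbTilde} is stated, per the section's convention, as the left version, so the claim about $\TstR$ invokes its right analogue, which is precisely what the parenthetical ``(resp.\ $\TstR$)'' in the statement refers to.
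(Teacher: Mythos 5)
Your proposal is correct and takes essentially the same approach as the paper: the paper's proof is the single sentence that this is Theorem \ref*{thm:talbTilde}, and you identify that theorem as supplying the $\mutilde$-preservation property while spelling out the injectivity (via $T_{t,s}=T_{s,t}^{-1}$ from the remark after \myrefP{prop:talbAltDef}) that the paper leaves implicit.
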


  \begin{proof}
    That $\TstL$ is an edge transport function is \myrefT{thm:talbTilde}.
  \end{proof}

  To use these functions with the generalized \ti, we'll need this property.

  \begin{definition}
    A function $T: D \longrightarrow W$ with $D \subset W$ is left KL order preserving if for $x, y \in D$ with $x \leqL y$ we have $T(x) \leqL T(y)$.
    We define similarly right  KL order preserving.
  \end{definition}

  In the following proposition and corollary, we reproduce the argument of Corollary 4.3 and part of Section 5 of \citeKL, in our more general context.

  \begin{proposition}
    \label{prop:edgeTransportKLOrderA}
    Let $T: D \longrightarrow W$ be an edge transport function.
    Assume in addition that $D$ is a right KL interval set and that $T$ is right \ti\ preserving.
    Then $T$ is right KL order preserving.
    Similarly, with left and right interchanged.
  \end{proposition}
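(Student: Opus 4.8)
The approach is simply to unwind the definition of $\leqR$ and push the witnessing chain through $T$ one step at a time; the only thing that needs care is keeping the intermediate terms of the chain inside the domain $D$, and that is precisely what the KL interval set hypothesis delivers. So suppose $x, y \in D$ with $x \leqR y$, and pick a sequence $w_1, \dots, w_n$ with $w_1 = x$, $w_n = y$, and $\mutilde(w_i, w_{i+1}) > 0$, $\tau_R(w_i) \not\subset \tau_R(w_{i+1})$ for $1 \le i \le n-1$.

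\textbf{Step 1: all $w_i$ lie in $D$.} First note that $\leqR$ is a preorder: reflexivity is the length-one chain, and transitivity is concatenation of chains. Hence for each $i$ the truncations $w_1, \dots, w_i$ and $w_i, \dots, w_n$ witness $x \leqR w_i$ and $w_i \leqR y$, so $x \leqR w_i \leqR y$. Since $x, y \in D$ and $D$ is a right KL interval set, this forces $w_i \in D$.

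\textbf{Step 2: transport the chain.} Now every $w_i$ is in the domain of $T$. Because $T$ is an edge transport function, $\mutilde(T(w_i), T(w_{i+1})) = \mutilde(w_i, w_{i+1}) > 0$ for each $i$. Because $T$ is right $\tau$-invariant preserving, $\tau_R(T(w_i)) = \tau_R(w_i)$ for all $i$, and therefore $\tau_R(T(w_i)) \not\subset \tau_R(T(w_{i+1}))$. Thus $T(w_1), \dots, T(w_n)$ is a chain witnessing $T(x) = T(w_1) \leqR T(w_n) = T(y)$, which is exactly the conclusion. The left-handed version follows verbatim with $L$ in place of $R$ throughout.

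\textbf{On the difficulty.} There is no substantial obstacle here — this is the abstract skeleton of the argument for Corollary~4.3 of \citeKL. The one point to get right, and the reason the hypotheses are phrased as they are, is that the edge transport and $\tau$-preservation properties of $T$ are only assumed on $D$, so one must know the intermediate elements of any $\leqR$-chain between two points of $D$ stay in $D$; the KL interval set condition is tailored to supply exactly this, replacing the ad hoc verification carried out in \citeKL.
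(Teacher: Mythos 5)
Your proof is correct and takes essentially the same approach as the paper: take a witnessing chain for $x \leqR y$, use the KL interval set hypothesis to confine the intermediate terms to $D$, then push the whole chain through $T$ using the edge transport and $\tau$-preservation properties. The only cosmetic difference is that you explicitly flag the preorder property of $\leqR$ when justifying $x \leqR w_i \leqR y$, which the paper states as clear.
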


  \begin{proof}
    Let $x, y \in D$ with $x \leqR y$.
    Then there is a sequence $w_1,\dots,w_n$ of elements of $W$
    with $w_1 = x$ and $w_n = y$  such that $\mutilde(w_i,w_{i+1}) > 0$ and $\tau_R(w_i) \not\subset \tau_R(w_{i+1})$ for $1 \le i \le n-1$.
    Since $D$ is a right KL interval set, and
    since clearly $x \leqR w_i \leqR y$ for $2 \leq i \leq y$, we see that $w_i \in D$ for $2 \leq i \leq y$.

    Now, applying $T$ to the sequence $w_1,\dots,w_n$, we obtain a new sequence $w'_1,\dots,w'_n$, with $w'_i = T(w_i)$.
    Since $T$ is an edge transport function, we have $\mutilde(w'_i,w'_{i+1}) > 0$ for $1 \le i \le n-1$.
    Since $T$ is right \ti\ preserving, we have $\tauR(w'_i) = \tauR(w_i)$, and so $\tauR(w'_i) \not\subset \tauR(w'_{i+1})$ for $1 \le i \le n-1$.
    Thus $w'_1 \leqR w'_n$, that is, $T(x) \leqR T(y)$.
  \end{proof}

  \begin{corollary}
    \label{cor:edgeTransportEquiv}
    Let $T: D \longrightarrow W$ be an edge transport function.
    Let $\bar D$ be its image.
    Assume in addition that both $D$ and $\bar D$ are right KL interval sets and that $T$ and $T^{-1}$ are right \ti\ preserving.
    Let  $x, y \in D$.
    Then $x \leqR y$ if and only if $T(x) \leqR T(y)$.
    In particular, $x \equivR y$ if and only if $T(x) \equivR T(y)$.

    Let $C \subseteq D$ be a right cell.
    Then $T(C)$ is also a right cell, and $T$ gives an isomorphism from the $W$ graph of $C$ to the $W$ graph of $T(C)$.

    Similarly, with left and right interchanged.
  \end{corollary}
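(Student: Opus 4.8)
The plan is to derive everything from \myrefP{prop:edgeTransportKLOrderA}, applied both to $T$ and to its inverse. First I would observe that, since $T$ is an injection and $\mutilde(T(x), T(y)) = \mutilde(x, y)$ for all $x, y \in D$, the inverse map $T^{-1}: \bar D \longrightarrow W$ is again an edge transport function in the sense of \myrefD{def:edgeTransportFunction}: it is injective, and for $x', y' \in \bar D$ we may write $x' = T(x)$, $y' = T(y)$ with $x, y \in D$, so that $\mutilde(T^{-1}(x'), T^{-1}(y')) = \mutilde(x, y) = \mutilde(x', y')$. By hypothesis $\bar D$ is a right KL interval set and $T^{-1}$ is right \ti\ preserving, so the hypotheses of \myrefP{prop:edgeTransportKLOrderA} are met by $T^{-1}$ just as they are by $T$.

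Next, for the order statement I would argue as follows. If $x, y \in D$ with $x \leqR y$, then $T(x) \leqR T(y)$ by \myrefP{prop:edgeTransportKLOrderA} applied to $T$. Conversely, if $T(x) \leqR T(y)$, then applying \myrefP{prop:edgeTransportKLOrderA} to $T^{-1}$ gives $x = T^{-1}(T(x)) \leqR T^{-1}(T(y)) = y$. This proves $x \leqR y$ if and only if $T(x) \leqR T(y)$, and the corresponding statement for $\equivR$ follows immediately since $\equivR$ is the symmetrization of $\leqR$.

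For the cell statement, let $C \subseteq D$ be a right cell, i.e.\ a full $\equivR$-equivalence class, and fix $x_0 \in C$. For any $x, y \in C$ we have $x \equivR y$, hence $T(x) \equivR T(y)$, so $T(C)$ is contained in a single $\equivR$-class. To see it is the whole class, I would take $z \in W$ with $z \equivR T(x_0)$: since $T(x_0) \in \bar D$ and $\bar D$ is a right KL interval set, \myrefP{prop:KLIntervalEquiv} gives $z \in \bar D$, so $z = T(w)$ for some $w \in D$; then $T(w) \equivR T(x_0)$ forces $w \equivR x_0$ by the equivalence just established, so $w \in C$ and $z \in T(C)$. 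Thus $T(C)$ is a right cell. Finally, the $W$-graph of $C$ is the full subgraph on vertex set $C$ with edges $\{a,b\}$ for $a \neq b$ and $\mutilde(a,b) \neq 0$, each vertex $a$ being labelled by $\tauR(a)$; since $T$ restricts to a bijection $C \to T(C)$ preserving $\mutilde$ (edge transport) and preserving $\tauR$ (right \ti\ preserving), it is an isomorphism of these labelled graphs. The left-handed version is proved identically, with $\leqL$, $\equivL$, and $\tauL$ in place of their right-hand counterparts throughout. The only step needing any care is the verification in the first paragraph that $T^{-1}$ inherits the hypotheses of \myrefP{prop:edgeTransportKLOrderA}; everything after that is formal.
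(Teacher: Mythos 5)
Your proposal is correct and follows the route the paper clearly intends (the corollary is stated without a separate proof, immediately after \myrefP{prop:edgeTransportKLOrderA}, and is meant to be obtained by applying that proposition to both $T$ and $T^{-1}$). You correctly observe that $T^{-1}$ is again an edge transport function, you invoke the KL interval property of $\bar D$ via \myrefP{prop:KLIntervalEquiv} to show that $T(C)$ exhausts its equivalence class, and you note that preservation of $\mutilde$ and $\tauR$ gives the $W$-graph isomorphism; these are exactly the steps required.
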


  For ease of future reference, we'll note here that the previous proposition and corollary apply to the Knuth maps.

  \begin{proposition}
    \label{prop:KnuthAKLOrder}
    Let $s, t \in S$ with $st$ of order 3.
    Then $\TstL$ (resp.\ $\TstR$) is right (resp.\ left) KL order preserving.
  \end{proposition}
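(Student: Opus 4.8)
The plan is to obtain this as an immediate application of \myrefP{prop:edgeTransportKLOrderA}. That proposition takes as hypotheses that a map $T\colon D \longrightarrow W$ is an edge transport function, that $D$ is a right KL interval set, and that $T$ is right \ti\ preserving, and concludes that $T$ is right KL order preserving. So the whole task reduces to checking that $\TstL$ meets those three hypotheses, each of which has already been recorded earlier in this section.

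Concretely, first I would cite \myrefP{prop:KnuthAEdgeTransportFunction} to get that $\TstL$ is an edge transport function. Next, \myrefP{prop:KnuthAKLSet} gives that the domain $\DstL$ is a right KL interval set. For the remaining hypothesis, right \ti\ preservation, I would argue as follows: by \myrefP{prop:KnuthAKLCellFunction} the map $\TstL$ is a left KL cell function, and by \myrefR{rem:KLcellTauPreserving} (itself an immediate consequence of \myrefP{prop:tauCell}) a left KL cell function is right \ti\ preserving. Feeding these three facts into \myrefP{prop:edgeTransportKLOrderA} yields that $\TstL$ is right KL order preserving. The statement for $\TstR$ is the mirror image, obtained by interchanging left and right throughout the same chain of citations.

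I do not anticipate any real obstacle here; this proposition is deliberately a packaging statement, assembled from ingredients already proved. The one point that requires a moment of care is keeping the sidedness straight: the left-multiplication map $\TstL$ pairs with a \emph{right} KL interval domain and \emph{right} \ti\ preservation, hence gives a \emph{right} KL order preserving map — which is exactly the pairing hard-wired into \myrefP{prop:edgeTransportKLOrderA}, so nothing extra is needed.
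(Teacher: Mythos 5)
Your proposal is correct and follows exactly the same route as the paper, which likewise simply combines \myrefP{prop:KnuthAKLSet}, \myrefP{prop:KnuthAKLCellFunction}, \myrefR{rem:KLcellTauPreserving}, \myrefP{prop:KnuthAEdgeTransportFunction}, and \myrefP{prop:edgeTransportKLOrderA}. No gaps.
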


  \begin{proof}
    This combines \myrefP{prop:KnuthAKLSet}, \myrefP{prop:KnuthAKLCellFunction}, \myrefR{rem:KLcellTauPreserving}, \myrefP{prop:KnuthAEdgeTransportFunction}, and \myrefP{prop:edgeTransportKLOrderA}.
  \end{proof}

  \begin{proposition}
    \label{prop:KnuthALeq}
    Suppose $s, t \in S$ with $st$ of order 3.
    Suppose $x, y \in \DstL$.  Then $x \leqR y$ if and only if $\TstL(x) \leqR \TstL(y)$.
    In particular, if $x, y \in \DstL$ then $x \equivR y$ if and only if $\TstL(x) \equivR \TstL(y)$.

    Let $C \subseteq \DstL$ be a right cell.
    Then $\TstL(C)$ is also a right cell, and $\TstL$ gives an isomorphism from the $W$ graph of $C$ to the $W$ graph of $\TstL(C)$.

    Similarly, with left and right interchanged.
  \end{proposition}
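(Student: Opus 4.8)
The plan is to obtain this proposition as an immediate application of \myrefC{cor:edgeTransportEquiv} to the edge transport function $T = \TstL$ with domain $D = \DstL$. All that is needed is to verify the three hypotheses of that corollary: that $T$ is an edge transport function, that both $D$ and its image $\bar D$ are right KL interval sets, and that $T$ and $T^{-1}$ are right \ti\ preserving.

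First I would recall that by \myrefD{def:KnuthA} the image of $\TstL$ is exactly $D^L_{t,s}(W)$, and that $\TstL$ is a bijection onto it whose inverse is $T^L_{t,s}$ (the remark following \myrefP{prop:talbAltDef}, in its left version). Then the four preceding propositions supply everything: $\TstL$ is an edge transport function by \myrefP{prop:KnuthAEdgeTransportFunction}; $D = \DstL$ and $\bar D = D^L_{t,s}(W)$ are both right KL interval sets by \myrefP{prop:KnuthAKLSet} (the second applied with the roles of $s$ and $t$ exchanged); and $\TstL$, as well as its inverse $T^L_{t,s}$, is a left KL cell function by \myrefP{prop:KnuthAKLCellFunction}, hence right \ti\ preserving by \myrefR{rem:KLcellTauPreserving}. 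Feeding these into \myrefC{cor:edgeTransportEquiv} yields all the assertions at once: $x \leqR y$ if and only if $\TstL(x) \leqR \TstL(y)$, the corresponding statement for $\equivR$, and the fact that a right cell $C \subseteq \DstL$ is carried to a right cell with an isomorphism of $W$-graphs. The version with left and right interchanged follows by running the identical argument with the roles of left and right swapped throughout (using the left-right symmetric forms of \myrefP{prop:KnuthAKLSet}, \myrefP{prop:KnuthAKLCellFunction}, and \myrefP{prop:KnuthAEdgeTransportFunction}).

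I do not expect any real obstacle; this proposition is a bookkeeping corollary that assembles the structural facts already proved about the Knuth maps. The only points that call for a moment's care are confirming that the image of $\TstL$ is literally $D^L_{t,s}(W)$, so that the ``image is a right KL interval set'' hypothesis of the corollary is met, and that $T^{-1}$ is again a Knuth map, so that the hypotheses of the corollary apply symmetrically to $T$ and $T^{-1}$.
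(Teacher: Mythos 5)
Your proposal is correct and follows essentially the same route as the paper: both invoke \myrefC{cor:edgeTransportEquiv} after assembling \myrefP{prop:KnuthAEdgeTransportFunction}, \myrefP{prop:KnuthAKLSet}, \myrefP{prop:KnuthAKLCellFunction}, \myrefR{rem:KLcellTauPreserving}, and the observation that $T_{s,t}^{-1} = T_{t,s}$. Your extra care in checking that the image of $\TstL$ is exactly $D^L_{t,s}(W)$ is implicit in the paper's terse proof but is the right point to confirm.
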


  \begin{proof}
    This combines \myrefP{prop:KnuthAKLSet}, \myrefP{prop:KnuthAKLCellFunction}, \myrefR{rem:KLcellTauPreserving}, \myrefP{prop:KnuthAEdgeTransportFunction}, and \myrefC{cor:edgeTransportEquiv}, after noting that $T_{s,t}^{-1} = T_{t,s}$.
  \end{proof}

  \begin{remark}
    The first part of this corollary is related to Corollary 3.6 of \cite{vogan_1979}.
    That is, Corollary 3.6 of \cite{vogan_1979} is the primitive ideal version of \myrefP{prop:KnuthALeq}.
    The second part of \myrefP{prop:KnuthALeq} is Corollary 4.3 of \citeKL, plus some of Section 5 of \citeKL.
  \end{remark}

  \section{The Generalized \texorpdfstring{$\tau$-invariant}{tau-invariant}, Part 1}
  \label{sec:genTauA}

  In this section, we'll present the easy version of the generalized \ti, the one which appears in \citeKL.
  The generalized \ti\ was first defined in \cite{vogan_1979}, Definition 3.10.

  The generalized \ti\ can be defined with respect to any set of maps each of which has domain a subset of $W$ and range $W$.
  Though, to be useful, the maps need to be (left or right) KL order preserving.
  But first, the definition.

  \begin{definition}
    \label{def:genTauA}
    Let $\euscr F$ be a set of functions each of which has domain a subset of $W$ and range $W$.
    We define the left generalized \ti\ with respect to $\euscr F$ as follows.
    Let $w_1, w_2 \in W$. We say $w_1$ and $w_2$ are equivalent to order 0 if $\tau_L(w_1) = \tau_L(w_2)$.
    For $n \geq 1$, we say $w_1$ and $w_2$ are equivalent to order $n$, $w_1 \underset{n}{\approx} w_2$, if the following two conditions hold.
    \begin{enumerate}
      \item $w_1 \underset{n - 1}{\approx} w_2$.
      \item For every $T \in \euscr F$ with $w_1$ in the domain of $T$, we have that $w_2$ is in the domain of $T$ and $T(w_1) \underset{n - 1}{\approx} T(w_2)$, and similarly with $w_2$ in place of $w_1$.
    \end{enumerate}

    We say that $w_1$ and $w_2$ are left equivalent to infinite order, or that $w_1$ and $w_2$ have the same left generalized \ti, with respect to the set $\euscr F$, if $w_1 \underset{n}{\approx} w_2$ for every non-negative integer $n$.
    We'll write this as $w_1 \equivGTF w_2$.
    Alternate notations are $w_1 \equivUnder{GT} w_2$ or $w_1 \equivUnder{GTL} w_2$, when $\euscr F$ is understood.

    We define analogously the right generalized \ti.
  \end{definition}

  The main property that we need about the left (resp.\ right) generalized \ti\ is that, if defined with respect to an appropriate set $\euscr F$, it is a weaker equivalence relation than that of being in the same right (resp.\ left) cell.

  \begin{theorem}
    \label{thm:genTauA}
    Let $\euscr F$ be a set of right KL order preserving functions and suppose the domain of every $T \in \euscr F$ is a right KL interval set.
    Let $w_1, w_2 \in W$.  If $w_1 \equivR w_2$ then $w_1$ and $w_2$ have the same left generalized $\tau$-invariant with respect to $\euscr F$.
    Similarly, interchanging left and right.
  \end{theorem}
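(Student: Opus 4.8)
The plan is to argue by induction on the order $n$, proving the (apparently stronger, but logically equivalent) statement: for \emph{all} $x, y \in W$ with $x \equivR y$, one has $x \underset{n}{\approx} y$, where $\underset{n}{\approx}$ denotes equivalence to order $n$ in the sense of \myrefD{def:genTauA} with respect to $\euscr F$, taken on the left. Taking $x = w_1$, $y = w_2$ and letting $n$ range over all non-negative integers then gives $w_1 \equivGTF w_2$. The statement with left and right interchanged follows by the symmetric argument, so I only write the one version.

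For the base case $n = 0$: if $x \equivR y$ then by \myrefP{prop:tauCell} we have $\tauL(x) = \tauL(y)$, which is precisely the statement that $x$ and $y$ are equivalent to order $0$. For the inductive step, assume the claim for $n - 1$, and let $x \equivR y$. Condition~1 of \myrefD{def:genTauA}, namely $x \underset{n-1}{\approx} y$, is immediate from the inductive hypothesis. For condition~2, fix $T \in \euscr F$ with domain $D$ and suppose $x \in D$. Since $D$ is a right KL interval set and $x \equivR y$, \myrefP{prop:KLIntervalEquiv} gives $y \in D$. Since $x \equivR y$ means $x \leqR y$ and $y \leqR x$, and $T$ is right KL order preserving, we obtain $T(x) \leqR T(y)$ and $T(y) \leqR T(x)$, that is, $T(x) \equivR T(y)$. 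Applying the inductive hypothesis to the pair $T(x), T(y)$ yields $T(x) \underset{n-1}{\approx} T(y)$. The same reasoning with the roles of $x$ and $y$ exchanged handles the symmetric requirement in condition~2. Hence $x \underset{n}{\approx} y$, which completes the induction.

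I do not expect a genuine obstacle here: the theorem is a formal consequence of the definitions together with \myrefP{prop:tauCell}, \myrefP{prop:KLIntervalEquiv}, and the hypotheses that each $T \in \euscr F$ is right KL order preserving with a right KL interval set as domain. The only point that needs a little care is phrasing the inductive hypothesis as a statement about all $\equivR$-equivalent pairs rather than just the fixed pair $w_1, w_2$, so that it is available for the images $T(w_1), T(w_2)$ at the next stage of the induction.
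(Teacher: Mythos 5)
Your proof is correct and takes essentially the same approach as the paper: induction on $n$, with the base case from \myrefP{prop:tauCell}, domain membership from \myrefP{prop:KLIntervalEquiv}, and the image equivalence $T(x) \equivR T(y)$ from the KL order preserving hypothesis. If anything, your direct appeal to the hypothesis that $T$ is right KL order preserving is cleaner than the paper's citation of \myrefP{prop:edgeTransportKLOrderA} at that step, since the latter is about deducing KL order preservation from other hypotheses, which is already assumed here.
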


  \begin{proof}
      We will prove by induction on $n$ that $w_1 \equivR w_2$ implies that $w_1 \underset{n}{\approx} w_2$ for all integers $n \geq 0$.
      When $n = 0$, this is true by \myrefP{prop:tauCell}.
      So, assume now that $n \geq 1$ and that $y \equivR w$ implies that $y \underset{k}{\approx} w$ for $0 \leq k \leq n - 1$.
      In particular, this says that $w_1 \underset{n -1}{\approx} w_2$, which is the first condition to be satisfied.

      For condition 2, let $T \in \euscr F$ with domain $D$. \myrefP{prop:KLIntervalEquiv} says that $w_1 \in D$ if and only if $w_2 \in D$.
      If $w_1 \in D$  then
      \myrefP{prop:edgeTransportKLOrderA} says that $T(w_1) \equivR T(w_2)$ and so by induction $T(w_1) \underset{n - 1}{\approx} T(w_2)$.
  \end{proof}

  \begin{theorem}
    \label{thm:genTauKnuthConclusion}
    In the context of \myrefD{def:genTauA}, let $\euscr F$ be a set of right Knuth maps.
    Suppose $y, w \in W$ with $y \equivL w$.
    Then $y \equivGTF w$.
    Similarly, interchanging left and right.
  \end{theorem}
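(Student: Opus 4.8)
The plan is to obtain this as an immediate consequence of \myrefT{thm:genTauA} (in its ``interchanging left and right'' form) together with the properties of Knuth maps already recorded in \myrefS{sec:edgeTransportFunctions}; no genuinely new argument should be needed. First I would unpack the hypothesis that $\euscr F$ is ``a set of right Knuth maps'': every $T \in \euscr F$ is of the form $\TstR$ for some $s, t \in S$ with $st$ of order $3$, and so has domain $\DstR$. By \myrefP{prop:KnuthAKLSet}, each domain $\DstR$ is a left KL interval set, and by \myrefP{prop:KnuthAKLOrder}, each $\TstR$ is left KL order preserving. Hence every member of $\euscr F$ is a left KL order preserving function whose domain is a left KL interval set.

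With that in hand, I would invoke the left/right mirror of \myrefT{thm:genTauA}: since $y \equivL w$ and $\euscr F$ consists of left KL order preserving maps with domains that are left KL interval sets, the theorem yields that $y$ and $w$ have the same generalized \ti\ relative to $\euscr F$, i.e.\ $y \equivGTF w$. (Here the order-$0$ datum of the relevant generalized \ti\ is the one that is constant on left cells, which is exactly what \myrefP{prop:tauCell} supplies as the base case of the induction inside \myrefT{thm:genTauA}; everything at higher order is then forced by the KL order preserving property of the maps in $\euscr F$ and the identification of domains via \myrefP{prop:KLIntervalEquiv}.) The closing ``similarly, interchanging left and right'' assertion I would prove the same way, now using that $\TstL$ is right KL order preserving with domain $\DstL$ a right KL interval set (\myrefP{prop:KnuthAKLOrder} and \myrefP{prop:KnuthAKLSet}), and applying \myrefT{thm:genTauA} as literally stated.

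The only point demanding any care is keeping the left/right labels aligned: a \emph{right}-acting Knuth map must be matched with the \emph{left} hypotheses of \myrefT{thm:genTauA} (left KL order preserving, left KL interval domain), so it is the mirror form of that theorem, not its verbatim statement, that applies. There is no analytic obstacle beyond this bookkeeping — all the substantive content already sits in \myrefP{prop:KnuthAKLSet}, \myrefP{prop:KnuthAKLOrder}, and \myrefT{thm:genTauA}.
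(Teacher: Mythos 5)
Your proposal is correct and is essentially identical to the paper's own proof, which simply combines \myrefT{thm:genTauA} with \myrefP{prop:KnuthAKLOrder} and \myrefP{prop:KnuthAKLSet}. You have merely made the left/right bookkeeping explicit — a right Knuth map $\TstR$ is left KL order preserving with domain $\DstR$ a left KL interval set, so the mirrored form of \myrefT{thm:genTauA} applies — which is exactly what the paper's terse citation is asking the reader to check.
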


  \begin{proof}
    This combines \myrefT{thm:genTauA} with \myrefP{prop:KnuthAKLOrder} and \myrefP{prop:KnuthAKLSet}.
  \end{proof}

  \begin{remark}
    The last theorem is used (though not stated separately) in Section 5 of \citeKL.
  \end{remark}

  Some examples will probably make the definition of the generalized \ti\ clearer.
  We'll work in the Weyl group of type $D_4$, since that will also allow us to complete the proof of \myrefP{prop:D4LeftCells}.
  Our set $\euscr F$ will be the set of left Knuth maps.
  The generalized \ti, as defined, is an equivalence relation, not, for example, a set, such as the \ti.
  However, we can, in small cases, given an element $w \in W$, draw a picture which has all the information necessary to understand the generalized \ti\ of $w$.
  So, let's look at that.

  Let $W$ be the Weyl group of type $D_4$, with the elements of $S$ labeled as in \autoref{fig:D4}.
  First let $w = s_4$.
  Its generalized \ti\ is pictured in \autoref{fig:genTauForS4}.
  We have $\tau(w) = \{s_4\}$.
  There is only one Knuth map which we can apply to $w$, namely $T_{3,4}$.
  The \ti\ of $T_{3,4}(w) = s_3s_4$ is $\{s_3\}$, as pictured.
  Next, we apply $T_{1,3}$ to $T_{3,4}(w)$.
  We have $s_2 \notin T_{1,3}(T_{3,4}(w)) = s_1s_3s_4$.
  Adding $T_{2,3}(T_{3,4}(w))$ to the picture completes the picture of the generalized \ti\ of $w$.
  There are no more Knuth maps which we can apply, except inverses of the ones already applied.

  \begin{figure}[!ht]
    \centering
    \begin{tikzpicture}[scale=.7, every node/.style = {transform shape}]
      \node (Node4) at (0 * \xSLT, 0 * \ySLT) {\TypeIPicture{4}};
      \node (Node3) at (1.5 * \xSLT, 0 * \ySLT) {\TypeDPicture};
      \node (Node1) at (3 * \xSLT, .7 * \ySLT) {\TypeIPicture{1}};
      \node (Node2) at (3 * \xSLT, -.7 * \ySLT) {\TypeIPicture{2}};

      \draw[->, thick, brown, shorten >= .1cm] (Node4.east) --  node[above] {$T_{3,4}$} (Node3.west);
      \draw[->, thick, brown, shorten <= .2cm, shorten >= .1cm] (Node3.east) --  node[above] {$T_{1,3}$} (Node1.west);
      \draw[->, thick, brown, shorten <= .2cm, shorten >= .1cm] (Node3.east) --  node[above] {$T_{2,3}$} (Node2.west);
    \end{tikzpicture}
    \caption{Generalized \ti\ for $w = s_4$}
    \label{fig:genTauForS4}
  \end{figure}

  Now let $y = s_1s_2$.
  Its generalized \ti\ is pictured in \autoref{fig:genTauForS1s2}.
  We have $\tau(y) = \{s_1, s_2\}$.
  We have $T_{3, 1}(y) = T_{3,2}(y) = s_3s_1s_2$, with $\tau(s_3s_1s_2) = \{s_3\}$, as pictured.
  Finally, let $z = T_{4, 3}(s_3s_1s_2) = s_4s_3s_1s_2$.
  We have $\tau(z) = \{s_4\}$.

  \begin{figure}[!ht]
    \centering
    \begin{tikzpicture}[scale=.7, every node/.style = {transform shape}]
      \node (Node12) at (0 * \xSLT, 0 * \ySLT) {\TypeBPicture{4}};
      \node (Node3) at (1.5 * \xSLT, 0 * \ySLT) {\TypeDPicture};
      \node (Node4) at (3 * \xSLT, 0 * \ySLT) {\TypeIPicture{4}};

      \draw[<-, thick, brown, shorten <= .1cm] (Node4.west) --  node[above] {$T_{4,3}$} (Node3.east);
      \draw[<-, thick, brown, shorten <= .1cm] (Node3.west) --  node[above] {$T_{3,1}$} node[below] {$T_{3,2}$} (Node12.east);
    \end{tikzpicture}
    \caption{Generalized \ti\ for $y = s_1s_2$}
    \label{fig:genTauForS1s2}
  \end{figure}

  Now that we've seen the generalized \ti\ of two elements, let's see what more we can get from this.
  For one, we can get generalized \ti\ pictures of the other elements which we have seen so far just by reversing some arrows (and relabeling them with the inverse function).  For example, if we want to see the generalized \ti\ of $z$, which is the element on the right in \autoref{fig:genTauForS1s2}, we can just reverse the two arrows.
  See \autoref{fig:genTauForS4s3s1s2}.

  \begin{figure}[!ht]
    \centering
    \begin{tikzpicture}[scale=.7, every node/.style = {transform shape}]
      \node (Node12) at (0 * \xSLT, 0 * \ySLT) {\TypeBPicture{4}};
      \node (Node3) at (1.5 * \xSLT, 0 * \ySLT) {\TypeDPicture};
      \node (Node4) at (3 * \xSLT, 0 * \ySLT) {\TypeIPicture{4}};

      \draw[->, thick, brown, shorten <= .1cm] (Node4.west) --  node[above] {$T_{3,4}$} (Node3.east);
      \draw[->, thick, brown, shorten <= .1cm] (Node3.west) --  node[above] {$T_{1,3}$} node[below] {$T_{2,3}$} (Node12.east);
    \end{tikzpicture}
    \caption{Generalized \ti\ for $z = s_4s_3s_1s_2$}
    \label{fig:genTauForS4s3s1s2}
  \end{figure}

  Let's look at the examples which we have so far to see how the generalized \ti\ separates points.
  Consider $w$ and $z$.
  They both have the same \ti, so $z \underset{0}{\approx} w$.
  The only $T_{s, t}$ defined on $w$ or $z$ is $T_{3, 4}$.
  Set $w_1 = T_{3, 4}(w)$ and $z_1 = T_{3, 4}(z)$.
  Since $\tau(w_1) = \tau(z_1) = \{s_3\}$, we have $z \underset{1}{\approx} w$.
  Now, however, when we apply $T_{1,3}$ to $w_1$ and to $z_1$, the resulting \ti s do not agree.
  So $z_1 \underset{1}{\not\approx} w_1$, and so $z \underset{2}{\not\approx} w$.
  Thus $z$ and $w$ do not have the same generalized \ti.

  Next, let's count the elements of $W$ which have generalized \ti s which we've seen so far, and simple variations of them.
  First, as per \myrefT{thm:genTauA}, any two elements in the same right cell have the same left generalized \ti\ with respect to the left Knuth maps.
  Also, by \myrefP{prop:KnuthAKLCellFunction}, any element which can be obtained from a given element by a sequence of Knuth maps acting on the right is in the same right cell as the starting element.
  Starting with $w = s_4$, we can obtain $s_4s_3$, $s_4s_3s_1$ and $s_4s_3s_2$ using Knuth maps on the right.
  So, the right cell containing $w$ (call it $C^R(w)$) has a least four elements, and all those elements have the generalized \ti\ pictured in \autoref{fig:genTauForS4}.

  Now let's apply $T_{3,4}$ to $C^R(w)$.
  By \myrefC{prop:KnuthALeq}, the result will also be a right cell, specifically $C^R(s_3s_4)$.
  Again by \myrefT{thm:genTauA}, all the elements of $C^R(s_3s_4)$ will have the generalized \ti\ pictured by modifying \autoref{fig:genTauForS4} to reverse the left-most arrow.
  This accounts for (at least) another four elements of $W$.
  We obtain eight more elements by applying $T_{1, 3}$ and $T_{2,3}$ to $C^R(s_3s_4)$.
  Again, the pictures of their generalized \ti s are obtained by reversing arrows in \autoref{fig:genTauForS4}.

  By a similar argument, we obtain nine elements whose generalized \ti\ is pictured in \autoref{fig:genTauForS1s2}, or variations thereof.
  We can obtain another eighteen elements by starting with $s_1s_4$ or $s_2s_4$ instead of $s_1s_2$.
  So, in \autoref{fig:genTauForS4}, \autoref{fig:genTauForS1s2}, and variations, we have seen the generalized \ti\ of 43 elements of $W$.

  We can see another 43 elements of $W$ by multiplying by the long element, $w_0$, on the right.
  By \myrefP{prop:longElement}, this will operate on a picture of the generalized \ti\ by inverting the \ti s and replacing each $T_{s,t}$ with $T_{t,s}$, with the arrow going in the same direction.
  \autoref{fig:genTauForS4w0} and \autoref{fig:genTauForS1s2w0} show the results of applying this operation to
  \autoref{fig:genTauForS4} and \autoref{fig:genTauForS1s2}, respectively.
  \begin{figure}[!ht]
    \centering
    \begin{tikzpicture}[scale=.7, every node/.style = {transform shape}]
      \node (Node4) at (0 * \xSLT, 0 * \ySLT) {\TypeFPicture{4}};
      \node (Node3) at (1.5 * \xSLT, 0 * \ySLT) {\TypeCPicture};
      \node (Node1) at (3 * \xSLT, .7 * \ySLT) {\TypeFPicture{1}};
      \node (Node2) at (3 * \xSLT, -.7 * \ySLT) {\TypeFPicture{2}};

      \draw[->, thick, brown, shorten >= .1cm] (Node4.east) --  node[above] {$T_{4,3}$} (Node3.west);
      \draw[->, thick, brown, shorten <= .2cm, shorten >= .1cm] (Node3.east) --  node[above] {$T_{3,1}$} (Node1.west);
      \draw[->, thick, brown, shorten <= .2cm, shorten >= .1cm] (Node3.east) --  node[above] {$T_{3,1}$} (Node2.west);
    \end{tikzpicture}
    \caption{Generalized \ti\ for $s_4w_0$}
    \label{fig:genTauForS4w0}
  \end{figure}

  \begin{figure}[!ht]
    \centering
    \begin{tikzpicture}[scale=.7, every node/.style = {transform shape}]
      \node (Node12) at (0 * \xSLT, 0 * \ySLT) {\TypeAPicture{4}};
      \node (Node3) at (1.5 * \xSLT, 0 * \ySLT) {\TypeCPicture};
      \node (Node4) at (3 * \xSLT, 0 * \ySLT) {\TypeFPicture{4}};

      \draw[<-, thick, brown, shorten <= .1cm] (Node4.west) --  node[above] {$T_{3,4}$} (Node3.east);
      \draw[<-, thick, brown, shorten <= .1cm] (Node3.west) --  node[above] {$T_{1,3}$} node[below] {$T_{2,3}$} (Node12.east);
    \end{tikzpicture}
    \caption{Generalized \ti\ for $s_1s_2w_0$}
    \label{fig:genTauForS1s2w0}
  \end{figure}

  So, we've now seen the generalized \ti\ of 86 elements of $W$.
  Two more elements are the identity element and the long element, with $\tau(e) = \varnothing$ and $\tau(w_0) = \{s_1, s_2, s_3, s_4\}$ (and thus no $T_{s,t}$ maps are defined on either), for a total of 88 elements.
  The elements studied in \myrefS{sec:D4cells} comprise 104 elements.
  Since $W$ has 192 elements, we will have seen all the generalized \ti\ pictures once we draw those for the elements from \myrefS{sec:D4cells}.
  So let's do that.

  For a type $\scC$ element, it's simple.
  See \autoref{fig:genTauForS1s2s4}.
  For type $\scD$, reverse the arrow.

  \begin{figure}[!ht]
    \centering
    \begin{tikzpicture}[scale=.7, every node/.style = {transform shape}]
      \node (NodeC) at (1.5 * \xSLT, 0 * \ySLT) {\TypeCPicture};
      \node (NodeD) at (3 * \xSLT, 0 * \ySLT) {\TypeDPicture};

      \draw[->, thick, brown, shorten >= .1cm] (NodeC.east) --  node[above] {$T_{3,1}\quad T_{3,2}$} node[below] {$T_{3,4}$} (NodeD.west);
    \end{tikzpicture}
    \caption{Generalized \ti\ for $s_1s_2s_4$}
    \label{fig:genTauForS1s2s4}
  \end{figure}

  The generalized \ti\ picture for an element of type $\scA_1$ is an infinite chain.
  It does not circle around to its start after six or twelve \ti s have been seen, as the actual elements in the cells do.
  It is just a record of \ti s as we apply any applicable Knuth maps.
  See \autoref{fig:genTauForA1}.
  Thus, the elements of type $\scA_1$ in the three figures \refDFourFigures\ all have the same generalized \ti.
  For any type $\scA$ or type $\scB$ element, we can use the same picture after reversing some of the arrows.

  \begin{figure}[!ht]
    \centering
    \begin{tikzpicture}[scale=.64, every node/.style = {transform shape}]
      \node at (-1.5 * \xSLT, 0 * \ySLT) {\Large{$\dots$}};
      \node (NodeA2) at (-1 * \xSLT, 0 * \ySLT) {\TypeAPicture{2}};
      \node (NodeB4) at (0 * \xSLT, 0 * \ySLT) {\TypeBPicture{4}};
      \node (NodeA1) at (1 * \xSLT, 0 * \ySLT) {\TypeAPicture{1}};
      \node (NodeB2) at (2 * \xSLT, 0 * \ySLT) {\TypeBPicture{2}};
      \node (NodeA4) at (3 * \xSLT, 0 * \ySLT) {\TypeAPicture{4}};
      \node at (3.5 * \xSLT, 0 * \ySLT) {\Large{$\dots$}};

      \draw[<-, thick, brown, shorten >= .1cm] (NodeA2.east) --  node[above] {$T_{3,1}$} (NodeB4.west);
      \draw[<-, thick, brown, shorten >= .1cm] (NodeB4.east) --  node[above] {$T_{2,3}$} (NodeA1.west);
      \draw[->, thick, brown, shorten >= .1cm] (NodeA1.east) --  node[above] {$T_{4,3}$} (NodeB2.west);
      \draw[->, thick, brown, shorten >= .1cm] (NodeB2.east) --  node[above] {$T_{3,1}$} (NodeA4.west);
    \end{tikzpicture}
    \caption{Generalized \ti\ for Type $\scA_1$}
    \label{fig:genTauForA1}
  \end{figure}

  Now that we have seen all the possible generalized \ti s, we see that, though there are some elements in $W$ not in the clumps which have the same \ti\ as a type $\scC$ element, none of them have the same generalized \ti\ as a type $\scC$ element.
  Similarly for the other types.

  We can use the above discussion to complete the proof of \myrefP{prop:D4LeftCells}.

  \begin{proof}[Second half of the proof of \myrefP{prop:D4LeftCells}]
    As before, we'll show this for $C(10, a)$.
    The others are similar.
    We have already seen that the elements of $C(10, a)$ are in the same left cell.

    Now we have to see that the elements of $C(10, a)$ are not in the same left cell as any other elements of $\WDFour$.
    We'll do this using the generalized \ti\ with respect to the set of right Knuth maps.
    The converse of \myrefT{thm:genTauKnuthConclusion} says that two elemnts which do not have the same generalized \ti\ with respect to the set of right Knuth maps are not in the same left cell.

    So, basically, we just need to switch sides from what we did above.
    Above, we computed left generalized \ti s, which are constant on right cells.
    Now, instead, we are considering $C(10, a)$, which we want to show is a left cell.
    To do that, we need to compute the right generalized \ti\ of its elements, and of the other elements in $\WDFour$, with respect to the right Knuth maps.

    This is no different than what we have done above.
    We just need to reverse the order in which the reduced expressions of the elements in question are written.
    For example, the bottom element of $C(10, a)$ is $s_1s_2s_4$, an involution.
    So, it is right type $\scC$ as well as left type $\scC$.
    Both its left and right generalized \ti s are illustrated by \autoref{fig:genTauForS1s2s4}.

    Similarly, the bottom element of $C(14, a, 4)$ is $s_4s_3s_4$, also an involution.
    So, it is of right type $\scA_4$ as well as being of left type $\scA_4$.
    In $C(10, b)$, the lower type $\scD$ element is $s_3s_1s_2s_4s_3$, also an involution and so of right type $\scD$ as well as left type $\scD$.
    In $C(14, b, 4)$, the lower type $\scB_4$ element is $s_1s_2s_3s_2s_1$, also an involution and so of right type $\scB_4$ as well as left type $\scB_4$.

    So, as above, we see that no other elements have the same right generalized \ti\ as the elements of $C(10, a)$.
    We can therefore conclude that $C(10, a)$ is a left cell.
  \end{proof}

  \begin{remark}
    As we've just seen, for the Weyl group of type $D_4$, the generalized $\tau$-invariant defined using Knuth maps alone is enough to separate the left cells.
    Once we get to $D_6$, that's no longer true.
    In the Weyl group of type $D_6$, there are different left cells with the same generalized \ti\ if only Knuth maps are used.
    In \myrefS{sec:D4Maps}, we'll add the $D_4$ maps to the generalized \ti.
    As we'll see in a later paper, that will be enough to separate the left cells in Weyl groups of type $D_n$.
  \end{remark}

  Before we leave this section, let's recall some of the motivation for the definitions which we've made.

  \begin{definition}
    \label{def:equivalenceRelationF}
    Let $\euscr F$ be a set of maps such that each $T \in \euscr F$ has domain a subset of $W$ and range $W$.
    Write $\equivF$ for the equivalence relation on $W$ generated by $\euscr F$.
    That is, we have $w \equivF T(w)$ for every $T \in \euscr F$ and $w$ in the domain of $T$.
  \end{definition}

  By definition, if $\euscr F$ is a set of left (resp.\ right) KL cell functions, then $\equivF$ is a stronger equivalence relation than $\equivL$ (resp.\ $\equivR$).
  By \myrefT{thm:genTauA}, if $\euscr F'$ is a set of right (resp.\ left) KL order preserving functions whose domains are right (resp.\ left) KL interval set, then $\equivL$ (resp.\ $\equivR$) is a stronger equivalence relation than $\equivGTFPrime$.
  Ideally, we would like to find a set $\euscr F$ of left KL cell functions and a set $\euscr F'$ of right KL order preserving functions such that $\equivF$ and $\equivGTFPrime$ coincide.
  In that case, both will coincide with $\equivL$.

  As described in Section 5 of \citeKL\, this ideal situation is achieved for the Weyl group of type $A_n$, where $\euscr F$ (resp.\ $\euscr F'$) is the set of Knuth maps acting on the left (resp.\ right).

  \section{Edge Transport Functions, Part 2}
  \label{sec:edgeTransportPairs}
  In this section, we'll extend the definitions and results of \myrefS{sec:edgeTransportFunctions} to the maps defined in \myrefS{sec:B2Maps} and the maps which we'll define in \myrefS{sec:D4Maps}.

  \myrefD{def:KLIntervalSet} still works for this situation, but we need to modify
  \myrefD{def:tauPreserving} and \myrefD{def:KLCellFunction} a little.

  \begin{definition}
    \label{def:tauPreservingB}
    A function $T: D \longrightarrow \mathcal P(W)$, where $D \subset W$, is left \ti\ preserving if for all $w \in D$ and $w' \in T(w)$ we have $\tau(w') = \tau(w)$.
    We define similarly right \ti\ preserving.
  \end{definition}

  \begin{definition}
    \label{def:KLCellFunctionB}
    A function $T: D \longrightarrow \mathcal P(W)$, where $D \subset W$, is a left KL cell function if for all $w \in D$ and $w' \in T(w)$ we have $w' \equivL w$.
    We define similarly right KL cell function.
  \end{definition}

  \begin{remarkNumbered}
    As before, a left (resp.\ right) KL cell function is right (resp.\ left) \ti\ preserving by \myrefP{prop:tauCell}.
  \end{remarkNumbered}

  The next two propositions have the same proof as when $st$ is of order 3.  (See \myrefP{prop:KnuthAKLSet} and \myrefP{prop:KnuthAKLCellFunction}.)

  \begin{proposition}
  \label{prop:BTwoKLSet}
  Suppose $s, t \in S$ with $st$ of order 4.
  Then $\DstL$ is a right KL interval set, and $\DstR$ is a left KL interval set.
  \end{proposition}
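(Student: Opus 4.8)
The plan is to deduce this directly from \myrefP{prop:tauInterval}, exactly as in the order-3 case (\myrefP{prop:KnuthAKLSet}), since nothing about $st$ having order $4$ rather than $3$ plays a role here. First I would invoke the $\tau$-invariant description of the domain from \myrefD{def:domainKnuthB}, namely $\DstL = \{w \in W \mid \tau_L(w) \cap \{s,t\} = \{t\}\}$. The point is that membership in $\DstL$ depends only on the intersection of the left descent set with the two-element set $J = \{s,t\}$.

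Then, to check that $\DstL$ is a right KL interval set, I would take $x, y \in \DstL$ and $w \in W$ with $x \leqR w \leqR y$. From $x, y \in \DstL$ we get $\tau_L(x) \cap J = \{t\} = \tau_L(y) \cap J$, so \myrefP{prop:tauInterval}, applied with this $J$, yields $\tau_L(w) \cap J = \tau_L(x) \cap J = \{t\}$; hence $w \in \DstL$, which is exactly the defining condition of a right KL interval set. The assertion that $\DstR$ is a left KL interval set follows by the symmetric argument, using the left-right interchanged form of \myrefP{prop:tauInterval} together with the description $\DstR = \{w \in W \mid \tau_R(w) \cap \{s,t\} = \{t\}\}$.

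I do not expect any real obstacle: the only ingredient beyond the definitions is \myrefP{prop:tauInterval}, which is itself an immediate consequence of \myrefP{prop:tauCell}. The role of this proposition is purely bookkeeping — it records that the order-$4$ domains share the interval-set property of the Knuth-map domains, so that the formalism of \myrefS{sec:edgeTransportFunctions} (generalized $\tau$-invariants, KL order preservation, etc.) applies verbatim to the $B_2$ maps.
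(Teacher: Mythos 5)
Your argument is exactly the paper's: the paper states that \myrefP{prop:BTwoKLSet} has the same proof as \myrefP{prop:KnuthAKLSet}, which in turn is a one-line appeal to \myrefP{prop:tauInterval}, and you have simply written that appeal out in full. Correct, and no divergence from the paper's route.
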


  \begin{proposition}
  \label{prop:BTwoKLCellFunction}
  Suppose $s, t \in S$ with $st$ of order 4.
  Then $\TstL$ is a left KL cell function and
  $\TstR$ is a right KL cell function.
  \end{proposition}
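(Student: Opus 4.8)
The plan is to run the argument used for \myrefP{prop:KnuthAKLCellFunction} essentially word for word; the only new feature is that $\TstL$ now takes values in $\mathcal P(D^L_{t,s}(W))$ rather than in $D^L_{t,s}(W)$, so the cell condition must be verified for every element of the image set. By \myrefD{def:KLCellFunctionB}, it suffices to show that $w' \equivL w$ whenever $w \in \DstL$ and $w' \in \TstL(w)$.

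First I would apply the left analogue of \myrefP{prop:KnuthBAltDef}, namely $\TstL(w) = D^L_{t,s}(W) \cap \{sw, tw\}$. Hence any $w' \in \TstL(w)$ is of the form $sw$ or $tw$, so $\{w, w'\} = \{u, ru\}$ for some $r \in \{s, t\}$ and some $u \in W$ with $ru > u$. Then $P_{u, ru} = 1$ is of degree $d(u, ru) = 0$, so $\mutilde(w, w') = \mu(u, ru) = 1 > 0$.

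Next I would compare descent sets. Since $w \in \DstL$ we have $\tauL(w) \cap \{s, t\} = \{t\}$, while $w' \in D^L_{t,s}(W)$ gives $\tauL(w') \cap \{s, t\} = \{s\}$. Thus $t \in \tauL(w) \smallsetminus \tauL(w')$ and $s \in \tauL(w') \smallsetminus \tauL(w)$, so $\tauL(w) \not\subset \tauL(w')$ and $\tauL(w') \not\subset \tauL(w)$. Combining these with $\mutilde(w, w') > 0$, the two-term sequences $w, w'$ and $w', w$ each satisfy the defining condition for $\leqL$, so $w \leqL w'$ and $w' \leqL w$, i.e.\ $w \equivL w'$. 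This proves that $\TstL$ is a left KL cell function; the statement for $\TstR$ follows by interchanging the words ``left'' and ``right'' throughout.

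I do not anticipate any genuine obstacle here: this is exactly the proof in the $st$-order-$3$ case, as the paragraph preceding the proposition already indicates. The only point that needs a moment's attention is the bookkeeping when $p_J(w) = ts$, in which case $\TstL(w)$ has two elements — but since each of them is simply $sw$ or $tw$, the two lines above apply to both simultaneously.
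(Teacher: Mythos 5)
Your proposal is correct and follows exactly the approach the paper intends: the paper itself gives no separate proof for \myrefP{prop:BTwoKLCellFunction}, but simply notes that the argument is the same as for \myrefP{prop:KnuthAKLCellFunction}, and that is precisely what you have reproduced, with the only adjustment being the quantification over all elements of the set-valued image. The two ingredients --- $\mutilde(w, w') = 1$ because $w'$ is $sw$ or $tw$, and the incomparability of $\tauL(w)$ and $\tauL(w')$ because their intersections with $\{s,t\}$ are $\{t\}$ and $\{s\}$ respectively --- are exactly the ones the paper uses.
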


  In our current situation, the definition of an edge transport function is rather more complicated than in \myrefS{sec:edgeTransportFunctions}.
  We'll use a definition which encompasses the maps of this paper ($B_2$ maps and $D_4$ maps) and also hopefully those which might be defined and/or studied in the future.
  Specifically, there should be an edge transport theorem coming from a parabolic subgroup of type $E_6$.
  In addition, as seen in Section 10 of \cite{lusztig_1985}, there are (more complicated) edge transport theorems for parabolic subgroups generated by $s, t \in S$ where the order of $st$ is larger than 4.
  These also can be used to define maps which seem like they will fit into \myrefD{def:edgeTransportPair}.

  \begin{definition}
    \label{def:pairFunction}
    Let $T: D \longrightarrow \mathcal P(W)$, where $D \subset W$.
    Let $\bar D = \cup_{w \in D}T(w)$.
    Let $\pair(T): \bar D \longrightarrow \mathcal P(W)$ be defined by $\pair(T)(\bar w) = \{w \in D \mid T(w) = \bar w \}$.
    We'll call $\pair(T)$ the pair function to $T$.
    Note that $\pair(\pair(T)) = T$.
  \end{definition}

  \begin{definition}
    \label{def:edgeTransportPair}
    Let $T: D \longrightarrow \mathcal P(W)$, where $D \subset W$.
    Let $\bar T = \pair(T)$.
    We'll call $T$ a type 2 edge transport function if it satisfies the following conditions.
    Suppose $y, w \in D$
    with $\mutilde(y,w) \neq 0$.
    \begin{enumerate}
      \item If $\abs{T(y)} = \abs{T(w)} = k$ then we can write $T(y) = \{y_1,\dots,w_k\}$ and $T(w) = \{w_1,\dots,w_k\}$ so that $\mutilde(y_i, w_i) \neq 0$ for all $1 \leq i \leq k$.
      \item If $\abs{T(y)} \neq \abs{T(w)}$ then for every $y' \in T(y)$ and every $w' \in T(w)$ we have $\mutilde(y', w') \neq 0$.
      \item $\bar T$ also satisfies the above conditions.
    \end{enumerate}
  \end{definition}

  \begin{remarkNumbered}
    \label{rem:typeIToTypeII}
    A type 1 edge transport function $T$ can be turned into a type 2 edge transport function $T'$ simply by setting $T'(w) = \{T(w)\}$.
    If $T$ is left \ti\ preserving (resp.\ a left KL cell function), then so is $T'$, and similarly with right in place of left.
    If $T$ is a Knuth map, we will also call $T'$ a Knuth map.
  \end{remarkNumbered}

  Now let's look at the edge transport theorems which are the subject of this paper and see that the maps arising from them ($B_2$ maps and $D_4$ maps) are type 2 edge transport functions.
  Since the edge transport theorems have the same form, we can prove both at once.

  \begin{proposition}
    \label{prop:edgeTransportPair}
    Let $T: D \longrightarrow \mathcal P(W)$, where $D \subset W$.
    Let $\bar T = \pair(T)$.
    Assume that we have the following:
    \begin{enumerate}
      \item For $w \in D$, $\abs{T(w)}$ is 1 or 2, and similarly for $\bar T$.
      \item If $T(y) = \{y', y''\}$  with $y' \neq y''$ then $\bar T(y') = \bar T(y'') = \{y\}$.
      \item If $T(y) = \{y'\}$ then $\bar T(y') = \{y, y^*\}$ with $y \neq y^*$ and $T(y^*) = \{y'\}$ .
      \item Suppose $T(y) = \{y', y''\}$ and $T(w) = \{w', w''\}$ with $y' \neq y''$ and $w' \neq w''$. Then we have
      \begin{enumerate}
        \item $\mutilde(y, w) = \mutilde(y', w') + \mutilde(y', w'')$
        \item $\mutilde(y', w') = \mutilde(y'', w'')$
        \item $\mutilde(y', w'') = \mutilde(y'', w')$
      \end{enumerate}
      \item Suppose $T(y) = \{y', y''\}$  with $y' \neq y''$ and $T(w) = \{w'\}$.
      Let $w^* \in W$ be such that $w^* \neq w$ and $T(w^*) = \{w'\}$.
      Then we have
      \begin{equation*}
        \mutilde(y, w) = \mutilde(y, w^*) = \mutilde(y', w') = \mutilde(y'', w')
      \end{equation*}
      \item Conditions 2--5 also hold with $T$ and $\bar T$ interchanged.
    \end{enumerate}

    Then $T$ and $\bar T$ are type 2 edge transport functions.
  \end{proposition}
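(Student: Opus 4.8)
My plan is to verify \myrefD{def:edgeTransportPair} directly, treating the six hypotheses as a lookup table for the various size-configurations of the images. Two preliminary remarks organize everything. First, by \myrefT{thm:nonNegative} every value $\mutilde(\cdot,\cdot)$ is a non-negative integer, and $\mutilde$ is symmetric straight from its definition. Second, since hypothesis 6 asserts that conditions 2--5 are unchanged when $T$ and $\bar T$ are swapped, and $\pair(\bar T) = T$ by \myrefD{def:pairFunction}, the full list of hypotheses is symmetric under $T \leftrightarrow \bar T$; hence it is enough to check that $T$ satisfies conditions 1 and 2 of \myrefD{def:edgeTransportPair}, the assertion that $\bar T$ does so (condition 3 there) being obtained by repeating the argument verbatim with $\bar T$ in place of $T$.

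So I fix $y,w\in D$ with $\mutilde(y,w)\neq 0$; by hypothesis 1 the pair $(\abs{T(y)},\abs{T(w)})$ lies in $\{1,2\}^2$, and I split into cases. If $\abs{T(y)}=\abs{T(w)}=2$, write $T(y)=\{y',y''\}$, $T(w)=\{w',w''\}$; hypothesis 4(a) gives $\mutilde(y,w)=\mutilde(y',w')+\mutilde(y',w'')$, so one summand is nonzero, and then 4(b) or 4(c) supplies the matching companion, producing the pairing required by condition 1. If $\abs{T(y)}=\abs{T(w)}=1$, write $T(y)=\{y'\}$, $T(w)=\{w'\}$; hypothesis 3 gives $\bar T(y')=\{y,y^*\}$ and $\bar T(w')=\{w,w^*\}$, both of size $2$, so hypothesis 4(a) applied to $\bar T$ yields $\mutilde(y',w')=\mutilde(y,w)+\mutilde(y,w^*)\geq\mutilde(y,w)>0$ by non-negativity. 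Finally, if the two sizes differ, say $\abs{T(y)}=2$ and $\abs{T(w)}=1$ (the reverse orientation follows from the symmetry of $\mutilde$), hypothesis 3 furnishes the auxiliary element $w^*$ and hypothesis 5 gives $\mutilde(y',w')=\mutilde(y'',w')=\mutilde(y,w)\neq 0$, which is precisely condition 2 since $T(w)$ is a singleton. This exhausts the cases.

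I expect essentially no serious obstacle here: the argument is an organized bookkeeping exercise. The one step that is more than mechanical is the case $\abs{T(y)}=\abs{T(w)}=1$, where one must route through the pair function $\bar T$ and use \myrefT{thm:nonNegative} to convert the identity $\mutilde(y',w')=\mutilde(y,w)+\mutilde(y,w^*)$ into the inequality $\mutilde(y',w')\neq 0$; this is also the only point at which $\bar T$'s instance of hypothesis 4 is invoked rather than $T$'s. Care is also needed to phrase the $T\leftrightarrow\bar T$ symmetry cleanly enough that the clause ``$\bar T$ also satisfies the above conditions'' in \myrefD{def:edgeTransportPair} is dispatched without a second round of case analysis.
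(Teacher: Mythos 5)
Your proposal is correct and follows essentially the same route as the paper's proof: reduce to $T$ by the $T \leftrightarrow \bar T$ symmetry, split the size-(2,2), size-(1,1), and mixed-size cases, use hypothesis 4 for the first, hypothesis 4 for $\bar T$ (via hypothesis 3 and \myrefT{thm:nonNegative}) for the second, and hypothesis 5 for the third. The only cosmetic divergence is that you cite hypothesis 4 for $\bar T$ directly in the $(1,1)$ case, where the paper's text says ``assumption 5'' but then writes the hypothesis-4(a) identity — your citation is in fact the more accurate one.
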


  \begin{proof}
    It suffices to prove this for $T$, since our assumptions are symmetric in $T$ and $\bar T$, and since $\pair(\pair(T)) = T$.
    To verify statement 1 of \myrefD{def:edgeTransportPair}, we'll split into two cases.
    Let $y, w \in D$ with $\mutilde(y,w) \neq 0$ and assume first that $\abs{T(y)} = \abs{T(w)} = 2$.
    For this case we'll use assumption 4 of our proposition.
    From equation (a) of assumption 4, we have that either $\mutilde(y', w') \neq 0$ or $\mutilde(y', w'') \neq 0$.
    Without loss of generality we can assume the former.
    Then set $y_1 = y'$, $y_2 = y''$, $w_1 = w'$, and $w_2 = w''$.
    From equation (b) of assumption 4, we have $\mutilde(y'', w'') = \mutilde(y', w')$.
    So this gives statement 1 of the proposition.

    Next assume that $\abs{T(y)} = \abs{T(w)} = 1$.
    Our assumption 6 says that have assumption 5 with $\bar T$ in place of $T$.
    Let's introduce some new letters to avoid the overlap.
    Set $x = T(y)$ and $z = T(w)$.
    Then, let $\bar T(x) = \{x', x''\}$ with $x' = y$ and $\bar T(z) = \{z', z''\}$ with $z' = w$.
    We have $\mutilde(x, z) = \mutilde(x', z') + \mutilde(x', z'')$.
    Given \myrefT{thm:nonNegative} and our hypothesis that $\mutilde(x', z') \neq 0$, we conclude that
    $\mutilde(x, z) \neq 0$, that is, $\mutilde(T(y), T(w)) \neq 0$, as was to have been shown.

    To verify statement 2 of \myrefD{def:edgeTransportPair}, since $\mutilde(y, w) = \mutilde(w, y)$, we can without loss of generality assume that  $\abs{T(y)} = 2$ and $\abs{T(w)} = 1$.
    Then assumption 5 yields the desired conclusion.
  \end{proof}

  \begin{proposition}
    \label{prop:KnuthBEdgeTP}
    Let $s, t \in S$ with $st$ of order 4.
    Then $T_{s, t}$ is a type 2 edge transport function, with pair function $T_{t, s}$.
  \end{proposition}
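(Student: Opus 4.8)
The plan is to deduce this directly from \myrefP{prop:edgeTransportPair}: I will check that $T := T_{s,t}$ satisfies hypotheses (1)--(6) of that proposition, with $\bar T = \pair(T) = T_{t,s}$. The first step is to pin down the pair function. By \myrefP{prop:KnuthBAltDef}, $T_{s,t}(w) = D_{t,s}(W) \cap \{sw, tw\}$ for $w \in D_{s,t}(W)$; hence $w' \in T_{s,t}(w)$ holds precisely when $w' \in D_{t,s}(W)$ and $w \in \{sw', tw'\} \cap D_{s,t}(W)$, which by the same proposition with $s$ and $t$ exchanged says exactly $w \in T_{t,s}(w')$. This shows simultaneously that $\bar D := \bigcup_{w \in D_{s,t}(W)} T_{s,t}(w)$ equals $D_{t,s}(W)$ (one inclusion is immediate; for the other, $T_{t,s}(w')$ is nonempty for every $w' \in D_{t,s}(W)$) and that $\pair(T_{s,t}) = T_{t,s}$.

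Next I verify hypotheses (1), (2), (3), which are purely combinatorial statements about how $p_J$ changes under left multiplication by $s$ or $t$; by \myrefP{prop:parabolic2} and \myrefP{prop:talbParabolic} these reduce to computations in the dihedral group $W_J$ of order $8$ and can be read off \myrefD{def:KnuthB}. Concretely, $\abs{T_{s,t}(w)} = 2$ exactly when $p_J(w) = ts$, and then $T_{s,t}(w) = \{sw, tw\}$ with $p_J(sw) = sts$ and $p_J(tw) = s$; both of these elements are sent by $T_{t,s}$ to the singleton $\{w\}$, which is hypothesis (2). If instead $\abs{T_{s,t}(w)} = 1$, then $p_J(w) \in \{t, tst\}$, the unique element $w'$ of $T_{s,t}(w)$ has $p_J(w') = st$, and so $T_{t,s}(w') = \{tw', sw'\}$ is a two-element set containing $w$ together with another element $w^*$ (with $p_J(w^*) \in \{t, tst\}$) satisfying $T_{s,t}(w^*) = \{w'\}$; this is hypothesis (3). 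Hypothesis (1) is immediate from the above.

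Hypotheses (4) and (5) are exactly the two cases of the $\mutilde$-form of the $B_2$ edge transport theorem, \myrefT{thm:stsStrings}. For (4), the situation $\abs{T_{s,t}(y)} = \abs{T_{s,t}(w)} = 2$ means $p_J(y) = p_J(w) = ts$; writing $y'' = ty$, $y' = sy$ and $w'' = tw$, $w' = sw$, the triple $L := y''$, $M := y$, $U := y'$ (and its primed analogue) satisfies the Case 1 hypotheses $p_J(L) = s$, $M = tL$, $U = sM$, so \myrefT{thm:stsStrings}(1) yields $\mutilde(y,w) = \mutilde(y'',w'') + \mutilde(y',w'')$, $\mutilde(y',w') = \mutilde(y'',w'')$, and $\mutilde(y',w'') = \mutilde(y'',w')$; combining the first two identities puts (4a) in the required form $\mutilde(y,w) = \mutilde(y',w') + \mutilde(y',w'')$, while the last two are (4b) and (4c). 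For (5), the situation $\abs{T_{s,t}(y)} = 2$, $\abs{T_{s,t}(w)} = 1$ translates, with the same relabeling and with $w^*$ the element produced in hypothesis (3), into the Case 2 hypotheses, and \myrefT{thm:stsStrings}(2) gives precisely $\mutilde(y,w) = \mutilde(y,w^*) = \mutilde(y',w') = \mutilde(y'',w')$.

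Finally, hypothesis (6)---conditions (2)--(5) with $T$ and $\bar T$ exchanged---follows from the symmetry $s \leftrightarrow t$, since $\bar T = T_{t,s}$, $\pair(\bar T) = T_{s,t} = T$, and \myrefT{thm:stsStrings} is stated for arbitrary $s, t$ with $st$ of order $4$; then \myrefP{prop:edgeTransportPair} gives the conclusion. The only point demanding care is the dictionary between the projection-based description of $T_{s,t}$ in \myrefD{def:KnuthB} and the $L, M, U$ / $L', M', U'$ labeling of \myrefT{thm:stsStrings}: in particular, one must track that the elements appearing in the edge transport equations straddle $D_{s,t}(W)$ and $D_{t,s}(W)$ (the element with $\abs{T_{s,t}(\cdot)} = 2$ plays the role of $M$, its two images the roles of $L = tM$ and $U = sM$), and that hypothesis (4a) is not read off directly but obtained by combining two of the theorem's identities.
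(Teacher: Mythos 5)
Your proof is correct and follows the same route as the paper, which simply cites \myrefP{prop:edgeTransportPair} and says the first three conditions follow from the definitions while conditions~4 and~5 are \myrefT{thm:stsStrings}. You have supplied the bookkeeping the paper leaves implicit; in particular you correctly identify both that $\pair(T_{s,t}) = T_{t,s}$ is a direct consequence of \myrefP{prop:KnuthBAltDef}, and that condition~(4a) is not one of the three displayed identities in \myrefT{thm:stsStrings}(1) but is obtained by substituting the (4b) identity $\mutilde(y',w')=\mutilde(y'',w'')$ into $\mutilde(M,M')=\mutilde(L,L')+\mutilde(U,L')$.
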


  \begin{proof}
    That $\pair(T_{s,t}) = T_{t,s}$ is clear from the definition.
    We'll use \myrefP{prop:edgeTransportPair}.
    The first three conditions follow from the definition of the maps.
    The next two conditions are \myrefT{thm:stsStrings}.
  \end{proof}

  Now, let's prove the analogue of \myrefP{prop:edgeTransportKLOrderA} for these maps.
  Our goal is \myrefP{prop:edgeTransportLeqB} and the propositions following it.

  \begin{definition}
    A function $T: D \longrightarrow \mathcal P(W)$, with $D \subset W$, is left KL order preserving if for $x, y \in D$ with $x \leqL y$, we have the following:
    \begin{enumerate}
      \item If $\abs{T(y)} = \abs{T(w)} = k$ then we can write $T(y) = \{y_1,\dots,y_k\}$ and $T(w) = \{w_1,\dots,w_k\}$ so that $y_i \leqL w_i$ for all $1 \leq i \leq k$.
      \item If $\abs{T(y)} \neq \abs{T(w)}$ then for every $y' \in T(y)$ and every $w' \in T(w)$ we have $y' \leqL w'$.
    \end{enumerate}
    We define similarly right KL order preserving.
  \end{definition}

  \begin{proposition}
    \label{prop:edgeTransportLeqB}
    Let $T$ be a type 2 edge transport function, with $D$ the domain of $T$.
    Assume in addition that $D$ is a right KL interval set and that $T$ is right $\tau$-invariant preserving.
    Then $T$ is right KL order preserving.
    Similarly, with left and right interchanged.
  \end{proposition}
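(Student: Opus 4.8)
The plan is to follow the template of the proof of \myrefP{prop:edgeTransportKLOrderA}, lifting it from single-valued to set-valued maps. Given $x, y \in D$ with $x \leqR y$, I would start from a defining chain $w_1 = x, w_2, \dots, w_n = y$ with $\mutilde(w_i, w_{i+1}) > 0$ and $\tauR(w_i) \not\subset \tauR(w_{i+1})$ for $1 \le i \le n-1$; since $x \leqR w_i \leqR y$ and $D$ is a right KL interval set, every $w_i$ lies in $D$, so each $T(w_i)$ is defined. Then I would assemble a layered graph on the disjoint union $\bigsqcup_i T(w_i)$, placing between the consecutive fibres $T(w_i)$ and $T(w_{i+1})$ the edges furnished by the type 2 edge transport property of $T$ applied to the pair $(w_i, w_{i+1})$: a perfect matching $\{(a_j,b_j)\}$ when $\abs{T(w_i)} = \abs{T(w_{i+1})}$, and the complete bipartite graph $T(w_i) \times T(w_{i+1})$ when the sizes differ. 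The two facts to record are that every edge $a \sim b$ of this graph has $\mutilde(a,b) > 0$ (edge transport) and, because $T$ is right \ti\ preserving, $\tauR(a) = \tauR(w_i) \not\subset \tauR(w_{i+1}) = \tauR(b)$; hence any path in the graph that moves forward one layer at a time, from some $a \in T(x)$ to some $a' \in T(y)$, exhibits $a \leqR a'$.

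It then remains to read off the two clauses in the definition of a right KL order preserving set-valued map from the forward-reachability structure of this graph. Let $k_i = \abs{T(w_i)}$. If all the $k_i$ are equal, the edge sets are perfect matchings and compose to a single perfect matching $T(x) \longleftrightarrow T(y)$, which (as $k_1 = k_n$) settles the equal-size clause. Otherwise let $j$ be the least index with $k_j \neq k_{j+1}$; the matchings in layers $1$ through $j$ compose to a bijection $T(x) \to T(w_j)$, so each $a \in T(x)$ forward-reaches a fibre element at layer $j$, and since the edges from $T(w_j)$ to $T(w_{j+1})$ form the complete bipartite graph, each $a \in T(x)$ forward-reaches every element of $T(w_{j+1})$. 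A short induction over the remaining layers (each subsequent step, whether a perfect matching or a complete bipartite step, preserves the property ``every element of $T(x)$ forward-reaches every element of the current fibre'') then shows that every element of $T(x)$ forward-reaches every element of $T(y)$. When $k_1 \neq k_n$ this is exactly the unequal-size clause; when $k_1 = k_n$ it in particular provides a perfect matching for the equal-size clause. The left-handed statement is obtained by interchanging $L$ and $R$ throughout.

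The routine parts are the verification that the layered-graph edges satisfy both the $\mutilde$ and the $\tauR$ conditions and the bookkeeping in the forward-reachability induction. The one point that needs a moment's care is that a fibre $T(w_j)$ whose size differs from its neighbour's acts as a ``mixer'': after the first size change the reachability relation jumps from a matching to the full product, and I must check that this persists through all later layers regardless of any further changes in fibre size. I expect that to be the main (though still elementary) obstacle; everything else is a direct transcription of the argument for \myrefP{prop:edgeTransportKLOrderA}.
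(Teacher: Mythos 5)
Your proof is correct and takes essentially the same approach as the paper's: form the defining chain, use the interval-set hypothesis to keep every intermediate term in $D$, and propagate the matching (equal sizes) or complete-bipartite (unequal sizes) structure from the type~2 edge transport conditions layer by layer, while the \ti-preservation carries the descent-set non-containment along each sequence. The paper compresses your layered-graph bookkeeping into a terse appeal to ``induction on $n$'', but the underlying point — once a size change forces full forward-reachability, it persists through all later layers whether those steps are matchings or complete bipartite — is exactly the ``mixer'' observation you identified.
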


  \begin{proof}
    Let  $y, w \in D$ with $y \leqR w$.
    By definition, there is a sequence $x_1,\dots,x_n$ of elements of $W$
    with $x_1 = y$ and $x_n = w$ such that $\mutilde(x_i,x_{i+1}) \neq 0$ and $\tau_R(x_i) \not\subset \tau_R(x_{i+1})$ for $1 \le i \le n-1$.
    Then $y \leqR x_i \leqR w$ for all $2 \le i \le n-1$, and so, since $D$ is a right KL interval set, $x_i \in D$ for all $2 \le i \le n-1$.

    Suppose first that $\abs{T(x_i)} = k$ for some $k$ and all $1 \leq i \leq n$.
    Then we can find $k$ sequences $x_1^j,\dots,x_n^j$ for $1 \leq j \leq k$ such that $\mutilde(x_i^j,x_{i+1}^j) \neq 0$ for all $1 \leq j \leq k$ and such that $T(x_i) =  \{x_i^1,\dots,x_i^k\}$ for all $1 \le i \leq n$.
    This follows easily from the statement 1 of \myrefD{def:edgeTransportPair}, using induction on $n$.
    Since $T$ is right \ti\ preserving, we have that $\tauR(x_i^j) = \tauR(x_i)$ for all applicable $i$ and $j$.
    Thus, each sequence $x_1^j,\dots,x_n^j$ demonstrates that $x_1^j \leqR x_n^j$, as was to have been shown.

    Now suppose that $\abs{T(x_i)} \neq \abs{T(x_{i'})}$ for some $1 \leq i, i' \leq n$.
    Then given $y' \in T(y)$ and $w' \in T(w)$, there is a sequence $x_1',\dots,x_n'$ such that $y' = x_1'$, $w' = x_n', $ $x_i' \in T(x_i)$ for $1 \leq i \leq n$ and $\mutilde(x_i', x_{i + 1}') \neq 0$ for $1 \leq i \leq n - 1$.
    This follows easily from the previous case and statement 2 of \myrefD{def:edgeTransportPair}, using induction on $n$.
    As in the previous case, this sequence demonstrates that $y' \leqR w'$.
  \end{proof}

  \begin{proposition}
    \label{prop:edgeTransportEquivalence}
    Suppose $T: D \longrightarrow \mathcal P(W)$, with $D \subset W$, is left KL order preserving.
    Suppose we have $x, y \in D$ with $x \equivL y$.
    Then we have the following:
    \begin{enumerate}
      \item If $\abs{T(y)} = \abs{T(w)} = k$ then we can write $T(y) = \{y_1,\dots,y_k\}$ and $T(w) = \{w_1,\dots,w_k\}$ so that $y_i \equivL w_i$ for all $1 \leq i \leq k$.
      \item If $\abs{T(y)} \neq \abs{T(w)}$ then for every $y' \in T(y)$ and every $w' \in T(w)$ we have $y' \equivL w'$.
    \end{enumerate}
  \end{proposition}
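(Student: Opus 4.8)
The plan is to reduce the statement entirely to the definition of left KL order preserving, applied in both directions. Since $x \equivL y$ is by definition the conjunction of $x \leqL y$ and $y \leqL x$, and since $\abs{T(x)}$ and $\abs{T(y)}$ are fixed, both inequalities $x \leqL y$ and $y \leqL x$ fall under the same clause of the definition of left KL order preserving: clause~1 when $\abs{T(x)} = \abs{T(y)}$, clause~2 otherwise. So the two cases of the proposition correspond exactly to the two clauses, and in each case I want to combine the information coming from the two directions.

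The case $\abs{T(x)} \neq \abs{T(y)}$ is immediate. Clause~2 applied to $x \leqL y$ gives $y' \leqL w'$ for every $y' \in T(x)$ and $w' \in T(y)$, and clause~2 applied to $y \leqL x$ gives the reverse inequality $w' \leqL y'$; together these yield $y' \equivL w'$, as claimed.

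The case $\abs{T(x)} = \abs{T(y)} = k$ is the only place requiring any thought. Clause~1 applied to $x \leqL y$ produces a bijection $\sigma \colon T(x) \to T(y)$ with $a \leqL \sigma(a)$ for all $a \in T(x)$, and clause~1 applied to $y \leqL x$ produces a bijection $\tau \colon T(y) \to T(x)$ with $b \leqL \tau(b)$ for all $b \in T(y)$. The obstacle — and I expect it to be the only real point of the proof — is that $\sigma$ and $\tau$ need not be mutually inverse, so one cannot conclude $a \equivL \sigma(a)$ directly from these two facts about a priori unrelated pairings. I would resolve this by viewing $\tau \circ \sigma$ as a permutation of the finite set $T(x)$: for each $a$ we get a chain $a \leqL \sigma(a) \leqL (\tau \circ \sigma)(a) \leqL (\tau \circ \sigma)^{2}(a) \leqL \cdots$, and since $\tau \circ \sigma$ is a permutation of a finite set it has finite order, so the chain returns to $a$. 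By transitivity of $\leqL$ this forces $\sigma(a) \leqL a$ as well, hence $a \equivL \sigma(a)$. Thus $\sigma$ itself is a pairing valid in both directions; writing $T(x) = \{y_1,\dots,y_k\}$ and setting $w_i = \sigma(y_i)$ gives the required enumeration $T(y) = \{w_1,\dots,w_k\}$ with $y_i \equivL w_i$.

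Everything apart from this finite-order permutation argument is a mechanical unwinding of the definitions of $\equivL$ and of left KL order preserving together with transitivity of the preorder $\leqL$; I anticipate no further subtleties, and the right-handed version is obtained verbatim by interchanging $L$ and $R$.
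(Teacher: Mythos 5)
Your proof is correct and takes essentially the same approach as the paper: the unequal-cardinality case is immediate from applying clause~2 in both directions, and the equal-cardinality case is handled by the same finite-order permutation argument (the paper fixes a labeling from one direction and obtains a permutation $\sigma$ from the other, then uses that some power of $\sigma$ is the identity; your composition $\tau\circ\sigma$ is just a mildly different packaging of the identical idea).
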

  Similarly, interchanging left and right.

  \begin{proof}
    Statement 2 is clear.  So assume that $\abs{T(y)} = \abs{T(w)} = k$.
    Since $y \leqL w$, we can write $T(y) = \{y_1,\dots,y_k\}$ and $T(w) = \{w_1,\dots,w_k\}$ so that $y_i \leqL w_i$ for all $1 \leq i \leq k$.
    Since $w \leq y$, there is a permutation $\sigma$ of $1,\dots,k$ such that $w_i \leqL y_{\sigma(i)}$ for all $1 \leq i \leq k$.
    If $\sigma$ is the identity, we are done.
    If not, still, some power of $\sigma$ is the identity, and we can use that to conclude that $y_{\sigma(i)} \leqL y_i$, and thus reach the desired conclusion.
  \end{proof}

  \begin{proposition}
    \label{prop:cellsToCells}
    Let $T$ be a type 2 edge transport function, with $D$ and $\bar D$ as in \myrefD{def:pairFunction}, and $\bar T = \pair(T)$.
    Assume in addition that both $D$ and $\bar D$ are right KL interval sets and that $T$ and $\bar T$ are right \ti\ preserving.

    Let $C$ be a right cell contained in $D$.
    \begin{enumerate}
      \item If $\abs{T(w)} = k$ for some $k$ and all $w \in C$ then $T(C)$ is a union of at most $k$ right cells.
      \item If $\abs{T(y)} \neq \abs{T(w)}$ for some $y, w \in C$ then $T(C)$ is a right cell.
    \end{enumerate}

    Similarly, interchanging left and right.
  \end{proposition}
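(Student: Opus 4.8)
The plan is to reduce everything to the observation that, under our hypotheses, both $T$ and its pair function $\bar T = \pair(T)$ are right KL order preserving, and then to transport cells forward through $T$ and backward through $\bar T$ using \myrefP{prop:edgeTransportEquivalence}. First I would record the preliminaries: since $\pair(\bar T) = T$, condition 3 of \myrefD{def:edgeTransportPair} makes $\bar T$ a type 2 edge transport function as well; and since $D$ and $\bar D$ are right KL interval sets and $T$, $\bar T$ are right \ti\ preserving, \myrefP{prop:edgeTransportLeqB} shows that $T$ and $\bar T$ are both right KL order preserving. I will also use that $\bar T(x) \neq \varnothing$ for every $x \in \bar D$ (as $x \in T(w)$ for some $w$, so $w \in \bar T(x)$), that $z \equivR x$ with $x \in \bar D$ forces $z \in \bar D$ by \myrefP{prop:KLIntervalEquiv}, and that a right cell is an $\equivR$-class, hence nonempty and closed under $\equivR$.

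For statement 1, fix $w_0 \in C$, write $T(w_0) = \{x_1,\dots,x_k\}$, and let $C_i$ be the right cell of $x_i$. Given $w \in C$ we have $w \equivR w_0$ and $\abs{T(w)} = \abs{T(w_0)} = k$, so \myrefP{prop:edgeTransportEquivalence} pairs the elements of $T(w)$ with those of $T(w_0)$ by $\equivR$; hence $T(w) \subseteq C_1 \cup \dots \cup C_k$, and therefore $T(C) \subseteq C_1 \cup \dots \cup C_k$. For the reverse inclusion I would show $C_i \subseteq T(C)$ for each $i$: given $z \equivR x_i$ we have $z \in \bar D$, and applying \myrefP{prop:edgeTransportEquivalence} to $\bar T$ at the pair $z \equivR x_i$ — using that $w_0 \in \bar T(x_i)$ — produces, in either the equal- or the unequal-cardinality case, an element of the nonempty set $\bar T(z)$ that is $\equivR w_0$ and hence lies in $C$; so $z \in T(C)$. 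Thus $T(C) = C_1 \cup \dots \cup C_k$ is a union of at most $k$ right cells.

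For statement 2, choose $y, w \in C$ with $\abs{T(y)} \neq \abs{T(w)}$; by \myrefP{prop:edgeTransportEquivalence} every element of $T(y)$ is $\equivR$ to every element of $T(w)$, so $T(y) \cup T(w)$ lies in a single right cell $C'$. Any $z \in C$ satisfies $\abs{T(z)} \neq \abs{T(y)}$ or $\abs{T(z)} \neq \abs{T(w)}$, and the corresponding case of \myrefP{prop:edgeTransportEquivalence} forces $T(z) \subseteq C'$, so $T(C) \subseteq C'$. The inclusion $C' \subseteq T(C)$ is the same backward argument as in statement 1: pull an arbitrary $z \in C'$ back through $\bar T$ starting from some $x_0 \in T(y)$ with $y \in \bar T(x_0) \cap C$. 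The versions with left and right interchanged are proved identically, the hypotheses being symmetric in this respect.

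The step I expect to be the main obstacle is the backward inclusion, i.e.\ showing that $T(C)$ is saturated under $\equivR$. The subtlety is that $\bar T$ need not have constant image size along an $\equivR$-class, so the pull-back must be carried out separately in the two cases of \myrefP{prop:edgeTransportEquivalence} (matching cardinalities, where one gets an $\equivR$-matching of $\bar T(z)$ with $\bar T(x_i)$; and mismatched cardinalities, where every element of $\bar T(z)$ is $\equivR$ to every element of $\bar T(x_i)$). Here the hypothesis that $\bar D$ is a right KL interval set is precisely what guarantees $\bar T(z)$ is defined, and the hypothesis that $C$ is a right cell — not merely a subset — is what lets us conclude that a single $\equivR$-representative of $\bar T(z)$ landing in $C$ suffices.
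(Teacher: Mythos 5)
Your proof is correct and follows essentially the same route as the paper: forward inclusion via \myrefP{prop:edgeTransportEquivalence} applied to $T$, backward inclusion via the same proposition applied to $\bar T$ starting from a chosen element of $T(x)$ whose preimage under $\bar T$ meets $C$. The only difference is that you explicitly record the preliminaries (that $\bar T$ is itself a type~2 edge transport function, that both $T$ and $\bar T$ are right KL order preserving by \myrefP{prop:edgeTransportLeqB}, and that $\bar T(z) \neq \varnothing$), and you flag the case split in the pull-back step that the paper handles implicitly; these are clarifications rather than a change of method.
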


  \begin{proof}
    Assume first that $\abs{T(w)} = k$ for some $k$ and all $w \in C$.
    Fix $x \in C$ and write $T(x) = \{x_1,\dots,x_k\}$.
    Let $C_i$ be the right cell containing $x_i$ for $1 \leq i \leq k$.
    Let $y \in C$.
    By \myrefP{prop:edgeTransportEquivalence}, we can write $T(y) = \{y_1,\dots,y_k\}$ so that $y_i \in C_i$.
    That is, $T(C) \subset \cup_{i=1}^k C_i$.

    Now suppose $y_i \in C_i$ for some $i$, that is, $y_i \equivR x_i$.
    Then, again by the previous proposition, this time applied to $\bar T$, since $x \in \bar T(x_i)$, there is a $y \in \bar T(y_i)$ such that $y \equivR x$ .
    Now $y_i \in T(y)$, that is, $y_i \in T(C)$.  Thus $C_i \subset T(C)$.
    So $T(C) = \cup_{i=1}^k C_i$

    Now suppose $\abs{T(y)} \neq \abs{T(w)}$ for some $y, w \in C$.
    Choose $y' \in T(y)$ and let $C'$ be the left cell containing $y'$.
    From statement 2 of \myrefP{prop:edgeTransportEquivalence}, we see that $T(y)$ and $T(w)$ are contained in $C'$.
    For any other $z \in C$, we have either $\abs{T(z)} \neq \abs{T(y)}$ or $\abs{T(z)} \neq \abs{T(w)}$, and so similarly, we have $T(z) \subseteq C'$.
    Now suppose $z' \in C'$.
    Since $z' \equivR y'$, applying \myrefP{prop:edgeTransportEquivalence} to $\bar T$, $y'$, and $z'$, we can find a $z \in \bar T(z')$ with $z \equivR y$.
    Then $z' \in T(z)$.
    We conclude that $C' \in T(C)$, and so $C' = T(C)$.
  \end{proof}

  As a consequence, we have these results for the $B_2$ maps.

  \begin{corollary}
    \label{cor:BTwoConsequences}
    Let $s, t \in S$ with $st$ of order 4.  We have
    \begin{enumerate}
      \item $\TstL$ is right KL order preserving.
      \item Suppose $x,y \in \DstL$ with $x \equivR y$.
      Then we can write $\TstL(x) = \{x',x''\}$ and $\TstL(y) = \{y',y''\}$ (where possibly $x' = x''$ and/or $y' = y''$) so that $x' \equivR y'$ and $x'' \equivR y''$.
      \item Let $C$ be a right cell contained in $\DstL$.
      Then $\TstL(C)$ is either a right cell or a union of two right cells.
    \end{enumerate}
    Similarly, interchanging left and right.
  \end{corollary}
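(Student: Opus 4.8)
The plan is to obtain all three parts as immediate consequences of the general machinery set up in this section, specialized to the $B_2$ maps. The relevant inputs are already established: \myrefP{prop:KnuthBEdgeTP} says $\TstL$ is a type 2 edge transport function with pair function $T^L_{t,s}$; \myrefP{prop:BTwoKLSet} says $\DstL$ is a right KL interval set, and, applying it with the roles of $s$ and $t$ exchanged, $D^L_{t,s}(W)$ is as well; and \myrefP{prop:BTwoKLCellFunction} says $\TstL$ and $T^L_{t,s}$ are left KL cell functions, hence right \ti\ preserving by \myrefP{prop:tauCell}. So the proof is purely a matter of plugging these into the three propositions \myrefP{prop:edgeTransportLeqB}, \myrefP{prop:edgeTransportEquivalence}, and \myrefP{prop:cellsToCells}.

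For statement 1 I would simply apply \myrefP{prop:edgeTransportLeqB}: a type 2 edge transport function whose domain is a right KL interval set and which is right \ti\ preserving is right KL order preserving, so $\TstL$ is right KL order preserving. For statement 2 I would feed statement 1 and the given pair $x \equivR y \in \DstL$ into the right-hand version of \myrefP{prop:edgeTransportEquivalence}. Since $\abs{\TstL(w)} \in \{1,2\}$ for all $w$ by \myrefD{def:KnuthB} (or \myrefP{prop:KnuthBAltDef}), three cases occur. When $\abs{\TstL(x)} = \abs{\TstL(y)} = 2$, statement 1 of \myrefP{prop:edgeTransportEquivalence} with $k = 2$ produces the desired labelling. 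When both images are singletons, I would write each as $\{x',x''\}$ with $x' = x''$ and $\{y',y''\}$ with $y' = y''$ and apply statement 1 with $k = 1$. When the cardinalities differ, statement 2 of \myrefP{prop:edgeTransportEquivalence} gives $x' \equivR y'$ for \emph{every} choice of $x' \in \TstL(x)$, $y' \in \TstL(y)$, so any labelling (padding the singleton side by repetition) works.

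For statement 3 I would apply \myrefP{prop:cellsToCells} with $T = \TstL$ and $\bar T = \pair(\TstL) = T^L_{t,s}$; its hypotheses are exactly the inputs listed above (both $\DstL$ and $D^L_{t,s}(W)$ right KL interval sets, both maps right \ti\ preserving). The conclusion splits according to whether $\abs{\TstL(w)}$ is constant on the right cell $C$: if it equals some $k$, then $k \le 2$ and $\TstL(C)$ is a union of at most $k \le 2$ right cells; if it is not constant, $\TstL(C)$ is a single right cell. In either case $\TstL(C)$ is a right cell or a union of two right cells. The ``interchanging left and right'' assertions follow verbatim with $\DstR$, $\TstR$, $D^R_{t,s}(W)$ in place of their left counterparts. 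I do not anticipate any real obstacle, since every step is a direct citation; the only point calling for mild care is the bookkeeping for possibly-unequal image cardinalities in statement 2 (and the ``at most $k$'' in statement 3), which is exactly what the ``possibly $x' = x''$'' convention in the statement is there to absorb.
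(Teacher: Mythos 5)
Your proof is correct and follows the same route as the paper: it cites \myrefP{prop:KnuthBEdgeTP}, \myrefP{prop:BTwoKLSet}, and \myrefP{prop:BTwoKLCellFunction} (with the remark that a left KL cell function is right \ti\ preserving) to verify the hypotheses of \myrefP{prop:edgeTransportLeqB}, \myrefP{prop:edgeTransportEquivalence}, and \myrefP{prop:cellsToCells}, which deliver the three statements in order. The paper states this even more tersely, but the underlying argument is identical; your additional case bookkeeping in statements 2 and 3 is just an unpacking of what those propositions already guarantee.
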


  \begin{proof}
    \myrefP{prop:KnuthBEdgeTP} says that both $\TstL$ and $\pair(\TstL) = T^L_{t,s}$ are type 2 edge transport functions.
    \myrefP{prop:BTwoKLCellFunction} implies that both are right \ti\ preserving.
    \myrefP{prop:BTwoKLSet} says that both domains are right KL interval sets.
    So, the conclusions of \myrefP{prop:edgeTransportLeqB}, \myrefP{prop:edgeTransportEquivalence}, and \myrefP{prop:cellsToCells} hold for $\TstL$.
  \end{proof}

  \section{The Generalized \texorpdfstring{$\tau$-invariant}{tau-invariant}, Part 2}
  \label{sec:genTauAB}

  In this section, we'll give the more elaborate definition of the generalized \ti, this time for type 2 edge transport functions.
  Again, this is based on the definition of \cite{vogan_1979}.

  \begin{definition}
    \label{def:genTau}
    Let $\euscr F$ be a set of functions each of which  has domain a subset of $W$ and range $\mathcal P(W)$.
    We define the left generalized \ti\ with respect to $\euscr F$ as follows.
    Let $w_1, w_2 \in W$. We say $w_1$ and $w_2$ are equivalent to order 0 if $\tauL(w_1) = \tauL(w_2)$.
    For $n \geq 1$, we say $w_1$ and $w_2$ are equivalent to order $n$, $w_1 \underset{n}{\approx} w_2$, if
    \begin{enumerate}
      \item $w_1 \underset{n - 1}{\approx} w_2$.
      \item For every $T \in \euscr F$ with $w_1$ in the domain of $T$ we have that $w_2$ is in the domain of $T$, and for every $y_1 \in T(w_1)$, there is a $y_2 \in T(w_2)$ such that $y_1 \underset{n - 1}{\approx} y_2$.
      Similarly, interchanging $w_1$ and $w_2$.
    \end{enumerate}

    We say that $w_1$ and $w_2$ are left equivalent to infinite order, or that $w_1$ and $w_2$ have the same left generalized $\tau$-invariant, with respect to the set $\euscr F$, if $w_1 \underset{n}{\approx} w_2$ for every non-negative integer $n$.
    We'll write this as $w_1 \equivGTF w_2$.
    Alternate notations are $w_1 \equivUnder{GT} w_2$ or $w_1 \equivUnder{GTL} w_2$, when $\euscr F$ is understood.
  \end{definition}

  Again, the main property that we need about the right generalized \ti\ is that (under the right conditions) it is a weaker equivalence relation than that of being in the same left cell.

  \begin{theorem}
    \label{thm:genTau}
    Let $\euscr F$ be a set of functions each of which  has domain a subset of $W$ and range $\mathcal P(W)$.
    Suppose that every $T \in \euscr F$ is a right KL order preserving function and suppose the domain of every $T \in \euscr F$ is a right KL interval set.
    Let $w_1, w_2 \in W$.  If $w_1 \equivR w_2$ then $w_1$ and $w_2$ have the same left generalized \ti\ with respect to $\euscr F$.
  \end{theorem}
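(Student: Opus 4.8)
The plan is to argue exactly as in the proof of \myrefT{thm:genTauA}, with \myrefP{prop:edgeTransportEquivalence} playing the role that \myrefP{prop:edgeTransportKLOrderA} played there. That is, I would prove by induction on $n$ that $w_1 \equivR w_2$ implies $w_1 \underset{n}{\approx} w_2$ for every integer $n \geq 0$, where $\underset{n}{\approx}$ denotes the order-$n$ equivalence of \myrefD{def:genTau}; letting $n \to \infty$ then yields $w_1 \equivGTF w_2$.

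For the base case $n = 0$, the assertion is precisely that $\tauL(w_1) = \tauL(w_2)$, which is \myrefP{prop:tauCell}. For the inductive step, suppose $n \geq 1$ and that $y \equivR w$ implies $y \underset{k}{\approx} w$ for all $0 \leq k \leq n-1$. Applying this with $k = n-1$ to $w_1, w_2$ gives condition (1) of \myrefD{def:genTau}. For condition (2), fix $T \in \euscr F$ with domain $D$ and assume $w_1 \in D$; since $D$ is a right KL interval set, \myrefP{prop:KLIntervalEquiv} gives $w_2 \in D$. Because $T$ is right KL order preserving by hypothesis, the right-handed form of \myrefP{prop:edgeTransportEquivalence} applies to $w_1 \equivR w_2$: in the case $\abs{T(w_1)} = \abs{T(w_2)}$ we may index $T(w_1) = \{y_1,\dots,y_k\}$ and $T(w_2) = \{z_1,\dots,z_k\}$ so that $y_i \equivR z_i$ for all $i$, and in the case $\abs{T(w_1)} \neq \abs{T(w_2)}$ every $y' \in T(w_1)$ and $z' \in T(w_2)$ satisfy $y' \equivR z'$. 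Either way, for each $y_1 \in T(w_1)$ there is a $y_2 \in T(w_2)$ with $y_1 \equivR y_2$, hence $y_1 \underset{n-1}{\approx} y_2$ by the inductive hypothesis; the symmetric statement obtained by exchanging $w_1$ and $w_2$ follows in the same way. This establishes condition (2) and completes the induction.

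I do not expect a genuine obstacle here: the technical content beyond \myrefT{thm:genTauA} is entirely absorbed into \myrefP{prop:edgeTransportEquivalence}, which was set up precisely to produce a matching of the elements of $T(w_1)$ with those of $T(w_2)$ compatible with clause (2) of \myrefD{def:genTau}. The only points that warrant explicit mention are that the existential quantifier ``for every $y_1 \in T(w_1)$ there is a $y_2 \in T(w_2)$'' is satisfied in both the equal-cardinality and the unequal-cardinality branches of \myrefP{prop:edgeTransportEquivalence}, and that the whole argument is symmetric in $w_1$ and $w_2$; both are immediate. For the sets $\euscr F$ of interest in this paper (Knuth maps, $B_2$ maps, and the $D_4$ maps), the order-preservation hypothesis is supplied by \myrefP{prop:KnuthAKLOrder}, \myrefC{cor:BTwoConsequences}, and the corresponding result to be proved for the $D_4$ maps in \myrefS{sec:D4Maps}.
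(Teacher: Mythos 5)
Your argument reproduces the paper's proof essentially verbatim: induction on $n$, with \myrefP{prop:tauCell} for the base case, \myrefP{prop:KLIntervalEquiv} for stability of the domain, and the right-handed form of \myrefP{prop:edgeTransportEquivalence} to match elements of $T(w_1)$ with elements of $T(w_2)$ in each cardinality branch. Correct, and the same approach as the paper.
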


  \begin{proof}
      We will prove by induction on $n$ that $w_1 \equivR w_2$ implies that $w_1 \underset{n}{\approx} w_2$ for all integers $n \geq 0$.
      When $n = 0$, this is true by \myrefP{prop:tauCell}.
      Assume now that $n \geq 1$ and that  $y \equivR w$ implies that $y \underset{r}{\approx} w$ for $0 \leq r \leq n - 1$.
      In particular, this says that $w_1 \underset{n -1}{\approx} w_2$, which is the first condition to be satisfied.
      For condition 2, let $T \in \euscr F$ with domain $D$.
      \myrefP{prop:KLIntervalEquiv} says that $w_1 \in D$ if and only if $w_2 \in D$.

      By \myrefP{prop:edgeTransportEquivalence},
      for every $y_1 \in T(w_1)$, there is a $y_2 \in T(w_2)$ such that $y_1 \equivR y_2$, and similarly interchanging $w_1$ and $w_2$.
      Then the desired conclusion follows by induction.
  \end{proof}

  \begin{remark}
    In general, such a $\euscr F$ will include type 1 edge transport functions which we regard as type 2 edge transport functions as in \myrefR{rem:typeIToTypeII}.
  \end{remark}

  \begin{theorem}
    \label{thm:genTauBTwoConclusion}
    In the context of \myrefD{def:genTau}, let $\euscr F$ be a set of right Knuth maps and right $B_2$. maps
    Suppose $y, w \in W$ with $y \equivL w$.
    Then $y \equivGTF w$.
    Similarly, interchanging left and right.
  \end{theorem}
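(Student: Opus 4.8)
The plan is to obtain this as an immediate consequence of \myrefT{thm:genTau}, in the form obtained by interchanging the roles of left and right, once we have checked that every map in $\euscr F$ — that is, every right Knuth map and every right $B_2$ map — is a left KL order preserving function whose domain is a left KL interval set. Since \myrefT{thm:genTau} is phrased for functions with range $\mathcal P(W)$, the first thing I would do is regard each right Knuth map $\TstR$ (for $s,t\in S$ with $st$ of order $3$) as a type 2 edge transport function via \myrefR{rem:typeIToTypeII}, replacing it by $w\mapsto\{\TstR(w)\}$; the type 2 notion of KL order preservation specializes, when every image has size $1$, to the type 1 notion $T(x)\leqL T(y)$, so no content is lost in this passage.

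Next I would verify the two hypotheses for each member of $\euscr F$. For the right Knuth maps: \myrefP{prop:KnuthAKLSet} gives that $\DstR$ is a left KL interval set, and \myrefP{prop:KnuthAKLOrder} gives that $\TstR$ is left KL order preserving. For the right $B_2$ maps (for $s,t$ with $st$ of order $4$): \myrefP{prop:BTwoKLSet} gives that $\DstR$ is a left KL interval set, and \myrefn{Corollary}{cor:BTwoConsequences}{1}, read with left and right interchanged, gives that $\TstR$ is left KL order preserving. Thus every $T\in\euscr F$ is a left KL order preserving function with a left KL interval set as domain. Applying \myrefT{thm:genTau} with left and right interchanged — its proof uses only \myrefP{prop:KLIntervalEquiv} and \myrefP{prop:edgeTransportEquivalence}, both of which hold symmetrically — to $y\equivL w$ then yields that $y$ and $w$ have the same (right) generalized \ti\ with respect to $\euscr F$, i.e.\ $y\equivGTF w$. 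The "similarly interchanging left and right" clause is the mirror assertion, proved the same way from the unprimed (left) Knuth and $B_2$ maps together with the corresponding halves of \myrefP{prop:KnuthAKLSet}, \myrefP{prop:KnuthAKLOrder}, \myrefP{prop:BTwoKLSet}, and \myrefC{cor:BTwoConsequences}.

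I do not expect a genuine obstacle here: all the substantive work — the $A_2$ and $B_2$ edge transport theorems, the implication from (type 1 or type 2) edge transport function to KL order preservation, and the inductive definition underlying the generalized \ti\ — has already been carried out. The only point needing care is bookkeeping: keeping the left/right decorations on the maps, their domains, and the generalized \ti\ consistent, and making sure the type 1 Knuth maps are promoted to type 2 edge transport functions before \myrefT{thm:genTau} is invoked.
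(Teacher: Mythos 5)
Your proof is correct and follows essentially the same route as the paper's: invoke the left/right-swapped form of Theorem \ref{thm:genTau}, with the hypotheses on the right Knuth maps supplied by Propositions \ref{prop:KnuthAKLSet} and \ref{prop:KnuthAKLOrder} and those on the right $B_2$ maps supplied by Proposition \ref{prop:BTwoKLSet} and Corollary \ref{cor:BTwoConsequences}--1. You are a bit more explicit than the paper about promoting the Knuth maps to type 2 edge transport functions via Remark \ref{rem:typeIToTypeII} so that all of $\euscr F$ lives in the setting of Theorem \ref{thm:genTau}; the paper instead delegates the Knuth-map case to the earlier Theorem \ref{thm:genTauKnuthConclusion}, but the substance is the same.
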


  \begin{proof}
    For the Knuth maps, we have already seen this in the proof of \myrefT{thm:genTauKnuthConclusion}.
    For the $B_2$ maps, this combines \myrefT{thm:genTau} with \myrefC{cor:BTwoConsequences}--1 and \myrefP{prop:BTwoKLSet}.
  \end{proof}

  Let's continue looking at past theorems using the generalized $\tau$-invariant, as motivation.
  We need to modify \myrefD{def:equivalenceRelationF} a little.

  \begin{definition}
    \label{def:equivalenceRelationF2}
    Let $\euscr F$ be a set of functions each of which has domain a subset of $W$ and range $\mathcal P(W)$.
    We'll write $\equivF$ for the equivalence relation on $W$ generated by $\euscr F$.
    That is, we have $w \equivF y$ for every $T \in \euscr F$, $w$ in the domain of $T$, and $y \in T(w)$.
  \end{definition}

  Again, by definition, if $\euscr F$ is a set of left (resp.\ right) KL cell functions, then $\equivF$ is a stronger equivalence relation than $\equivL$ (resp.\ $\equivR$).
  By \myrefT{thm:genTau}, if $\euscr F$ is a set of right (resp.\ left) KL order preserving functions whose domains are right (resp.\ left) KL interval sets, then $\equivL$ (resp.\ $\equivR$) is a stronger equivalence relation than $\equivGTF$.

  Ideally, we would like to find a left transport set $\euscr F$ and a right transport set $\euscr F'$ such that $\equivF$ and $\equivGTFPrime$ coincide.
  In that case, both will coincide with $\equivL$.

  As with type $A_n$, the ideal situation of a set $\euscr F$ of left KL cell functions and a set $\euscr F'$ of right KL order preserving functions such that $\equivF$ and $\equivGTFPrime$ coincide is achieved for type $B_n$ and $C_n$.
  Here $\euscr F$ is the set of left Knuth maps and left $B_2$ maps, and similarly $\euscr F'$ is the set of right Knuth maps and right $B_2$ maps.

  This theorem is proved in \cite{garfinkle_1993} as Corollary 3.5.6.

  \section{\texorpdfstring{$D_4$}{D4} Maps}
  \label{sec:D4Maps}
  It remains now to define the maps associated to \myrefT{thm:main}, and to show that they have the properties described in \myrefS{sec:edgeTransportPairs}.
  We will then have the machinery necessary to carry out (in future papers) the program of classifying left (and right) cells in the Weyl group of type $D_n$.

  \myrefT{thm:main} is a theorem about the relationship between $\mutilde$ terms connecting elements of type $\scA$ and elements of type $\scC$.
  So, there are maps implicit in that theorem.
  Let's start by defining those maps.

  \begin{definition}
    \label{def:D4MainMap}
    Let $D_{\scC} \subset W$ be the set of elements of type $\scC$.
    Let $i \in \refSet$.
    Let $T_i$ be the map which associates to a $w \in D_{\scC}$ the set consisting of the one or two elements of type $\scA_i$ which are in the same clump as $w$.
    Let $\bar D_i \subset W$ be the set of elements of type $\scA_i$.
    Let $\bar T_i$ be the map which associates to a $w \in \bar D_i$ the set consisting of the one or two elements of type $\scA_i$ which are in the same clump as $w$.
    Note that $\bar T_i = \pair(T_i)$.
  \end{definition}

  For applications, variations on that map are more useful.
  Next, we'll define the variations (following \cite{garfinkle_vogan_1992}.)
  We'll call the maps $D_4$ maps.
  As usual, these maps have a left and a right version.
  We'll define the left version below, as usual omitting the superscript $L$.

  Please note, our terminology so far has been a shorthand.
  We've talked about type $\scC$, etc.
  This so far has been left type $\scC$.
  There are also right type $\scC$ elements.
  Similarly we have right type $\scA_1$ elements, etc.
  If the generators of the parabolic subgroup are not labeled 1, 2, 3, and 4, (as for example in the Weyl group of type $E_6$) then we'd use different subscripts for our type $\scA$ and $\scB$ elements.
  That will also be convenient in a situation where there are two (or more) parabolic subgroups of type $D_4$, as for example in affine $D_4$.

  \begin{definition}
    \label{def:D4Ops}
    Let $i \in \refSet$.
    \begin{enumerate}
      \item Let $T_{{\scD},i}$ be the map which associates to a $w \in W$ of type $\scA_i$ the set consisting of the one or two elements of type $\scD$ which are in the same clump as $w$.
      \item Let $T_{{\scC},i}$ be the map which associates to a $w \in W$ of type $\scB_i$ the set consisting of the one or two elements of type $\scC$ which are in the same clump as $w$.
      \item Let $T_{i,{\scC}}$ be the map which associates to a $w \in W$ of type $\scC$ the set consisting of the one or two elements of type $\scB_i$ which are in the same clump as $w$.
      \item Let $T_{i,{\scD}}$ be the map which associates to a $w \in W$ of type $\scD$ the set consisting of the one or two elements of type $\scA_i$ which are in the same clump as $w$.
    \end{enumerate}
  \end{definition}

  \begin{notation}
    We will write $D_{{\scD},i}(W)$ for the domain of the map $T_{{\scD},i}$, that is, $D_{{\scD},i}(W)$ is simply the set of elements of type $\scA_i$.
    We have an analogous notation for the domains of the other maps in \myrefD{def:D4Ops}.
  \end{notation}

  We have an alternate characterization of the maps in \myrefD{def:D4Ops}, analogous to that of \myrefP{prop:talbAltDef} and \myrefP{prop:KnuthBAltDef}.

  \begin{proposition}
    Let $i \in \refSet$. Let $j, k$ be such that $\refSet = \{i, j, k\}$.
    \begin{enumerate}
      \item  If $w \in D_{{\scD},i}(W)$ then
      \begin{equation*}
        T_{{\scD},i}(w) = D_{i,{\scD}}(W) \cap \{s_i w,s_3 s_j s_k w,s_3 s_j s_3 w, s_3 s_k s_3 w\}
      \end{equation*}

      \item  If $w \in D_{{\scC},i}(W)$ then
      \begin{equation*}
        T_{{\scC},i}(w) = D_{i,{\scC}}(W) \cap \{s_i w, s_3 s_j s_k w,s_3 s_j s_3 w,  s_3 s_k s_3 w\}
      \end{equation*}

      \item  If $w \in D_{i,{\scC}}(W)$ then
      \begin{equation*}
        T_{i,{\scC}}(w) = D_{{\scC},i}(W) \cap \{s_i w, s_j s_k s_3 w,s_3 s_j s_3 w,   s_3 s_k s_3 w\}
      \end{equation*}

      \item  If $w \in D_{i,{\scD}}(W)$ then
      \begin{equation*}
        T_{i,{\scD}}(w) = D_{{\scD},i}(W) \cap \{s_i w, s_j s_k s_3 w,s_3 s_j s_3 w, s_3 s_k s_3 w\}
      \end{equation*}
    \end{enumerate}
  \end{proposition}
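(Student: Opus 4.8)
The plan is to mimic the proofs of \myrefP{prop:talbAltDef} and \myrefP{prop:KnuthBAltDef}: first reduce to the case $w \in \WDFour$, and then verify the four identities by a finite inspection of the eight cells of \myrefS{sec:D4cells}.

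For the reduction, suppose $w \in W$ lies in the domain of one of the four maps, say $w \in D_{{\scD},i}(W)$, so $w$ is of type $\scA_i$. Write $w = w_0 w^{\SDFour}$ with $w_0 = p_{\SDFour}(w) \in \WDFour$ and $w^{\SDFour} \in W^{\SDFour}$; by definition of type, $w_0$ is of type $\scA_i$, so it lies in one of the cells $C$, and the clump of $w$ is $\{u w^{\SDFour} \mid u \in C\}$, with $u w^{\SDFour}$ of the same type as $u$. For any word $\sigma$ in the generators $\SDFour$ we have $\sigma w = (\sigma w_0) w^{\SDFour}$ with $\sigma w_0 \in \WDFour$, and by \myrefP{prop:parabolic2}--1 we have $p_{\SDFour}(\sigma w) = \sigma w_0$ and $(\sigma w)^{\SDFour} = w^{\SDFour}$. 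Hence $\sigma w$ lies in the clump of $w$ and is of type $\scD$ if and only if $\sigma w_0 \in C$ and $\sigma w_0$ is of type $\scD$. Letting $\sigma$ range over $s_i$, $s_3 s_j s_k$, $s_3 s_j s_3$, and $s_3 s_k s_3$, we see that the asserted formula for $T_{{\scD},i}(w)$ follows from the corresponding formula for $T_{{\scD},i}(w_0)$ in $\WDFour$. The same argument works for the other three maps.

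So it remains to check the four identities for $w \in \WDFour$. Here every element in question has an explicit reduced word, the cell containing it and its $\tDFour$-value are recorded in \autoref{fig:type10Full}, \autoref{fig:type14aFull}, and \autoref{fig:type14bFull} (via \myrefP{prop:clumpDiagram} and \myrefP{prop:WGraph}), and by \myrefR{rem:relabel} it suffices to treat clumps of size $10$ together with $C(14, a, 4)$ and $C(14, b, 2)$, since size-$10$ clumps are symmetric in $s_1, s_2, s_4$ and the remaining cells are obtained by relabeling. For each such cell and each element $w$ in the relevant domain, I would compute the four products $s_i w$, $s_3 s_j s_k w$, $s_3 s_j s_3 w$, $s_3 s_k s_3 w$ (resp.\ $s_i w$, $s_j s_k s_3 w$, $s_3 s_j s_3 w$, $s_3 s_k s_3 w$ for $T_{i,{\scC}}$ and $T_{i,{\scD}}$), read off from the figure which of them lie in the cell and have the target type, and check that these are exactly the target-type elements of the cell. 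This is made manageable by two observations: each of the four words has length $1$ or $3$, while within a clump the one or two target-type elements differ in length from $w$ by exactly $1$ or $3$; and the $\tDFour$-values along any path obtained by left-multiplying by a sequence of generators are forced by \myrefP{prop:clumpDiagram}, so one simply traces paths in the figures.

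The main obstacle is purely the bookkeeping of this case analysis: there are several cells, each with one or two elements in each domain, and for each one must identify which of the four candidate words actually produces each target-type element (the remaining words land outside the clump, or inside it but at the wrong type, and are discarded by intersecting with the relevant $D_{\bullet,\bullet}(W)$). No new ideas beyond \myrefP{prop:parabolic2}, \myrefP{prop:clumpDiagram}, and inspection of the three figures are needed.
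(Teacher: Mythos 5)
Your reduction to $\WDFour$ via \myrefP{prop:parabolic2} is sound, and the overall plan (reduce, then inspect the $D_4$ cell pictures) is the same as the paper's. But the key step is not ``purely bookkeeping'': there is a gap in how you discard the candidate products that fall outside the clump of $w$.

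You claim that those products ``land outside the clump, or inside it but at the wrong type, and are discarded by intersecting with the relevant $D_{\bullet,\bullet}(W)$.'' That intersection only discards an element if it is \emph{not} of the target type, which is exactly what needs to be established. It is a priori possible that, say, $s_3 s_j s_3 w$ is of type $\scC$ but lies in a different clump from $w$; if that happened, the right-hand side would contain an element not in $T_{i,\scC}(w)$ and the identity would be false. The figures you cite (\autoref{fig:type10Full}, \autoref{fig:type14aFull}, \autoref{fig:type14bFull}) only display elements inside the three clumps, so they give you no purchase on a product that leaves the clump. The paper handles this in two stages: for statements 3 and 4 it uses the larger \emph{extra} figures (\autoref{fig:type10Extra}, \autoref{fig:type14aExtra}, \autoref{fig:type14bExtraColorChange}), which also display the neighboring elements hit by the candidate words, so one can verify directly that the out-of-clump candidates are not of the target type; for statements 2 and 1 even those figures are insufficient, and the paper instead observes that the candidate words are the inverses of those in statements 3 and 4, so if $\sigma w$ were of the target type but outside the clump of $w$, then applying $\sigma^{-1}$ to $\sigma w$ would produce $w$ --- by the already-proved statement 3 (resp.\ 4) this forces $w$ to lie in the clump of $\sigma w$, a contradiction. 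You would need either that indirect argument, or a genuine exhaustive check over all 192 elements of $\WDFour$ (not just those appearing in the basic clump diagrams), to close the gap.
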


  \begin{proof}
    Let's prove 3 first.  This is actually clear, by examination of \autoref{fig:type10Extra}, \autoref{fig:type14aExtra}, and \autoref{fig:type14bExtraColorChange}.
    If $w$ is of type $\scC$, we see that the one or two elements of type $\scA_i$ in each clump are in the set $\{s_i w, s_j s_k s_3 w,s_3 s_j s_3 w,
    s_3 s_k s_3 w\}$, and that the other elements of the set are not type $\scA_i$.

    For statement 2, if $w$ is of type $\scB_i$, the figures cited above show that the one or two elements of type $\scC$ in each clump are in the set $S_w = \{s_i w, s_3 s_j s_k w,s_3 s_j s_3 w, s_3 s_k s_3 w\}$.
    The figures don't display all the other elements of the set.
    However, notice that the four elements of $\WDFour$ which we're multiplying $w$ by to obtain $S_w$ are the inverses of the elements used in statement 3 to go from an element of type $\scC$ to an element of type $\scB_i$.
    So, if multiplying $w$ by of them led to an element of type $\scC$ which is not in the same clump as $w$, then the inverse would lead from that element back to our $w$ of type $\scB_i$.
    But we've already seen in the previous paragraph that from an element of type $\scC$, you only get to the elements of type $\scB_i$ which are in its clump.
    So, therefore, none of the other elements in $S_w$ are type $\scC$.

    The arguments for statements 4 and 1 are the same, starting with statement 4.
  \end{proof}

  here are some properties of the $D_4$ maps, which we'll need either in this paper or in future papers.

  \begin{proposition}
    \label{prop:d4OperatorsWOneTwo}
    Let $w \in W$. Let $i \in \refSet$.
    Let $j, k$ be such that $\refSet = \{i, j, k\}$.
    \begin{enumerate}
      \item Let $T$ be one of the maps of \myrefD{def:D4Ops}.  Let $w$ be in the domain of $T$.  Then
      $T(w)$, consists of one or two elements. Furthermore, we have the following.
      \begin{enumerate}
        \item If $T(w)=\{w'\}$ then
        $T(w')=\{w,w''\}$ with $w''\neq w$ and $T(w'')=\{w'\}$.
        \item If $T(w)=\{w_1,w_2\}$ with $w_1\neq w_2$ then
        $T(w_1)=T(w_2)=\{w\}$.
      \end{enumerate}

      \item If $w$ is of left type $\scC$ then
      \begin{equation*}
        T_{3,j}^L(T_{i,C}^L(w)) = T_{k,D}^L(T_{3,k}^L(w)).
      \end{equation*}
      In particular $\abs{T_{i,C}^L(w)}=\abs{T_{j,C}^L(w)}=\abs{T_{k,C}^L(w)}$, and similarly with $D$ in place of $C$.

      \item If $w$ is of left type $\scC$ and if $T_{i,C}^L(w)=\{w_1,w_2\}$ (where possibly $w_1 = w_2$) then
       \begin{equation*}
         w_2=(T_{k,3}^L\circ T_{3,i}^L\circ T_{j,3}^L\circ T_{3,k}^L\circ T_{i,3}^L\circ T_{3,j}^L)(w_1).
       \end{equation*}

      If $w$ is of left type $\scD$ and if $T_{i,D}^L(w)=\{w_1,w_2\}$ (where possibly $w_1 = w_2$) then
      \begin{equation*}
        w_2=(T_{3,k}^L\circ T_{i,3}^L\circ T_{3,j}^L
        \circ T_{k,3}^L\circ T_{3,i}^L\circ T_{j,3}^L)(w_1).
      \end{equation*}

      \item We have the corresponding statements with right in place of left.
    \end{enumerate}
  \end{proposition}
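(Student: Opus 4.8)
The plan is to reduce every claim to a finite inspection carried out inside $\WDFour$ and then transport it to $W$. Each of the four maps in \myrefD{def:D4Ops} is defined clump by clump and depends only on the types of the elements in a clump, while \myrefP{prop:clumpDiagram} (with \myrefP{prop:talbParabolic} and \myrefP{prop:parabolic2}) guarantees that the relative lengths, the $\tDFour$-values, and the internal Knuth maps $T_{i,j}$ of any clump agree with those of the corresponding cell of $\WDFour$ drawn in \refDFourFigures\ and in the enlarged \autoref{fig:type10Extra}, \autoref{fig:type14aExtra}, \autoref{fig:type14bExtraColorChange}. So it is enough to verify statements 1--3 for $w\in\WDFour$ lying in one of the eight cells $C(10,a),C(10,b),C(14,a,i),C(14,b,i)$; and because clumps of size $10$ are symmetric in $s_1,s_2,s_4$ and the two families of size-$14$ clumps are each permuted among themselves by relabelling $\{s_1,s_2,s_4\}$ (cf.\ \myrefR{rem:relabel}), only a few of these cases have to be looked at. Statement 4 is then obtained verbatim with ``left'' replaced throughout by ``right'', applying \myrefP{prop:clumpDiagram} on the other side.

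For statement 1, I would read off from the figures the multiset of types in each cell: a size-$10$ cell has two elements of type $\scC$, two of type $\scD$, and one of each of $\scA_i$ and $\scB_i$ for $i\in\refSet$; a size-$14$ cell has one element of type $\scC$, one of type $\scD$, and two of each of $\scA_i$ and $\scB_i$. Since each map of \myrefD{def:D4Ops} sends an element to the one or two elements of a fixed type in its clump, the image has one or two elements, and (a) and (b) follow immediately: if the image is a singleton $\{w'\}$ then the clump has two elements of the source type but only one of the target type, so the pair map carries $w'$ exactly to those two source elements, the other of which maps back to $\{w'\}$; if the image is $\{w_1,w_2\}$ then the clump has one source element and two target elements, so each $w_\ell$ maps back to $\{w\}$. (As in \myrefP{prop:edgeTransportPair}(2)--(3), ``$T$'' applied to an element of the target type means $\pair(T)$.)

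For statement 2, fix $w$ of type $\scC$ and write $\refSet=\{i,j,k\}$. By \myrefP{prop:D4OpsAltDef}(3) we have $\tDFour(T_{3,k}(w))=\{s_3\}$, and the figures show that $T_{3,k}(w)$ is a type-$\scD$ element of the clump of $w$; hence $T_{k,{\scD}}(T_{3,k}(w))$ is precisely the set of type-$\scA_k$ elements of that clump. On the other side, $T_{i,{\scC}}(w)$ is the set of type-$\scB_i$ elements of the clump, and by \myrefP{prop:D4OpsAltDef}(2) together with \myrefP{prop:talbParabolic} the Knuth map $T_{3,j}$ carries each of them to an element with $\tDFour=\{s_k,s_3\}$ in the same clump, which by inspection is a type-$\scA_k$ element; as a clump contains equally many type-$\scB_i$ and type-$\scA_k$ elements and $T_{3,j}$ is injective, this is a bijection between the two sets, so $T_{3,j}(T_{i,{\scC}}(w))$ is again the full set of type-$\scA_k$ elements of the clump. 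This proves the displayed identity, and the ``in particular'' clause follows because all the cardinalities in question equal $1$ for size-$10$ clumps and $2$ for size-$14$ clumps; the version with $\scD$ in place of $\scC$ is proved in the same way.

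For statement 3 I would lean on \myrefP{prop:semicircle} rather than re-trace the six-step paths. In the $\scD$-case the composition $T_{3,k}\circ T_{i,3}\circ T_{3,j}\circ T_{k,3}\circ T_{3,i}\circ T_{j,3}$ is, for the appropriate relabelling of $\{s_1,s_2,s_4\}$, exactly the semicircle map of \myrefP{prop:semicircle} attached to type $\scA_i$; by that proposition it preserves type $\scA_i$, fixes the single type-$\scA_i$ element of a size-$10$ clump, and is a nontrivial self-map — hence the transposition — of the two type-$\scA_i$ elements of a size-$14$ clump, and statement 1 identifies $\{w_1,w_2\}=T_{i,{\scD}}(w)$ with those elements. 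The $\scC$-case reduces to this: with $v_\ell:=T_{3,j}(w_\ell)$ (so $w_\ell=T_{j,3}(v_\ell)$ as $T_{j,3}=T_{3,j}^{-1}$), the word $T_{k,3}T_{3,i}T_{j,3}T_{3,k}T_{i,3}T_{3,j}$ conjugated by $T_{3,j}$ becomes a type-$\scA_k$ semicircle word, and statement 2 identifies $\{v_1,v_2\}$ with the type-$\scA_k$ elements of the clump. The only genuinely delicate point in the whole argument — and where I would be most careful — is this subscript bookkeeping: one must check, for each $(i,j,k)$, that the conjugated word matches one of the admissible forms of the \myrefP{prop:semicircle} word under $T_{3,m}^{-1}=T_{m,3}$; that, and making the type count in statement 1 genuinely exhaustive over the cells, is essentially all the work.
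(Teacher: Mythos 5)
Your proposal is correct and follows the same high-level strategy as the paper's one-line proof (``inspect \refDFourFigures, taking into account \myrefP{prop:clumpDiagram}''), but it supplies the structure that the paper leaves implicit. You are right that the statement abuses notation: when $w'$ lies in the image of $T$, ``$T(w')$'' must be read as $\pair(T)(w')$, exactly as in \myrefP{prop:edgeTransportPair}(2)--(3); the paper confirms this later when \myrefP{prop:D4MapsETP} cites this proposition as verifying conditions 1--3 of \myrefP{prop:edgeTransportPair}. Your type-count of the two cell families (two $\scC$/$\scD$ and one of each $\scA_i$/$\scB_i$ for size~$10$; the reverse for size~$14$) is what makes (a) and (b) in part 1 immediate, and is exactly what the intended ``inspection'' amounts to. The one genuinely nontrivial observation in your write-up is the treatment of part 3: instead of retracing the six Knuth steps across the figures for each of the admissible orderings of $(j,k)$, you identify the $\scD$-case word with (a relabelling of) the semicircle map of \myrefP{prop:semicircle} and then derive the $\scC$-case by conjugating with $T_{3,j}$, using part 2 to identify $T_{3,j}(T_{i,\scC}^L(w))$ with the type-$\scA_k$ elements. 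This is a clean reduction, though one should keep in mind that \myrefP{prop:semicircle} is itself proved by inspection, so you have repackaged rather than replaced the inspection. Your caveats about the $\tDFour$-calculation in part 2 are well placed: $\tDFour = \{s_k,s_3\}$ alone does not determine type $\scA_k$ (it is also compatible with the extra condition in \myrefP{prop:D4OpsAltDef}(1) failing), so the ``by inspection'' step there is genuinely needed, and the injectivity-plus-equal-cardinality argument to close the bijection is exactly the right way to finish without checking each image individually.
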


  \begin{proof}
    This can be seen by inspecting \refDFourFigures, taking into account \myrefP{prop:clumpDiagram}.
  \end{proof}

  Now, we'll connect our maps to the theorems of the last two sections.

  \begin{proposition}
    \label{prop:D4MainETP}
    The maps $T_i$ and $\bar T_i$ are type 2 edge transport functions.
  \end{proposition}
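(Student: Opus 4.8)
The plan is to deduce the statement from \myrefP{prop:edgeTransportPair}, applied to the map $T = T_i$, whose pair function is $\bar T_i$ by \myrefD{def:D4MainMap}. So I need to check the six hypotheses of that proposition. Throughout I will use the combinatorial description of clumps from \myrefP{prop:clumpDiagram} together with \refDFourFigures: a clump of size $10$ contains two elements of type $\scC$ and one of type $\scA_i$, while a clump of size $14$ contains one element of type $\scC$ and two of type $\scA_i$.

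Hypotheses 1--3, and the clauses of hypothesis 6 that repeat conditions 2 and 3 with $T_i$ and $\bar T_i$ interchanged, are purely combinatorial and follow by inspection of \refDFourFigures, exactly as in \myrefn{Proposition}{prop:d4OperatorsWOneTwo}{1}. Explicitly: for $w$ of type $\scC$ we have $\abs{T_i(w)} = 1$ if $w$ lies in a size-$10$ clump and $\abs{T_i(w)} = 2$ if it lies in a size-$14$ clump, and symmetrically for $\bar T_i$ on elements of type $\scA_i$; if $T_i(y) = \{y', y''\}$ with $y' \neq y''$ then $y', y''$ are the two type-$\scA_i$ elements of a size-$14$ clump whose unique type-$\scC$ element is $y$, so $\bar T_i(y') = \bar T_i(y'') = \{y\}$; and if $T_i(y) = \{y'\}$ then $y$ is one of the two type-$\scC$ elements of a size-$10$ clump whose unique type-$\scA_i$ element is $y'$, so $\bar T_i(y') = \{y, y^*\}$ where $y^* \neq y$ is the other type-$\scC$ element of that clump and $T_i(y^*) = \{y'\}$. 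The same inspection gives these statements with $T_i$ and $\bar T_i$ interchanged.

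The substance is hypotheses 4 and 5 together with their $T_i \leftrightarrow \bar T_i$ interchanges, which come from \myrefT{thm:main}. The point is a dictionary between the $L, M, U$ labels of \myrefT{thm:main} and the labels of \myrefP{prop:edgeTransportPair}: whichever of the two types $\scC$, $\scA_i$ occurs twice in a clump is named $L, U$ there, and the one occurring once is named $M$. Take $y, w$ of type $\scC$ with $\mutilde(y,w)\neq 0$. If $\abs{T_i(y)} = \abs{T_i(w)} = 2$, both clumps have size $14$, so $\abs C = \abs{C'}$; writing $T_i(y) = \{y', y''\}$, $T_i(w) = \{w', w''\}$ and matching $(L, M, U) = (y', y, y'')$, $(L', M', U') = (w', w, w'')$, which is legitimate by \myrefR{rem:chooseLU}, Case 1 of \myrefT{thm:main} gives $\mutilde(y', w') = \mutilde(y'', w'')$ and $\mutilde(y', w'') = \mutilde(y'', w')$ directly, and $\mutilde(y,w) = \mutilde(y',w') + \mutilde(y'',w')$, which becomes $\mutilde(y,w) = \mutilde(y',w') + \mutilde(y',w'')$ on substituting the symmetry relation just obtained; this is hypothesis 4. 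If instead $\abs{T_i(y)} = 2$ and $\abs{T_i(w)} = 1$, then $y$'s clump has size $14$ and $w$'s has size $10$, so $\abs C \neq \abs{C'}$; taking $w^* \neq w$ to be the other type-$\scC$ element of $w$'s clump and matching $(L, M, U) = (y', y, y'')$, $(L', M', U') = (w, w', w^*)$, Case 2 of \myrefT{thm:main} reads $\mutilde(y', w') = \mutilde(y'', w') = \mutilde(y, w) = \mutilde(y, w^*)$, which is hypothesis 5. The remaining clauses of hypothesis 6 (conditions 4 and 5 with $T_i$ and $\bar T_i$ interchanged) are handled identically, now with $y, w$ of type $\scA_i$ and the doubled type being $\scC$; since the hypotheses of \myrefT{thm:main} are symmetric in $C$ and $C'$, no new input is needed.

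I expect the only real difficulty to be bookkeeping: keeping straight that $T_i$ maps the (possibly doubled) type-$\scC$ locus to the (possibly doubled) type-$\scA_i$ locus while $\bar T_i$ goes the other way, so that the name "$L, U$" always attaches to whichever type occurs twice in the clump at hand, and invoking the Case 1 symmetry $\mutilde(U, L') = \mutilde(L, U')$ at the right moment to put equation (a) of hypothesis 4 into exactly the form demanded by \myrefP{prop:edgeTransportPair}. Once all six hypotheses are verified, \myrefP{prop:edgeTransportPair} yields that both $T_i$ and $\bar T_i$ are type 2 edge transport functions.
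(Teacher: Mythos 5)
Your proof is correct and takes essentially the same route as the paper: reduce to the six hypotheses of \myrefP{prop:edgeTransportPair}, get conditions 1--3 (and their $T_i\leftrightarrow\bar T_i$ versions) from the combinatorial structure of clumps, and get conditions 4--5 from the two cases of \myrefT{thm:main}. The paper's proof is far terser but relies on the same decomposition; you have simply spelled out the dictionary between the $L,M,U$ labels and the $y,y',y'',w,w',w''$ labels, including the small substitution via $\mutilde(U,L')=\mutilde(L,U')$ needed to put equation (a) of hypothesis 4 into the exact form required.
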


  \begin{proof}
    The first three conditions (for $T_i$ and $\bar T_i$) follow from the definition of the maps.
    The next two conditions (again for $T_i$ and $\bar T_i$) are \myrefT{thm:main}.
  \end{proof}

  To prove the same for the functions of \myrefD{def:D4Ops}, we'll relate those functions to the ones of \myrefD{def:D4MainMap}.

  \begin{proposition}
    \label{prop:D4MapRelations}
    Let $i \in \refSet$.
    Let $j, k$ be such that $\refSet = \{i, j, k\}$.
    We have:
    \begin{enumerate}
      \item $T_{i, \scC} = T_{k, 3} \circ T_j$
      \item $T_{\scC, i} = \bar T_k \circ T_{3, j}$
      \item $T_{i, \scD} = T_i \circ T_{i, 3}$
      \item $T_{\scD, i} = T_{3, i} \circ \bar T_i$
    \end{enumerate}
  \end{proposition}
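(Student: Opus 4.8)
The plan is to transfer both sides of each identity down to the parabolic subgroup $\WDFour$ and then verify the resulting statements by inspecting the pictures of the eight cells. The first step is to observe that every map occurring in the statement is compatible with the projection $p_{\SDFour}\colon W\to\WDFour$, in the sense that if $w=w_{\SDFour}w^{\SDFour}$ with $w_{\SDFour}\in\WDFour$ and $w^{\SDFour}\in W^{\SDFour}$, then applying the map to $w$ gives the result of applying the corresponding map to $w_{\SDFour}$ inside $\WDFour$, translated on the right by $w^{\SDFour}$. For the main maps $T_i,\bar T_i$ of \myrefD{def:D4MainMap} and the $D_4$ maps $T_{\scC,i},T_{i,\scC},T_{\scD,i},T_{i,\scD}$ of \myrefD{def:D4Ops} this is immediate from the definition of a clump together with \myrefP{prop:clumpDiagram}: the clump of $w$ is the cell of $w_{\SDFour}$ translated on the right by $w^{\SDFour}$, and the type of an element of a clump is the type of its image in $\WDFour$. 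For the Knuth maps $T_{3,j},T_{k,3},T_{i,3},T_{3,i}$ the same compatibility is \myrefP{prop:talbParabolic}, and for the notion of type it is \myrefP{prop:parabolic2}--2. Hence it suffices to prove the four identities for $w\in\WDFour$.

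Next I would check that the domains of the two sides agree. For the first identity, the domain of $T_{i,\scC}$ is the set of type $\scC$ elements, which is exactly the domain of $T_j$; and by \myrefP{prop:D4OpsAltDef} each element of $T_j(w)$ has $\tDFour=\{s_j,s_3\}$, hence meets $\{s_k,s_3\}$ in $\{s_3\}$, hence lies in $D_{s_k,s_3}(W)$ by \myrefD{def:domainKnuthA}; so $T_{k,3}\circ T_j$ is defined precisely on the type $\scC$ elements. The remaining three domain checks are the same bookkeeping with \myrefP{prop:D4OpsAltDef}, \myrefP{prop:parabolic2}--2 and \myrefD{def:domainKnuthA}: a type $\scB_i$ element lies in $D_{s_3,s_j}(W)$, a type $\scD$ element lies in $D_{s_i,s_3}(W)$, a type $\scC$ element lies in $D_{s_3,s_i}(W)$, and in each case the image of the inner map lands in the domain of the outer one.

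For the identities themselves in $\WDFour$, the content is that, on each of the eight cells, the Knuth-map factor restricts to the bijection between the two relevant types that the identity asserts. For the first identity, $T_{k,3}$ carries the type $\scA_j$ elements of a cell bijectively onto its type $\scB_i$ elements, so applying $T_{k,3}$ to $T_j(w)$ — which by definition is the set of all type $\scA_j$ elements of the cell of $w$ — produces the set of all type $\scB_i$ elements of that cell, which is $T_{i,\scC}(w)$. Likewise $T_{3,j}$ carries type $\scB_i$ onto type $\scA_k$ (giving the second identity after composing with $\bar T_k$), $T_{i,3}$ carries type $\scD$ onto type $\scC$ (giving the third), and $T_{3,i}$ carries type $\scC$ onto type $\scD$ (giving the fourth). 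Each of these restriction statements is read directly off the pictures in \refDFourFigures and, where the surrounding elements are relevant, \autoref{fig:type10Extra}, \autoref{fig:type14aExtra}, and \autoref{fig:type14bExtraColorChange}: the solid lines there are the Knuth maps, labeled by the generator one multiplies by on the left (see the conventions of \myrefS{sec:D4cells}), so one simply matches up, cell by cell, the solid $s_k$-edges out of the type $\scA_j$ elements with the type $\scB_i$ elements, and similarly for the others. By the $s_1,s_2,s_4$-symmetry of \myrefR{rem:relabel} it is enough to carry this out for $i=1$ and for a short list of representative cells, which keeps the verification brief.

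I do not expect a genuine obstacle: once the reduction to $\WDFour$ is in place, what remains is a finite inspection entirely parallel to the proofs of \myrefP{prop:D4OpsAltDef} and \myrefP{prop:d4OperatorsWOneTwo}. The only point needing care is confirming that each Knuth-map factor really does restrict to the asserted type-to-type bijection on every clump — that no type $\scA_j$ element of a cell is sent by $T_{k,3}$ outside that cell, and that the restriction is onto — but this is precisely the data recorded in the detailed diagrams together with \myrefP{prop:D4OpsAltDef}, so it is bookkeeping rather than a difficulty.
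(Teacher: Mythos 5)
Your proposal takes the same route as the paper's (very terse) proof: reduce to $\WDFour$ via \myrefP{prop:clumpDiagram} and \myrefP{prop:talbParabolic}, then read the type-to-type action of each Knuth-map factor off the cell diagrams in \refDFourFigures. You have simply filled in the bookkeeping (domain checks via \myrefP{prop:D4OpsAltDef}, the symmetry reduction of \myrefR{rem:relabel}) that the paper leaves implicit, and the argument is correct.
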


  \begin{proof}
    This is clear from the definitions and the diagrams  \refDFourFigures, using also \myrefP{prop:clumpDiagram}.
  \end{proof}

  \begin{proposition}
    \label{prop:D4MapsETP}
    The functions of \myrefD{def:D4Ops} are type 2 edge transport functions.
  \end{proposition}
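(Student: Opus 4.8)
The plan is to deduce this directly from \myrefP{prop:D4MainETP}, which already says that $T_i$ and $\bar T_i$ are type 2 edge transport functions, together with \myrefP{prop:D4MapRelations}, which expresses each of the four maps of \myrefD{def:D4Ops} as a composition of one of $T_i$, $\bar T_i$ with a Knuth map, and \myrefP{prop:KnuthAEdgeTransportFunction}, which says each Knuth map is an edge transport function (in particular injective and $\mutilde$-preserving). The one genuinely new ingredient is the observation that the class of type 2 edge transport functions is closed under pre- and post-composition with a type 1 edge transport function (i.e.\ with an injective, $\mutilde$-preserving single-valued map). All the hard content of the proposition is buried in \myrefT{thm:main} and has already been packaged; what remains is bookkeeping.

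First I would record the closure property as a lemma. Let $B\colon D\longrightarrow\mathcal P(W)$ be a type 2 edge transport function with $\bar B=\pair(B)$, and let $A$ be a type 1 edge transport function. Claim: (i) if the image of $B$ lies in the domain of $A$, then $A\circ B\colon w\longmapsto\{A(b):b\in B(w)\}$ is a type 2 edge transport function, with $\pair(A\circ B)=\bar B\circ A^{-1}$; and (ii) if the image of $A$ lies in $D$, then $B\circ A\colon w\longmapsto B(A(w))$ is a type 2 edge transport function, with $\pair(B\circ A)=A^{-1}\circ\bar B$. Here $A^{-1}$ denotes the (set-theoretic) inverse of $A$ on its image, which is again a type 1 edge transport function; for Knuth maps $A^{-1}$ is the opposite Knuth map, so this stays inside the family.

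To prove the lemma I would first check that conditions (1) and (2) of \myrefD{def:edgeTransportPair} are preserved. For post-composition: suppose $\mutilde(y,w)\neq0$; if $\abs{B(y)}=\abs{B(w)}=k$, write $B(y)=\{y_1,\dots,y_k\}$ and $B(w)=\{w_1,\dots,w_k\}$ with $\mutilde(y_i,w_i)\neq0$; since $A$ is injective, $A\circ B(y)=\{A(y_1),\dots,A(y_k)\}$ still has $k$ elements, likewise for $w$, and $\mutilde(A(y_i),A(w_i))=\mutilde(y_i,w_i)\neq0$; if instead $\abs{B(y)}\neq\abs{B(w)}$ then $\abs{A\circ B(y)}\neq\abs{A\circ B(w)}$ and every cross pair has nonzero $\mutilde$, again because $A$ preserves $\mutilde$. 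For pre-composition: if $\mutilde(y,w)\neq0$ with $y,w$ in the domain of $A$, then $\mutilde(A(y),A(w))=\mutilde(y,w)\neq0$, and conditions (1), (2) for $B$ at the pair $(A(y),A(w))$ are exactly conditions (1), (2) for $B\circ A$ at $(y,w)$. The identities for $\pair$ are immediate from \myrefD{def:pairFunction} and injectivity of $A$. Finally, condition (3) of \myrefD{def:edgeTransportPair} for $A\circ B$ asserts that $\pair(A\circ B)=\bar B\circ A^{-1}$ satisfies (1), (2); but $\bar B$ satisfies (1), (2) by condition (3) for $B$, and $A^{-1}$ is a type 1 edge transport function, so this is an instance of the pre-composition case just established — and symmetrically condition (3) for $B\circ A$ is an instance of the post-composition case. (A small point: one should order the argument so that (1) and (2) are proved first for both composition directions, and only then invoke them to get (3) for both, to avoid circularity.)

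With the lemma in hand the proposition follows at once from \myrefP{prop:D4MapRelations}: $T_{i,\scC}=T_{k,3}\circ T_j$ and $T_{\scD,i}=T_{3,i}\circ\bar T_i$ are post-compositions of the type 2 edge transport functions $T_j$, $\bar T_i$ with Knuth maps, while $T_{\scC,i}=\bar T_k\circ T_{3,j}$ and $T_{i,\scD}=T_i\circ T_{i,3}$ are pre-compositions; in each case the image of the inner map consists of elements of a single type whose $\tau$-invariant (read off from \refDFourFigures\ via \myrefP{prop:clumpDiagram}, or from \myrefP{prop:D4OpsAltDef}) places them in the domain of the outer Knuth map, so the compositions are defined. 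Hence all four maps of \myrefD{def:D4Ops} are type 2 edge transport functions. The main obstacle is not conceptual but organizational: keeping straight that $A\circ B$ and $B\circ A$ must be handled separately, and noticing that condition (3) — the pair-function condition — requires no independent verification precisely because $\pair$ of a composition with a single-valued edge transport function is again such a composition, with the opposite Knuth map, so it is subsumed by the other half of the lemma.
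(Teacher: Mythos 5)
Your route is a valid variant of the paper's but packages the argument differently. The paper proves this by checking the sufficient conditions of \myrefP{prop:edgeTransportPair}: the cardinality and pairing conditions (1)--(3) come from \myrefP{prop:d4OperatorsWOneTwo}--1, and the $\mutilde$ conditions (4)--(5) come from \myrefP{prop:D4MainETP} together with \myrefP{prop:D4MapRelations} and \myrefT{thm:talbTilde}. You instead prove a general closure lemma (type 2 edge transport functions are stable under pre- and post-composition with type 1 edge transport functions) and verify \myrefD{def:edgeTransportPair} directly, bypassing \myrefP{prop:edgeTransportPair} and \myrefP{prop:d4OperatorsWOneTwo} altogether. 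The closure lemma is the more reusable statement and makes explicit what the paper leaves compressed in a one-line citation, namely that composition with a $\mutilde$-preserving bijection carries the structure of \myrefD{def:edgeTransportPair} along; the paper's route is shorter here because the combinatorial structure is already recorded in \myrefP{prop:d4OperatorsWOneTwo}.

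There is, however, one point where your lemma as stated is too weak, in the pre-composition case $B \circ A$. Unwinding \myrefD{def:pairFunction}, one gets $\pair(B \circ A)(\bar w) = A^{-1}\bigl(\bar B(\bar w) \cap \operatorname{im}(A)\bigr)$. This equals $A^{-1}(\bar B(\bar w))$ --- and in particular $A^{-1}\circ\bar B$ is even well defined --- only when $\bar B(\bar w) \subseteq \operatorname{im}(A)$, which does not follow from your hypothesis that $\operatorname{im}(A) \subseteq D$. If it failed, $\pair(B\circ A)(\bar w)$ could be a proper subset of the claimed set, its cardinality could differ, and your reduction of condition~(3) for $B\circ A$ to the post-composition case would not go through. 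The fix is to require $\operatorname{im}(A) = D$, i.e.\ the inner (type 1) map is a bijection from its restricted domain onto the domain of the outer map. This stronger hypothesis is exactly what holds in the four applications: for example in $T_{\scC,i} = \bar T_k \circ T_{3,j}$, whose domain is the set of elements of type $\scB_i$, the Knuth map $T_{3,j}$ restricted to elements of type $\scB_i$ is a bijection onto the elements of type $\scA_k$ (the domain of $\bar T_k$), since by \myrefP{prop:D4OpsAltDef} it and its inverse $T_{j,3}$ carry these two type classes onto one another; the other three cases are analogous. With that strengthening, your argument is correct and gives a cleaner, more general account than the paper's.
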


  \begin{proof}
    We'll use \myrefP{prop:edgeTransportPair}.
    Note that $T_{i, \scC}$ and $T_{\scC, i}$ are pair functions, as are $T_{i, \scD}$ and $T_{\scD, i}$.
    Conditions 1-3 of \myrefP{prop:edgeTransportPair} are stated in \myrefP{prop:d4OperatorsWOneTwo}--1.
    So, we just need to show that conditions 4 and 5 hold for all the maps $T$ listed in the proposition.
    This follows from \myrefP{prop:D4MainETP}, \myrefP{prop:D4MapRelations}, and \myrefT{thm:talbTilde}.
  \end{proof}

  Now, let's check the other properties of these functions.

  \begin{proposition}
    \label{prop:D4OpDomain}
    Let $D$ be the domain of one of the maps in \myrefD{def:D4Ops}.
    Then $D$ is a right KL interval set.
    Similarly, with left and right interchanged.
  \end{proposition}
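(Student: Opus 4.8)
The plan is to recognize that each of the four sets $D_{\scD,i}(W)$, $D_{\scC,i}(W)$, $D_{i,\scC}(W)$, $D_{i,\scD}(W)$ named in \myrefD{def:D4Ops} is, by the Notation immediately following that definition, simply the set of all elements of $W$ of one fixed left type: type $\scA_i$, type $\scB_i$, type $\scC$, and type $\scD$, respectively. Unwinding \myrefD{def:KLIntervalSet}, the assertion ``$D$ is a right KL interval set'' then amounts to: if $x$ and $y$ share a common left type $\scT$ and $x \leqR w \leqR y$, then $w$ also has left type $\scT$.

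First I would observe that this is exactly \myrefP{prop:D4OpRightCells} read with the roles of left and right interchanged. That proposition, stated for type $\scA_1$ and ``similarly for the other types,'' says that the set of elements of a given \emph{right} type is a \emph{left} KL interval set, and it explicitly records that the mirror statement (left $\leftrightarrow$ right) also holds; the mirror statement is precisely what we need here for the domains of the left $D_4$ maps. So the proof is just: invoke the ``interchanging left and right'' form of \myrefP{prop:D4OpRightCells} for whichever of the types $\scA_i$, $\scB_i$, $\scC$, $\scD$ cuts out $D$.

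For the ``similarly, with left and right interchanged'' clause — that the domain of each \emph{right} $D_4$ map, being the set of elements of a given \emph{right} type, is a left KL interval set — I would instead quote \myrefP{prop:D4OpRightCells} in its originally stated orientation.

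The one thing to be careful about is the left/right bookkeeping: the left $D_4$ maps of \myrefD{def:D4Ops} have domains carved out by \emph{left} types, yet the property claimed is that these domains are \emph{right} KL interval sets, so it is essential to cite the reflected form of \myrefP{prop:D4OpRightCells} rather than the literal one. There is no genuine obstacle beyond this: the substantive content already resides in \myrefP{prop:D4OpRightCells}, which in turn rests on the type characterization of \myrefP{prop:D4OpsAltDef} together with \myrefP{prop:tauInterval} and \myrefP{prop:KnuthAKLSet}.
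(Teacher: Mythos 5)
Your proposal is correct and follows exactly the route the paper takes: the paper's own proof is the single sentence ``This is just \myrefP{prop:D4OpRightCells},'' and you have unwound that to the same reduction. You go a bit further in making explicit the left/right bookkeeping (the domains are cut out by left types but the claim concerns being a right KL interval set, so the ``interchanging left and right'' form of \myrefP{prop:D4OpRightCells} is the one actually invoked), which is a genuine clarification of what the paper leaves implicit.
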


  \begin{proof}
    This is just \myrefP{prop:D4OpRightCells}.
  \end{proof}

  \begin{proposition}
    \label{prop:D4KLCell}
    Let $T$ be one of the maps in \myrefD{def:D4Ops} or \myrefD{def:D4MainMap}.
    Then $T$ is a left KL cell map.
  \end{proposition}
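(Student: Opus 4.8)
The plan is to deduce the statement directly from \myrefP{prop:clumpSameCell}. Every map $T$ appearing in \myrefD{def:D4MainMap} or \myrefD{def:D4Ops} is defined so that, for $w$ in its domain, $T(w)$ is the set of the one or two elements of a prescribed type (type $\scA_i$, $\scD$, $\scC$, or $\scB_i$, according to the map) lying in the \emph{same clump} as $w$. So the first step is simply to record this consequence of the definitions: if $w$ is in the domain of $T$ and $w' \in T(w)$, then there is a clump $C$ with $w, w' \in C$.

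Next I would invoke \myrefP{prop:clumpSameCell}, which says that any two elements of a single clump are left-equivalent. Applying it to $w$ and $w'$ gives $w' \equivL w$, which is exactly the requirement of \myrefD{def:KLCellFunctionB} for $T$ to be a left KL cell function. The right-hand versions of the maps are handled by the identical argument with $L$ replaced by $R$ throughout, using the right analogue of \myrefP{prop:clumpSameCell}.

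There is no genuine obstacle here: the substantive work has already been done in \myrefP{prop:clumpSameCell}, which itself rests on \myrefP{prop:KLPolysParabolic} (to transport the $W$-graph edges of \refDFourFigures\ to an arbitrary clump) together with the cell computation from the proof of \myrefP{prop:D4LeftCells}. The only bookkeeping to verify is that the target of each of $T_i$, $\bar T_i$, $T_{\scD,i}$, $T_{\scC,i}$, $T_{i,\scC}$, $T_{i,\scD}$ does in fact consist of elements of the clump of its argument, which is immediate from \myrefD{def:D4MainMap}, \myrefD{def:D4Ops}, and \myrefP{prop:clumpDiagram}.
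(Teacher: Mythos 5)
Your argument is correct and follows the same route as the paper: the paper's proof of this proposition consists precisely of invoking \myrefP{prop:clumpSameCell}, and you have simply spelled out why the definitions of the $D_4$ maps stay within a clump before applying that proposition.
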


  \begin{proof}
    This follows from \myrefP{prop:clumpSameCell}.
  \end{proof}

  As a consequence, we have these results for the $D_4$ maps.

  \begin{corollary}
    \label{cor:D4Conclusions}
    Let T be a let $D_4$ map with domain $D$.  We have
    \begin{enumerate}
      \item $T$ is right KL order preserving.
      \item Suppose $x,y \in D$ with $x \equivR y$.
      Then we can write $T(x) = \{x',x''\}$ and $T(y) = \{y',y''\}$ (where possibly $x' = x''$ and/or $y' = y''$) so that $x' \equivR y'$ and $x'' \equivR y''$.
      \item Let $C$ be a right cell contained in $D$.
      Then $T(C)$ is either a right cell or a union of two right cells.
    \end{enumerate}
    Similarly, interchanging left and right.
  \end{corollary}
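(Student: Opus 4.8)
The plan is to imitate almost verbatim the proof of \myrefC{cor:BTwoConsequences}, feeding into the general machinery of \myrefS{sec:edgeTransportPairs} the three structural facts about the $D_4$ maps that we have just assembled. First I would record that if $T$ is one of the maps of \myrefD{def:D4Ops}, then its pair function $\pair(T)$ is again a map on that list: indeed $\pair(T_{i,\scC}) = T_{\scC,i}$ and $\pair(T_{i,\scD}) = T_{\scD,i}$, and $\pair$ is an involution (\myrefD{def:pairFunction}), so applying $\pair$ never takes us outside the family. Hence by \myrefP{prop:D4MapsETP} both $T$ and $\pair(T)$ are type 2 edge transport functions.

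Next I would invoke \myrefP{prop:D4KLCell}: every map in \myrefD{def:D4Ops} is a left KL cell function, and therefore, by \myrefR{rem:KLcellTauPreserving} (equivalently, the $\mathcal P(W)$-valued analogue of it stated right after \myrefD{def:KLCellFunctionB}, which is itself an immediate consequence of \myrefP{prop:tauCell}), it is right \ti\ preserving; the same applies to $\pair(T)$. Finally, \myrefP{prop:D4OpDomain} gives that the domains of $T$ and of $\pair(T)$ are both right KL interval sets. With $D$ the domain of $T$, $\bar D$ its image, and $\bar T = \pair(T)$, the hypotheses of \myrefP{prop:edgeTransportLeqB}, \myrefP{prop:edgeTransportEquivalence}, and \myrefP{prop:cellsToCells} are then all satisfied for $T$.

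From here the three conclusions fall out directly. Conclusion (1) is exactly \myrefP{prop:edgeTransportLeqB}. For conclusion (2) I would apply \myrefP{prop:edgeTransportEquivalence}, first noting via \myrefP{prop:d4OperatorsWOneTwo}--1 that $\abs{T(x)}$ and $\abs{T(y)}$ each lie in $\{1,2\}$: if they are equal, statement 1 of that proposition already produces the paired equivalences; if they differ, statement 2 gives that every element of $T(x)$ is $\equivR$ every element of $T(y)$, which still lets us write $T(x) = \{x',x''\}$, $T(y) = \{y',y''\}$ (allowing $x' = x''$ and/or $y' = y''$) with $x' \equivR y'$ and $x'' \equivR y''$. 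For conclusion (3) I would use \myrefP{prop:cellsToCells}: on a right cell $C \subseteq D$ either $\abs{T(w)}$ is constant, in which case that constant is $1$ or $2$ and $T(C)$ is a union of at most two right cells, or $\abs{T(w)}$ takes two values on $C$, in which case part 2 of \myrefP{prop:cellsToCells} makes $T(C)$ a single right cell. The left/right-interchanged assertion follows from the symmetry present in all the cited propositions.

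I do not anticipate a genuine obstacle; this corollary is purely a matter of citing the right earlier results in the right order. The only point demanding a little care is to check that each of \myrefP{prop:D4MapsETP}, \myrefP{prop:D4KLCell}, and \myrefP{prop:D4OpDomain} applies uniformly to all four maps of \myrefD{def:D4Ops} and, crucially, that the pair function of any one of them is again covered by the same propositions — which is precisely what the pairings $\{T_{i,\scC}, T_{\scC,i}\}$ and $\{T_{i,\scD}, T_{\scD,i}\}$ guarantee.
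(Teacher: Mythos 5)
Your proposal is correct and follows essentially the same route as the paper: show that both $T$ and $\pair(T)$ are type~2 edge transport functions, right $\tau$-invariant preserving, and have right KL interval set domains, then invoke \myrefP{prop:edgeTransportLeqB}, \myrefP{prop:edgeTransportEquivalence}, and \myrefP{prop:cellsToCells}. In fact, you cite the appropriate $D_4$ propositions (\myrefP{prop:D4MapsETP}, \myrefP{prop:D4KLCell}, \myrefP{prop:D4OpDomain}), whereas the paper's printed proof appears to carry over the $B_2$ citations (\myrefP{prop:KnuthBEdgeTP}, \myrefP{prop:BTwoKLCellFunction}, \myrefP{prop:BTwoKLSet}) from \myrefC{cor:BTwoConsequences} verbatim; your version is the one that should be read into the corollary.
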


  \begin{proof}
    \myrefP{prop:KnuthBEdgeTP} says that $T$ and $\pair(T) = T_{t,s}$ are type 2 edge transport functions.
    \myrefP{prop:BTwoKLCellFunction} implies that both are right \ti\ preserving.
    \myrefP{prop:BTwoKLSet} says that both domains are right KL interval sets.
    Then the conclusions of \myrefP{prop:edgeTransportLeqB}, \myrefP{prop:edgeTransportEquivalence}, and \myrefP{prop:cellsToCells} hold for $T$.
  \end{proof}

  Finally, we can conclude

  \begin{theorem}
    \label{thm:genTauConclusion}
    In the context of \myrefD{def:genTau}, let $\euscr F$ be a set of functions consisting of some combination of right Knuth maps, right $B_2$ maps, and right $D_4$ maps.
    Suppose $y, w \in W$ with $y \equivL w$.
    Then $y \equivGTF w$.
    Similarly, interchanging left and right.
  \end{theorem}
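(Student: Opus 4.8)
The plan is to reduce the statement to \myrefT{thm:genTau}, in its ``interchanging left and right'' form, so that the whole content of the proof becomes the verification that each of the three families of right-acting maps making up $\euscr F$ is left KL order preserving and has domain which is a left KL interval set. Since $\euscr F$ is a \emph{union} of such families and \myrefD{def:genTau} only ever consults one $T \in \euscr F$ at a time, it is enough to check these two hypotheses family by family; there is no interaction between the families to worry about.

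First I would dispose of the right Knuth maps. Regarding a right Knuth map as a type 2 function via \myrefR{rem:typeIToTypeII}, the two properties needed are exactly \myrefP{prop:KnuthAKLOrder} (a right Knuth map is left KL order preserving) and \myrefP{prop:KnuthAKLSet} (its domain $\DstR$ is a left KL interval set); equivalently, this part of the conclusion is already contained in \myrefT{thm:genTauKnuthConclusion}. For the right $B_2$ maps the two hypotheses are \myrefC{cor:BTwoConsequences}--1 and \myrefP{prop:BTwoKLSet}, each in its left/right-interchanged form, so this part of the conclusion is \myrefT{thm:genTauBTwoConclusion}.

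The genuinely new ingredient is the family of right $D_4$ maps. Here I would invoke \myrefC{cor:D4Conclusions}--1, which says a right $D_4$ map is left KL order preserving, together with \myrefP{prop:D4OpDomain}, which says its domain is a left KL interval set. Both of these rest in turn on the $D_4$ edge transport theorem \myrefT{thm:main}, packaged through \myrefP{prop:D4MapsETP} (the $D_4$ maps are type 2 edge transport functions), \myrefP{prop:D4KLCell} (they are KL cell maps, hence \ti\ preserving on the appropriate side), and \myrefP{prop:edgeTransportLeqB}. So the chain of dependencies for the $D_4$ part is \myrefT{thm:main} $\Rightarrow$ \myrefP{prop:D4MapsETP} together with \myrefP{prop:D4KLCell} $\Rightarrow$ \myrefC{cor:D4Conclusions} $\Rightarrow$ the present theorem.

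With all three families checked, I would finish by applying \myrefT{thm:genTau}: if $y \equivL w$, then since every $T \in \euscr F$ is left KL order preserving with domain a left KL interval set, $y$ and $w$ have the same generalized \ti\ with respect to $\euscr F$, that is, $y \equivGTF w$. The ``similarly interchanging left and right'' clause follows by the same argument with the left and right versions of every cited result swapped. I expect the main obstacle not to lie in this final assembly — which is pure bookkeeping — but upstream, in \myrefT{thm:main} and hence \myrefC{cor:D4Conclusions}; granting those, the present theorem is essentially a matter of matching hypotheses to the already-proved general framework.
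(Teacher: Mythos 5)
Your proposal matches the paper's proof exactly: the paper likewise cites \myrefT{thm:genTauBTwoConclusion} to dispose of the Knuth and $B_2$ maps, and then combines \myrefT{thm:genTau} with \myrefC{cor:D4Conclusions}--1 and \myrefP{prop:D4OpDomain} for the $D_4$ maps. You have simply unfolded the same dependency chain in more detail.
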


  \begin{proof}
    For the Knuth maps and $B_2$ maps, we have already seen this in the proof of \myrefT{thm:genTauBTwoConclusion}.
    For the $D_4$ maps, this combines \myrefT{thm:genTau} with \myrefC{cor:D4Conclusions}--1 and \myrefP{prop:D4OpDomain}.
  \end{proof}

  \section{Other Maps}
  \label{sec:otherMaps}
  There are other maps implicit in the situation of edge transport pairs.
  Though we won't make use of them, they appear elsewhere in the literature.
  Vogan defines maps called $S_{\alpha\beta}$ in Definition 4.6 of \cite{vogan_1980}.
  Lusztig in Section 10.6 of \cite{lusztig_1985} defines the analogous map as $w \mapsto \tilde w$.
  These maps are defined in relation to the $B_2$ maps.
  We'll make the definition here in the more general context of type 2 edge transport functions and show the maps' properties.
  The definition and properties will then also apply to the maps defined in \myrefS{sec:D4Maps}.

  \begin{definition}
    \label{def:derivedMap}
    Let $T$, $\bar T$,  and $D$ be as in \myrefP{prop:edgeTransportPair}.
    We define an associated map $U: D \longrightarrow D$ as follows.
    For $w \in D$, if $\abs{T(w)} = 2$ then $U(w) = w$.
    Otherwise, let $T(w) = \{w'\}$, and let $w^*$ be such that $\bar T = \{w, w^*\}$. Then we set $U(w) = \tilde w$.
  \end{definition}

  \begin{remark}
    With $U$ as in \myrefD{def:derivedMap}, we have $U^{-1} = U$.
    Also, Vogan uses the notation $S$ with a subscript for the associated maps.  For us, with that convention, we would start with a map $T_{s,t}$, with $st$ of order 4, and write $S_{s,t}$ for the associated map.
    Similarly, we can write $S_{\scC,i}$, etc., for the maps associated to $D_4$ maps.
  \end{remark}

  \begin{remark}
    Let $T$ and $U$ be as in \myrefD{def:derivedMap}.
    Since $U$ has the same domain as $T$, if we start with a map $T$ whose domain is a left (resp.\ right) KL interval set, then the domain of the derived function $U$ has the same property.
  \end{remark}

  \begin{proposition}
    \label{prop:derivedMap}
    Let $T$ and $U$ be as in \myrefD{def:derivedMap}.
    Then $U$ is a type 1 edge transport function.
    If $T$ is a left (resp.\ right) KL cell function, then so is $U$.
    If $T$ is left (resp.\ right) \ti\ preserving, then so is $U$.
  \end{proposition}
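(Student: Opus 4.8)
The plan is to read off all three claims directly from the axioms listed in \myrefP{prop:edgeTransportPair}, in each case splitting according to whether $\abs{T(w)}$ equals $1$ or $2$ at the points involved. I would begin by recording the well-definedness and involutivity of $U$: if $\abs{T(w)} = 2$ then $U(w) = w$, while if $\abs{T(w)} = 1$, say $T(w) = \{w'\}$, then condition 3 of \myrefP{prop:edgeTransportPair} says that $\bar T(w')$ consists of exactly two elements $w$ and $w^*$, with $T(w^*) = \{w'\}$; thus $U(w) := w^*$ is unambiguous, lies in $D$, and satisfies $U(w^*) = w$. Hence $U^2 = \mathrm{id}_D$, so in particular $U$ is an injection, which is half of what is needed for $U$ to be a type 1 edge transport function in the sense of \myrefD{def:edgeTransportFunction}.

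For the edge-transport equality $\mutilde(U(x), U(y)) = \mutilde(x, y)$ with $x, y \in D$, I would use the symmetry of $\mutilde$ to reduce to three cases. When $\abs{T(x)} = \abs{T(y)} = 2$ we have $U(x) = x$ and $U(y) = y$, and there is nothing to prove. When $\abs{T(x)} = 2$ and $\abs{T(y)} = 1$, writing $T(y) = \{y'\}$, condition 5 of \myrefP{prop:edgeTransportPair} applied with the two-image element taken to be $x$ and the one-image element taken to be $y$ (so that its $w^*$ is our $U(y)$) yields $\mutilde(x, y) = \mutilde(x, U(y)) = \mutilde(U(x), U(y))$. When $\abs{T(x)} = \abs{T(y)} = 1$, write $T(x) = \{x'\}$ and $T(y) = \{y'\}$; then $\bar T(x') = \{x, U(x)\}$ and $\bar T(y') = \{y, U(y)\}$ are each genuine two-element sets by condition 3, so by condition 6 we may apply condition 4 to the function $\bar T$ at the pair of points $x', y'$, and its relation (b) reads precisely $\mutilde(x, y) = \mutilde(U(x), U(y))$. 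This establishes that $U$ is a type 1 edge transport function.

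Finally, suppose $T$ is a left KL cell function and $w \in D$. When $\abs{T(w)} = 2$ we have $U(w) = w \equivL w$; when $\abs{T(w)} = 1$ with $T(w) = \{w'\}$ and $U(w) = w^*$, the memberships $w' \in T(w)$ and $w' \in T(w^*)$ give $w \equivL w'$ and $w^* \equivL w'$, hence $U(w) = w^* \equivL w$. The proof that $U$ is left \ti\ preserving whenever $T$ is runs identically, with $\equivL$ replaced by equality of $\tau$-invariants throughout and \myrefD{def:tauPreservingB} in place of \myrefD{def:KLCellFunctionB}; the right-hand statements follow by interchanging left and right. None of these steps is delicate; the only point demanding care is the bookkeeping in the case $\abs{T(x)} = \abs{T(y)} = 1$, where one must check that the hypotheses of condition 4 (two-element images, now applied to $\bar T$) are satisfied and that the labelling ambiguity in conditions 4 and 5 does not affect their conclusions.
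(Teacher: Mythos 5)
Your proof is correct and follows essentially the same route as the paper's: establish $U^{-1}=U$ hence injectivity, then verify $\mutilde(U(x),U(y))=\mutilde(x,y)$ by the three-way case split on whether $U$ fixes $x$ and/or $y$, invoking condition 5 of \myrefP{prop:edgeTransportPair} for the mixed case and condition 4 applied to $\bar T$ (via condition 6) for the case where neither is fixed, and finally deduce the KL cell and \ti\ preservation claims from $T(w)=T(U(w))$. Your version is marginally more explicit than the paper's about invoking condition 6 and about the harmlessness of the labelling ambiguity in condition 4, but the decomposition and the lemmas used are the same.
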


  \begin{proof}
    To see that $U$ is a type 1 edge transport function, we use conditions 4 and 5 of \myrefP{prop:edgeTransportPair}.
    That is, let $y, w \in D$.
    If $U(y) = y$ and $U(w) = w$, then clearly $\mutilde(U(y), U(w)) = \mutilde(y, w)$.
    If $U(y) \neq y$ and $U(w) \neq w$, then $\mutilde(U(y), U(w)) = \mutilde(y, w)$ follows from condition 4 applied to $\pair(T)$.
    If $U(y) = y$ and $U(w) \neq w$, then $\mutilde(U(y), U(w)) = \mutilde(y, w)$ follows from condition 5.
    Also, since $U^{-1} = U$, we know that $U$ is an injection.

    To see that $U$ is a left KL cell function when $T$ is, note that if $w \in D$ with $U(w) = w^* \neq w$, then $T(w) = T(w^*)$, so $w \equivL T(w) \equivL w^*$.
    We show similarly that $U$ is left \ti\ preserving when $T$ is.
  \end{proof}

  \begin{corollary}
    Let $T$, $U$, and $D$ be as in \myrefD{def:derivedMap}
    Assume in addition that $D$ is a right KL interval set and that $T$ is right \ti\ preserving.
    Then $U$ satisfies the hypotheses of \myrefP{prop:edgeTransportKLOrderA} and of \myrefC{cor:edgeTransportEquiv}.
    In particular, we have their conclusions for $U$.
  \end{corollary}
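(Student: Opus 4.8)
The plan is to assemble the corollary directly from \myrefP{prop:derivedMap} together with the two remarks immediately preceding it. First I would recall that, by \myrefD{def:derivedMap}, the map $U$ has the same domain $D$ as $T$; hence, by hypothesis, the domain of $U$ is a right KL interval set. Next, \myrefP{prop:derivedMap} states that $U$ is a type 1 edge transport function (in particular an injection, so $U^{-1}$ makes sense), and that, since $T$ is right \ti\ preserving, so is $U$. These are precisely the hypotheses of \myrefP{prop:edgeTransportKLOrderA} in its right-hand version, whose conclusion — that $U$ is right KL order preserving — then follows immediately.

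For the hypotheses of \myrefC{cor:edgeTransportEquiv} there is one additional point, concerning the image $\bar D$ of $U$ and the inverse $U^{-1}$. Here I would invoke the fact, noted in the remark after \myrefD{def:derivedMap}, that $U^{-1} = U$. Since $U$ is thus an involution of $D$, it is a bijection, so its image $\bar D$ equals $D$, which is a right KL interval set by hypothesis; and $U^{-1} = U$ is right \ti\ preserving, again by \myrefP{prop:derivedMap}. Therefore both $D$ and $\bar D$ are right KL interval sets and both $U$ and $U^{-1}$ are right \ti\ preserving, so \myrefC{cor:edgeTransportEquiv} applies to $U$. We then obtain its conclusions: for $x,y \in D$ we have $x \equivR y$ if and only if $U(x) \equivR U(y)$; and for any right cell $C \subseteq D$, $U(C)$ is again a right cell and $U$ induces an isomorphism of the associated $W$-graphs. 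The ``similarly'' clause is obtained by running the same argument with left and right interchanged.

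I expect no genuine obstacle here: the corollary is essentially bookkeeping on top of \myrefP{prop:derivedMap}. The only spots requiring care are (i) keeping the left/right ``resp.'' conventions aligned across \myrefP{prop:derivedMap}, \myrefP{prop:edgeTransportKLOrderA}, and \myrefC{cor:edgeTransportEquiv}, and (ii) pointing out explicitly that the involutivity $U^{-1} = U$ is what makes both the image hypothesis ($\bar D$ a KL interval set) and the $U^{-1}$ hypothesis ($U^{-1}$ \ti\ preserving) automatic, so that neither needs a separate verification.
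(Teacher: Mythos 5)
Your argument follows the paper's proof exactly: both rest on \myrefP{prop:derivedMap} for the KL-interval-set domain, the edge-transport property, and the $\tau$-invariant preservation, and both observe that $U^{-1}=U$ supplies the extra hypotheses of \myrefC{cor:edgeTransportEquiv}. Your write-up merely spells out the bookkeeping that the paper compresses into two sentences.
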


  \begin{proof}
    Mostly this follows from \myrefP{prop:derivedMap}. For \myrefC{cor:edgeTransportEquiv}, we also note that $U^{-1} = U$.
  \end{proof}

  \begin{remark}
    This replicates Theorem 4.8 of \cite{vogan_1980} when $W$ is a Weyl group, and part of Proposition 10.7 of \cite{lusztig_1985}.
  \end{remark}

  \section{Techniques of Strings and Clumps}
  \label{sec:techniques}

  In section 10.5 of \cite{lusztig_1985}, Lusztig describes the technique of strings.
  The technique of strings is just the application of \myrefT{thm:stStrings} or \myrefT{thm:stsStrings} to start with a known edge in the $W$ graph (for example one given by multiplication by an element of $S$) and deduce from it and the theorem the presence of a hitherto unknown edge.
  With good luck, this new edge will be one which contributes to the equivalence relation $\underset{L}{\leq}$.

  A simple example of this can be seen in \autoref{fig:type10Full}.
  Write $w$ for the element of type $\scC$ at the bottom of such a picture.
  Then $s_3w$ and $s_1s_3w$ are connected by multiplication by an element of $S$, namely $s_1$, so $\mu(s_3w, s_1s_3w) = 1$.
  Now, $w = T_{4,3}(s_3w)$ and $s_4s_1s_3w = T_{4,3}(s_1s_3w)$.
  From \myrefT{thm:stStrings}, we can then deduce that $\mu(w, s_4s_1s_3w) = 1$.
  This is one of the gray edges shown in \autoref{fig:type10Full}.
  This edge has the property that $\tau(s_4s_1s_3w) \subsetneq \tau(w)$, and thus shows that $s_4s_1s_3w \underset{L}{\leq} w$, and thus that the middle six elements are in the same left cell as the bottom two.

  Now let's look at an example of using the analogous ``technique of clumps''.  This example is in the Weyl group of type $E_6$.
  We'll use a standard numbering of the nodes of the Dynkin diagram, as shown below.

  \begin{figure}[H]
    \centering
    \begin{tikzpicture}[scale=.7]
        \DynkinESixLabelBourbaki
    \end{tikzpicture}
  \end{figure}

  Let $w = s_1s_3s_1s_5s_6s_5s_2$.
  \autoref{fig:E6} shows part of the left cell containing $w$, with $w$ the element at the bottom of the diagram.

  \begin{figure}[!ht]
    \begin{center}
      \begin{tikzpicture}[scale=0.58, every node/.style={scale=.4}]
        % Nodes
        \node (CENode) at (0, 0) {\TypeCPictureE};
        \node at (0, \ySE) {\ESixPicture1355};
        \node (DE1Node) at (-1 * \xSE, 2 * \ySE) {\TypeDOnePictureE};
        \node at (0, 2 * \ySE) {\ESixTopPicture135};
        \node (DE5Node) at (1 * \xSE, 2 * \ySE) {\TypeDFivePictureE};
        \node (AE1Node) at (-2 * \xSE, 3 * \ySE) {\TypeAOnePictureE};
        \node at (-1 * \xSE, 3 * \ySE) {\ESixTopPicture255};
        \node at (0, 3 * \ySE) {\ESixPicture2444};
        \node at (1 * \xSE, 3 * \ySE) {\ESixTopPicture144};
        \node (AE5Node) at (2 * \xSE, 3 * \ySE) {\TypeAFivePictureE};
        \node at (-2 * \xSE, 4 * \ySE) {\ESixPicture1244};
        \node at (-1 * \xSE, 4 * \ySE) {\ESixTopPicture125};
        \node at (0, 4 * \ySE) {\ESixTopPicture244};
        \node at (1 * \xSE, 4 * \ySE) {\ESixTopPicture145};
        \node at (2 * \xSE, 4 * \ySE) {\ESixPicture2455};
        \node at (0, 5 * \ySE) {\ESixPicture3333};
        \node (BE1Node) at (-1 * \xSE, 5 * \ySE) {\TypeBOnePictureE};
        \node (BE5Node) at (1 * \xSE, 5 * \ySE) {\TypeBFivePictureE};

        % lines
        \upLineE{darkblue}{solid}{0}{0}
        \upLineE{goldenpoppy}{dashed}{0}{1}
        \rightLineE{green}{solid}{0}{1}
        \leftLineE{magenta}{solid}{0}{1}
        \rightLineE{green}{solid}{-1}{2}
        \upLineE{goldenpoppy}{solid}{-1}{2}
        \leftLineE{magenta}{solid}{0}{2}
        \leftLineE{magenta}{solid}{1}{2}
        \upLineE{goldenpoppy}{solid}{1}{2}
        \rightLineE{green}{solid}{0}{2}
        \leftLineE{magenta}{solid}{1}{3}
        \rightLineE{green}{solid}{-1}{3}
        \upLineE{goldenpoppy}{dashed}{0}{3}
        \upLineE{darkblue}{solid}{0}{4}
        \draw[-, very thick, shorten <=0.3cm, shorten >=.6cm, violet, dotted] (-1 * \xSE, 2 * \ySE) -- (-1 * \xSE - \xSE, 2 * \ySE + \ySE);
        \draw[-, very thick, shorten <=.3cm, shorten >=0.6cm, cyan, dotted] (1 * \xSE, 2 * \ySE) -- (1 * \xSE + \xSE, 2 * \ySE + \ySE);
        % \leftLineE{violet}{dotted}{-1}{2}
        % \rightLineE{cyan}{dotted}{1}{2}
        \upLineE{green}{solid}{-2}{3}
        \rightLineE{goldenpoppy}{solid}{-2}{3}
        \upLineE{violet}{dashdotted}{-1}{3}
        \upLineE{cyan}{dashdotted}{1}{3}
        \leftLineE{goldenpoppy}{solid}{2}{3}
        \upLineE{magenta}{solid}{2}{3}
        \rightLineE{goldenpoppy}{dashdotted}{-2}{4}
        \upLineE{green}{solid}{-1}{4}
        \upLineE{magenta}{solid}{1}{4}
        \leftLineE{goldenpoppy}{dashdotted}{2}{4}

        \draw[bend right = 60, thick, gray, dotted] (BE1Node) to (CENode);
        \draw[bend left = 60, thick, gray, dotted] (BE5Node) to (CENode);

        % Legend
        \matrix [draw, below right, every node/.style={scale=.7}]
        at (1.75 * \xSE, 6 * \ySE)
        {
          \node [legendLine, draw=violet,label=right:{$s_1$}] {}; \\
          \node [legendLine, draw=goldenpoppy,label=right:{$s_2$}] {}; \\
          \node [legendLine, draw=magenta,label=right:{$s_3$}] {}; \\
          \node [legendLine, draw=darkblue,label=right:{$s_4$}] {}; \\
          \node [legendLine, draw=green,label=right:{$s_5$}] {}; \\
          \node [legendLine, draw=cyan,label=right:{$s_6$}] {}; \\
        };
      \end{tikzpicture}
    \end{center}
    \caption{$E_6$ Example}
    \label{fig:E6}
  \end{figure}

  The element $w$ is of type $\scC$ for the parabolic subgroup generated by $\{s_2, s_3, s_4, s_5\}$, and is part of a clump of size 10, all of which is shown in the diagram.
  Then $s_3s_4w$ is type $\scA_{s_3}$, as is $y = s_1s_3s_4w$.
  The element $y$ is in a clump of size 14, only four of whose elements is shown in the diagram.
  The element of type $\scC$ in the same clump as $y$ is $s_5s_2y$ and is shown in the diagram.
  We have obviously $\mu(s_3s4w, y) = 1$.
  Since the clump containing $s_3s_4w$ and the clump containing $y$ have different sizes, \myrefT{thm:mainA}--2 applies and says that $\mu(w, s_5s_2y) = 1$.
  (This is the curved edge shown in gray on the left in the diagram.)
  In particular, we can conclude that $w$ and $y$ are in the same left cell.

  The technique of strings is used for example in \cite{lusztig_1985} and \cite{bedard_1986} as part of their work computing left cells in certain low-rank affine Weyl groups.
  It's hoped that the edges transport theorem of this paper can have similar applications.

  The two-sided cell in the Weyl group of type $E_6$ containing the elements shown in \autoref{fig:E6} is in many ways analogous to the two-sided cell in $D_4$ which is the subject of this paper.
  Hopefully one can prove an edge transfer theorem for the $E_6$ cell as well.
  I think the methods of this paper should work there in principle.
  However, the $E_6$ cell is a lot larger than the $D_4$ cell, so the parts of the $D_4$ proof which go case by case would be harder to carry out in practice.

  \section*{Acknowledgements} I would like to thank the University of Pennsylvania for their kind hospitality while this paper was being written.
  I would like to thank David Harbater for much helpful advice about writing this paper.
  I would like to thank my son Christian Johnson, and my friend Leila Miller, for their support in writing this paper.
  I would like to thank my son Robert Johnson for writing programs to compute and draw cells in $E_6$ and $E_7$.
  I would like to thank Mike Chmutov and Joel Brewster Lewis for helpful comments about earlier versions of this paper.
  I would like to thank the creator and maintainers of the \LaTeX Tikz package, without which this research would not have been possible.
  Finally, I would like to thank Professor Ariki for his paper \cite{ariki_2000}, which was the inspiration for this one.

\end{document}